\theoremstyle{plain}
\newtheorem{thm}{Theorem}[section]
\newtheorem{prop}[thm]{Proposition}
\newtheorem{lemma}[thm]{Lemma}
\newtheorem{cor}[thm]{Corollary}
\theoremstyle{definition}
\newtheorem{defn}[thm]{Definition}
\newtheorem*{defn*}{Definition}
\newtheorem*{question*}{Question}
\newtheorem{example}[thm]{Example}
\newtheorem*{example*}{Example}
\newtheorem{rem}[thm]{Remark}
\newtheorem*{rem*}{Remark}
\newtheorem{notation}[thm]{Notation}
\newcommand{\field}[1]{\mathbb{#1}}
\newcommand{\N}{\field{N}}
\newcommand{\Q}{\field{Q}}
\newcommand{\ideal}[1]{\mathfrak{#1}}
\newcommand{\m}{\ideal{m}}
\newcommand{\p}{\ideal{p}}
\newcommand{\func}[1]{\mathrm{#1} \,}
\newcommand{\depth}{\func{depth}}
\newcommand{\im}{\func{im}}
\newcommand{\ra}{\rightarrow}
\DeclareMathOperator{\ann}{ann}
\DeclareMathOperator{\Hom}{Hom}
\newcommand{\RR}{\rm RR}
\newcommand{\be}{\begin{enumerate}}
\newcommand{\ee}{\end{enumerate}}
\newcommand{\li}
 {\leftfootline}
\newcommand{\lic}[2]{{#1}^{\li}_{#2}}
\newcommand{\Ri}{{-\rm{Rs}}}
\newcommand{\EHUi}{{-\rm{EHU}}}
\newcommand{\EHUic}[2]{{#1}^\EHUi_{#2}}
\newcommand{\onto}{\twoheadrightarrow}
\newcommand{\into}{\hookrightarrow}
\newcommand{\cC}{\mathcal{C}}
\newcommand{\cI}{\mathcal{I}}
\newcommand{\cD}{\mathcal{D}}
\newcommand{\cM}{\mathcal{M}}
\newcommand{\cP}{\mathcal{P}}
\newcommand{\cQ}{\mathcal{Q}}
\newcommand{\cR}{\mathcal{R}}
\renewcommand{\phi}{\varphi}
\DeclareMathOperator{\Sym}{Sym}
\newcommand{\opsub}{order-preserving on submodules}
\newcommand{\dual}{\smallsmile}
\newcommand{\cl}{{\mathrm{cl}}}
\let\int\relax
\DeclareMathOperator{\int}{i}
\DeclareMathOperator{\ab}{a}
\DeclareMathOperator{\re}{r}
\DeclareMathOperator{\ch}{c}
\DeclareMathOperator{\h}{h}
\newcommand{\po}[2]{{#1}_{#2}}
\newcommand{\rcl}[2]{{#1}^{\po {\cl} {\re}}_{#2}}
\DeclareMathOperator{\core}{-core}
\newcommand{\Jcolsym}[1]{{#1} \rm{bf}}
\newcommand{\Jcol}[3]{{#2}^{\Jcolsym #1}_{#3}}
\newcommand{\Jintrelsym}[1]{{#1}{\rm be}}
\newcommand{\Jintrel}[3]{{#3}_{\Jintrelsym{#1}}^{#2}}
\newcommand{\fg}{finitely generated}
\newcommand{\charp}{characteristic $p>0$}
\newcommand{\cvl}{\color{violet}}
\author{Neil Epstein}
\address{Department of Mathematical Sciences \\ George Mason University \\ Fairfax, VA  22030}
\email{nepstei2@gmu.edu}
\author{Rebecca R.G.}
\address{Department of Mathematical Sciences \\ George Mason University \\ Fairfax, VA  22030}
\email{rrebhuhn@gmu.edu}
\author{Janet Vassilev}
\address{Department of Mathematics and Statistics \\ University of New Mexico \\ Albuquerque, NM 87131}
\email{jvassil@math.unm.edu}
\title[How to extend closure and interior operations]{How to extend closure and interior operations to more modules}
\subjclass[2020]{Primary: 13C60, Secondary: 13B22, 13A35, 13J10}
\keywords{closure operation, integral closure, tight closure, interior operation, hereditary, residual}
\date{November 3, 2023}
\begin{document}
\begin{abstract}
There are several ways to convert a closure or interior operation to a different operation that has particular desirable properties.  In this paper, we axiomatize 3 ways to do so, drawing on disparate examples from the literature, including tight closure, basically full closure, and various versions of integral closure.  In doing so, we explore several such desirable properties, including \emph{hereditary}, \emph{residual}, and \emph{cofunctorial}, and see how they interact with other properties such as the \emph{finitistic} property.
\end{abstract}

\maketitle
\setcounter{tocdepth}{2} 
\tableofcontents

\section{Introduction}

In extending a closure operation on ideals to a closure operation on modules, there are three main techniques that appear in the literature:
\begin{enumerate}
    \item (Hereditary version) Start by defining the closure operation on submodules of injective modules. Given $L \subseteq M$, embed $M$ in an injective module $E$. To compute the closure of $L$ in $M$, compute the closure of $\im(L) \subseteq E$, then intersect with $M$. (See Section \ref{sec:hereditaryversions}.)
    \item (Residual version) Start by defining the closure operation on submodules of free modules. Given $L \subseteq M$, find a free module $F$ that surjects onto $M$. To compute the closure of $L$ in $M$, take the closure of $\text{pre-im}(M)$ in $F$ and project it back to $M$. (See Section \ref{sec:residualversion}.)
    \item (Free module pre-enveloping class version) 
    Start by defining the closure operation on submodules of free modules. Given $L \subseteq M$, consider all maps from $M$ to free modules $F$. To compute the closure of $L$ in $M$, compute the closure of $\im(L)$ in all such $F$, then take the intersection of the pre-images in $M$. (See Section \ref{sec:preenveloping}.)
\end{enumerate}

For example, Rees's extension of integral closure to finitely generated modules over a domain in \cite{reesintegralclosure} takes the hereditary approach modelling the version of integral closure for ideals defined by extending to valuation rings in the fraction field and contracting back to the ring. Whereas the integral closure defined by Eisenbud, Huneke and Ulrich \cite{EHU-Ralg} uses the free module pre-enveloping class version modelling the version of integral closure for ideals defined through the Rees algebra of the ideal. Motivated by the study of tight closure, Epstein and Ulrich \cite{nmeUlr-lint} have shown there is a unique residual version of integral closure, which they call the liftable integral closure. These versions of integral closure are known to disagree on examples \cite{EHU-Ralg,nmeUlr-lint}.  
We do have a few results comparing these closures, such as that Rees integral closure is contained in EHU-integral closure, with equality if and only if the module is a submodule of a free module or the ring is a domain \cite{nmeUlr-lint}. But this doesn't provide a framework for comparing the closures in general.

In contrast to the work done for integral closure, extensions of tight closure and Frobenius closure to submodules of finitely generated modules have for the most part been defined through the residual approach as this has been fruitful in the study of singularities.  However, in \cite[Section 8]{HHmain} Hochster and Huneke did introduce a hereditary version of tight closure that they called the absolute tight closure.

Very few closure operations are both hereditary and residual, though module closures coming from flat modules are \cite{dietzclosureprops}. Research using residual operations (tight closure, Frobenius closure, plus closure, module closures, see for example \cite{epstein})
tends to focus on the study of singularities of Noetherian rings. Research using hereditary operations (star operations, semi-prime operations, see for example \cite{semiprimeoperations,staroperations,elliottclosureops}) tends to focus on the study of fractional ideals over integral domains. Recent work of Elliott \cite[Chapter 5]{elliottclosureops} extends results on these classes of hereditary closure operations to more general classes of modules, enabling a more direct comparison between residual and hereditary operations. The various versions of integral closure straddle the divide, coming up in both contexts.

Our prior work built a foundation for exploring these three methods in greater generality. In \cite{nmeRG-cidual}, the first two authors described a duality between residual closure operations $\cl$ and interior operations $\int$ over complete local rings. This duality assigns to each residual closure operation a dual interior operation, such that taking the dual twice returns the original residual closure operation. We additionally showed that a number of structures in commutative algebra not usually viewed as closure operations or interior operations actually satisfy the conditions needed to use this duality, including variations on trace modules, torsion submodules, divisible submodules, and 0th local cohomology modules.

In \cite{ERGV-nonres} we defined a new version of duality that works on any \emph{pair operation}, a notion that encompasses both closure and interior operations.  In particular, we used this duality to find the duals of non-residual closures, which are \emph{relative} interior operations. A relative interior operation takes a pair of modules $N \subseteq M$ to a submodule $N_{\int}^M$ of $N$, and in contrast to an (absolute) interior operation, $N_{\int}^M$ may depend on both $N$ and $M$. When the closure operation is residual, this new duality agrees with the duality of \cite{nmeRG-cidual}, and the resulting interior operation is absolute, meaning that the interior of a module $N$ does not depend on an ambient module $M$ containing $N$.
This enabled us to compute integral hulls of submodules of the injective hull of the residue field over a local ring, using the duality to cores of ideals in the ring. 

In this paper we generalize the three methods of extending a closure operation to the context of pair operations, as well as discussing how to convert a closure operation to a hereditary or residual one. In Section \ref{sec:background}, we recall the relevant properties of pair operations that we introduced in \cite{ERGV-nonres} along with a number of versions of integral closure and tight closure. In Section  \ref{sec:pairopssmselector},  we discuss some new properties for pair operations.  In particular, we define  \emph{cofunctoriality} for pair operations, 
and show that it is 
the dual of functoriality for pair operations in Proposition \ref{pr:functcofunct}.  However, when the pair operation is order preserving on submodules as with closure or interior operations, cofunctorial and functorial pair operations are equivalent (See Proposition \ref{pr:cofunctorial}). 
In Section \ref{sec:pairopssmselector}, we also define the dual notions of hereditary and cohereditary pair operations, pair operations arising from submodule selectors, as well as finitistic versions of pair operations.

In Section \ref{sec:residualversion}, from any pair operation $p$, we 
construct a cohereditary version $\po{p}{\ch}$. When the pair operation is a closure operation, this becomes a residual version.  In Section \ref{sec:hereditaryversions}, from a pair operation $p$ we 
construct a hereditary version $\po{p}{\h}$. In Section \ref{sec:duality}, we show that when the ring $(R, \m )$ is a complete local ring, then the cohereditary and hereditary versions of pair operations that we constructed in Sections \ref{sec:residualversion} and \ref{sec:hereditaryversions} are dual to each other. In Section \ref{sec:JbfJbe}, we continue the study of $J$-basically full closures and $J$-basically empty interiors 
we began in \cite{ERGV-nonres}, by discussing the residual version of the $J$-basically full closure (which is a hereditary closure) and the absolute version of the $J$-basically empty interior (which is a cohereditary interior). In Section \ref{sec:preenveloping}, we introduce pair operations defined by pre-enveloping classes. In Section \ref{sec:integralclosure}, we analyze known closures such as integral closure and tight closure through the lens of pre-enveloping classes.
In \cite{ERGV-nonres}, for any ideal $J$, we discussed the $\Jcolsym J$-closure and introduced the $\Jintrelsym{J}$-interior. In Propositions \ref{jcolhereditary} and \ref{pr:jbecohereditary} we show that $\Jcolsym J$ is  hereditary and $\Jintrelsym J$ is cohereditary.  Then in  Propositions \ref{pr:EHUishereditary} and \ref{pr:fgaspreenveloping}, we show that EHU-integral closure is hereditary, and more generally, that closures constructed through pre-enveloping classes are hereditary. 




\section{Background}\label{sec:background}

\subsection{Pair Operations}

In this section we define pair operations and describe some of their properties. The purpose of using a pair operation is to have a structure that generalizes both closure operations and interior operations.

\begin{defn}[{\cite[Definition 2.2]{ERGV-nonres}}]
\label{def:pairop}
Let $\cM$ be a category of $R$-modules. Let $\cP$ be a collection of pairs $(L,M)$, where $L$ is a submodule of $M$, and $L,M \in \cM$, 
such that whenever $\phi:M \to M'$ is an isomorphism in $\cM$ and $(L,M) \in \cP$, $(\phi(L),M') \in \cP$ as well.

A \emph{pair operation} is a function $p$ that sends each pair $(L,M) \in \cP$ to a submodule $p(L,M)$ of $M$, in such a way that whenever $\phi: M \ra M'$ is isomorphism in $\cM$
 and $(L,M) \in \cP$, then 
  $\phi(p(L,M)) = p(\phi(L),M')$.

A pair operation $p$ on a class $\cP$ of pairs of $R$-modules $(L,M)$ as above is
\begin{itemize}
    \item \emph{idempotent} 
    if whenever $(L,M) \in \cP$ and $(p(L,M), M) \in \cP$, we always have $p(p(L,M),M)=p(L,M)$;
    \item \emph{extensive} 
    if we always have $L \subseteq p(L,M)$;
    \item \emph{intensive} if we always have $p(L,M) \subseteq L$;
    \item \emph{order-preserving on submodules} if whenever $L \subseteq N \subseteq M$ such that $(L,M), (N,M) \in \cP$, we have $p(L,M) \subseteq p(N,M)$;
    \item \emph{order-preserving on ambient modules} 
    if whenever  $L \subseteq N \subseteq M$ 
    such that $(L,N), (L,M) \in \cP$,
    we have $p(L,N) \subseteq p(L,M)$;
    \item \emph{surjection-functorial} 
    if whenever $\pi:M \twoheadrightarrow M'$ is a surjection and $(L,M),(\pi(L),M') \in \cP$, we have \[\pi(p(L,M)) \subseteq p(\pi(L),M').\]  Equivalently,  when $(L,M) \in \cP$ and for 
    $U \subseteq M$, $((L+U)/U,M/U) \in \cP$, then \[(p(L,M)+U)/U \subseteq p((L+U)/U, M/U);\] 
    \item \emph{functorial} if whenever $g: M \ra M'$ and $(L,M),(g(L),M') \in \cP$, we have \[g(p(L,M)) \subseteq p(g(L), M').\] (Note that if $(g(L),g(M))$ is also in $\cP$, it is equivalent that $p$ be both  order-preserving on ambient modules and surjection-functorial, by the usual epi-monic factorization); 
    \item a \emph{closure operation} if it is extensive, order-preserving on submodules, and idempotent;
    \item a \emph{(relative) interior operation} if it is intensive, order-preserving on submodules, and idempotent;
    \item \emph{restrictable} if whenever $L,N \subseteq M$ such that $(L \cap N,N),(L,M) \in \cP$, $p(L \cap N,N) \subseteq p(L,M)$.
\end{itemize}
\end{defn}

\begin{rem}
In \cite[5.5.10]{elliottclosureops}, Elliott introduces a notion related to that of the pair operation, namely a ``system of (order-preserving) operations''.  While we take the requirement of isomorphism-preservation as basic, he names this property ``invariant'', while on the other hand he takes the requirement of order-preservation (on submodules) as basic.  With this in mind, we may compare our terminology to his.  For instance, our definition of \emph{idempotent} is a generalization of his ``left semiexact''.  His notion of ``left functorial'' becomes equivalent to our notion of \emph{order-preserving on ambient modules}, given our requirement to be invariant in his sense.  And his notion of ``right functorial'' matches our \emph{surjection-functorial}.
\end{rem}

\begin{rem}
We severely limit the hypotheses on $\cP$ so that our results cover pair operations not only on modules or \fg\ modules, but also more restrictive classes like ideals, graded modules (with graded homomorphisms), or $\m$-primary modules.
Hypotheses like ``closed under taking submodules" or ``closed under taking quotient modules" would be too strong for our purposes:
Consider \[\cP_1=\{(N,M)\mid  N \text{ is } \m\text{-primary in } M\}.\]  For any $L \subseteq N \subseteq M$, with $L$ not $\m$-primary in $N$, but $N$ $\m$-primary in $M$, $(L,M)$ and $(L,N)$ will not be in $\cP_1$ however $(N,M),(N/L,M/L) \in \cP_1$.  Whereas for \[\cP_2=\{(N,M)\mid  N \text{ and } M\   \m\text{-primary modules with }N \subseteq M\}\] if $(N,M) \in \cP_2$ and $R$ has dimension at least 1, $(N/N,M/N) \notin \cP_2$ even if $(N,M)$ is.
\end{rem}



We recall the definition of the dual of a pair operation from \cite{ERGV-nonres}:

\begin{defn}[{\cite[Definition 3.1]{ERGV-nonres}}]
\label{def:smiledual}
Let $R$ be a complete local ring and let $^\vee$ denote the Matlis duality operator. Let $p$ be a pair operation on a class of pairs of Noetherian and/or Artinian $R$-modules $\cP$ as in Definition \ref{def:pairop}. Set $\cP^\vee := \{(A,B) \mid ((B/A)^\vee, B^\vee) \in \cP\}$, where $(B/A)^\vee$ is seen as a submodule of $B^\vee$ via the dual of the projection map $\pi:B \onto B/A$.
It's an easy exercise to show that $(\cP^\vee)^\vee=\cP$.

We define the dual $p^\dual$ of $p$ by
\[p^\dual(A,B):= \left(\frac{B^\vee}{p((B/A)^\vee,B^\vee)} \right)^\vee. \]
\end{defn}

\begin{rem}\label{rem:Matlis}
    Recall that Matlis duality ${}^\vee$ is an exact contravariant functor, and hence preserves short exact sequences, turning injections into surjections and vice-versa.  Moreover,  for Noetherian or Artinian $R$-modules $M$, there is a functorial isomorphism $M \cong M^{\vee\vee}$. Throughout the paper, we will view submodules of $M^{\vee\vee}$ as submodules of $M$ via this isomorphism.
\end{rem}

The next definition will come into play when we discuss $\cl$-cores and $\int$-hulls later.

\begin{defn}[{\cite[Definitions 2.11 and 3.11] {ERGV-nonres}}]
\label{def:nakayama}
Let $(R,\m)$ be a Noetherian local ring.

Let $\cl$ be a closure operation on the class of pairs of \fg\ $R$-modules.  We say that $\cl$ is a \emph{Nakayama closure} if for $L \subseteq N \subseteq M$ \fg\ $R$-modules, if $L\subseteq N \subseteq (L+\m N)^{\cl}_M$ then $L^{\cl}_M=N^{\cl}_M$.

Let $\int$  be a (relative) interior operation on the class of pairs of Artinian $R$-modules. We say that $\int$ is a \emph{Nakayama interior} if for any Artinian $R$-modules $A \subseteq C \subseteq B$, if 
$(A:_C \m)^B_i\subseteq A$, then $A^B_{\int} = C^B_{\int}$ (or equivalently, $C^B_{\int} \subseteq A$).
\end{defn}

\subsection{Versions of integral and tight closure}

We define the versions of integral closure and tight closure discussed in this paper.

Integral closure is defined in a standard way on ideals. Most extensions of integral closure to modules use the definition below for submodules of free modules:

\begin{defn}[Integral closure in a free module]\label{def:icfree}  Let $R$ be a Noetherian ring and $L \subseteq F$
be $R$-modules where $F$ is free.  Let $\Sym(F)$ be the naturally graded symmetric algebra over $R$ defined by $F$ and $T$ be the subring of $\Sym(F)$ induced by the inclusion of $L \subseteq F$. Note that both $\Sym(F)$ and $T$ are $\mathbb{N}$-graded rings generated in degree 1 over $R$.  The integral closure of $L$ in $F$, denoted $L_F^{-}$ is the degree 1 part of the integral closure of the subring $T$ in $\Sym(F)$.
\end{defn}

The primary versions of integral closure that we will focus on in this paper are EHU-integral closure and liftable integral closure, both of which are functorial. We define them below.

\begin{defn}[{\cite[Definition 1.3]{nmeUlr-lint}, derived from \cite{EHU-Ralg}}]
\label{def:EHU}

(EHU-integral closure) Let $R$ be a Noetherian ring and $L \subseteq M$ be finite $R$-modules.  For $x \in M$, we say $x$ is in the $EHU$-integral closure of $L$ in $M$,  written $x \in L_M^{\EHUi}$ if for every $R$-module homomorphism $g:M \rightarrow F$ where $F$ is free we have $g(x) \in g(L)_F^{-}$. 
\end{defn}

\begin{defn}[\cite{nmeUlr-lint}] \label{def:li}
 (Liftable integral closure)  Let $L \subseteq M$ be $R$-modules. Let $\pi:F \twoheadrightarrow M$ be a surjection of a free $R$-module $F$ onto $M$. Let $K:=\pi^{-1}(L)$. Then the liftable integral closure of $L$ in $M$ is $\lic LM:=\pi(K_F^{-}).$
\end{defn}

\cite[Proposition 2.4(2)]{nmeUlr-lint} shows liftable integral closure is functorial. We will show that EHU-integral closure is functorial in Theorem \ref{thm:EHUic}.
We will also briefly mention Rees integral closure:

\begin{defn}[\cite{reesintegralclosure}]
(Rees integral closure for Noetherian domains) Let $R$ be a Noetherian domain and $L \subseteq M$ be $R$-modules.  We say $x \in M$ is in the Rees integral closure of $L$ in $M$ denoted $x \in L_M^{\Ri}$ if for every valuation ring $V$ between $R$ and its fraction field $Q$, $x$ is in the image $LV$ of $L \otimes_R V$ in $M \otimes_R Q$.
\end{defn}

\begin{defn}[\cite{reesintegralclosure}]
(Rees integral closure for Noetherian rings) Let $R$ be a Noetherian ring and $L \subseteq M$ be $R$-modules.  We say $x \in M$ is in the Rees integral closure of $L$ in $M$ denoted $x \in L_M^{\Ri}$ if for every minimal prime $\mathfrak{p}$, $x+\mathfrak{p}M \in \left(\displaystyle\frac{L+\mathfrak{p}M}{\mathfrak{p}M} \right)_{M/\mathfrak pM}^{\rm -Rs}$ as modules over the ring $R/\mathfrak{p}$.
\end{defn}

We show below that this closure is functorial.

\begin{prop}
Let $R$ be a Noetherian ring. Then Rees integral closure is functorial, i.e. if $R$ is a Noetherian ring, $L \subseteq M$ and $N$ are $R$-modules, and $f:M \to N$ is an $R$-module map, then $f(L_M^{\rm -Rs}) \subseteq f(L)_N^{\rm -Rs}$.
\end{prop}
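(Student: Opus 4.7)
The plan is to reduce to the domain case via the minimal prime definition, and then prove the domain case by applying $f$ functorially after tensoring with a valuation ring.

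First I would handle the domain case. Assume $R$ is a Noetherian domain with fraction field $Q$, and let $x \in L_M^{\Ri}$. Fix a valuation ring $V$ with $R \subseteq V \subseteq Q$. Since tensor product is functorial, the map $f: M \to N$ induces $f \otimes_R Q : M \otimes_R Q \to N \otimes_R Q$, which sends $x \otimes 1$ to $f(x) \otimes 1$. By hypothesis, $x \otimes 1 \in LV$, i.e., $x \otimes 1$ is in the image of $L \otimes_R V \to M \otimes_R Q$. A typical element of this image has the form $\sum_i \ell_i \otimes v_i$ with $\ell_i \in L$ and $v_i \in V$; applying $f \otimes Q$ yields $\sum_i f(\ell_i) \otimes v_i$, which lies in the image of $f(L) \otimes_R V \to N \otimes_R Q$, namely $f(L)V$. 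Hence $f(x) \otimes 1 \in f(L)V$ for every such $V$, so $f(x) \in f(L)_N^{\Ri}$.

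Next I would reduce the Noetherian case to the domain case. Let $\mathfrak{p}$ be a minimal prime of $R$. The map $f: M \to N$ descends to an $R/\mathfrak{p}$-linear map $\bar f : M/\mathfrak{p}M \to N/\mathfrak{p}N$, which sends $x + \mathfrak{p}M$ to $f(x) + \mathfrak{p}N$ and sends the submodule $(L+\mathfrak{p}M)/\mathfrak{p}M$ onto $(f(L)+\mathfrak{p}N)/\mathfrak{p}N$. Since $R/\mathfrak{p}$ is a Noetherian domain, the domain case applied to $\bar f$ gives
\[ f(x) + \mathfrak{p}N \;\in\; \left(\frac{f(L)+\mathfrak{p}N}{\mathfrak{p}N}\right)^{\Ri}_{N/\mathfrak{p}N}. \]
As this holds for every minimal prime $\mathfrak{p}$, the definition in the Noetherian case yields $f(x) \in f(L)_N^{\Ri}$.

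There is no real obstacle here: the content is entirely the functoriality of $-\otimes_R V$ and $-\otimes_R Q$, together with the compatibility of the Rees definition with the quotient $R \to R/\mathfrak{p}$. The only mild point to verify carefully is that passing $f$ through the extension-of-scalars preserves the submodule $LV \subseteq M \otimes_R Q$, which is immediate from the elementwise description above.
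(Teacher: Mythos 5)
Your proof is correct and follows essentially the same two-step route as the paper's: establish the domain case by applying $f$ (extended along $- \otimes_R V$ and $- \otimes_R Q$) to an expression witnessing $x \in LV$, then reduce the general Noetherian case to the domain case by passing to $R/\mathfrak{p}$ for each minimal prime $\mathfrak{p}$. The paper phrases the domain step slightly more abstractly via a commuting square of maps $L \otimes_R V \to M \otimes_R V$ and $f(L) \otimes_R V \to N \otimes_R V$ rather than expanding into simple tensors $\sum_i \ell_i \otimes v_i$, but the underlying functoriality argument is identical.
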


\begin{proof}
First assume that $R$ is a domain and let $x \in L_M^{\rm -Rs}$. Let $V$ be a valuation ring between $R$ and its fraction field $Q$. Then $x$ (or $x \otimes 1$) is in the image of $\phi:L \otimes_R V \to M \otimes_R V$, say $x=\phi(z)$. Let $\psi:f(L) \otimes_R V \to N \otimes_R V$. Then $\psi(f(z))=f(x)$ (where $f$ is really $f \otimes$ the appropriate identity map in each place). So $f(x)$ is in the image of $\psi$, as desired.

Next let $R$ be any Noetherian ring and let $x \in L_M^{\rm -Rs}$. We want to show that for every minimal prime $\mathfrak{p}$ of $R$, $f(x)+\mathfrak{p}N \in \left(\displaystyle\frac{f(L)+\mathfrak{p}N}{\mathfrak{p}N} \right)_{N/\mathfrak{p}N}^{\rm -Rs}$. We know that $x+\mathfrak{p}M \in \left(\displaystyle\frac{L+\mathfrak{p}M}{\mathfrak{p}M} \right)_{M/\mathfrak{p}M}^{\rm -Rs}$. Since we are now working over a domain and functoriality holds in the domain case, we get
\[f\left(\left(\displaystyle\frac{L+\mathfrak{p}M}{\mathfrak{p}M} \right)_{M/\mathfrak pM}^{\rm -Rs}\right) \subseteq \left(\displaystyle\frac{f(L)+\mathfrak{p}N}{\mathfrak{p}N} \right)_{N/\mathfrak pN}^{\rm -Rs}.\]
So 
\[\bar{f}(x+\mathfrak{p}M)=f(x)+\mathfrak{p}N \in \left(\displaystyle\frac{f(L)+\mathfrak{p}N}{\mathfrak{p}N} \right)_{N/\mathfrak pN}^{\rm -Rs}=f(L)_N^{\rm -Rs}. \qedhere\]
\end{proof}

We note that as a result of the first author and Ulrich \cite[Corollary 1.5]{nmeUlr-lint} $L_M^{\Ri} \subseteq L_M^{\EHUi}$ with equality if $M$ is free or $R$ is a domain. In the same paper, there is an example given where $R$ is not a domain and the inequality is strict.

Simis, Ulrich, and Vasconcelos also define a version of integral closure on modules with rank \cite{SimisUlrichVasconcelos}. They note that if $M$ is a module with rank and is a torsionfree submodule of a free module, then the Rees algebra of $M$ agrees with Rees's definition of Rees algebra of $M$.  It seems that in EHU, they note that the EHU construction of the Rees algebra agrees with the Rees algebra when the module is an ideal.  It seems that this would imply that the EHU-integral closure of an ideal $J$ with rank inside another ideal $I$ agrees with the SUV-integral closure of $J$ in $I$. Hence we do not consider this version of integral closure in this paper.

We also give the definition of absolute tight closure, originally from \cite{HHmain}:

\begin{defn}[{\cite[Section 8]{HHmain}}]
\label{def:abstightclosure}
Let $R$ be a Noetherian ring of \charp\ and $N \subseteq M$. We define $N_M^{*fg}$, the \textit{finitistic tight closure} of $N$ in $M$, to be 
\[N_M^{*fg}:=\bigcup_{M' \subseteq M, M' \text{ f.g.}} (M' \cap N)_{M'}^*,\]
where $*$ denotes tight closure.

We define $N_M^{*abs}$, the \textit{absolute tight closure} of $N$ in $M$, to be
\[N_M^{*abs}=\{x \in M : x \in N_Q^{*fg} \text{ for some } Q \supseteq M\}.\]
\end{defn}

\section{New properties for pair operations}\label{sec:pairopssmselector}

We give some new properties for pair operations and show how they are related. These properties will be the focus of this paper.
\subsection{Functorial and cofunctorial pair operations} \label{subs:funccofunc}
Although not all closure operations are functorial, functoriality is an important assumption to discuss a closure on different rings.   We introduce cofunctoriality, the dual notion of functoriality and prove that these two properties are equivalent for pair operations which are order preserving on submodules.
\begin{defn}
\label{def:cofunct}
A pair operation $p$ on a class $\cP$ of pairs of $R$-modules $(L,M)$ with $L \subseteq M$ is \begin{itemize}
    \item \emph{surjection-cofunctorial} if whenever $\pi: M \ra M'$ is a surjective homomorphism and $(L,M'), (\pi^{-1}(L),M) \in \cP$, we have \[p(\pi^{-1}(L),M) \subseteq \pi^{-1} (p(L, M')).\]
    \item  \emph{cofunctorial} if whenever $g: M \ra M'$ and $(L,M'), (g^{-1}(L),M) \in \cP$, we have 
    \[p(g^{-1}(L),M) \subseteq g^{-1} (p(L, M')).\]
\end{itemize}
\end{defn}

 For closure and interior operations, we will see that functoriality and cofunctoriality are interchangeable under mild hypotheses (see Proposition~\ref{pr:cofunctorial}(\ref{it:pairopsub})).  However, this is not the case with pair operations. We will note in Example \ref{ex:RRop} and Remark \ref{rem:dualRRop}, that the Ratliff-Rush operation 
is a pair operation where functoriality and cofunctoriality can differ.

 \begin{rem}
One might wonder why we have not included the definitions of \emph{injection-functorial} in Definition~\ref{def:pairop} (i.e. functorial for injective homomorphisms) or \emph{injection-cofunctorial} in Definition~\ref{def:cofunct} (i.e. cofunctorial for injective homomorphisms).  The reason for this is that they are equivalent to notions that we have already defined, namely \emph{order preserving on ambient modules} and \emph{restrictable}, respectively.  We prove these notions are equivalent below.
\end{rem}

\begin{lemma}\label{lem:injfuncequiv}
Let $p$ be a pair operation on $\cP$.  
\begin{enumerate}
    \item \label{it:orderambient} $p$ is order preserving on ambient modules if and only if for all injective homomorphisms $i:M \rightarrow N$ and all submodules $L \subseteq M$ such that $(L,M), (i(L),N) \in \cP$, we have $i(p(L,M)) \subseteq p(i(L),N)$.
    \item \label{it:restrictable} $p$ is restrictable if and only if for all injective homomorphisms $i:N \rightarrow M$ and all submodules $L \subseteq M$ such that $(L,M), (i^{-1}(L),N) \in \cP$, we have $p(i^{-1}(L),N) \subseteq i^{-1}(p(L,M))$.
\end{enumerate}
\end{lemma}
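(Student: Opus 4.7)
The plan is to reduce each biconditional to the case of actual subset inclusions by using the isomorphism-invariance built into the definition of a pair operation. Both original properties (order preserving on ambient modules, restrictable) are stated for the special case where one module literally sits inside the other, while the claimed equivalents replace this by an arbitrary injective homomorphism; since any injection factors as an isomorphism onto its image followed by a genuine inclusion, the two formulations should match.

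For part (\ref{it:orderambient}), in the forward direction I would start with an injection $i : M \to N$ and factor it as $i = j \circ i'$, where $i' : M \to i(M)$ is the induced isomorphism and $j : i(M) \hookrightarrow N$ is the inclusion. Isomorphism-invariance of $\cP$ gives $(i(L), i(M)) \in \cP$ from $(L,M) \in \cP$, and isomorphism-invariance of $p$ gives $i(p(L,M)) = p(i(L), i(M))$. Applying order preservation on ambient modules to $i(L) \subseteq i(M) \subseteq N$ then yields $p(i(L), i(M)) \subseteq p(i(L), N)$, which combines with the previous equality to the desired inclusion. The backward direction is immediate: for $L \subseteq N \subseteq M$ with $(L,N), (L,M) \in \cP$, apply the assumption to the inclusion $i : N \hookrightarrow M$, noting $i(L) = L$.

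For part (\ref{it:restrictable}), the forward direction is analogous. Given an injection $i : N \to M$ with $(L, M), (i^{-1}(L), N) \in \cP$, factor $i$ through $i(N)$ via $i' : N \xrightarrow{\sim} i(N)$ and the inclusion $j : i(N) \hookrightarrow M$. Then $i^{-1}(L) = (i')^{-1}(L \cap i(N))$, and isomorphism-invariance transports the pair $(i^{-1}(L), N)$ and its operation-value over to $(L \cap i(N), i(N))$. Restrictability applied to the pair $L, i(N) \subseteq M$ gives $p(L \cap i(N), i(N)) \subseteq p(L, M)$; since the left-hand side is contained in $i(N)$, this refines to $p(L \cap i(N), i(N)) \subseteq p(L,M) \cap i(N)$. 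Pulling this back through $i'$ yields $p(i^{-1}(L), N) \subseteq i^{-1}(p(L,M))$. For the backward direction, given $L, N \subseteq M$ with $(L \cap N, N), (L,M) \in \cP$, apply the condition to the inclusion $i : N \hookrightarrow M$, for which $i^{-1}(L) = L \cap N$ and $i^{-1}(p(L,M)) = p(L,M) \cap N \subseteq p(L,M)$.

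I do not anticipate a genuine obstacle here; the only subtle point is verifying that all the relevant pairs belong to $\cP$ at each step, which follows from the stipulation that $\cP$ is closed under the isomorphisms induced by the factorization $i = j \circ i'$. The main thing to be careful about in the write-up is keeping track of whether containments are interpreted inside $M$ or inside $N$, and translating between them via $i$.
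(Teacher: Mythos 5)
Your proposal is correct and takes essentially the same approach as the paper's proof: reduce to the inclusion case via the isomorphism-invariance built into the definition of a pair operation, and then appeal directly to the definitions of order-preserving-on-ambient-modules and restrictable. The only cosmetic difference is that you spell out the factorization $i = j \circ i'$ explicitly, whereas the paper compresses this to "we may assume $i$ is an inclusion map."
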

\begin{proof}
\eqref{it:orderambient}:  If $p$ is order preserving on ambient modules, then for arbitrary $(L,N), (L,M) \in \cP$ with $L \subseteq N \subseteq M$, $p(L,N) \subseteq p(L,M)$.  Suppose $i:K \to M$ is an injective homomorphism and $L \subseteq K$ such that $(L,K),(i(L),M) \in \cP$. Since $i$ restricts to an isomorphism between $L$ and $i(L)$ (and between $K$ and $i(K)$), $(i(L),i(K)) \in \cP$. Since $p$ is order preserving on ambient modules, $p(i(L),i(K)) \subseteq p(i(L),M)$. But $i(p(L,K))=p(i(L),i(K))$ since $i$ is an isomorphism on $K$. So $i(p(L,K)) \subseteq p(i(L),M)$, as desired.

Now assume the condition:  for all injective homomorphisms $i:M \rightarrow N$ and all submodules $L \subseteq M$ such that $(L,M), (i(L),N) \in \cP$, we have $i(p(L,M)) \subseteq p(i(L),N)$.  If $M \subseteq N$, setting $i$ be the inclusion map of $M$ into $N$, we obtain
 $p(L,M)=i(p(L,M)) \subseteq p(i(L),N)=p(L,N)$ which implies that $p$ is order preserving on ambient modules.

\eqref{it:restrictable}:  Suppose $p$ is restrictable. Let $i: N \ra M$ be an injective homomorphism and $L \subseteq M$ such that $(L,M),(i^{-1}(L),N) \in \cP$. Since pair operations are invariant under isomorphisms, we may assume that $i$ is an inclusion map. Then $i^{-1}(L)=L \cap N$. So
\[p(i^{-1}(L),N)=p(L \cap N,N) \subseteq p(L,M).\]
Since $p(L \cap N,N)$ is by definition contained in $N$ as well, 
\[p(i^{-1}(L),N) \subseteq p(L,M) \cap N=i^{-1}(p(L,M)).\]

Now suppose that $p$ satisfies the condition given in the statement of the Lemma. Let $L \subseteq N \subseteq M$. Let $i:N \to M$ be the inclusion. Then
\[p(L \cap N,N)=p(i^{-1}(L),N) \subseteq i^{-1}(p(L,M)=p(L,M) \cap N,\]
so $p$ is restrictable.
\end{proof}

\begin{lemma}\label{lem:matlisimagepreimage}
Let $(R,\m)$ be a complete local ring.  Let $M$ and $N$ both be finite or Artinian $R$-modules and $g:M \ra N$ be an $R$-module homomomorphism.  Define $\phi=g^\vee: N^\vee \ra M^\vee$.
\begin{enumerate}
    \item 	\label{it:phipreimage}If $L \subseteq M$ is a submodule, then $\phi^{-1}((M/L)^\vee) = (N/g(L))^\vee$
    \item \label{it:phiimage} Let $K \subseteq N$ be a submodule, then $\phi((N/K)^\vee) = (M / g^{-1}(K))^\vee$

\end{enumerate}
\end{lemma}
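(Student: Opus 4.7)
The plan is to reduce each part to a short diagram chase after dualizing an appropriate ladder of short exact sequences. The key bookkeeping, justified by Remark \ref{rem:Matlis}, is that for any submodule $Y \subseteq X$ (with $X$ finite or Artinian), the submodule $(X/Y)^\vee \subseteq X^\vee$ coincides with $\ker(X^\vee \twoheadrightarrow Y^\vee)$, where the surjection is the dual of the inclusion $Y \hookrightarrow X$. Once this identification is consistently made, everything reduces to the observation that Matlis duality turns surjections into injections and vice versa.

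For part (\ref{it:phipreimage}), consider the commutative ladder of short exact sequences
\[
\begin{array}{ccccccccc}
0 & \to & L & \to & M & \to & M/L & \to & 0 \\
& & \downarrow g|_L & & \downarrow g & & \downarrow \bar g & & \\
0 & \to & g(L) & \to & N & \to & N/g(L) & \to & 0
\end{array}
\]
in which $g|_L \colon L \twoheadrightarrow g(L)$ is surjective by definition of $g(L)$. Applying Matlis duality, $(g|_L)^\vee \colon g(L)^\vee \hookrightarrow L^\vee$ becomes injective. Then for $y \in N^\vee$, commutativity of the right-hand dualized square together with injectivity of $(g|_L)^\vee$ yields $\phi(y) \in \ker(M^\vee \to L^\vee) = (M/L)^\vee$ if and only if the image of $y$ in $g(L)^\vee$ is zero, i.e., $y \in \ker(N^\vee \to g(L)^\vee) = (N/g(L))^\vee$. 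This gives the claimed equality $\phi^{-1}((M/L)^\vee) = (N/g(L))^\vee$.

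For part (\ref{it:phiimage}), the analogous ladder is
\[
\begin{array}{ccccccccc}
0 & \to & g^{-1}(K) & \to & M & \to & M/g^{-1}(K) & \to & 0 \\
& & \downarrow g|_{g^{-1}(K)} & & \downarrow g & & \downarrow \bar g & & \\
0 & \to & K & \to & N & \to & N/K & \to & 0
\end{array}
\]
The crucial observation here is that the induced map $\bar g \colon M/g^{-1}(K) \hookrightarrow N/K$ is injective, since $g(m) \in K$ forces $m \in g^{-1}(K)$. Dualizing, $\bar g^\vee \colon (N/K)^\vee \twoheadrightarrow (M/g^{-1}(K))^\vee$ becomes surjective, and by commutativity of the dualized right-hand square this map agrees with the restriction of $\phi$ to the submodule $(N/K)^\vee \subseteq N^\vee$, composed with the inclusion into $M^\vee$. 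Therefore $\phi((N/K)^\vee) = \bar g^\vee((N/K)^\vee) = (M/g^{-1}(K))^\vee$. The only subtle point throughout is the consistent identification of $(X/Y)^\vee$ with a submodule of $X^\vee$; once that is pinned down, each claim is immediate from the ladder.
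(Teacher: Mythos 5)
Your proof is correct. The underlying mathematics matches the paper's, but the presentation is genuinely different: you dualize the two-row ladder of short exact sequences and invoke exactness of Matlis duality (as recorded in Remark~\ref{rem:Matlis}) to get injectivity of $(g|_L)^\vee$ in part~(\ref{it:phipreimage}) and surjectivity of $\bar g^\vee$ in part~(\ref{it:phiimage}), after which everything is a one-line diagram chase. The paper instead works with elements of the dual modules as homomorphisms into $E$: part~(\ref{it:phipreimage}) is a direct computation with kernels ($\phi(x)=x\circ g$, so $\phi(x)$ kills $L$ iff $x$ kills $g(L)$), and in part~(\ref{it:phiimage}) the nontrivial containment $(M/g^{-1}(K))^\vee \subseteq \phi((N/K)^\vee)$ is obtained by explicitly extending a map $x'\colon M/g^{-1}(K)\to E$ across the injection $g'\colon M/g^{-1}(K)\into N/K$ using the injectivity of $E$ — which is precisely re-deriving, by hand, the surjectivity of $\bar g^\vee$ that you get for free from exactness. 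Your route is shorter and more structural; the paper's is more elementary and self-contained, at the cost of a longer diagram chase in part~(\ref{it:phiimage}). One cosmetic remark: in part~(\ref{it:phiimage}) the square you invoke is the right-hand square of the original ladder, which after dualizing sits on the left; calling it the ``dualized right-hand square'' is a harmless but potentially confusing label.
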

\begin{proof}
\eqref{it:phipreimage} First assume $x \in (N/g(L))^\vee$. Then $x:N \to E$ and $g(L) \subseteq \text{ker}(x)$. So $\phi(x)=x \circ g$ is a map from $M$ to $E$, and its kernel contains $L$ because $(x \circ g)(L)=x(g(L))=0.$ 

Now assume $x \in \phi^{-1}((M/L)^\vee)$. Then $\phi(x)=x \circ g$ is a map $M$ to $E$ such that $(x \circ g)(L)=0$. Then $x(g(L))=0$, so $x \in (N/g(L))^\vee$.

\eqref{it:phiimage}  Assume $z \in (N/K)^\vee$. Then $z:N \ra E$ and $z(K)=0$. For $u \in g^{-1}(K)$, $g(u) \in K$, so $\phi(z)(u)=(z \circ g)(u)=0$ implying $\phi(z) \in (M/(g^{-1}(K))^\vee$. 

Now we need to show that $(M/g^{-1}(K))^\vee \subseteq \phi(N/K)^\vee$.  Let $x \in M^\vee$ such that $x$ kills $g^{-1}(K)$. We need to show there is some $y \in N^\vee$ that kills $K$ and such that $x=y\circ g$.
\[ \xymatrix{ M \ar[r]^p \ar[d]_g \ar@/^2pc/[rr]^x & M/g^{-1}(K) \ar[r]^{x'} \ar[d]_{g'} & E \\
N \ar[r]_q \ar@/_3pc/[rru]_y & N/K \ar[ur]_{z} & \\
}\]

Write $x=x' \circ p$, where $p: M \onto M/g^{-1}(K)$ is the natural map and $x': M/g^{-1}(K) \ra E$ is the induced map from the first isomorphism theorem.  Let $q: N \onto N/K$ be the quotient map.  Since $g^{-1}(K)$ is the kernel of the composite $q \circ g$, the first isomorphism theorem gives us an induced injective map $g': M/g^{-1}(K) \ra N/K$ such that $g' \circ p = q \circ g$.  Since $g'$ is an injective map and $E$ is an injective module, the map $x'$ can be extended across $g'$ to a map $z: N/K \ra E$.  That is, $z \circ g' = x'$.  But then $z\circ q \circ g = z \circ g' \circ p = x' \circ p = x$.  So if we set $y=z \circ q$, then $\phi(y)=x$ and $y \in (N/K)^\vee$.  
\end{proof}

In the following proposition, we include \eqref{it:iparatoifunc} and \eqref{it:injsurjfunc} for completeness.
\begin{prop}\label{pr:cofunctorial}
Let $p$ be a pair operation on a class $\cP$ of pairs of $R$-modules.  
\begin{enumerate}
    \item \label{it:sfunctospara} If $p$ is surjection-functorial then $p$ is surjection-cofunctorial.
    \item \label{it:iparatoifunc} \cite[Lemma 2.5 (1)]{ERGV-nonres} If $p$ is restrictable then $p$ is order preserving on ambient modules.
    \item \label{it:injsurjfunc} \cite[Definition 2.2]{ERGV-nonres} If $p$ is functorial then it is surjection-functorial and order preserving on ambient modules. The converse holds if whenever $g: M \ra M'$ is in $\cM$ and $(L,M) \in \cP$, we have $(g(L), g(M)) \in \cP$.
    \item \label{it:injsurjcofunc} If $p$ is cofunctorial then it is surjection cofunctorial and restrictable. The converse holds  if whenever $g:M \to M'$ is a homomorphism, $L \subseteq M'$, $(L,M'),(g^{-1}(L),M) \in \cP$, and we can write $g=i \circ \pi$ where $\pi$ is surjective and $i$ is injective, then $(i^{-1}(L),\pi(M)) \in \cP$.
    \item\label{it:pairopsub} If $p$ is order preserving on submodules, then 
    \begin{enumerate}
        \item \label{it:sfunceq} if for any surjection $\pi:M \to M'$ and $L \subseteq M$ such that $(L,M)$, $(\pi(L),M') \in \cP$, we have $(\pi^{-1}(\pi(L)),M) \in \cP$, then $p$ is surjection-functorial if and only if $p$ is surjection-cofunctorial. 
        \item \label{it:ifunceq} \cite[Lemma 2.5 (2)]{ERGV-nonres} 
        {if when $(L \cap N,N), (L,M) \in \cP$ we also have $(L \cap N,M) \in \cP$}, then $p$ is order preserving on ambient modules if and only if $p$ is restrictable.
        \item \label{it:funceq} If $p$ is functorial then $p$ is cofunctorial as long as when $(L \cap N,N),(L,M) \in \cP$, $(L \cap N,M)$ is also in $\cP$ and whenever $g:M \to M'$ is a homomorphism, $(L,M'),(g^{-1}(L),M) \in \cP$ and we can write $g=i \circ \pi$ where $\pi$ is surjective and $i$ is injective, then $(i^{-1}(L),\pi(M)) \in \cP$.  The converse holds if whenever $g: M \ra M'$ is in $\cM$ and $(L,M) \in \cP$, we have $(g(L), g(M)) \in \cP$, and whenever $\pi:M \to M'$ is a surjection with $(L,M),(\pi(L),M') \in \cP$, $(\pi^{-1}(\pi(L)),M) \in \cP$.
    \end{enumerate}
\end{enumerate}
\end{prop}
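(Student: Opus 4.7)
The proof naturally splits along the enumeration; parts (2), (3), and (5)(b) are cited from \cite{ERGV-nonres}, so the plan is to handle parts (1), (4), (5)(a), and (5)(c), with (5)(c) assembled from the others. The main technical challenge throughout is book-keeping for which pairs $(L,M)$ must lie in $\cP$ at each step; this is exactly what the extra hypotheses on $\cP$ in (4), (5)(a), and (5)(c) are designed to guarantee.

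For \eqref{it:sfunctospara}, I would simply apply surjection-functoriality to $\pi:M \onto M'$ with submodule $\pi^{-1}(L)$: the two required pairs $(\pi^{-1}(L),M)$ and $(L,M') = (\pi(\pi^{-1}(L)),M')$ are both in $\cP$ by hypothesis, so $\pi(p(\pi^{-1}(L),M)) \subseteq p(L,M')$, which rearranges to $p(\pi^{-1}(L),M) \subseteq \pi^{-1}(p(L,M'))$. For the forward direction of \eqref{it:injsurjcofunc}, surjection-cofunctoriality is just the restriction of cofunctoriality to surjections, and restrictability follows from cofunctoriality via Lemma~\ref{lem:injfuncequiv}\eqref{it:restrictable} applied to inclusion maps. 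For the converse, I would take a general $g:M \ra M'$ and factor it as $g = i \circ \pi$ with $\pi:M \onto \pi(M)=:N$ surjective and $i:N \into M'$ injective; the extra hypothesis in \eqref{it:injsurjcofunc} provides $(i^{-1}(L),N) \in \cP$. Then apply surjection-cofunctoriality to $\pi$ and restrictability (via Lemma~\ref{lem:injfuncequiv}\eqref{it:restrictable}) to $i$, composing the two containments:
\[p(g^{-1}(L),M) = p(\pi^{-1}(i^{-1}(L)),M) \subseteq \pi^{-1}(p(i^{-1}(L),N)) \subseteq \pi^{-1}(i^{-1}(p(L,M'))) = g^{-1}(p(L,M')).\]

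For \eqref{it:sfunceq}, the forward implication is \eqref{it:sfunctospara} (no order-preservation or extra $\cP$-hypothesis is needed there). For the converse, given $\pi:M \onto M'$ with $(L,M),(\pi(L),M') \in \cP$, the $\cP$-hypothesis gives $(\pi^{-1}(\pi(L)),M) \in \cP$; then order-preservation on submodules yields $p(L,M) \subseteq p(\pi^{-1}(\pi(L)),M)$, and surjection-cofunctoriality applied to $\pi$ with submodule $\pi(L)$ gives $p(\pi^{-1}(\pi(L)),M) \subseteq \pi^{-1}(p(\pi(L),M'))$. Pushing forward by the surjection $\pi$ then yields $\pi(p(L,M)) \subseteq p(\pi(L),M')$, using that $p(\pi(L),M') \subseteq M'$.

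Finally, \eqref{it:funceq} is a chain of equivalences assembled from what has already been proved: by \eqref{it:injsurjfunc}, functoriality is equivalent to surjection-functoriality together with order-preservation on ambient modules; by \eqref{it:injsurjcofunc}, cofunctoriality is equivalent to surjection-cofunctoriality together with restrictability; by \eqref{it:sfunceq}, the first halves match under order-preservation on submodules plus the $\pi^{-1}(\pi(L))$ hypothesis; and by \eqref{it:ifunceq}, the second halves match under order-preservation on submodules plus the $(L \cap N,M) \in \cP$ hypothesis. The hypotheses listed in \eqref{it:funceq} are precisely those needed to invoke \eqref{it:injsurjfunc}, \eqref{it:injsurjcofunc}, \eqref{it:sfunceq}, and \eqref{it:ifunceq} in turn, so the main care in writing this up is stating which $\cP$-closure hypothesis is used at each juncture rather than proving anything substantively new.
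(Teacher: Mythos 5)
Your proposal is correct and follows essentially the same route as the paper: part (1) rearranges the surjection-functoriality inequality using $\pi(\pi^{-1}(L))=L$; part (4) uses Lemma~\ref{lem:injfuncequiv}\eqref{it:restrictable} for the forward direction and the epi-monic factorization $g=i\circ\pi$ with the chained containments for the converse; part (5a) interpolates through $\pi^{-1}(\pi(L))$ and uses order-preservation on submodules; and part (5c) is assembled from (2), (3), (4), (5a), and (5b) exactly as the paper does, tracking which $\cP$-hypothesis is needed at each step.
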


\begin{rem}
    In \cite[Lemma 2.5 (2)]{ERGV-nonres}, the hypothesis on $\cP$ is stated incorrectly and should match the hypothesis given in the statement of 5b above.
\end{rem}

\begin{rem}
    We note that the additional hypotheses on $\cP$ required for some parts of this result tend to hold for most choices of $\cP$ found in the literature. For example, if $\cP$ consists of all pairs of $R$-modules, \fg\ $R$-modules, \fg\ and Artinian $R$-modules, or graded $R$-modules (with graded homomorphisms), all of the extra hypotheses hold. Or more generally, if $\cM$ is an abelian category and $\cP$ consists of all nested pairs of modules in $\cM$, then all of the extra hypotheses hold.
\end{rem}

\begin{proof}[Proof of Proposition~\ref{pr:cofunctorial}]
\eqref{it:sfunctospara}  Suppose that $p$ is surjection-functorial.  Suppose $\pi:M \rightarrow M'$ is a surjective homomorphism of $R$-modules, $L \subseteq M'$, and $(L,M'),(\pi^{-1}(L),M) \in \cP$.  Since $\pi$ is surjective, we know that $\pi(\pi^{-1}(L))=L$.  Since $p$ is surjection-functorial and $(\pi^{-1}(L),M),(\pi(\pi^{-1}(L)),M')=(L,M') \in \cP$, we have \[\pi(p(\pi^{-1}(L),M)) \subseteq p(\pi(\pi^{-1}(L)),M')=p(L,M').\]
By definition of the preimage,
\[p(\pi^{-1}(L),M) \subseteq \pi^{-1}(p(L,M')),\] implying $p$ is surjection-cofunctorial.

\eqref{it:injsurjcofunc} The forward direction is immediate from Lemma \ref{lem:injfuncequiv}\eqref{it:restrictable}. For the reverse direction, suppose that $p$ is surjection cofunctorial and restrictable. Let $g:M \to M'$ be an arbitrary map, and $L \subseteq M'$ such that $(L,M'), (g^{-1}(L),M) \in \cP$. We can write $g$ as a composite
\[M \xrightarrow{\pi} M/\ker(g) \xrightarrow{i} M'.\] By our hypotheses, $(i^{-1}(L),M/\ker(g)) \in \cP$.
Then 
\begin{align*}
    p(g^{-1}(L),M) &=p((i \circ \pi)^{-1}(L),M)=p(\pi^{-1}(i^{-1}(L)),M) \\
    &\subseteq \pi^{-1}(p(i^{-1}(L),M/\ker(g)) \subseteq \pi^{-1}\left(i^{-1}(p(L,M'))\right) \\
    &=(i \circ \pi)^{-1}(p(L,M'))=g^{-1}(p(L,M')).
\end{align*}
 
\eqref{it:sfunceq}  It is enough to show that if $p$ is order preserving on submodules and surjection-cofunctorial then $p$ is surjection-functorial by \eqref{it:sfunctospara}.  Suppose that $p$ is order-preserving on submodules and surjection-cofunctorial.  Suppose $\pi:M \rightarrow M'$ is a surjective homomorphism of $R$-modules, $L \subseteq M$, and $(L,M),(\pi(L),M') \in \cP$. By hypothesis,  $(\pi^{-1}(\pi(L)),M) \in \cP$ as well.  By properties of images and preimages of functions we know $L \subseteq \pi^{-1}(\pi(L))$.  Since $p$ is surjection-cofunctorial and order preserving on submodules, we have \[p(L,M)\subseteq p(\pi^{-1}(\pi(L)),M) \subseteq \pi^{-1}(p(\pi(L),M')).\] Now take the image of each of the submodules to obtain
\[\pi(p(L,M)) \subseteq \pi(\pi^{-1}(p(\pi(L),M'))) = p(\pi(L),M').\]
 implying $p$ is surjection-functorial.

\eqref{it:funceq} First assume that $p$ is functorial and the other given hypotheses hold. By \eqref{it:injsurjfunc}, $p$ is surjection functorial and order-preserving on ambient modules. By \eqref{it:ifunceq}, $p$ is restrictable and by \eqref{it:sfunctospara}, $p$ is surjection cofunctorial. Then by \eqref{it:injsurjcofunc}, $p$ is cofunctorial.

Now assume that $p$ is cofunctorial and the other given hypotheses hold. By \eqref{it:injsurjcofunc}, $p$ is surjection cofunctorial and restrictable. By \eqref{it:iparatoifunc}, $p$ is order preserving on ambient modules. By \eqref{it:sfunceq}, $p$ is surjection functorial, and so by \eqref{it:injsurjfunc}, $p$ is functorial.
\end{proof}

\begin{example} \label{ex:RRop}
In \cite[Example 2.6]{ERGV-nonres}, we showed that the Ratliff-Rush operation given by
\[p(I,J)=I^{\RR}_J=\bigcup\limits_{n \geq 0} (I^{n+1}:_JI^{n}),\] when defined on pairs of ideals of a ring $R$ is not restrictable.  By Proposition \ref{pr:cofunctorial}\eqref{it:injsurjcofunc}, any cofunctorial operations must be restrictable; hence, the Ratliff-Rush operation is not cofunctorial.  

However, if $R$ is a domain, then the Ratliff-Rush operation is functorial. To see this, let $I \subseteq J$ be ideals of $R$, with $J$ nonzero.  Let $\phi: J \rightarrow J^{\prime}$ be an $R$-module homomorphism, where $J^{\prime}$ is another ideal.  Let $j \neq 0$ be a fixed nonzero element of $J$.  Then for any $x \in J$, we have $j\phi(x) = \phi(jx) = x\phi(j)$. Similarly, $\phi(j)I = j\phi(I)$.  Let $f\in I^{\RR}_J$ and suppose $I^n f \subseteq I^{n+1}$.  We will show that $\phi(I)^n \phi(f) \subseteq \phi(I)^{n+1}$ which will prove that the Ratliff-Rush operation is functorial.

To see this, it is enough to show that for any $a_1, \ldots, a_n \in I$, we have $\phi(a_1) \cdots \phi(a_n) \phi(f) \in \phi(I)^{n+1}$.  Multiplying the left hand side by $j^{n+1}$,  we have

\begin{align*}
j^{n+1} \phi(a_1) \cdots \phi(a_n) \phi(f) &= \phi(ja_1) \cdots \phi(ja_n) \phi(jf) \\
&= \phi(j)^{n+1} a_1 \cdots a_n f \in \phi(j)^{n+1} I^{n+1} \\
&= (\phi(j)I)^{n+1} = (j \phi(I))^{n+1} = j^{n+1} \phi(I)^{n+1}.
\end{align*}
Since $j^{n+1}$ is a regular element of $R$, we can cancel it to get the required inclusion.   
\end{example}

\begin{prop}\label{pr:functcofunct}
Let $R$ be a complete Noetherian local ring, assume $\cM$ contains Matlis-dualizable modules, and let $p$ be a pair operation on a class of pairs of Matlis-dualizable $R$-modules $\cP$ as in Definition \ref{def:pairop}.   Then $p$ is functorial if and only if $p^\dual$ is cofunctorial. 
\end{prop}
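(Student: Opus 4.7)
The plan is to translate functoriality of $p$ into cofunctoriality of $p^\dual$ by dualizing a functoriality inclusion along the Matlis dual map, and vice versa. The two directions are mirror images, so I will spell out the forward direction and obtain the reverse by symmetry (equivalently, via the involutivity $(p^\dual)^\dual=p$).

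\textbf{Forward direction.} Suppose $p$ is functorial. Fix $g: N \to N'$ and a submodule $L \subseteq N'$ with $(L, N'), (g^{-1}(L), N) \in \cP^\vee$. By definition of $\cP^\vee$, the pairs $((N'/L)^\vee, (N')^\vee)$ and $((N/g^{-1}(L))^\vee, N^\vee)$ lie in $\cP$. Set $\phi := g^\vee: (N')^\vee \to N^\vee$. Lemma~\ref{lem:matlisimagepreimage}\eqref{it:phiimage} identifies $\phi((N'/L)^\vee) = (N/g^{-1}(L))^\vee$, so functoriality of $p$ applied to $\phi$ with submodule $(N'/L)^\vee$ yields
\[
\phi\bigl(p((N'/L)^\vee, (N')^\vee)\bigr) \subseteq p\bigl((N/g^{-1}(L))^\vee, N^\vee\bigr).
\]
Hence $\phi$ descends to a map $\bar\phi$ between the quotients $(N')^\vee/p((N'/L)^\vee,(N')^\vee)$ and $N^\vee/p((N/g^{-1}(L))^\vee,N^\vee)$ whose Matlis duals (by Remark~\ref{rem:Matlis}) define $p^\dual(L,N')$ and $p^\dual(g^{-1}(L),N)$. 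Dualizing $\bar\phi$ gives a map
\[
\bar\phi^\vee: p^\dual(g^{-1}(L), N) \to p^\dual(L, N').
\]
Under the natural isomorphisms $N \cong N^{\vee\vee}$ and $N' \cong (N')^{\vee\vee}$, the map $\phi^\vee=g^{\vee\vee}$ is identified with $g$, and naturality of Matlis biduality places $\bar\phi^\vee$ inside a commutative square whose vertical arrows are the canonical inclusions $p^\dual(g^{-1}(L),N) \hookrightarrow N$ and $p^\dual(L,N') \hookrightarrow N'$. Therefore $g\bigl(p^\dual(g^{-1}(L),N)\bigr) \subseteq p^\dual(L,N')$, i.e.\ $p^\dual(g^{-1}(L),N) \subseteq g^{-1}(p^\dual(L,N'))$, which is cofunctoriality of $p^\dual$.

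\textbf{Reverse direction.} A mirror-image argument, using Lemma~\ref{lem:matlisimagepreimage}\eqref{it:phipreimage} in place of~\eqref{it:phiimage}, shows that cofunctoriality of a pair operation yields functoriality of its dual. Applying this to $p^\dual$ in place of $p$, and using the easy fact $(p^\dual)^\dual = p$ (which follows from $M\cong M^{\vee\vee}$, $(\cP^\vee)^\vee=\cP$, and the computation $(M^\vee/(M/L)^\vee)^\vee \cong L$), we deduce that if $p^\dual$ is cofunctorial, then $p=(p^\dual)^\dual$ is functorial.

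The main obstacle is bookkeeping rather than content: one must track which submodule of which Matlis dual plays the role of ``$L$'' in each application of Lemma~\ref{lem:matlisimagepreimage}, pick the correct part of that Lemma (image vs.\ preimage) in each direction, and check that the naturality square genuinely identifies $\bar\phi^\vee$ with the restriction of $g$. Once these identifications are in place, both implications reduce to a short diagram chase.
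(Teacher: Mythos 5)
Your proof is correct and uses essentially the same strategy as the paper's: both directions are handled by dualizing the (co)functoriality inclusion via Lemma~\ref{lem:matlisimagepreimage} and Matlis duality, and the reverse implication is obtained from the mirror statement (``$p$ cofunctorial $\Rightarrow p^\smile$ functorial'') applied to $p^\smile$ together with $p^{\smile\smile}=p$. The only cosmetic difference is in how the final identification is carried out: the paper applies Lemma~\ref{lem:matlisimagepreimage} a second time and invokes $p^{\smile\smile}=p$ to rewrite $\phi^{-1}(p^\smile(D,C))$ as an explicit dual quotient, whereas you dualize the descended map $\bar\phi$ and appeal to naturality of the biduality isomorphism from Remark~\ref{rem:Matlis}; these are two routes through the same bookkeeping, and you are slightly more explicit than the paper in spelling out the $p\mapsto p^\smile$ substitution needed for the reverse direction.
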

\begin{proof}
Suppose $p$ is functorial. Let $D \subseteq C$ and $\phi: B \rightarrow C$ such that $(D,C), (\phi^{-1}(D),B) \in \cP^\vee$.  We will show that $p^\dual$ is cofunctorial, i.e., $p^\dual(\phi^{-1}(D),B) \subseteq \phi^{-1}(p^\dual(D,C))$.  Set $N=B^\vee$, $M=C^\vee$, $L=(C/D)^\vee$ and $g:=\phi^\vee:M \rightarrow N$. Note that, by Matlis duality, $D=(M/L)^\vee$.
By Lemma \ref{lem:matlisimagepreimage}, \[p^\dual(\phi^{-1}(D),B)=p^\dual(\phi^{-1}((M/L)^\vee),B)=p^\dual((N/g(L))^\vee,B).\] 
By the definition of $p^\dual$, 
\[p^\dual((N/g(L))^\vee,B)=\left(\displaystyle\frac{B^\vee}{p((B/(N/g(L))^\vee)^\vee,B^\vee)} \right)^\vee=\left(\displaystyle\frac{N}{p(g(L),N)} \right)^\vee
\]  where the last inequality follows from Matlis duality.
By functoriality,
\[ \left(\displaystyle\frac{N}{p(g(L),N)} \right)^\vee \subseteq \left(\displaystyle\frac{N}{g(p(L,M))} \right)^\vee.
\]
Applying Lemma \ref{lem:matlisimagepreimage}, 
\[\left(\displaystyle\frac{N}{g(p(L,M))} \right)^\vee=\phi^{-1}\left((M/p(L,M))^\vee\right).
\]
Now by the definition of $p^\dual$,  and  since $p^{\dual \dual}=p$,
\[\phi^{-1}((M/p(L,M))^\vee)=\phi^{-1}(p^\dual((M/L)^\vee, C))=\phi^{-1}(p^\dual(D, C))
\]
which implies that $p^\dual$ is cofunctorial.

Suppose now that $p$ is cofunctorial, $K \subseteq N$ and $g:M \ra N$.  Set $B=N^\vee$ and $C=M^\vee$, $A=(N/K)^\vee$ and  $\phi:=g^\vee: B \ra C$ such that $(A,B),(\phi(A),C) \in \cP^\vee$.  We need to show that $\phi(p^\dual(A,B)) \subseteq p^\dual(\phi(A),C)$.  By definition of $p^\dual$,
\[\phi(p^\dual(A,B))=\phi\left( \left(\displaystyle\frac{B^\vee}{p((B/A)^\vee,B^\vee)} \right)^\vee\right)=\phi\left(\left(\displaystyle\frac{N}{p(K,N)} \right)^\vee \right).
\]
By Lemma \ref{lem:matlisimagepreimage},
\[\phi\left(\left(\displaystyle\frac{N}{p(K,N)} \right)^\vee \right)=\left(\displaystyle\frac{M}{g^{-1}(p(K,N))} \right)^\vee.
\]
Using the fact that $p$ is cofunctorial we obtain,
\[\left(\displaystyle\frac{M}{g^{-1}(p(K,N))} \right)^\vee \subseteq \left(\displaystyle\frac{M}{(p(g^{-1}(K),M)} \right)^\vee.
\]
By the definition of $p^\dual$ we have, 
\[\left(\displaystyle\frac{M}{p(g^{-1}(K),M)} \right)^\vee=p^\dual\left(\left( \displaystyle\frac{M}{g^{-1}(K)}\right)^\vee, M^\vee\right)=p^\dual\left(\left( \displaystyle\frac{M}{g^{-1}(K)}\right)^\vee, C\right).
\]
Applying Lemma \ref{lem:matlisimagepreimage}, we obtain
\[p^\dual\left(\left( \displaystyle\frac{M}{g^{-1}(K)}\right)^\vee, C\right)=p^\dual\left(\phi\left(\left(\displaystyle\frac{N}{K}\right)^\vee\right), C \right)=p^\dual(\phi(A), C )
\] completing the proof that $p^\dual$ is functorial. 
\end{proof}

\begin{rem}
The proof for Proposition \ref{pr:functcofunct} also provides the following dualities for pair operations $p$ and $p^\smile$ for Matlis dualizable pairs over a complete Noetherian local ring.
\begin{enumerate}
    \item \cite[Proposition 3.6(8)]{ERGV-nonres} $p$ is surjection-functorial if and only if $p^\smile$ is restrictable. 
    \item $p$ is surjection-cofunctorial if and only if $p^\dual$ is order preserving on ambient modules.
\end{enumerate}

In particular, (2) follows from the second part of the proof of Proposition~\ref{pr:functcofunct} by restricting the map $g$ to be a surjection.
\end{rem}

\begin{rem}
    Let  $p$ be the Ratliff-Rush operation, as defined in Example \ref{ex:RRop}, on pairs of ideals of a complete local domain. Proposition \ref{pr:functcofunct} implies that $p^\smile$ is an example of a pair operation on pairs of submodules of the injective hull of the residue field which is cofunctorial  but not functorial.
\end{rem}

\subsection{Hereditary and cohereditary pair operations}
 Here we define properties of pair operations that we use to better understand how various closure and interior operations have been extended from operations on ideals to operations on submodules.  For closure operations, the terms \emph{residual} and \emph{hereditary} may be familiar.  These two notions are dual for closure operations, but not for pair operations more generally.  Through the lens of pair operations, we discuss these properties and their duals.

\begin{defn}
A pair operation $p$ on a class $\cP$ of pairs of $R$-modules $(L,M)$ with $L \subseteq M$ is \begin{itemize}
    \item  \emph{absolute} if whenever $L \subseteq N \subseteq M$ with $(L,M), (L,N) \in \cP$, we have $p(L,N) = p(L,M)$ \cite[Definition 2.2]{ERGV-nonres};
    \item \emph{residual} if whenever $L \subseteq N \subseteq M$ with $(N,M), (N/L,M/L) \in \cP$, we have $p(N,M) = \pi^{-1}(p(N/L, M/L))$, where $\pi: M \onto M/L$ is the natural surjection \cite[Definition 2.2]{ERGV-nonres};
    \item \emph{hereditary} 
    if whenever $L \subseteq N \subseteq M$ with $(L,M), (L,N) \in \cP$, we have $p(L,N) = p(L,M) \cap N$;
    \item \emph{cohereditary} 
    if whenever $L \subseteq N \subseteq M$ with $(N,M), (N/L, M/L) \in \cP$, we have $p(N/L, M/L) = \frac{p(N,M) + L}L$.
\end{itemize}
\end{defn}

\begin{rem}
Note that by definition, hereditary implies restrictable.
\end{rem}

\begin{rem} \label{rem:dualRRop}
In \cite[5.5.10]{elliottclosureops}, Elliott calls ``left exact'' what we call hereditary, and he calls ``residual'' or ``right exact'' what we call cohereditary.  The conflict in terminology arises from our need here to distinguish between two notions that we call cohereditary and residual.
\end{rem}

\begin{lemma}\label{lem:absiffinther}
Let $p$ be a pair operation on a class $\cP$ of pairs of $R$-modules.  If $p$ is intensive and hereditary, then $p$ is absolute.  If we further assume that when $(L,M) \in \cP$, then $(L,L) \in \cP$, then the converse holds. 
\end{lemma}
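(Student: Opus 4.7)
The plan is to exploit the fact that, under intensivity, $p(L,M)$ is always contained in $L$ itself, so intersecting $p(L,M)$ with any intermediate module $N$ with $L \subseteq N$ has no effect. This makes the hereditary and absolute conditions collapse into one another for intensive operations.

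For the forward direction, I would start with arbitrary $L \subseteq N \subseteq M$ such that $(L,M), (L,N) \in \cP$. By hereditary, $p(L,N) = p(L,M) \cap N$. Since $p$ is intensive, $p(L,M) \subseteq L \subseteq N$, so $p(L,M) \cap N = p(L,M)$. Therefore $p(L,N) = p(L,M)$, as required for absoluteness.

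For the converse, assume $p$ is absolute and that $(L,M) \in \cP$ implies $(L,L) \in \cP$. To prove intensivity, take any $(L,M) \in \cP$. Then $(L,L) \in \cP$ by the assumption on $\cP$, and absoluteness (applied to the chain $L \subseteq L \subseteq M$) gives $p(L,M) = p(L,L) \subseteq L$, since $p(L,L)$ is by definition a submodule of the ambient module $L$. This establishes intensivity. To prove hereditary, let $L \subseteq N \subseteq M$ with $(L,M), (L,N) \in \cP$. Absoluteness gives $p(L,N) = p(L,M)$, and intensivity gives $p(L,M) \subseteq L \subseteq N$, so $p(L,M) = p(L,M) \cap N$, and we conclude $p(L,N) = p(L,M) \cap N$.

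There is no real obstacle here — the argument is essentially bookkeeping once one observes that intensivity forces $p(L,M) \subseteq N$ whenever $L \subseteq N$. The only subtlety is the need for the extra hypothesis $(L,M) \in \cP \Rightarrow (L,L) \in \cP$ in the converse, which is exactly what is needed so that absoluteness has something to say about the pair $(L,L)$ and thereby forces $p(L,M)$ into $L$.
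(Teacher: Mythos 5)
Your proof is correct and takes essentially the same approach as the paper: the forward direction uses hereditariness then intensivity to drop the $\cap N$, and the converse uses absoluteness applied to the pair $(L,L)$ to deduce intensivity, from which hereditariness follows immediately.
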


\begin{proof}
Suppose $p$ is intensive and hereditary.  Let $L \subseteq N \subseteq M$ with $(L,N), (L,M) \in \cP$.  Then $p(L,N) = p(L,M) \cap N = p(L,M)$, with the first equality by hereditariness and the second by intensivity.

Now, suppose $p$ is absolute, and let $(L,M) \in \cP$.  
Since we also have $(L,L) \in \cP$, we have $p(L,L) = p(L,M)$ by absoluteness, and by definition of pair operation we have $p(L,L) \subseteq L$.  Hence $p$ is intensive.  Now let $L \subseteq N \subseteq M$ with $(L,N),(L,M) \in \cP$.  Then by intensivity, we have $p(L,M) \subseteq L \subseteq N$, so that $p(L,N) = p(L,M) = p(L,M) \cap N$.

\end{proof}

\begin{lemma}\label{lem:resiffextcoher}
Let $p$ be a pair operation on $\cP$.    If $p$ is extensive and cohereditary, then $p$ is residual.  If we further assume that  $(0,M/L) \in \cP$ for all $(L,M) \in \cP$ with $L \subseteq M$, then the converse holds. 
\end{lemma}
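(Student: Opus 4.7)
The strategy is to directly chase the definitions of extensive, cohereditary, and residual, using the projection $\pi: M \onto M/L$ together with the identities $\pi^{-1}((X+L)/L) = X + L$ for $X \subseteq M$ and $\pi(\pi^{-1}(Y)) = Y$ for $Y \subseteq M/L$ (since $\pi$ is surjective).

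For the forward implication, assume $p$ is extensive and cohereditary. Given $L \subseteq N \subseteq M$ with $(N,M),(N/L,M/L) \in \cP$, cohereditariness gives $p(N/L, M/L) = (p(N,M)+L)/L$. Pulling this back along $\pi$, we get $\pi^{-1}(p(N/L, M/L)) = p(N,M) + L$. Extensivity applied to $(N,M)$ yields $N \subseteq p(N,M)$, and since $L \subseteq N$ we conclude $p(N,M) + L = p(N,M)$, which is exactly the residual identity.

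For the converse, assume $p$ is residual and the auxiliary hypothesis that $(0,M/L) \in \cP$ for every $(L,M) \in \cP$. To show extensivity, fix $(L,M) \in \cP$ and apply the residual identity with $N := L$; this is allowed because $(L,M) \in \cP$ and $(L/L, M/L) = (0,M/L) \in \cP$ by hypothesis. We obtain $p(L,M) = \pi^{-1}(p(0, M/L))$. Every preimage under $\pi$ contains $\ker \pi = L$, so $L \subseteq p(L,M)$, establishing extensivity. For cohereditariness, start from residuality: $p(N,M) = \pi^{-1}(p(N/L, M/L))$. Applying $\pi$ to both sides and using surjectivity gives $\pi(p(N,M)) = p(N/L, M/L)$; but $\pi(p(N,M)) = (p(N,M)+L)/L$, yielding the cohereditary identity.

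There is no substantive obstacle here; both directions are formal manipulations of $\pi$ and $\pi^{-1}$. The only subtle point is the need for the extra hypothesis $(0, M/L) \in \cP$ in the converse: without it, the residual identity specialized to $N = L$ may not be available, and extensivity could fail. This mirrors the role of the auxiliary hypothesis $(L,L) \in \cP$ in the dual Lemma \ref{lem:absiffinther}.
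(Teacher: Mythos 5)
Your proof is correct and follows essentially the same route as the paper's: the forward direction derives the residual identity from cohereditariness by pulling back along $\pi$ and then collapsing $p(N,M)+L$ to $p(N,M)$ via $L \subseteq N \subseteq p(N,M)$ (extensivity), while the converse establishes extensivity from residuality at $N=L$ (using the auxiliary hypothesis) and cohereditariness by applying $\pi$ and surjectivity. The only cosmetic difference is that you invoke the identity $\pi^{-1}\bigl((X+L)/L\bigr)=X+L$ directly where the paper does a short element chase, but the argument is the same.
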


\begin{proof}
Suppose $p$ is cohereditary and extensive.  Let $L \subseteq N \subseteq M$ with $(N,M), (N/L, M/L) \in \cP$, setting $\pi: M \onto M/L$ as the canonical surjection.  Then \[\pi(p(N,M)) = \frac{p(N,M)+L}{L} = p(N/L, M/L),\] so that $p(N,M) \subseteq \pi^{-1}(p(N/L, M/L))$.  For the reverse containment, let $z \in \pi^{-1}(p(N/L,M/L))$.  Then $\pi(z) \in p(N/L, M/L) = \pi(p(N,M))$ by the cohereditary property.  Hence, $z \in p(N,M) + \ker \pi = p(N,M)+L = p(N,M)$, with the latter equality since $L \subseteq N \subseteq p(N,M)$ by extensivity.  Hence, $p$ is residual.

Now suppose $p$ is residual and that whenever $(L,M) \in \cP$, $(0,M/L) \in \cP$ as well.  Then it is extensive because if $(L,M) \in \cP$ and $\pi: M \onto M/L$ is the natural surjection, then setting $L=N$ in the definition, we have $L = \pi^{-1}(0)  \subseteq \pi^{-1}(p(0,M/L)) = p(L,M)$.  To see that it is cohereditary, from the fact that $\pi$ is surjective we have \[
\frac{p(N,M)+L}L = \pi(p(N,M)) = \pi(\pi^{-1}(p(N/L, M/L))) = p(N/L, M/L)
\]
as long as all of the relevant pairs are in $\cP$.
\end{proof}

\begin{prop}\label{pr:dualityprops}
Let $R$ be a complete Noetherian local ring, assume $\cM$ contains Matlis-dualizable modules, and let $p$ be a pair operation on a class of pairs of Matlis-dualizable $R$-modules $\cP$ as in Definition \ref{def:pairop}.  Then \begin{enumerate}
    \item\label{it:extintdual}\cite[Proposition 3.6(2) and (3)]{ERGV-nonres} $p$ is extensive if and only if $p^\dual$ is intensive.
    \item\label{it:hercoher} If $p$ is hereditary, then $p^\dual$ is cohereditary.
    \item\label{it:coherher} If $p$ is cohereditary, then $p^\dual$ is hereditary.
    \item\label{it:resabsdual}  Suppose for all $(L,M) \in \cP$, $(0,M/L) \in \cP$ and $(L,L) \in \cP$. Then  $p$ is residual if and only if $p^\dual$ is absolute.
\end{enumerate}
\end{prop}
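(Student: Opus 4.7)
The plan is: part (1) is cited so needs no argument; parts (2) and (3) will be proved directly by unfolding the definition of $p^\dual$ through Matlis duality; part (4) will be assembled from (1)--(3) together with Lemmas \ref{lem:resiffextcoher} and \ref{lem:absiffinther}. Throughout, I will use the anti-isomorphism between submodules of $B$ and submodules of $B^\vee$ given by $X \mapsto (B/X)^\vee$, together with the fact that $p^{\dual\dual}=p$.

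For (2), assume $p$ is hereditary and take $A \subseteq C \subseteq B$ with $(C,B)$ and $(C/A,B/A)$ in $\cP^\vee$. Set $N := B^\vee$, $L := (B/A)^\vee$, $K := (B/C)^\vee$, so that $K \subseteq L \subseteq N$ with $(K,L),(K,N) \in \cP$. Unfolding the definition of $p^\dual$, the goal becomes the identification $(L/p(K,L))^\vee = (p^\dual(C,B)+A)/A$, where the right-hand side is the image of $(N/p(K,N))^\vee \subseteq N^\vee=B$ under the quotient $B \twoheadrightarrow B/A = L^\vee$, itself the Matlis dual of $L \hookrightarrow N$. By the second isomorphism theorem, the inclusion $L \hookrightarrow N$ induces an injection $L/(p(K,N)\cap L) \hookrightarrow N/p(K,N)$, and dualizing gives the surjection $(N/p(K,N))^\vee \twoheadrightarrow (L/(p(K,N)\cap L))^\vee$. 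Hereditariness replaces $p(K,N)\cap L$ by $p(K,L)$, matching the two sides.

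For (3), the argument is formally dual. Take $A \subseteq B \subseteq C$ in $\cP^\vee$, set $Q := C^\vee$, $P := B^\vee$, and let $\pi : Q \twoheadrightarrow P$ be the Matlis dual of $B \hookrightarrow C$, with kernel $K := (C/B)^\vee$. Write $T := (C/A)^\vee$, $S := (B/A)^\vee$, and note the short exact sequence $0 \to K \to T \to S \to 0$ obtained by dualizing $0 \to B/A \to C/A \to C/B \to 0$, so $T/K = S$. Cohereditariness of $p$ applied to $K \subseteq T \subseteq Q$ gives $p(S,P) = (p(T,Q)+K)/K$, whence $P/p(S,P) = Q/(p(T,Q)+K)$. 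On the other hand, viewing $B \cong P^\vee$ as the submodule of $C \cong Q^\vee$ consisting of functionals vanishing on $K$, the intersection $p^\dual(A,C) \cap B$ consists of those functionals on $Q$ that vanish on both $p(T,Q)$ and $K$, i.e., on $K + p(T,Q)$. This gives $p^\dual(A,C) \cap B = (Q/(K+p(T,Q)))^\vee = p^\dual(A,B)$.

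For (4), I assemble the previous parts using the lemmas. Under the hypothesis $(0,M/L) \in \cP$ for $(L,M) \in \cP$, Lemma \ref{lem:resiffextcoher} gives $p$ residual $\Leftrightarrow$ $p$ extensive and cohereditary. Under the hypothesis $(L,L) \in \cP$ for $(L,M) \in \cP$, Lemma \ref{lem:absiffinther} gives $p$ absolute $\Leftrightarrow$ $p$ intensive and hereditary; applied to $p^\dual$ on $\cP^\vee$, one needs $(A,A) \in \cP^\vee$ for $(A,B) \in \cP^\vee$, which follows from our standing hypothesis since $((B/A)^\vee, B^\vee) \in \cP$ implies $(0, B^\vee/(B/A)^\vee) \in \cP$ and $B^\vee/(B/A)^\vee \cong A^\vee$. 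Then (1) exchanges extensive with intensive, while (3) combined with (2) applied to $p^\dual$ (and $p^{\dual\dual}=p$) exchanges cohereditary with hereditary, yielding the biconditional. The main obstacle will be bookkeeping in (3): correctly identifying $p^\dual(A,C) \cap B$ as the Matlis dual of a specific quotient of $Q$ requires care in tracking which submodules of $N^{\vee\vee} \cong N$ correspond to which annihilator submodules of $N$, but nothing is conceptually hard.
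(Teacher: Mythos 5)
Your proposal is correct and follows essentially the same route as the paper's proof: identify the relevant pairs of submodules under Matlis duality, apply hereditariness (resp. cohereditariness) of $p$ to the dualized pair, and translate back using facts about how Matlis duality interchanges sums/intersections with quotients/inclusions. The paper's proof of (2) invokes Lemma~6.15 of \cite{ERGV-chdual} to convert a sum of annihilator submodules into the dual of a quotient by an intersection, whereas you reach the same conclusion by directly computing the image of $(N/p(K,N))^\vee$ under the quotient dual to the inclusion $L \hookrightarrow N$ via Lemma~\ref{lem:matlisimagepreimage}; these are interchangeable applications of the same duality bookkeeping. Your verification of the extra hypothesis $(A,A)\in\cP^\vee$ in part~(4) is a useful detail that the paper leaves implicit.
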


\begin{proof}\ 
(\ref{it:hercoher}): Let $L \subseteq N \subseteq M$ with $(N/L, M/L), (N,M) \in \cP^\vee$.  Let $B := M^\vee$, and let $A := (M/N)^\vee$, $C := (M/L)^\vee$, considered as submodules of $B$.  Then $A \subseteq C \subseteq B$, and we have $(A,B) = ((M/N)^\vee, M^\vee) = (N,M)^\vee \in \cP$ and $(A,C) = ((M/N)^\vee, (M/L)^\vee) = (N/L, M/L)^\vee \in \cP$.  Thus, we have \[
  p^\dual(N,M) + L = \left(\frac B {p(A,B)}\right)^\vee + \left(\frac BC\right)^\vee = \left(\frac B {p(A,B) \cap C}\right)^\vee = \left(\frac B {p(A,C)}\right)^\vee.
\]
Here the second equality is by \cite[Lemma 6.15]{ERGV-chdual} and the third is since $p$ is hereditary. Hence, \[
\frac{p^\dual(N,M) + L}{L} = \frac{(B/p(A,C))^\vee}{(B/C)^\vee} = (C/p(A,C))^\vee = p^\dual(N/L, M/L).
\]

(\ref{it:coherher}): Let $L \subseteq N \subseteq M$ with $(L,M), (L,N) \in \cP^\vee$.   Let $B := M^\vee$, and let $A := (M/N)^\vee$, $C := (M/L)^\vee$, considered as submodules of $B$.  Then $(C,B) = ((M/L)^\vee, M^\vee) = (L,M)^\vee \in \cP$ and $(C/A, B/A) = ((N/L)^\vee, N^\vee) := (L,N)^\vee \in \cP$.  Thus we have \begin{align*}
    p^\dual(L,N) &= p^\dual((B/C)^\vee, (B/A)^\vee) = \left( \frac{B/A} {p(C/A, B/A)}\right)^\vee = \left(\frac{B/A} {(p(C,B)+A)/A}\right)^\vee \\
    &= \left(\frac B {p(C,B)+A}\right)^\vee = \left(\frac B {p(C,B)}\right)^\vee \cap (B/A)^\vee = p^\dual(L,M) \cap N.
\end{align*}
Here the third equality is because $p$ is cohereditary, the fourth equality is by the third isomorphism theorem, and the fifth is by \cite[Lemma 6.15]{ERGV-chdual}.

(\ref{it:resabsdual}): This follows from (\ref{it:hercoher}) and (\ref{it:coherher}), Lemmas~\ref{lem:absiffinther} and \ref{lem:resiffextcoher}, and (\ref{it:extintdual}).

\end{proof}

\begin{lemma}
\label{lem:residualnakayama}
Let $(R,\m)$ be a Noetherian local ring and $\cl$ be a residual closure operation on the class of pairs of  \fg\ $R$-modules. Then $\cl$ is Nakayama if and only if whenever $N \subseteq (\m N)_M^{\cl}$, $N \subseteq 0_M^{\cl}$.
\end{lemma}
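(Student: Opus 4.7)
The plan is to prove both directions by using the residual property to reduce statements about pairs $L \subseteq N \subseteq M$ to statements about $N/L \subseteq M/L$, at which point the special case $L=0$ becomes exactly the hypothesis in the equivalent characterization. The forward direction is a direct application of the Nakayama hypothesis, and the backward direction is the substantive one.

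For the forward direction, assume $\cl$ is Nakayama and suppose $N \subseteq (\m N)_M^{\cl}$. I would apply the Nakayama property with $L=0$: the chain $0 \subseteq N \subseteq (0+\m N)_M^{\cl} = (\m N)_M^{\cl}$ is exactly the required hypothesis, so it yields $0_M^{\cl} = N_M^{\cl}$, whence $N \subseteq N_M^{\cl} = 0_M^{\cl}$.

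For the backward direction, assume the implication ``$N \subseteq (\m N)_M^{\cl} \Rightarrow N \subseteq 0_M^{\cl}$'' and suppose $L \subseteq N \subseteq (L+\m N)_M^{\cl}$ with $L,N,M$ finitely generated. Let $\pi\colon M \onto M/L$ be the natural surjection and write $\bar{N}:=N/L$, $\bar{M}:=M/L$. Since $(L+\m N)/L = \m\bar{N}$ inside $\bar{M}$, residuality gives $(L+\m N)_M^{\cl} = \pi^{-1}((\m\bar{N})_{\bar{M}}^{\cl})$. Thus the assumed inclusion descends to $\bar{N} \subseteq (\m\bar{N})_{\bar{M}}^{\cl}$. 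Applying the standing hypothesis to the pair $\bar{N}\subseteq\bar{M}$ gives $\bar{N}\subseteq 0_{\bar{M}}^{\cl}$.

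It then remains to deduce $L_M^{\cl}=N_M^{\cl}$. Using that $\cl$ is order-preserving on submodules and idempotent (which are part of being a closure operation), the containments $0 \subseteq \bar{N} \subseteq 0_{\bar{M}}^{\cl}$ imply $0_{\bar{M}}^{\cl} \subseteq (\bar{N})_{\bar{M}}^{\cl} \subseteq (0_{\bar{M}}^{\cl})_{\bar{M}}^{\cl} = 0_{\bar{M}}^{\cl}$, so $(\bar{N})_{\bar{M}}^{\cl} = 0_{\bar{M}}^{\cl}$. Applying $\pi^{-1}$ and using residuality once more,
\[
N_M^{\cl} = \pi^{-1}((\bar{N})_{\bar{M}}^{\cl}) = \pi^{-1}(0_{\bar{M}}^{\cl}) = L_M^{\cl},
\]
as required. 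The main (mild) obstacle is keeping track of the translation between containments in $M$ and containments in $M/L$ under the residual property; once that is set up, both directions reduce to the $L=0$ case in a single step.
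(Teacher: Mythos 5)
Your proof is correct and follows essentially the same route as the paper: both directions reduce, via residuality, to the case $L=0$ in the quotient module $M/L$. The only difference is cosmetic — the paper concludes from $N/L \subseteq 0^\cl_{M/L}$ that $N \subseteq \pi^{-1}(0^\cl_{M/L}) = L^\cl_M$ and then invokes $L^\cl_M = N^\cl_M$ in one stroke, whereas you spell out the same conclusion by first showing $(\bar N)^\cl_{\bar M} = 0^\cl_{\bar M}$ using order-preservation and idempotence and then pulling back; both fill in the same implicit step.
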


\begin{proof}
The forward direction is immediate from Definition \ref{def:nakayama}. For the reverse direction, let $L \subseteq N \subseteq M$ such that $N \subseteq (L+\m N)_M^{\cl}$. Then 
\[N/L \subseteq (L+\m N)^{\cl}_M/L=\left(\frac{L+\m  N}{L}\right)_{M/L}^{\cl}.\] By our hypotheses, this implies that $N/L \subseteq 0_{M/L}^{\cl}$. Since $\cl$ is residual, $N \subseteq L_M^{\cl}$. This implies that $N_M^{\cl}=L_M^{\cl}$, as desired. 
\end{proof}

\subsection{Connection to submodule selectors}
There are two natural ways to define a pair operation from a submodule selector.  In this subsection, we discuss how properties of a given submodule selector compare with the properties of the associated pair operations.

\begin{rem}
In \cite{nmeRG-cidual}, we discussed hereditary and cohereditary properties in the context of submodule selectors. 
In that paper, the dual of a hereditary submodule selector is also called co-hereditary (Definition 6.1). Note that this is referring to the duality between submodule selectors of \cite{nmeRG-cidual}, whereas the duality of this paper takes submodule selectors to residual extensive operations and vice versa. The two notions are compatible in the sense that if $\cl$ is a residual closure operation, then the submodule selector $\alpha(M):=0_M^\cl$ is co-hereditary by definition. If a submodule selector $\alpha$ is co-hereditary, then its dual submodule selector $\alpha^\dual$ is hereditary by Proposition 6.5 of \cite{nmeRG-cidual}. Hence if we apply the duality of this paper to $\alpha^\dual$ to get an extensive operation, this extensive operation will be residual and agrees with what we get if instead we take the residual closure operation $\cl$ such that $\alpha(M)=0_M^\cl$ for $R$-modules $M \in \cM$ as in Construction 2.3 of \cite{nmeRG-cidual}.

We discuss residual and absolute in the context of submodule selectors in the next result.
\end{rem}

\begin{defn}[{\cite[Definition 2.1]{nmeRG-cidual}}]
\label{def:submodselect}
Let $\cM$ be a category of $R$-modules. A \textit{submodule selector} is a map $\alpha:\cM \to \cM$ sending each module $M$ to a submodule of $M$ such that if $M \cong N$ in $\cM$, $\alpha(M) \cong \alpha(N)$ under the restriction of the isomorphism.

Let $R$ be a complete local ring and $^\vee$ denote the Matlis duality operator. We define a dual $\alpha^\dual$ to a submodule selector $\alpha$ by
\[\alpha^\dual(M):=\left(\frac{M^\vee}{\alpha(M^\vee)} \right)^\vee.\]
\end{defn}

\begin{defn}
Let $\alpha$ be a submodule selector.  Then there is a residual operation $r = \rho(\alpha)$ given by $r(L,M) = \pi^{-1}(\alpha(M/L))$, where $\pi: M \onto M/L$ is the canonical map.

Also, there is an absolute operation $g=\gamma(\alpha)$ given by $g(L,M) = \alpha(L)$.
\end{defn}

\begin{prop}
Let $R$ be complete local commutative Noetherian.  Let $\alpha$ be a submodule selector on a category $\cM$ of Matlis dualizable modules.  Then $\rho(\alpha^\smile) = \gamma(\alpha)^\smile$, and $\rho(\alpha)^\smile = \gamma(\alpha^\smile)$.
\end{prop}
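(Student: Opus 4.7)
The plan is to prove each equality by unfolding the definitions and applying the natural short exact sequence
\[
0 \to (M/L)^\vee \xrightarrow{\phi} M^\vee \xrightarrow{\sigma} L^\vee \to 0
\]
obtained by Matlis-dualizing $0 \to L \to M \xrightarrow{\pi} M/L \to 0$. Throughout I would identify $M \cong M^{\vee\vee}$, $L \cong L^{\vee\vee}$, and $M/L \cong (M/L)^{\vee\vee}$ via the canonical isomorphisms of Remark~\ref{rem:Matlis}.

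For the first identity, I would set $W := \alpha((M/L)^\vee)$, a submodule of $(M/L)^\vee$, and unfold both sides: the left side becomes $\pi^{-1}\bigl(((M/L)^\vee/W)^\vee\bigr)$, with the inner Matlis dual regarded as a submodule of $(M/L)^{\vee\vee} \cong M/L$, while the right side becomes $\bigl(M^\vee/\phi(W)\bigr)^\vee$, viewed as a submodule of $M^{\vee\vee} \cong M$. To see these agree, I would apply Lemma~\ref{lem:matlisimagepreimage}\eqref{it:phiimage} to the injection $\phi:(M/L)^\vee \into M^\vee$ (so that its ``$g$-dual'' of the lemma is $\phi^\vee = \pi$) with submodule $\phi(W) \subseteq M^\vee$, yielding
\[
\pi\bigl((M^\vee/\phi(W))^\vee\bigr) = \bigl((M/L)^\vee/\phi^{-1}(\phi(W))\bigr)^\vee = \bigl((M/L)^\vee/W\bigr)^\vee,
\]
where the second equality uses injectivity of $\phi$. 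This gives one containment after applying $\pi^{-1}$; the reverse containment reduces to showing $\ker\pi = L \subseteq (M^\vee/\phi(W))^\vee$, which holds because every functional in $\phi(W) \subseteq \phi((M/L)^\vee)$ vanishes on $L$ by construction of $\phi$.

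For the second identity, I would first compute
\[
\rho(\alpha)((M/L)^\vee, M^\vee) = \sigma^{-1}\bigl(\alpha(M^\vee/(M/L)^\vee)\bigr) = \sigma^{-1}(\alpha(L^\vee)),
\]
using the isomorphism $M^\vee/(M/L)^\vee \cong L^\vee$ induced by $\sigma$. The third isomorphism theorem then gives $M^\vee/\sigma^{-1}(\alpha(L^\vee)) \cong L^\vee/\alpha(L^\vee)$, so $\rho(\alpha)^\smile(L,M) = (L^\vee/\alpha(L^\vee))^\vee$, embedded into $M \cong M^{\vee\vee}$ via the injection dual to the composite surjection $M^\vee \onto L^\vee \onto L^\vee/\alpha(L^\vee)$. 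Since this dual injection factors as $(L^\vee/\alpha(L^\vee))^\vee \into L^{\vee\vee} \cong L \into M$, it identifies $\rho(\alpha)^\smile(L,M)$ with the submodule $(L^\vee/\alpha(L^\vee))^\vee$ of $L \subseteq M$, which is precisely $\gamma(\alpha^\smile)(L,M) = \alpha^\smile(L)$.

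The main obstacle will be careful bookkeeping with the Matlis-duality identifications --- in particular, making precise how the submodule $\phi(W) \subseteq M^\vee$ relates to $W \subseteq (M/L)^\vee$, and how the Matlis dual of a surjection between biduals corresponds to the original injection. Once this is set up via the short exact sequence displayed above, both identities reduce to direct applications of Lemma~\ref{lem:matlisimagepreimage} together with the third isomorphism theorem.
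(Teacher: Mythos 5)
Your proof is correct. For the first identity you take essentially the paper's path, just slightly more circuitously: the paper applies Lemma~\ref{lem:matlisimagepreimage}\eqref{it:phipreimage} directly (with $g$ the inclusion $(M/L)^\vee \hookrightarrow M^\vee$, so that $\phi = g^\vee = \pi$) to transform $\pi^{-1}\bigl(((M/L)^\vee/\alpha((M/L)^\vee))^\vee\bigr)$ into $\bigl(M^\vee/\alpha((M/L)^\vee)\bigr)^\vee$ in a single step, whereas you invoke part~\eqref{it:phiimage}, push forward by $\pi$, and then separately check that $\ker \pi = L$ sits inside $(M^\vee/\phi(W))^\vee$ to upgrade a containment to an equality; both are valid applications of the same lemma to the same map. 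For the second identity you take a genuinely different route. The paper never recomputes anything: it uses the first identity together with the involutivity facts $\alpha^{\smile\smile}=\alpha$ and $p^{\smile\smile}=p$, writing $\rho(\alpha)^\smile = \rho(\alpha^{\smile\smile})^\smile = \gamma(\alpha^\smile)^{\smile\smile} = \gamma(\alpha^\smile)$. Your argument instead unwinds the definition directly from the short exact sequence $0 \to (M/L)^\vee \to M^\vee \xrightarrow{\sigma} L^\vee \to 0$, identifies $M^\vee/\sigma^{-1}(\alpha(L^\vee))$ with $L^\vee/\alpha(L^\vee)$ by the third isomorphism theorem, and then carefully tracks the dual injection $(L^\vee/\alpha(L^\vee))^\vee \hookrightarrow L^{\vee\vee}\cong L \hookrightarrow M$ so as to land in the correct submodule. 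Your version is longer but more concrete: it shows at the level of submodules of $M$ exactly why the two pair operations agree, whereas the paper's version is slicker but relies on having already internalized the double-dual involutions for both submodule selectors and pair operations.
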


\begin{proof}
Let $(L,M) \in \cP^\vee$.  Then \begin{align*}
    \rho(\alpha^\smile)(L,M) &=\pi^{-1}(\alpha^\smile(M/L)) = \pi^{-1}\left( \left( \frac{(M/L)^\vee} {\alpha((M/L)^\vee)}\right)^\vee \right)\\
    &= \left(\frac{M^\vee} {\alpha((M/L)^\vee)}\right)^\vee = \left(\frac{M^\vee} {\gamma(\alpha)((M/L)^\vee, M^\vee)}\right)^\vee \\
    &= \gamma(\alpha)^\smile(L,M).   
\end{align*}
The third equality is by Lemma~\ref{lem:matlisimagepreimage}.

For the second statement, the above applies to yield the following: \[
\rho(\alpha)^\smile = \rho(\alpha^{\smile \smile})^\smile = \gamma(\alpha^\smile)^{\smile \smile} = \gamma(\alpha^\smile). \qedhere
\]
\end{proof}

 We discussed functorial and cofunctorial pair operations in Section \ref{subs:funccofunc}.  We now remind the reader of the definition of functoriality for submodule selectors. 
\begin{defn} \cite[Definition 2.1]{nmeRG-cidual} \cite[Definition 5.1]{nmeRG-cidual}
\label{def:submodprops}
    Let $\cM$ be a category of modules, $\cP$ be the set of pairs $(L,M)$ such that $L \subseteq M$ and $L,M \in \cM$ and $\alpha$ be a submodule selector on $\cM$ (see Definition \ref{def:submodselect}). Suppose $M,N \in \cM$.  We say that
    \begin{itemize}
    \item $\alpha$ is \emph{order-preserving} if for any $(L,M) \in \cP$, $\alpha(L) \subseteq \alpha(M)$.
    \item $\alpha$ is \emph{surjection-functorial} if for $\pi:M \onto N$, $\pi(\alpha(M)) \subseteq \alpha(N)$.
    \item $\alpha$ is \emph{functorial} if for $f:M \rightarrow N$, $f(\alpha(M)) \subseteq \alpha(N)$.
    \item $\alpha$ is \emph{idempotent} if for any $M \in \cM$, $\alpha(\alpha(M))=\alpha(M)$.
    \end{itemize}
    We define the \emph{finitistic version} $\alpha_f$ of $\alpha$ to be 
\[\alpha_f(M)=\sum \{\alpha(L) \mid L \subseteq M \text{ is \fg\ and }L \in \cM\}.\]
We say that $\alpha$ is \textit{finitistic} if for every $M \in \cM$, $\alpha=\alpha_f$.
\end{defn}
Note that functoriality here is equivalent to order-preservation plus surjection-functoriality, assuming that for any $f: M \ra N$ in $\cP$, there is some epi-monic factorization $M \onto Q \into N$ such that $Q \in \cM$.  Note also that since 
\[\alpha(M) \subseteq f^{-1}(f(\alpha(M))) \subseteq f^{-1}(\alpha(N)),\] then if we were to define ``cofunctorial'' to be $f^{-1}(\alpha(N)) \supseteq \alpha(M)$, this would be the same as $\alpha$ being functorial in the sense of \cite[Definition 2.1]{nmeRG-cidual}.  


\begin{prop}
Let $\alpha$ be a submodule selector on an abelian category $\cM$ of modules and $\cP$ be as in Definition \ref{def:submodprops}. Let $g=\gamma(\alpha)$ and $r=\rho(\alpha)$.  \begin{enumerate}
    \item \label{it:opgop} The following are equivalent: \begin{enumerate}
        \item $\alpha$ is order-preserving.
        \item $g$ is order-preserving on submodules.
        \item $g$ is restrictable. 
        \item $r$ is cofunctorial. 
     \end{enumerate}
    \item \label{it:sfgfunct} The following are equivalent: \begin{enumerate}
    \item $\alpha$ is surjection-functorial.
    \item $g$ is functorial.
    \item $r$ is order-preserving on submodules. 
    \end{enumerate}
    \item\label{it:func} The following are equivalent:
    \begin{enumerate}
    \item $\alpha$ is functorial.
    \item $g$ is functorial and order preserving on submodules.
    \item $r$ is cofunctorial and order preserving on submodules. 
    \item $r$ is functorial and order preserving on submodules. 
    \end{enumerate}
    \item\label{it:idem} $\alpha$ is idempotent $\iff g$ is idempotent.
    \end{enumerate}
\end{prop}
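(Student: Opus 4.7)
The strategy is to unwind each property of $g=\gamma(\alpha)$ and $r=\rho(\alpha)$ directly into a statement about $\alpha$, using the defining identities $g(L,M)=\alpha(L)$ and $r(L,M)=\pi_L^{-1}(\alpha(M/L))$. Two elementary factorization facts do all the work: whenever $L\subseteq N\subseteq M$, the surjection $\pi_N\colon M\onto M/N$ factors as $\bar\pi\circ\pi_L$, where $\bar\pi\colon M/L\onto M/N$ is the induced quotient; and whenever $f\colon M\to M'$ and $L\subseteq M'$, one has $\pi_L\circ f=\bar f\circ\pi_{f^{-1}(L)}$, where $\bar f\colon M/f^{-1}(L)\into M'/L$ is the induced injection. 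Moreover, every inclusion $A\into B$ in $\cM$ is realized as such a $\bar f$ (take $M=A$, $M'=B$, $L=0$, and $f$ the inclusion), and every surjection $A\onto B$ in $\cM$ is realized as such a $\bar\pi$ (take $M=A$, $L=0$, and $N$ the kernel).

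For Part~(\ref{it:opgop}), the equivalence (a)~$\Leftrightarrow$~(b) is immediate from $g(L,M)=\alpha(L)$. For (a)~$\Leftrightarrow$~(c), restrictability of $g$ is the condition $\alpha(L\cap N)\subseteq\alpha(L)$, which is implied by order-preservation; conversely, given any $A\subseteq B$, take $M=N=B$ and $L=A$ in the restrictability axiom to recover $\alpha(A)\subseteq\alpha(B)$. For (a)~$\Leftrightarrow$~(d), cofunctoriality of $r$ unfolds via $\pi_L\circ f=\bar f\circ\pi_{f^{-1}(L)}$: the required inclusion $r(f^{-1}(L),M)\subseteq f^{-1}(r(L,M'))$ becomes $\alpha(M/f^{-1}(L))\subseteq\bar f^{-1}(\alpha(M'/L))$ after canceling the surjection $\pi_{f^{-1}(L)}$, which is exactly order-preservation of $\alpha$ along the injection $\bar f$; by the realization remark above, this is equivalent to order-preservation of $\alpha$. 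Part~(\ref{it:sfgfunct}) is analogous: functoriality of $g$ reads $f(\alpha(L))\subseteq\alpha(f(L))$, and restricting $f$ to the surjection $L\onto f(L)$ shows this is equivalent to surjection-functoriality of $\alpha$; order-preservation of $r$ on submodules unfolds via $\pi_N=\bar\pi\circ\pi_L$ to $\bar\pi(\alpha(M/L))\subseteq\alpha(M/N)$, which again is surjection-functoriality of $\alpha$.

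For Part~(\ref{it:func}), use the standard fact that in an abelian category $\alpha$ is functorial iff it is both surjection-functorial and order-preserving---the forward direction restricts to surjections and to inclusions, and the reverse uses the epi-monic factorization $f=i\circ\pi$ through the image $f(M)\in\cM$. Combining this with Parts~(\ref{it:opgop}) and~(\ref{it:sfgfunct}) yields (a)~$\Leftrightarrow$~(b) and (a)~$\Leftrightarrow$~(c) directly. Finally (c)~$\Leftrightarrow$~(d) follows from Proposition~\ref{pr:cofunctorial}(\ref{it:funceq}): under order-preservation on submodules, functoriality and cofunctoriality of $r$ are equivalent, and the auxiliary hypotheses on $\cP$ required there---closure under intersections of submodules, under preimages along surjections, and under epi-monic factorization---hold automatically since $\cM$ is abelian and $\cP$ consists of all nested pairs. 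Part~(\ref{it:idem}) is immediate: $g(g(L,M),M)=g(\alpha(L),M)=\alpha(\alpha(L))$, so idempotence of $g$ on pairs $(L,M)\in\cP$ is the same as idempotence of $\alpha$ on modules $L\in\cM$. The only step that is not a direct dictionary translation is (c)~$\Leftrightarrow$~(d) in Part~(\ref{it:func}), which is the sole place where prior machinery is invoked rather than the two factorization identities above.
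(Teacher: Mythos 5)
Your proof follows essentially the same decomposition as the paper's --- direct unwinding of each property into a statement about $\alpha$, with Proposition~\ref{pr:cofunctorial}(\ref{it:funceq}) invoked for Part~(\ref{it:func}) --- and the upfront catalog of the two factorization identities, together with the observation that every inclusion and every surjection in $\cM$ is realized as a $\bar f$ or $\bar\pi$, is a clean way to organize the dictionary between $\alpha$, $g$, and $r$.

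One step, however, is wrong as written. In the converse of (a)~$\Leftrightarrow$~(c) in Part~(\ref{it:opgop}), you correctly unfold restrictability of $g$ to the condition $\alpha(L\cap N)\subseteq\alpha(L)$, but then propose substituting $M=N=B$ and $L=A$. That instantiation gives only $\alpha(A\cap B)\subseteq\alpha(A)$, i.e.\ the triviality $\alpha(A)\subseteq\alpha(A)$; it does not yield $\alpha(A)\subseteq\alpha(B)$. What you want instead is $L=B$, $N=A$, $M=B$, so that $L\cap N=B\cap A=A$ sits inside $\alpha$ on the left while $L=B$ sits on the right, recovering $\alpha(A)\subseteq\alpha(B)$. (This is precisely the instantiation the paper uses, after renaming its variables.) Apart from this slip, the argument is complete, all the other unfoldings check out, and the substance matches the paper's proof.
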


For more such equivalences, see \cite[Proposition~2.6]{nmeRG-cidual}.

\begin{proof}\




(\ref{it:opgop}) (a) $\iff$ (b): Let $L \subseteq N \subseteq M$ such that $(L,M),(N,M) \in \cP$. Since $g(L,M)=\alpha(L)$ and $g(N,M)=\alpha(N)$, we have $\alpha(L) \subseteq \alpha(N)$ if and only if $g(L,M) \subseteq g(N,M)$.

(\ref{it:opgop}) (b) $\iff$ (c): First assume that $g$ is order-preserving on submodules. Then $g$ is restrictable if and only if $g$ is order-preserving on ambient modules. Let $L \subseteq N \subseteq M$ such that $(L,N),(L,M) \in \cP$. We have $g(L,N)=\alpha(L)=g(L,M)$, so $g$ is order-preserving on ambient modules and hence is restrictable. 

Now assume that $g$ is restrictable. Note that by definition of $\cP$, we meet the hypotheses of Proposition \ref{pr:cofunctorial}(\ref{it:ifunceq}). 
 This means that for $L,N \subseteq M$, $g(L \cap N,L) \subseteq g(N,M)$. Assume that $L \subseteq N \subseteq M$. Then $g(L \cap N,L)=g(L,L)=\alpha(L)$ and $g(N,M)=\alpha(N)$. So $\alpha(L) \subseteq \alpha(N)$, which implies that $g(L,M) \subseteq g(N,M)$, as desired.

(\ref{it:opgop}) (a) $\iff$ (d): Suppose $\alpha$ is order-preserving.
Let $f: M \rightarrow N$ be a map, and $K \subseteq N$ a submodule.  Let $\pi: M \onto M/f^{-1}(K)$ and $q: N \onto N/K$ be the natural surjections, 
and let $e: M/f^{-1}(K) \rightarrow N/K$ be the natural injective map guaranteed by the first isomorphism theorem.  Then $e \circ \pi = q \circ f$.  Now let $x\in r(f^{-1}(K),M)$.  Then by definition of $r$, we have $x \in \pi^{-1}(\alpha(M/f^{-1}(K)))$.  That is, $\pi(x) \in \alpha(M/f^{-1}(K))$.  Thus, \[q(f(x)) = e(\pi(x)) \in e(\alpha(M/f^{-1}(K))) \subseteq \alpha(N/K)\] since $\alpha$ is order-preserving.  But then $x\in f^{-1}(q^{-1}(\alpha(N/K))) = f^{-1}(r(K,N))$.  Thus, $r$ is cofunctorial.

Conversely, suppose $r$ is cofunctorial.  Let $L \subseteq M$ be $R$-modules, with $i: L \rightarrow M$ the inclusion map. Cofunctoriality of $r$ means that \begin{align*}\alpha(L) & = r(0,L) = r(i^{-1}(0),L) \subseteq i^{-1}(r(0,M)) \\ &= r(0,M) \cap L \subseteq r(0,M) = \alpha(M).\end{align*}

(\ref{it:sfgfunct}) (a) $\iff$ (b): Suppose $\alpha$ is surjection-functorial.  Let $f: M \rightarrow N$ be a map, $L$ a submodule of $M$, and $e: L \onto f(L)$ the induced surjective map on submodules.  Then \[f(g(L,M))= f(\alpha(L)) = e(\alpha(L))\subseteq \alpha(f(L)) = g(f(L),N).\]

Conversely suppose $g$ is functorial.  Let $q: M \onto N$ be a surjective map.  Then $q(\alpha(M)) = q(g(M,M)) \subseteq g(q(M),N) = g(N,N) = \alpha(N)$.

(\ref{it:sfgfunct}) (a) $\iff$ (c):  Suppose $\alpha$ is surjection-functorial.
Let $L \subseteq N \subseteq M$ be $R$-modules.  Let 
$\pi_L: M \onto M/L$, $q: M/L \onto M/N$, and $\pi_N: M \onto M/N$ be the natural maps, so that $\pi_N = q \circ \pi_L$.  Then by surjection-functoriality of $\alpha$, $q(\alpha(M/L)) \subseteq \alpha(M/N)$.  Accordingly, let $x\in r(L,M)$.  Then $\pi_L(x) \in \alpha(M/L)$, so \[\pi_N(x) = q(\pi_L(x)) \in q(\alpha(M/L)) \subseteq \alpha(M/N).\]  Then by definition, $x \in r(N,M)$.

Conversely, suppose $r$ is order-preserving on submodules.  Let $q: M \onto N$ be a surjection.  Without loss of generality, $N=M/L$ for some submodule $L$ of $M$, and $q$ is the natural map.  Then \[\alpha(M) = r(0,M) \subseteq r(L,M) = q^{-1}(\alpha(M/L)),\] so for any $x\in \alpha(M)$, we have $q(x) \in \alpha(M/L) = \alpha(N)$. 

(\ref{it:func}): This follows from \ref{it:opgop} and \ref{it:sfgfunct} and Proposition \ref{pr:cofunctorial} (\ref{it:funceq}). 




(\ref{it:idem}): Let $L \subseteq M$ such that $(L,M),(g(L,M),M) \in \cP$. Then $g(g(L,M),M)=g(\alpha(L),M)=\alpha(\alpha(L))$. So $\alpha$ is idempotent if and only if $g$ is idempotent.

{\cvl 
}

\end{proof}

Suppose $p$ and $q$ are pair operations on a class of pairs $\cP$.  We say that $p \leq q$ if for all $(L,M) \in \cP$, $p(L,M) \subseteq q(L,M)$.  As we have pointed out both in this and our previous paper  \cite{ERGV-nonres}, pair operations are a generalization of submodule selectors.  In \cite[Proposition 7.5] {nmeRG-cidual}, the first and second named authors proved that if $\alpha$ and $\beta$ are submodule selectors on a class of Matlis-dualizable modules $\cM$ 
with $\alpha \leq \beta$, 
then $\beta^\dual \leq \alpha^\dual$.  A similar statement holds for pair operations:

\begin{prop}\label{pr:comparepairops}
Let $(R,\m)$ be a complete local ring. Let $p$ and $q$ be pair operations defined on a class of Matlis dualizable pairs $\cP$ satisfying $p \leq q$.  Then $q^\dual \leq p^\dual$.
\end{prop}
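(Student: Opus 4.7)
The plan is to unpack the definition of the dual from Definition~\ref{def:smiledual} and use the contravariant exactness of Matlis duality (as recalled in Remark~\ref{rem:Matlis}). Let $(A,B) \in \cP^\vee$, so by definition $((B/A)^\vee, B^\vee) \in \cP$. First I would apply the hypothesis $p \leq q$ to this pair to obtain the containment
\[
p((B/A)^\vee, B^\vee) \subseteq q((B/A)^\vee, B^\vee)
\]
inside $B^\vee$.

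Next, I would invoke the fact that Matlis duality is an exact contravariant functor: the containment of submodules above gives a natural surjection
\[
\frac{B^\vee}{p((B/A)^\vee, B^\vee)} \twoheadrightarrow \frac{B^\vee}{q((B/A)^\vee, B^\vee)},
\]
which upon dualizing becomes an injection
\[
\left(\frac{B^\vee}{q((B/A)^\vee, B^\vee)}\right)^\vee \hookrightarrow \left(\frac{B^\vee}{p((B/A)^\vee, B^\vee)}\right)^\vee.
\]
Under the functorial identification of Matlis biduals as submodules (again Remark~\ref{rem:Matlis}), the left-hand side is $q^\dual(A,B)$ and the right-hand side is $p^\dual(A,B)$, yielding $q^\dual(A,B) \subseteq p^\dual(A,B)$. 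Since this holds for every $(A,B) \in \cP^\vee$, we conclude $q^\dual \leq p^\dual$.

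There is no genuine obstacle here; the argument is a direct two-line consequence of the definition of $\cdot^\dual$ together with exactness of Matlis duality. The only minor point to verify carefully is that the injection produced by dualization is indeed the inclusion of submodules arising from viewing each term as a submodule of $B^{\vee\vee} \cong B$ via the canonical isomorphism, which is exactly what is built into the conventions of Definition~\ref{def:smiledual} and Remark~\ref{rem:Matlis}.
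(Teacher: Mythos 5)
Your argument is correct and is in spirit the same as the paper's proof: the paper simply dispatches this by reference to the analogous statement for submodule selectors, \cite[Proposition 7.5]{nmeRG-cidual}, instructing the reader to replace the submodule selectors $\alpha$ and $\beta$ by the pair operations $p$ and $q$. Your direct unpacking of Definition~\ref{def:smiledual} together with the contravariant exactness of Matlis duality is precisely that replacement argument made explicit.
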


\begin{proof}
The proof follows the same steps as the proof of \cite[Proposition 7.5]{nmeRG-cidual} replacing the submodule selectors $\alpha$ and $\beta$ by the pairs $p$ and $q$.
\end{proof}

\subsection{Finitistic pair operations}

We define the finitistic version of a pair operation and prove some of its properties. This material will be used in Section \ref{sec:residualversion} to give a method for computing certain interiors and in Section \ref{sec:integralclosure} to compare EHU-integral closure to its hereditary version.

\begin{defn}[c.f. {\cite[Definition~3.7]{ERGV-nonres}}]
\label{def:finitistic}
Let $R$ be a Noetherian ring, and let $p$ be a pair operation on a class $\cP$ of pairs of nested $R$-modules. We define the \textit{finitistic version} $p_f$ of $p$ to be 
\[p_f(L,M)=\bigcup \{p(L\cap U,U) \mid U \subseteq M \text{ is \fg\ and } (L \cap U,U) \in \cP\}.\]
We say that $p$ is \textit{finitistic} if for every $(L,M) \in \cP$, $p=p_f$.
\end{defn}

\begin{lemma}[c.f. {\cite[Lemma 3.2]{ERGV-chdual}}]
\label{lem:twoversionsoffinitistic}
If $\cP$ is a class of pairs of $R$-modules such that if $(L\cap U,U) \in \cP$ then $(L,L+U) \in \cP$, and if $p$ is a restrictable pair operation, then for every $(L,M) \in \cP$,
\[p_f(L,M) \subseteq \bigcup_{L \subseteq N \subseteq M,\ N/L\, \text{f.g.}, (L,N) \in \cP} p(L,N).\]
If $(L,N) \in \cP$ if and only if $(0,N/L) \in \cP$ and $p$ is also residual, then the containment is an equality.
\end{lemma}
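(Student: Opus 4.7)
The plan is to establish the two containments separately. For the first, fix a finitely generated $U \subseteq M$ with $(L \cap U, U) \in \cP$. The hypothesis on $\cP$ gives $(L, L+U) \in \cP$, and since $(L+U)/L \cong U/(L\cap U)$ is finitely generated, $N := L+U$ is among the modules appearing in the right-hand union. Applying restrictability to the submodules $L$ and $U$ of $L+U$ yields $p(L\cap U, U) \subseteq p(L, L+U)$, and taking the union over $U$ gives the first containment.

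For the reverse containment under the additional hypotheses, fix $N$ with $L \subseteq N \subseteq M$, $N/L$ finitely generated, and $(L,N) \in \cP$, and let $x \in p(L,N)$. Choose lifts $v_1, \ldots, v_k \in N$ of a generating set of $N/L$ and set $V := \sum_{i=1}^{k} R v_i + Rx$, a finitely generated submodule of $N \subseteq M$. Writing $x = l + \sum r_i v_i$ with $l \in L$ shows $Rx \subseteq L + \sum R v_i$, so $L + V = L + \sum R v_i = N$, while clearly $x \in V$. The second isomorphism theorem yields an isomorphism $\phi: V/(L\cap V) \to N/L$ satisfying $\phi(\pi_V(v)) = \pi_N(v)$ for all $v \in V$, where $\pi_V: V \to V/(L \cap V)$ and $\pi_N: N \to N/L$ are the canonical surjections. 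The iff hypothesis applied to $(L,N) \in \cP$ gives $(0, N/L) \in \cP$, which by invariance of $\cP$ under isomorphism gives $(0, V/(L\cap V)) \in \cP$, and applying the iff hypothesis a second time yields $(L\cap V, V) \in \cP$.

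To finish, invoke residuality of $p$: we have $p(L\cap V, V) = \pi_V^{-1}(p(0, V/(L\cap V)))$ and $p(L,N) = \pi_N^{-1}(p(0, N/L))$. Invariance of $p$ under $\phi$ identifies $p(0, V/(L\cap V))$ with $p(0, N/L)$, and since $x \in V$ we have $\phi(\pi_V(x)) = \pi_N(x) \in p(0,N/L)$; therefore $\pi_V(x) \in p(0, V/(L\cap V))$, so $x \in p(L\cap V, V) \subseteq p_f(L,M)$. The main care point is the construction of $V$: it must contain both a set of lifts generating $N/L$ \emph{and} the element $x$, so that simultaneously $L+V = N$ (making the second isomorphism theorem applicable) and $\pi_V(x)$ transports correctly to $\pi_N(x)$ under $\phi$; with $V$ so chosen, residuality and the iff hypothesis complete the argument.
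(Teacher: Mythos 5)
Your proposal is correct and takes essentially the same route as the paper's proof: for the forward containment you invoke restrictability to pass from $p(L\cap U,U)$ to $p(L,L+U)$, and for the reverse you build a finitely generated $V\subseteq N$ containing both lifts of a generating set of $N/L$ and the element $x$ itself, then apply the second isomorphism theorem and residuality. The only cosmetic difference is that the paper arranges for $z$ to be a member of the chosen lift set (it sets $x_1=z$), whereas you append $Rx$ to the span of the lifts; these achieve the same effect.
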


\begin{proof}
The forwards containment holds because $p(L \cap U,U) \subseteq p(L,U+L)$ and $(U+L)/L \cong U/(L \cap U)$ is \fg\ when $U$ is. 

For the backwards containment, let $z \in p(L,N)$ for some $L \subseteq N \subseteq M$ with $N/L$ \fg\ and $(L,N) \in \cP$. Set $x_1=z,x_2,\ldots,x_t \in N$ whose images generate $N/L$. Then $N=L+U$, where $U=\sum_{i=1}^t Rx_i$ is \fg.  
Set $\pi:N \to N/L$ to be the quotient map. By the Second Isomorphism Theorem, there is an isomorphism $j: \frac{L+U}L \ra \frac U{L\cap U}$ such that for any $y\in U$, $j(\bar y) = \bar y$. Since $p$ is residual, 
\[\pi(z) \in p(0,N/L)=p(0,(L+U)/L).
\]
Thus, $j(\pi(z)) = \bar z \in p(0, U/(L \cap U))$.
Hence $z \in p(L \cap U,U)$, as desired.
\end{proof}

If we have a pair operation defined only on \fg\ $R$-modules, its finitistic version gives us an extension to all $R$-modules:

\begin{lemma}
\label{lem:finitisticequalsorigonfg}
Let $R$ be a Noetherian ring and $p$ a pair operation defined on all nested pairs of \fg\ $R$-modules. If $p$ is restrictable, 
then $p_f(L,M)=p(L,M)$ when $L \subseteq M$ are \fg\ $R$-modules.
\end{lemma}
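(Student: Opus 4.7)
The plan is to prove both inclusions directly from the definition, with each using only one of the two hypotheses.

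For the containment $p_f(L,M) \subseteq p(L,M)$, I would take an arbitrary \fg\ submodule $U \subseteq M$ with $(L \cap U, U) \in \cP$ (which is automatic here since pairs of \fg\ modules are all in $\cP$). The goal is to show $p(L \cap U, U) \subseteq p(L, M)$. This is precisely the conclusion of the restrictable property applied to the submodules $L$ and $U$ of $M$: by definition, restrictability says that whenever $(L \cap U, U)$ and $(L, M)$ both lie in $\cP$, we have $p(L \cap U, U) \subseteq p(L, M)$. Taking the union over all such $U$ gives $p_f(L,M) \subseteq p(L,M)$.

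For the reverse containment $p(L, M) \subseteq p_f(L,M)$, I would exploit the fact that $M$ itself is \fg. Setting $U = M$, we have $L \cap U = L \cap M = L$, so $(L \cap U, U) = (L, M) \in \cP$, and hence $p(L, M) = p(L \cap U, U)$ appears as one of the terms in the union defining $p_f(L, M)$. Therefore $p(L,M) \subseteq p_f(L,M)$.

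The main obstacle is essentially nonexistent: both directions follow immediately once the right choice of witness $U$ is made. The only subtle point worth writing down explicitly is that, because we are working with the class of all nested pairs of \fg\ modules, no additional ``$(L\cap U, U) \in \cP$'' verification is needed beyond noting that $U$ is \fg; this is what allows us to apply restrictability without further hypotheses (in contrast to the setup of Lemma~\ref{lem:twoversionsoffinitistic}, where auxiliary conditions on $\cP$ were required).
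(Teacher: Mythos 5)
Your proof is correct and matches the paper's argument: both directions are established exactly as in the paper, with $p(L,M) \subseteq p_f(L,M)$ coming from the witness $U = M$, and the reverse containment from applying restrictability to each finitely generated $U \subseteq M$. Your writeup is slightly more explicit in naming the restrictable hypothesis at the relevant step, but the approach is the same.
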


\begin{proof}
Let $L \subseteq M$ be \fg\ $R$-modules. By the definition of $p_f$, $p(L,M) \subseteq p_f(L,M)$. For the other containment, notice that for any $U \subseteq M$, $U$ is \fg, and $p(L \cap U,U) \subseteq p(L,M)$. So $p_f(L,M) \subseteq p(L,M)$.
\end{proof}

\begin{lemma}
\label{lem:finitisticpairopprops}
If $R$ is Noetherian and $\cl$ is a restrictable closure operation on all nested pairs of \fg\ $R$-modules,
then $\cl_f$ is a closure operation on all $R$-modules.
\end{lemma}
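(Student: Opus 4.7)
The plan is to verify the three axioms of a closure operation (extensivity, order-preservation on submodules, and idempotency) for $\cl_f$ on arbitrary nested pairs $(L,M)$ of $R$-modules. Note that since $R$ is Noetherian and every \fg\ submodule $U \subseteq M$ has $L \cap U$ \fg\ as well, the defining union for $\cl_f(L,M)$ ranges over all \fg\ submodules $U$ of $M$.

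\textbf{Extensivity.} For each $x \in L$, take $U = Rx$, so $x \in L \cap U \subseteq \cl(L \cap U, U) \subseteq \cl_f(L,M)$; hence $L \subseteq \cl_f(L,M)$.

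\textbf{Order-preservation on submodules.} If $L \subseteq N \subseteq M$, then for any \fg\ $U \subseteq M$ we have $L \cap U \subseteq N \cap U$, so by order-preservation of $\cl$ on \fg\ pairs, $\cl(L \cap U, U) \subseteq \cl(N \cap U, U) \subseteq \cl_f(N, M)$. Taking the union over $U$ gives $\cl_f(L,M) \subseteq \cl_f(N,M)$.

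\textbf{Idempotency.} This is the main step. Let $z \in \cl_f(\cl_f(L,M), M)$; then there is a \fg\ $V \subseteq M$ with $z \in \cl(\cl_f(L,M) \cap V,\, V)$. Because $R$ is Noetherian and $V$ is \fg, the submodule $\cl_f(L,M) \cap V$ is also \fg, say generated by $y_1, \dotsc, y_t$. For each $i$, pick a \fg\ $U_i \subseteq M$ with $y_i \in \cl(L \cap U_i, U_i)$, and let $U = V + U_1 + \dotsb + U_t$, which is \fg. Since $U_i \subseteq U$, restrictability of $\cl$ applied to the pair $(L \cap U, U)$ yields $\cl(L \cap U_i, U_i) = \cl((L \cap U) \cap U_i, U_i) \subseteq \cl(L \cap U, U)$, so each $y_i \in \cl(L \cap U, U)$ and therefore $\cl_f(L,M) \cap V \subseteq \cl(L \cap U, U) \cap V$. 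Now order-preservation on submodules (inside the \fg\ module $V$) gives
\[
z \in \cl(\cl_f(L,M) \cap V,\, V) \subseteq \cl\bigl(\cl(L \cap U, U) \cap V,\, V\bigr),
\]
and a second application of restrictability, this time to the pair $(\cl(L \cap U, U), U)$ together with $V \subseteq U$, yields $\cl(\cl(L \cap U, U) \cap V, V) \subseteq \cl(\cl(L \cap U, U), U) = \cl(L \cap U, U)$, where the final equality is idempotency of $\cl$ on \fg\ pairs. Therefore $z \in \cl(L \cap U, U) \subseteq \cl_f(L,M)$, proving $\cl_f(\cl_f(L,M), M) \subseteq \cl_f(L,M)$. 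The reverse containment follows from extensivity of $\cl_f$.

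The main obstacle is idempotency, where one must funnel the potentially infinitely many \fg\ witnesses $U_i$ for the generators of $\cl_f(L,M) \cap V$ into a single \fg\ ambient module $U$, and then move the closure along the inclusion $V \subseteq U$ twice via restrictability to absorb $V$ into $U$ without losing control of $z$.
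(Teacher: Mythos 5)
Your proof is correct and follows essentially the same strategy as the paper's. The extensivity and order-preservation arguments are identical in substance. For idempotency, you assemble the same witnessing modules (your $V,U$ correspond to the paper's $U,V$, swapping labels), and both proofs use restrictability to funnel the generators of $\cl_f(L,M)\cap V$ into a single \fg\ ambient. The only difference is the order of operations in the final chain: the paper first uses the consequence of restrictability that $\cl$ is order-preserving on ambient modules (Proposition~\ref{pr:cofunctorial}\eqref{it:iparatoifunc}) to pass from the small ambient to the large one, and then applies order-preservation on submodules; you instead apply order-preservation on submodules while staying in the small ambient, and then invoke restrictability directly to pass to the large ambient. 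Both orderings are valid and rely on the same three hypotheses on $\cl$ (extensivity, order-preservation on submodules, idempotence) together with restrictability, so this is a cosmetic rather than substantive divergence.
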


\begin{proof}
Since $\cl$ is extensive, for each $U \subseteq M$ \fg\ such that $(L \cap U,U) \in \cP$, $L \cap U \subseteq (L \cap U)_U^{\cl}$. So $\cl_f$ is extensive.

Let $L \subseteq N \subseteq M$. Since $\cl$ is order-preserving on submodules, for each $U \subseteq M$ with $U$ \fg,
\[(L \cap U)_U^{\cl} \subseteq (N \cap U)_U^{\cl}.\]
Consequently,
\[\bigcup_{U \subseteq M, U\, \text{f.g.}} (L \cap U)_U^{\cl} \subseteq \bigcup_{U \subseteq M, U\, \text{f.g.}} (N \cap U)_U^{\cl},\]
so $\cl_f$ is order-preserving on submodules.



For idempotence, let $L \subseteq M$ be $R$-modules and let $x\in (L^{\cl_f}_M)^{\cl_f}_M$.  Then there is some finitely generated submodule $U \subseteq M$ such that $x\in (L^{\cl_f}_M \cap U)^\cl_U$.  Choose a finite generating set $y_1, \ldots, y_t$ for $L^{\cl_f}_M \cap U$.  Since each $y_j$ is in $L_M^{\cl_f}$, there are finitely generated submodules $V_1, \ldots, V_t$ of $M$ such that for $1\leq j \leq t$, we have $y_j \in (L \cap V_j)^\cl_{V_j}$. Set $V := U + \sum_{j=1}^t V_j$.  Then for each $j$, we have \[
y_j \in (L \cap V_j)^\cl_{V_j} = ((L \cap V) \cap V_j)^\cl_{V_j} \subseteq (L \cap V)^\cl_V
\]
by restrictability of $\cl$.  Hence, $L^{\cl_f}_M \cap U \subseteq (L \cap V)^\cl_V$.  Thus we have \[
x \in (L^{\cl_f}_M \cap U)^\cl_U \subseteq (L^{\cl_f}_M \cap U)^\cl_V \subseteq ((L \cap V)^\cl_V)^\cl_V = (L \cap V)^\cl_V \subseteq L^{\cl_f}_M.
\]
The first containment holds because $\cl$ is order-preserving on ambient modules by Proposition~\ref{pr:cofunctorial}(\ref{it:iparatoifunc}).  The second containment holds because $\cl$ is \opsub. The first equality follows from idempotence of $\cl$. The last containment is by definition of $\cl_f$.
Since $x$ was arbitrarily chosen and $\cl_f$ is extensive, idempotence of $\cl_f$ follows.



\end{proof}

\section{Cohereditary versions of pair operations}
\label{sec:residualversion}

In this section we show how to define a cohereditary version $\po p {\ch}$ of a pair operation $p$ that is not necessarily cohereditary.  Since residual operations are cohereditary and extensive (see Lemma~\ref{lem:resiffextcoher}), we will apply our results to closure operations to define a residual version of a closure operation that is not necessarily residual. We end by extending a result of \cite{ERGV-chdual} to non-residual closure operations by working with their residual versions.

\begin{notation}
\label{not:coheredpairs}
Throughout this section, we assume the following: 
\begin{enumerate}
    \item $\cM$ will denote an abelian category of $R$-modules that has enough projectives.
    \item $\cP$ will denote a set of pairs of $R$-modules $(N,P)$ with $N \subseteq P$, $N,P \in \cM$, and $P$ projective.
    \item $\cP'$ will denote a set of pairs such that $\cP \subseteq \cP'$ and for every $(L,M) \in \cP'$ with $L \subseteq M$, there is a projective module $P$ $\in \cM$ and a surjection $\pi:P \to M$ in $\cM$ with $(\pi^{-1}(L),P) \in \cP$.
    \item In addition, if $(L,M),(N,M) \in \cP'$ with $L \subseteq N$, then there exists $\pi:P \twoheadrightarrow M$ such that $(\pi^{-1}(L),P),(\pi^{-1}(N),P) \in \cP$.
    \item We further assume:
    \begin{itemize}
        \item[(*)] Given $L,M,P,\pi$ as above, if we have a commutative diagram
    \[\xymatrix{
P  \ar@{->>}[d]^{\pi} \ar[r]^{\tilde{\phi}} & Q \ar@{->>}[d]^q  \\
M \ar@{->>}[r]^{\phi} & N  \\
}\]
    in $\cM$ satisfying
    \begin{enumerate}
        \item $\phi:M \to N$ is surjective,
        \item $q:Q \to N$ is a surjection from a projective module onto $N$,
        \item $(q^{-1}(\phi(L)),Q) \in \cP$,
        \item $\tilde{\phi}:P \to Q$ is any map  such that $\phi \circ \pi = q \circ \tilde{\phi}$ (such a $\tilde{\phi}$ always exists by the projectiveness of $P$),
    \end{enumerate}
    then $(\tilde{\phi}(\pi^{-1}(L)),Q) \in \cP$.
\end{itemize}
\end{enumerate} 
\end{notation}

\begin{rem}
\label{rem:cohereditaryhypotheses}
In making this definition, the examples we have in mind are:
\begin{enumerate}
    \item $\cM=$ $R$-modules, $\cP=$ all nested pairs where the second module is projective, and $\cP'=$ all nested pairs in $\cM$.
    \item $\cM=$ \fg\ $R$-modules, $\cP=$ all nested pairs in $\cM$ where the second module is projective, and $\cP'=$ all nested pairs in $\cM$.
    \item $\cM=$ graded $R$-modules with graded $R$-module homomorphisms, $\cP=$ all nested pairs in $\cM$ where the second module is graded projective, and $\cP'=$ all nested pairs in $\cM$.
\end{enumerate}
The last example demonstrates the need for some of our hypotheses. Working this generally also allows us to include cases where $\cP'$ consists of finite-length modules or $\m$-primary modules over a local ring.
\end{rem}

\begin{prop}
\label{pr:definecohereditaryversion}
Let $\cM$, $\cP$, and $\cP'$ be as in Notation \ref{not:coheredpairs}, and let $p$ be a cofunctorial pair operation defined on $\cP$.
Then we can define a cohereditary, cofunctorial pair operation $\po p {\ch}$ on $\cP'$ as follows: for a pair $(L,M) \in \cP'$, let $\pi:P \to M$ be a surjection in $\cM$ with $P$ projective such that $(\pi^{-1}(L),P) \in \cP$. Define
\[\po p {\ch} (L, M) :=\pi(p(\pi^{-1}(L),P)).\]

 In particular, if $p$ is a cohereditary, cofunctorial  pair operation defined on $\cP'$, then $\po p {\ch}=p$.
\end{prop}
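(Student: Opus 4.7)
The plan is to verify, in this order: (i) the definition does not depend on the choice of $\pi$; (ii) $\po{p}{\ch}$ is isomorphism-invariant (hence a genuine pair operation); (iii) it is cohereditary; (iv) it is cofunctorial; and (v) if $p$ is already cohereditary and cofunctorial on $\cP'$, then $\po{p}{\ch} = p$. Well-definedness is the crux; all other verifications will reduce to picking the right lift and invoking cofunctoriality of $p$.

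For well-definedness, given two surjections $\pi_i \colon P_i \onto M$ ($i=1,2$) from projectives in $\cM$ with $(\pi_i^{-1}(L), P_i) \in \cP$, I will use projectivity of $P_1$ to lift $\pi_1$ through $\pi_2$, producing $\tilde\phi \colon P_1 \to P_2$ with $\pi_2 \circ \tilde\phi = \pi_1$. Then $\tilde\phi^{-1}(\pi_2^{-1}(L)) = \pi_1^{-1}(L)$ and both pairs belong to $\cP$, so cofunctoriality of $p$ gives
\[ p(\pi_1^{-1}(L), P_1) \subseteq \tilde\phi^{-1}(p(\pi_2^{-1}(L), P_2)). \]
Applying $\pi_1 = \pi_2 \circ \tilde\phi$ yields $\pi_1(p(\pi_1^{-1}(L), P_1)) \subseteq \pi_2(p(\pi_2^{-1}(L), P_2))$, and the reverse inclusion follows by symmetry via a lift $P_2 \to P_1$. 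For isomorphism-invariance, given an isomorphism $\phi \colon M \xrightarrow{\sim} M'$ and a projective surjection $\pi \colon P \onto M$ with $(\pi^{-1}(L), P) \in \cP$, the composition $\phi \circ \pi \colon P \onto M'$ satisfies $(\phi \circ \pi)^{-1}(\phi(L)) = \pi^{-1}(L)$, and well-definedness then gives $\po{p}{\ch}(\phi(L), M') = \phi(\po{p}{\ch}(L, M))$.

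For cohereditary, I choose $\pi \colon P \onto M$ with $(\pi^{-1}(N), P) \in \cP$ (available since $(N,M) \in \cP'$) and let $\bar\pi \colon P \onto M/L$ denote its composition with the quotient $M \onto M/L$; since $\bar\pi^{-1}(N/L) = \pi^{-1}(N)$ lies in $\cP$, both sides of the cohereditary identity can be computed with these surjections, yielding
\[ \po{p}{\ch}(N/L, M/L) = \bar\pi(p(\pi^{-1}(N), P)) = \frac{\pi(p(\pi^{-1}(N), P)) + L}{L} = \frac{\po{p}{\ch}(N, M) + L}{L}. \]
For cofunctoriality, given $g \colon M \to M'$ with $(L, M'), (g^{-1}(L), M) \in \cP'$, I would choose projective surjections $\pi_M \colon P \onto M$ and $\pi_{M'} \colon Q \onto M'$ realizing each relevant pair in $\cP$, then lift $g \circ \pi_M$ through $\pi_{M'}$ using projectivity of $P$ to obtain $\tilde g \colon P \to Q$. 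The identity $\tilde g^{-1}(\pi_{M'}^{-1}(L)) = \pi_M^{-1}(g^{-1}(L))$ places both pairs in $\cP$, so cofunctoriality of $p$ applies, and pushing down via $\pi_{M'} \circ \tilde g = g \circ \pi_M$ gives $g(\po{p}{\ch}(g^{-1}(L), M)) \subseteq \po{p}{\ch}(L, M')$, equivalent to the desired inclusion.

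Finally, if $p$ is already cohereditary and cofunctorial on $\cP'$, I fix $(L, M) \in \cP'$ and choose $\pi \colon P \onto M$ with $(\pi^{-1}(L), P) \in \cP$, setting $K := \ker \pi$. Cohereditary of $p$ applied to $K \subseteq \pi^{-1}(L) \subseteq P$ yields $p(\pi^{-1}(L)/K, P/K) = (p(\pi^{-1}(L), P) + K)/K = \pi(p(\pi^{-1}(L), P))$, and the induced isomorphism $P/K \xrightarrow{\sim} M$ carries $\pi^{-1}(L)/K$ to $L$, so isomorphism-invariance of $p$ identifies the left side with $p(L, M)$; hence $p = \po{p}{\ch}$. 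The main obstacle throughout is bookkeeping: ensuring that every pair encountered after passing through a lift or preimage remains in $\cP$ so that cofunctoriality of $p$ applies. Once the lifts are in hand, each step is a short diagram chase.
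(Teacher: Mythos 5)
Your proof is correct and follows essentially the same route as the paper's: well-definedness via a projective lift and cofunctoriality of $p$, isomorphism-invariance by replacing $\pi$ with $\phi\circ\pi$, cohereditariness by computing with the composite surjection onto $M/L$, cofunctoriality by lifting $g\circ\pi_M$ through $\pi_{M'}$, and the final identity by applying cohereditariness of $p$ to $\ker\pi\subseteq\pi^{-1}(L)\subseteq P$. The only cosmetic difference is that you phrase cofunctoriality via the equivalent inclusion $g(\po{p}{\ch}(g^{-1}(L),M))\subseteq\po{p}{\ch}(L,M')$ rather than the preimage form, and you derive isomorphism-invariance directly from well-definedness rather than unwinding the definition; both are fine.
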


\begin{proof}
We first show that $p_c(L,M)$ is well-defined, that is, independent of our choice of $\pi$ and $P$.  Accordingly, suppose $\pi: P \onto M$ and $\pi': P' \onto M$ in $\cM$, where $P, P'$ are projective and $(\pi^{-1}(L),P),((\pi')^{-1}(L),P') \in \cP$.  Let $z\in \po p {\ch}(L, M)$ with respect to $\pi$.  That is, there is some $y \in p(\pi^{-1}(L),P)$ with $\pi(y) = z$.  

Since $\pi'$ is surjective and $P$ is projective, there is a map $\phi: P \ra P'$ in $\cM$ with $\pi = \pi' \circ \phi$.  
By the cofunctoriality hypothesis, 
\[p(\pi^{-1}(L),P)=p((\pi' \circ \phi)^{-1}(L),P)=p(\phi^{-1}((\pi')^{-1}(L)),P) \subseteq \phi^{-1}(p((\pi')^{-1}(L),P')).\]
So $\phi(y) \in p((\pi')^{-1}(L),P')$. Further, $\pi'(\phi(y))=\pi(y)=z$. So $z \in \po p {\ch}(L,M)$ with respect to $\pi'$. Hence $\po p {\ch}(L,M)$ with respect to $P$ and $\pi$ is contained in $\po p {\ch}(L,M)$ with respect to $P'$ and $\pi'$, and by symmetry they are equal.

Next we show that $\po p {\ch}$ satisfies the isomorphism criterion. Suppose $\phi:M \rightarrow M'$ is an isomorphism, $(L,M) \in \cP'$, and $\pi:P \onto M$ with  $P$ projective and $(\pi^{-1}(L),P)=((\phi\circ\pi)^{-1}(\phi(L)),P) \in \cP$. Then  
\begin{align*}
    \phi(\po{p}{\ch}(L,M))&=\phi(\pi(p(\pi^{-1}(L),P)))=\phi(\pi(p(\pi^{-1}(\phi^{-1}(\phi(L))),P)))\\
    &=\phi\circ\pi(p((\phi\circ\pi)^{-1}(\phi(L)),P))=\po{p}{\ch}(\phi(L),M'),\\
\end{align*} which shows that $\po{p}{\ch}$ is invariant under isomorphisms. 

Now we show that $\po p {\ch}$ is cohereditary.  Let $L \subseteq N \subseteq M$ with \[(N,M),(N/L,M/L) \in \cP',\] and let $\pi: P \onto M$ in $\cM$ with $P$ projective such that $(\pi^{-1}(N),P) \in \cP$.  Let $q: M \onto M/L$ be the canonical surjection.
Note that $q^{-1}(N/L)=N$, so that $(q^{-1}(N/L),M) \in \cP'$, and $(q \circ \pi)^{-1}(N/L)=\pi^{-1}(N)$, so that $((q \circ \pi)^{-1}(N/L),P) \in \cP$. Then 
\begin{align*}
\po p {\ch}(N/L, M/L) &= (q \circ \pi) (p((q \circ \pi)^{-1}(N/L),P))\\
 &= q(\pi p(\pi^{-1}(q^{-1}(N/L)),P))) \\
 &= q( \po p {\ch}(q^{-1}(N/L),M))\\
& = \displaystyle\frac{\po p {\ch} (q^{-1}(N/L),M)+L}{L}\\
&=\displaystyle\frac{\po p {\ch} (N,M)+L}{L}\
\end{align*}
Hence, $\po p {\ch}$ is cohereditary.

We now show that $\po p {\ch}$ is cofunctorial. Accordingly, let $L \subseteq N$ and let $\phi: M \to N$ be a homomorphism in $\cM$ with $(L,N),(\phi^{-1}(L),M) \in \cP'$. Choose maps $\pi : P \onto M$ and $q: Q \onto N$ in $\cM$ with $P,Q$ projective such that $(\pi^{-1}(\phi(L)),P),(q^{-1}(L),Q) \in \cP$.  Then since $P$ is projective and $q$ is surjective, there is some map $\tilde \phi: P \ra Q$ in $\cM$ with $q \circ \tilde \phi = \phi \circ \pi$.  

Now let $z\in \po p {\ch} (\phi^{-1}(L),M)$. Then there is some $y \in p(\pi^{-1}(\phi^{-1}(L)), P)$ with $z=\pi (y)$.  Then by cofunctoriality of $p$ when the ambient modules are projective, 
\begin{align*}p(\pi^{-1}(\phi^{-1}(L)),P)&=p((\phi \circ \pi)^{-1}(L),P)
=p((q \circ \tilde{\phi})^{-1}(L),P)\\
&=p(\tilde{\phi}^{-1}(q^{-1}(L)),P)\subseteq \tilde{\phi}^{-1}(p(q^{-1}(L)),Q).\end{align*}
So $\tilde{\phi}(y) \in p(q^{-1}(L),Q)$. Hence $\phi(z)=q(\tilde{\phi}(y)) \in \po p {\ch}(L,N)$, which shows that $\po p {\ch}$ is cofunctorial.

To see that $p=\po p{\ch}$ when $p$ is cohereditary, first we show that if $P$ is projective, $K \subseteq L \subseteq P$, $(L,P) \in \cP$, and $(L/K,P/K) \in \cP'$, then $p(L/K,P/K)=\po p {\ch} (L/K,P/K)$.  Let $\pi: P \rightarrow P/K$.
Then \[\po p{\ch}(L/K,P/K)=\pi(p(L,P))=\displaystyle\frac{p(L,P)+K}{K}=p(L/K,P/K)\]
since $p$ is cohereditary.  

Now let $(N,M) \in \cP'$ and $\pi:P \twoheadrightarrow M$ in $\cM$ with $P$ projective and $(\pi^{-1}(N),P) \in \cP$. Set $\ker(\pi)=K$ so that $\bar{\pi}:P/K \to M$ is an isomorphism. 
Then setting $L/K=\bar{\pi}^{-1}(N)$ where $K \subseteq L \subseteq P$, we have 
\[
\po p{\ch}(N,M)=\po p{\ch}(L/K,P/K)=p(L/K,P/K)=p(N,M). \qedhere
\]
\end{proof}

\begin{prop}\label{pr:chorderpreserving}
Let $\cM$, $\cP$, and $\cP'$ be as in Proposition~\ref{pr:definecohereditaryversion}.
Let $p$ be a cofunctorial (eq. functorial) pair operation defined on $\cP$ that is order preserving on submodules. 
Then $\po p {\ch}$ is a cohereditary cofunctorial pair operation that is order-preserving on submodules. In particular, if for morphisms $g:M \to M'$ in $\cM$, $(L,M) \in \cP'$, then $(g(L),g(M)) \in \cP'$, and if for surjections $\pi:M \to M'$ in $\cM$, if $(L,M),(\pi(L),M') \in \cP'$, then $(\pi^{-1}(\pi(L)),M) \in \cP'$, then
$\po p {\ch}$ is functorial.
\end{prop}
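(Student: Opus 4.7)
The plan is to prove this in two stages, leveraging Propositions \ref{pr:definecohereditaryversion} and \ref{pr:cofunctorial}. Proposition \ref{pr:definecohereditaryversion} already gives that $\po p{\ch}$ is cohereditary and cofunctorial on $\cP'$, so it remains to establish order-preservation on submodules, and then, under the extra assumptions on $\cP'$, promote cofunctoriality to full functoriality.

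For order-preservation on submodules, I would fix $L \subseteq N \subseteq M$ with $(L,M), (N,M) \in \cP'$. The second half of item (4) in Notation \ref{not:coheredpairs} is tailor-made for this situation: it produces a single surjection $\pi: P \twoheadrightarrow M$ from a projective module $P \in \cM$ such that both pullbacks $(\pi^{-1}(L),P)$ and $(\pi^{-1}(N),P)$ lie in $\cP$. Since $\pi^{-1}(L) \subseteq \pi^{-1}(N)$ and $p$ is order-preserving on submodules, one obtains $p(\pi^{-1}(L),P) \subseteq p(\pi^{-1}(N),P)$. Applying $\pi$ and invoking the well-definedness of $\po p{\ch}$ established in Proposition \ref{pr:definecohereditaryversion} then yields $\po p{\ch}(L,M) \subseteq \po p{\ch}(N,M)$, as desired. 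The use of a \emph{common} surjection $\pi$ is the key point here; otherwise one would not be able to compare the two values of $\po p{\ch}$ via $p$.

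For functoriality, I plan to appeal directly to the converse direction of Proposition \ref{pr:cofunctorial}(\ref{it:funceq}). That result says that a cofunctorial, order-preserving-on-submodules pair operation is functorial, provided two closure-type conditions hold on the underlying class of pairs. The two extra hypotheses placed on $\cP'$ in the statement of the present proposition, namely that $(g(L),g(M)) \in \cP'$ whenever $g: M \to M'$ and $(L,M) \in \cP'$, and that $(\pi^{-1}(\pi(L)),M) \in \cP'$ whenever $\pi$ is a surjection with $(L,M),(\pi(L),M') \in \cP'$, are precisely those two conditions applied to $\cP'$. Together with the cofunctoriality and order-preservation on submodules of $\po p{\ch}$ already in hand, this immediately gives functoriality.

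I do not anticipate any substantive obstacle; the only point requiring any care is the bookkeeping check that Proposition \ref{pr:cofunctorial}(\ref{it:funceq}), stated there in terms of a generic class $\cP$, applies equally well with $\cP'$ in that role and with $\po p{\ch}$ in the role of the pair operation. This is immediate from the fact that $\po p{\ch}$ is by construction a pair operation on $\cP'$, so no additional hypothesis is needed beyond what is stated in the proposition.
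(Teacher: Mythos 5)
Your proof is correct and takes essentially the same route as the paper: Proposition \ref{pr:definecohereditaryversion} handles cohereditariness and cofunctoriality, item (4) of Notation \ref{not:coheredpairs} supplies the common surjection $\pi$ needed for the order-preservation argument, and Proposition \ref{pr:cofunctorial}(\ref{it:funceq}) delivers functoriality under the stated extra hypotheses on $\cP'$.
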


\begin{proof}
By Proposition \ref{pr:definecohereditaryversion}, it suffices to show that $\po p {\ch}$ is order-preserving on submodules. The final comment follows by Proposition \ref{pr:cofunctorial}(\ref{it:funceq}). 

Suppose $L \subseteq N \subseteq M$ and $(L,M), (N,M) \in \cP'$  and suppose  $P$ is a projective module such that $\pi:P \onto M$ in $\cM$ and  $(\pi^{-1}(L),P), (\pi^{-1}(N),P) \in \cP$ as in Notation \ref{not:coheredpairs}.

Since $\pi^{-1}(L) \subseteq \pi^{-1}(N) \subseteq P$, and $p$ is order-preserving on submodules then $p(\pi^{-1}(L),P) \subseteq p(\pi^{-1}(N),P)$. 
Now observe that 
\[\po p{\ch}(L,M)=\pi(p(\pi^{-1}(L),P)) \subseteq \pi(p(\pi^{-1}(N),P))= \po p{\ch}(N,M)
\]
implying that $\po p{\ch}$ is order-preserving on submodules.
\end{proof}

\begin{cor}
\label{cor:cohereditaryextensive}
Let $\cM$, $\cP$, and $\cP'$ be as in Proposition~\ref{pr:definecohereditaryversion}, and assume too that for every $(L,M) \in \cP'$, $(0,M/L) \in \cP'$.
Let $p$ be a cofunctorial pair operation defined on $\cP$ that is extensive. 
Then $\po p {\ch}$ is a residual, cofunctorial pair operation on $\cP'$.
In particular, if $p$ is a residual, cofunctorial  pair operation defined on $\cP'$, then $\po p {\ch}=p$.
\end{cor}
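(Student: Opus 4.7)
The plan is to show that this corollary is essentially a combination of Proposition~\ref{pr:definecohereditaryversion} with Lemma~\ref{lem:resiffextcoher}. Proposition~\ref{pr:definecohereditaryversion} already hands us that $\po p {\ch}$ is a cohereditary, cofunctorial pair operation on $\cP'$. By Lemma~\ref{lem:resiffextcoher}, any extensive cohereditary pair operation is residual, provided the ambient class has the property that $(0,M/L)\in\cP'$ whenever $(L,M)\in\cP'$, which is exactly our added standing hypothesis. Hence the only thing I really need to verify is that $\po p {\ch}$ inherits extensivity from $p$.

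For that verification, fix $(L,M)\in\cP'$ and use Notation~\ref{not:coheredpairs} to choose a surjection $\pi: P\onto M$ in $\cM$ with $P$ projective and $(\pi^{-1}(L),P)\in\cP$. Since $p$ is extensive on $\cP$, we have $\pi^{-1}(L)\subseteq p(\pi^{-1}(L),P)$. Applying $\pi$ and using $\pi(\pi^{-1}(L))=L$ (by surjectivity of $\pi$), we get
\[
L \;=\; \pi(\pi^{-1}(L)) \;\subseteq\; \pi(p(\pi^{-1}(L),P)) \;=\; \po p {\ch}(L,M),
\]
so $\po p {\ch}$ is extensive on $\cP'$. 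Together with cohereditariness and the hypothesis on $\cP'$, Lemma~\ref{lem:resiffextcoher} then delivers residuality.

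For the ``In particular'' statement, suppose $p$ itself is a residual, cofunctorial pair operation on $\cP'$. The easy direction of Lemma~\ref{lem:resiffextcoher} says that any residual pair operation is automatically extensive and cohereditary. Then the final sentence of Proposition~\ref{pr:definecohereditaryversion} (which asserts $\po p {\ch}=p$ for any cohereditary cofunctorial $p$ on $\cP'$) gives the desired equality.

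No real obstacle arises here; the corollary is a bookkeeping consequence, and the only substantive step is the one-line argument that extensivity passes from $p$ to $\po p {\ch}$ under the surjection $\pi$. The care that must be taken is ensuring the hypothesis $(0,M/L)\in\cP'$ is genuinely needed only in the ``extensive $+$ cohereditary $\Rightarrow$ residual'' direction of Lemma~\ref{lem:resiffextcoher}, which matches exactly what we are using.
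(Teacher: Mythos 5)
Your proof is correct in structure and takes essentially the same route as the paper: verify extensivity of $\po p {\ch}$ directly via the surjection $\pi$, invoke Lemma~\ref{lem:resiffextcoher} to upgrade ``extensive $+$ cohereditary'' to ``residual,'' and handle the ``In particular'' clause by running the lemma the other way and then appealing to the last sentence of Proposition~\ref{pr:definecohereditaryversion}.

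There is, however, a small but real confusion in your bookkeeping about where the hypothesis $(0,M/L)\in\cP'$ is needed. Reading Lemma~\ref{lem:resiffextcoher} carefully: the implication ``extensive $+$ cohereditary $\Rightarrow$ residual'' holds \emph{unconditionally}, while it is the \emph{converse} (``residual $\Rightarrow$ extensive $+$ cohereditary'') that requires the added assumption $(0,M/L)\in\cP$. You use the unconditional forward direction to show $\po p {\ch}$ is residual, and the conditional converse in the ``In particular'' part (to extract extensivity and cohereditariness from residuality of $p$ before applying Proposition~\ref{pr:definecohereditaryversion}). Your closing paragraph asserts the opposite --- that the hypothesis lives in the ``extensive $+$ cohereditary $\Rightarrow$ residual'' direction --- which is backwards. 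The argument still goes through, since the hypothesis is in fact available when you (silently) use it in the converse direction, but you should correct the attribution: the standing assumption $(0,M/L)\in\cP'$ is doing its work in the ``In particular'' sentence, not in establishing residuality of $\po p {\ch}$.
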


\begin{proof}
Suppose $(L,M) \in \cP'$ and $\pi:P \onto M$ is a surjective homomorphism in $\cM$ with $(\pi^{-1}(L),P) \in \cP$. Since $p$ is extensive \[L = \pi(\pi^{-1}(L)) \subseteq \pi(p( \pi^{-1}(L), P))=\po p {\ch}(L,M)\] implying that  $\po p {\ch}$ is extensive.
Now by Lemma~\ref{lem:resiffextcoher}, an extensive cohereditary pair operation is residual.  Thus by Proposition~\ref{pr:definecohereditaryversion}, $\po p {\ch}$ is residual and cofunctorial.  If $p$ is a cofunctorial residual pair operation, then $p$ is a cofunctorial extensive cohereditary pair operation by Lemma~\ref{lem:resiffextcoher} and again Proposition~\ref{pr:definecohereditaryversion} gives us that $\po p {\ch}=p$ when $p$ is residual.  
\end{proof}

\begin{notation}
When $p$ is extensive, we will denote $\po p {\ch}$ by $\po p {\re}$ from now on.

When $p=\cl$ is a closure operation and $L \subseteq M$, we will denote $p(L,M)=L_M^{\cl}$.
\end{notation}

\begin{prop}
\label{pr:defineresidualversion}
Let $\cM$, $\cP$, and $\cP'$ be as in Proposition \ref{pr:definecohereditaryversion}, and $\cl$ a cofunctorial closure operation defined on $\cP$. Then the cohereditary version $\po {\cl} {\re}$ of $\cl$ is a residual, cofunctorial closure operation on $\cP$ given as follows:
\[\rcl L M :=\pi((\pi^{-1}(L))^\cl_P).\]
In particular, if $\cl$ is itself residual, then $\cl=\po {\cl} {\re}$.
\end{prop}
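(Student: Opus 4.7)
The plan is to apply the general machinery of Proposition~\ref{pr:definecohereditaryversion}, Proposition~\ref{pr:chorderpreserving}, and Corollary~\ref{cor:cohereditaryextensive} to the special case $p = \cl$, and then verify the only axiom not already handled by those results, namely idempotence. Recall that a closure operation is, by definition, extensive, order-preserving on submodules, and idempotent. Since $\cl$ is also assumed cofunctorial, Proposition~\ref{pr:definecohereditaryversion} yields a cohereditary, cofunctorial pair operation $\po{\cl}{\ch}$ on $\cP'$ whose formula is $\po{\cl}{\ch}(L,M) = \pi((\pi^{-1}(L))^\cl_P)$ for any surjection $\pi: P \onto M$ from a projective $P \in \cM$ with $(\pi^{-1}(L),P) \in \cP$. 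Corollary~\ref{cor:cohereditaryextensive} upgrades this to \emph{residual} (so we may write $\po{\cl}{\re}$ in place of $\po{\cl}{\ch}$), and Proposition~\ref{pr:chorderpreserving} gives that $\po{\cl}{\re}$ is order-preserving on submodules. This already collapses the formula claim, extensivity, order-preservation on submodules, residuality, and cofunctoriality into standing results.

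It then remains to check idempotence of $\po{\cl}{\re}$. Fix $(L,M) \in \cP'$ such that $(\rcl{L}{M}, M) \in \cP'$ as well, and pick $\pi : P \onto M$ with $P$ projective in $\cM$ and $(\pi^{-1}(L), P) \in \cP$. The key preliminary computation is
\[
\pi^{-1}(\rcl{L}{M}) \;=\; \pi^{-1}\!\bigl(\pi((\pi^{-1}(L))^\cl_P)\bigr) \;=\; (\pi^{-1}(L))^\cl_P,
\]
where the second equality uses that $\ker \pi \subseteq \pi^{-1}(L) \subseteq (\pi^{-1}(L))^\cl_P$ by extensivity of $\cl$. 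Since $\cM$ is abelian, $(\pi^{-1}(L))^\cl_P$ is again an object of $\cM$ and therefore $((\pi^{-1}(L))^\cl_P, P) \in \cP$, so $\pi$ also computes $\rcl{(\rcl{L}{M})}{M}$. Applying the defining formula a second time and then using idempotence of $\cl$ on $\cP$ yields
\[
\rcl{\bigl(\rcl{L}{M}\bigr)}{M} \;=\; \pi\!\left( \bigl((\pi^{-1}(L))^\cl_P\bigr)^\cl_P \right) \;=\; \pi\!\bigl((\pi^{-1}(L))^\cl_P\bigr) \;=\; \rcl{L}{M}.
\]
Combined with extensivity and order-preservation on submodules, this proves $\po{\cl}{\re}$ is a closure operation.

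The final ``in particular'' clause is immediate from the corresponding clause of Corollary~\ref{cor:cohereditaryextensive}: if $\cl$ is itself residual (equivalently, cohereditary and extensive by Lemma~\ref{lem:resiffextcoher}), then the cohereditary version of $\cl$ recovers $\cl$. The only genuinely new content here is idempotence, and the main (minor) obstacle is the bookkeeping that confirms $((\pi^{-1}(L))^\cl_P, P) \in \cP$ so that $\cl$ can be applied a second time inside $P$, and the preimage identity $\pi^{-1}(\rcl{L}{M}) = (\pi^{-1}(L))^\cl_P$; both are straightforward consequences of extensivity together with the hypothesis that $\cM$ is abelian with enough projectives.
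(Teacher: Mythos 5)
Your proposal is correct and takes essentially the same approach as the paper: collapse everything except idempotence into Proposition~\ref{pr:definecohereditaryversion}, Proposition~\ref{pr:chorderpreserving}, and Corollary~\ref{cor:cohereditaryextensive}, then verify idempotence directly via the preimage identity $\pi^{-1}(\rcl{L}{M}) = (\pi^{-1}(L))^\cl_P$ together with idempotence of $\cl$. The paper phrases the computation with the identification $M \cong P/K$, $N \cong L/K$, rather than in terms of $\pi^{-1}$, but the content is identical — the crux is $\ker\pi \subseteq \pi^{-1}(L) \subseteq (\pi^{-1}(L))^\cl_P$ (extensivity of $\cl$) so that $\pi^{-1}(\rcl LM) = (\pi^{-1}(L))^\cl_P$.

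One small imprecision worth flagging in your bookkeeping: you assert that $((\pi^{-1}(L))^\cl_P, P) \in \cP$ ``since $\cM$ is abelian,'' but $\cP$ is merely a prescribed collection of pairs (Notation~\ref{not:coheredpairs}), not all pairs with projective ambient module, so membership does not follow just because $(\pi^{-1}(L))^\cl_P$ is an object of $\cM$. The correct justification is Notation~\ref{not:coheredpairs}(4): since $(L,M)$ and $(\rcl LM, M)$ are both in $\cP'$ with $L \subseteq \rcl LM$, there is a single surjection $\pi: P \onto M$ with both $(\pi^{-1}(L),P)$ and $(\pi^{-1}(\rcl LM),P)$ in $\cP$; then well-definedness of the formula plus the preimage identity give $((\pi^{-1}(L))^\cl_P, P) = (\pi^{-1}(\rcl LM),P) \in \cP$. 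The paper's own proof leaves this step tacit, so this is a shared informality rather than a gap that undermines your argument.
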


\begin{proof}
Since every 
cofunctorial closure operation is a 
cofunctorial pair operation, then Corollary \ref{cor:cohereditaryextensive} gives us that $\po {\cl} {\re}$ is a cofunctorial residual pair operation on $\cP'$ and if $\cl$ is a residual closure on $\cP'$, then $\cl=\po {\cl} {\re}$.  By Proposition 
\ref{pr:chorderpreserving} and Corollary \ref{cor:cohereditaryextensive}, we see that $\po{\cl}{\re}$ is extensive and order-preserving on submodules.  
When we have shown $\po{\cl}{\re}$ is idempotent, our proof will be complete.

Let $(N, M) \in \cP'$ and $P$ be a projective $R$-module with submodule $L$ such that $M \cong P/K$ and $N \cong L/K$ in $\cM$, and $(L,P) \in \cP$.  It will be enough to show that $\po{\cl}{\re}$ is idempotent on submodules of  $P/K$.  Suppose $\pi:P \onto P/K$, then by definition, \[(L/K)_{P/K}^{\po{\cl}{\re}}=\displaystyle\frac{L_P^{\cl}+K}{K}=\displaystyle\frac{L_P^{\cl}}{K}\] where the last equality holds because $\cl$ is extensive.
Now applying $\po{\cl}{\re}$ to $(L/K)_{P/K}^{\po{\cl}{\re}}$ and using the above equality and the idempotence of $\cl$ we obtain \[((L/K)_{P/K}^{\po{\cl}{\re}})_{P/K}^{\po{\cl}{\re}}=\displaystyle\frac{(L_P^{\cl})_P^{\cl}}{K}=\displaystyle\frac{L_P^{\cl}}{K}=(L/K)_{P/K}^{\po{\cl}{\re}}\] 
concluding our proof.
\end{proof}

\begin{notation}
When $\int$ is an interior operation, we denote $L_{\int}^M:=\int(L,M)$.
\end{notation}

\begin{prop}
\label{pr:definecohereditaryinterior}
Let $\cM$, $\cP$, and $\cP'$ be as in Notation \ref{not:coheredpairs}, and let $\int$ be a cofunctorial (eq. functorial) interior operation defined on $\cP$. Assume that whenever $(L,P) \in \cP$ and $(L/K,P/K) \in \cP'$, then $(L_{\int}^P,P), (L_{\int}^P+K,P) \in \cP$.
Then the cohereditary version $\int_{\ch}$ of $\int$ is a cohereditary, cofunctorial (eq. functorial) interior operation given by:
\[L_{\po {\int}{\ch}}^M :=\pi((\pi^{-1}(L))^P_{\int}).\]
In particular, if $\int$ is a cohereditary, cofunctorial (eq. functorial) interior operation on $\cP'$, then $\int_{\ch}=\int$.
\end{prop}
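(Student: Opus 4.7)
The plan is to reduce the bulk of the work to Proposition~\ref{pr:definecohereditaryversion} and Proposition~\ref{pr:chorderpreserving}, which together produce a cohereditary cofunctorial pair operation that is order-preserving on submodules and that is given by exactly the stated formula. What remains to be checked are the two properties that upgrade such a pair operation to an interior operation, namely intensivity and idempotence; the extra hypothesis on $\cP$ in the statement is present precisely to let us iterate $\int$ inside the idempotence check.

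First I would invoke Proposition~\ref{pr:definecohereditaryversion} applied to the cofunctorial pair operation $\int$; it yields that $\int_{\ch}$ is a well-defined, cohereditary, cofunctorial pair operation on $\cP'$, with $L_{\int_{\ch}}^M = \pi((\pi^{-1}(L))_\int^P)$ independently of the choice of $\pi \colon P \onto M$ and $(\pi^{-1}(L),P) \in \cP$. Then Proposition~\ref{pr:chorderpreserving}, together with the fact that interior operations are order-preserving on submodules, shows that $\int_{\ch}$ is order-preserving on submodules as well. Intensivity is then direct: given $(L,M) \in \cP'$ with $\pi$ as above, intensivity of $\int$ gives $(\pi^{-1}(L))_\int^P \subseteq \pi^{-1}(L)$, and applying $\pi$ yields $L_{\int_{\ch}}^M \subseteq L$.

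The main obstacle is idempotence, and this is where the added hypothesis on $\cP$ is essential. To compute $(L_{\int_{\ch}}^M)_{\int_{\ch}}^M$, I would set $L' := \pi^{-1}(L)$ and $K := \ker \pi$, and identify $M$ with $P/K$, so that $L_{\int_{\ch}}^M = ((L')_\int^P + K)/K$. Using the same $\pi$ as the surjection witnessing the second application of $\int_{\ch}$, one has $\pi^{-1}(L_{\int_{\ch}}^M) = (L')_\int^P + K$, and the assumed hypothesis $((L')_\int^P + K, P) \in \cP$ is exactly what allows $\int$ to be applied to this submodule. Hence
\[ (L_{\int_{\ch}}^M)_{\int_{\ch}}^M \;=\; \pi\bigl( ((L')_\int^P + K)_\int^P \bigr). \]
Since $K \subseteq L'$ and $(L')_\int^P \subseteq L'$ by intensivity, one has the nesting $(L')_\int^P \subseteq (L')_\int^P + K \subseteq L'$. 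Applying $\int$ throughout, together with order-preservation on submodules and idempotence of $\int$ on the pairs $((L')_\int^P, P)$ and $(L', P)$ in $\cP$, sandwiches $((L')_\int^P + K)_\int^P$ between $(L')_\int^P$ and itself, forcing equality. Pushing forward by $\pi$ then recovers $L_{\int_{\ch}}^M$, so idempotence follows.

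The \emph{in particular} statement is immediate from the analogous clause in Proposition~\ref{pr:definecohereditaryversion}: if $\int$ is already a cohereditary cofunctorial pair operation on $\cP'$, that proposition gives $\int_{\ch} = \int$ at the level of pair operations, and all interior-operation properties are inherited.
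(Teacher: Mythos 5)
Your proposal is correct and follows essentially the same approach as the paper: you invoke Propositions \ref{pr:definecohereditaryversion} and \ref{pr:chorderpreserving} for the pair-operation properties, verify intensivity directly, and prove idempotence by lifting to the projective cover $P$ and using the added hypothesis on $\cP$ to apply $\int$ to $(L')_\int^P + K$. The only minor difference is organizational — you establish the equality $\bigl((L')_\int^P + K\bigr)_\int^P = (L')_\int^P$ by a sandwich entirely inside $P$ and then push forward by $\pi$, whereas the paper checks the two containments of idempotence separately (one at the level of $P/K$ via order-preservation and intensivity of $\po{\int}{\ch}$, one at the level of $P$) — but the key ideas and hypothesis usage coincide.
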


\begin{proof}
First we show that $\po {\int}{\ch}$ is intensive. Note that if $(L,M) \in \cP'$ and $\pi:P \onto M$ is a surjective homomorphism in $\cM$ with $(\pi^{-1}(L),P) \in \cP$, then since $\int$ is intensive \[L_{\po{\int}{\ch}}^M= \pi((\pi^{-1}(L))_{\int}^P)\subseteq \pi(\pi^{-1}(L))=L\] implying that  $\po {\int} {\ch}$ is intensive.
 
Proposition \ref{pr:chorderpreserving} implies that $\po {\int}{\ch}$ is order-preserving on submodules. Thus to show that $\po{\int}{\ch}$ is a cohereditary interior operation, we need only show that $\po{\int}{\ch}$ is idempotent.  Suppose $P$ is a projective $R$-module and $K \subseteq L \subseteq P$ are such that $(L,P) \in \cP$ and $(L/K, P/K) \in \cP'$.  By definition of $\po{\int}{\ch}$ and intensivity we have
 \[(L/K)_{\po {\int}{\ch}}^{P/K}=\displaystyle\frac{L_{\int}^P+K}{K} \subseteq L/K\]
 Assuming further that $((L/K)_{\po{\int}{\ch}}^{P/K}, P/K) \in \cP'$, then 
  \[(L/K_{\po{\int}{\ch}}^{P/K})_{\po{\int}{\ch}}^{P/K} \subseteq (L/K)_{\po{\int}{\ch}}^{P/K}\]
  as $\po{\int}{\ch}$ is order-preserving on submodules.  
  We also know that $L_{\int}^P \subseteq L_{\int}^P+K$, so since $(L_{\int}^P,P), (L_{\int}^P+K,P) \in \cP$ by hypothesis, then using the fact that $\int$ is idempotent we have
  \[L_{\int}^P=(L_{\int}^P)_{\int}^P \subseteq (L_{\int}^P+K)_{\int}^P\]
  which implies
  \begin{align*}(L/K)_{\po{\int}{\ch}}^{P/K}&=\displaystyle\frac{L_{\int}^P+K}{K}=\displaystyle\frac{(L_{\int}^P)_{\int}^P+K}{K}\\
  &\subseteq \displaystyle\frac{(L_{\int}^P+K)_{\int}^P+K}{K}=((L/K)_{\po{\int}{\ch}}^{P/K})_{\po{\int}{\ch}}^{P/K}.
  \end{align*}
  Now for any $(N,M)\in \cP'$, with $M \cong P/K$ and $N \cong L/K$ in $\cM$, we have \[(N_{\po{\int}{\ch}}^M)_{\po{\int}{\ch}}^M = N_{\po{\int}{\ch}}^M,\] which implies $\po{\int}{\ch}$ is idempotent and hence a cohereditary functorial 
  interior operation.
\end{proof}

\begin{prop}\label{pr:sfcohereditarypairineq}
Let $\cM$, $\cP$, and $\cP'$ be as in Notation \ref{not:coheredpairs}, let $p$ be a cofunctorial pair operation defined on $\cP'$, and let $\po p {\ch}$ be its cohereditary version on $\cP'$.  Then $\po p {\ch} \leq p$.
\end{prop}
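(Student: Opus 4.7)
The plan is quite short because cofunctoriality of $p$ does essentially all the work. Fix $(L,M) \in \cP'$ and choose, as in Notation~\ref{not:coheredpairs}, a surjection $\pi : P \onto M$ in $\cM$ with $P$ projective and $(\pi^{-1}(L), P) \in \cP \subseteq \cP'$. By definition of the cohereditary version,
\[
\po p {\ch}(L,M) = \pi\bigl(p(\pi^{-1}(L), P)\bigr).
\]

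Now I would apply cofunctoriality of $p$ to the map $\pi : P \to M$ and the submodule $L \subseteq M$. Both $(L,M) \in \cP'$ and $(\pi^{-1}(L), P) \in \cP'$ hold by the choice of $\pi$, so cofunctoriality yields
\[
p(\pi^{-1}(L), P) \subseteq \pi^{-1}\bigl(p(L,M)\bigr).
\]
Applying $\pi$ to both sides and using that $\pi$ is surjective (so $\pi(\pi^{-1}(X)) = X$ for any submodule $X$ of $M$), we get
\[
\po p {\ch}(L,M) = \pi\bigl(p(\pi^{-1}(L), P)\bigr) \subseteq \pi\bigl(\pi^{-1}(p(L,M))\bigr) = p(L,M),
\]
which is the desired inequality.

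There is no real obstacle here; the only subtlety is to check that the hypotheses of cofunctoriality are satisfied, namely that both $(L,M)$ and $(\pi^{-1}(L),P)$ lie in the class on which $p$ is defined. Since $p$ is assumed to be defined and cofunctorial on $\cP'$, and $\cP \subseteq \cP'$, this is automatic from the choice of $\pi$ guaranteed by Notation~\ref{not:coheredpairs}. The well-definedness of $\po p{\ch}(L,M)$ (independence of $\pi$) is already established in Proposition~\ref{pr:definecohereditaryversion}, so no additional verification is needed.
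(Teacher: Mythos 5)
Your proof is correct and is essentially the same as the paper's: both choose a projective cover $\pi: P \onto M$ with $(\pi^{-1}(L),P) \in \cP$, apply cofunctoriality of $p$ to $\pi$ and $L$, then push forward along the surjection $\pi$.
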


\begin{proof}
Suppose $(L, M) \in \cP'$, and let $\pi:P \onto M$ be a surjection in $\cM$ such that $(\pi^{-1}(L),P) \in \cP$. By definition $\po p {\ch}(L,M)= \pi(p(\pi^{-1}(L), P))$.  Since $p$ is cofunctorial, then 
\[\pi(p(\pi^{-1}(L), P)) \subseteq \pi(\pi^{-1}(p(L,M)))=p(L,M)\]
since $\pi$ is surjective.
Hence, $\po p{\ch} \leq p$.
\end{proof}

\begin{cor}\label{pr:rclincl}
Let $\cl$ be a cofunctorial (eq. functorial) closure operation defined on $\cP$. Then $\po {\cl} {\re} \le \cl$.
\end{cor}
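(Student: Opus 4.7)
The plan is to observe that this corollary is a direct specialization of Proposition \ref{pr:sfcohereditarypairineq} combined with the convention fixed just after Corollary \ref{cor:cohereditaryextensive}. A closure operation is by definition a pair operation (it is extensive, order-preserving on submodules, and idempotent), and by hypothesis $\cl$ is cofunctorial. Since $\cl$ is extensive, the cohereditary version $\po{\cl}{\ch}$ is, by our notational convention, written $\po{\cl}{\re}$. The existence and well-definedness of $\po{\cl}{\re}$ on $\cP'$, and in particular its value $\po{\cl}{\re}(L,M) = \pi(\cl(\pi^{-1}(L),P))$ for a suitable projective cover $\pi:P \onto M$, are guaranteed by Proposition \ref{pr:defineresidualversion}.

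With this setup, applying Proposition \ref{pr:sfcohereditarypairineq} with $p = \cl$ gives immediately
\[
\po{\cl}{\re}(L,M) \;=\; \po{\cl}{\ch}(L,M) \;\subseteq\; \cl(L,M)
\]
for every $(L,M) \in \cP'$, which is the desired conclusion. Unwinding what this says with our notation, we have $\rcl{L}{M} \subseteq \cl(L,M) = L^{\cl}_M$.

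I do not expect any obstacle here: the work was already done in Proposition \ref{pr:sfcohereditarypairineq}, whose proof reduces to noting that for any surjection $\pi: P \onto M$ with $(\pi^{-1}(L),P) \in \cP$, cofunctoriality of $\cl$ gives $\cl(\pi^{-1}(L),P) \subseteq \pi^{-1}(\cl(L,M))$, and applying $\pi$ to both sides yields the inclusion $\pi(\cl(\pi^{-1}(L),P)) \subseteq \cl(L,M)$ since $\pi$ is surjective. The only thing to verify is that the hypotheses of Proposition \ref{pr:sfcohereditarypairineq} are genuinely met, namely that $\cl$ is a cofunctorial pair operation on the right class of pairs; this is immediate from the hypothesis that $\cl$ is a cofunctorial closure operation defined on $\cP$ together with the standing assumptions of Notation \ref{not:coheredpairs}.
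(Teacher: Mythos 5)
Your proof is correct and follows essentially the same route as the paper: both reduce the corollary to Proposition~\ref{pr:sfcohereditarypairineq}, after noting that $\cl$ is cofunctorial and extensive and hence that $\po{\cl}{\ch}$ may be written $\po{\cl}{\re}$ (the paper cites Lemma~\ref{lem:resiffextcoher} for this identification, while you invoke the notational convention, which amounts to the same thing). One tiny terminological quibble: the map $\pi \colon P \onto M$ need only be a surjection from a projective module, not a \emph{projective cover} in the technical sense, so that phrase should be avoided.
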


\begin{proof}
Since $\cl$ is both cofunctorial and extensive, and cohereditary, extensive operations are residual by Lemma \ref{lem:resiffextcoher}, this is a direct consequence of Proposition~\ref{pr:sfcohereditarypairineq}.
\end{proof}

\begin{example}\label{ex:lirverstrict}
We note here that the inequality in Proposition~\ref{pr:rclincl} can be strict.  Namely, let $R$ be any Noetherian local domain that is not a DVR or a field, let $L$ be a non-integrally closed ideal of $R$, and let $N$ be the integral closure of $L$.  Then the usual integral closure of $L$ in $N$ is $N$ itself, but the \emph{liftable} (i.e. residual) integral closure of $L$ in $N$ cannot be $N$. (See Definition \ref{def:li}.)  Indeed, by \cite[Proposition 2.4 (11)]{nmeUlr-lint}, we have $\lic LN \subseteq L + \m N$, so by Nakayama's lemma we cannot have $\lic LN = N$.

For a concrete example, let $R=k[\![x,y]\!]$ where $k$ is any field, let $N$ be the ideal $(x^2, xy, y^2)$, and let $L$ be the sub-ideal $(x^2, y^2)$.  Then we have a surjection $\pi: R \ra N/L$ given by $1 \mapsto xy + L$.  The kernel is $L:N = (x^2, y^2):xy = (x,y) = \m$, so $\lic LN / L = \pi(\m^-_R) = \pi(\m) = 0$, whence $\lic LN = L$.

Indeed we can expand this example to give a case where $0^\cl_M$ and $0^{\cl_{\rm r}}_M$ can differ.  For this, let $M = N/L$, where $R=R=k[\![x,y]\!]$, $N=(x^2, xy, y^2)$, and $L=(x^2, y^2)$.  Then as above, $\lic 0 M = \frac{\lic LN}L = 0$, but $0^{-\rm{EHU}}_M = M$. To see the latter equality, note first that $\Hom_R(M,R)=0$.  This is because if $g:M \ra R$ is $R$-linear and $c=g(xy+L)$, then since $x^2 y \in L$, we have $0 = g(x^2y+L) = g(x(xy+L)) = xg(xy+L) = xc$, but since $x$ is an $R$-regular element, it follows that $c=0$.  Thus, $\Hom_R(M,F)=0$ for any free module $F$, so that $0^{-\rm{EHU}}_M = M$ by Definition~\ref{def:EHU}.  Since $\li = {\rm EHU}_r$, we are done.
\end{example}

In Example \ref{ex:residualversionsdisagree}, we present an example where the residual versions of two closures agree on a particular pair, but the original closures disagree on the same pair.

\begin{lemma}\label{lem:compch}
Let $\cM$, $\cP$, and $\cP'$ be as in Notation \ref{not:coheredpairs}.
If $p \leq q$ are cofunctorial pair operations defined on $\cP$, then $\po{p}{\ch} \leq \po{q}{\ch}$ on $\cP'$.  In particular, if $c \leq d$ are closure operations on $\cP$, then $\po{c}{\re} \leq \po{d}{\re}$ on $\cP'$.
\end{lemma}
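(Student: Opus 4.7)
The plan is to unwind the definition of the cohereditary version pointwise and invoke monotonicity of $p$ and $q$ on the single auxiliary pair in $\cP$. Specifically, given $(L,M) \in \cP'$, Notation~\ref{not:coheredpairs} guarantees a surjection $\pi : P \onto M$ in $\cM$ with $P$ projective and $(\pi^{-1}(L),P) \in \cP$. By Proposition~\ref{pr:definecohereditaryversion}, this same choice of $\pi$ computes both cohereditary versions:
\[
\po{p}{\ch}(L,M) \;=\; \pi\bigl(p(\pi^{-1}(L),P)\bigr), \qquad \po{q}{\ch}(L,M) \;=\; \pi\bigl(q(\pi^{-1}(L),P)\bigr).
\]

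Since $(\pi^{-1}(L),P) \in \cP$, the hypothesis $p \leq q$ gives immediately $p(\pi^{-1}(L),P) \subseteq q(\pi^{-1}(L),P)$. Images of submodules under $\pi$ preserve inclusions, so applying $\pi$ to both sides yields $\po{p}{\ch}(L,M) \subseteq \po{q}{\ch}(L,M)$, as required. The fact that Proposition~\ref{pr:definecohereditaryversion} showed the value $\po{p}{\ch}(L,M)$ is independent of the choice of $\pi$ is what makes this one-pair comparison legitimate; otherwise, one might worry that different surjections were needed for $p$ and $q$.

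For the ``In particular'' clause, I would observe that since closure operations are extensive, the cohereditary versions $\po{c}{\ch}$ and $\po{d}{\ch}$ are the residual versions $\po{c}{\re}$ and $\po{d}{\re}$ by Corollary~\ref{cor:cohereditaryextensive} and the notational convention introduced immediately after it (and by Proposition~\ref{pr:defineresidualversion} for the closure-operation case). Thus the first part of the lemma specializes directly to $\po{c}{\re} \leq \po{d}{\re}$.

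There is no real obstacle here: the argument is essentially formal, and the only subtlety is making sure to use the \emph{same} surjection $\pi$ on both sides of the comparison, which is legitimized by the well-definedness established in Proposition~\ref{pr:definecohereditaryversion}.
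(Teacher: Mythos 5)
Your proof is correct and follows essentially the same route as the paper's: fix one surjection $\pi: P \onto M$ computing both cohereditary versions, apply $p \leq q$ on the pair $(\pi^{-1}(L),P) \in \cP$, and push forward along $\pi$. Your extra remark about well-definedness justifying the single-$\pi$ comparison is a nice clarification, and the handling of the ``in particular'' clause is fine.
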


\begin{proof}
If $p \leq q$, then for all $(N,P) \in \cP$, $p(N,P) \subseteq q(N,P)$.  If $(L,M) \in \cP'$ then there exists a projective module $P$ and surjection $\pi:P \onto M$ in $\cM$ such that $\po{p}{\ch}(L,M)=\pi(p(\pi^{-1}(L),P))$ and $\po{q}{\ch}(L,M)=\pi(q(\pi^{-1}(L),P))$.  Since $p(\pi^{-1}(L),P) \subseteq q(\pi^{-1}(L),P)$, then \[\po{p}{\ch}(L,M)=\pi(p(\pi^{-1}(L),P)) \subseteq \pi(q(\pi^{-1}(L),P))=\po{q}{\ch}(L,M)\] giving us the result.
\end{proof}

\begin{lemma}\label{lem:inverseimagercl}
Let $\cM$, $\cP$, and $\cP'$ be as in Notation \ref{not:coheredpairs}. Let $\cl$ be a cofunctorial (eq. functorial) closure operation on $\cP$ with residual version $\po {\cl} {\re}$.  Let $(L,M) \in \cP'$.  Then for any projective module $P$ and surjection $\pi: P \onto M$ in $\cM$ with $(\pi^{-1}(L),P) \in \cP$, we have $\pi^{-1}(\rcl L M) \subseteq \pi^{-1}(L)^\cl_P$.
\end{lemma}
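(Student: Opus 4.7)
The plan is to unwind the definition of $\po{\cl}{\re}$ from Proposition~\ref{pr:defineresidualversion} and exploit the well-definedness shown there, which tells us that for \emph{any} projective $P$ and surjection $\pi: P \onto M$ in $\cM$ with $(\pi^{-1}(L),P) \in \cP$, we may compute $\rcl L M = \pi((\pi^{-1}(L))^\cl_P)$.

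Fix such a $P$ and $\pi$ as in the statement. Then
\[ \pi^{-1}(\rcl L M) \;=\; \pi^{-1}\bigl(\pi((\pi^{-1}(L))^\cl_P)\bigr) \;=\; (\pi^{-1}(L))^\cl_P + \ker\pi, \]
using the standard identity $\pi^{-1}(\pi(A)) = A + \ker\pi$ for a submodule $A$ of $P$ when $\pi$ is surjective. So everything reduces to absorbing $\ker\pi$ into $(\pi^{-1}(L))^\cl_P$.

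For that, note $\ker\pi = \pi^{-1}(0) \subseteq \pi^{-1}(L)$, and by extensivity of $\cl$ we have $\pi^{-1}(L) \subseteq (\pi^{-1}(L))^\cl_P$. Thus $\ker\pi \subseteq (\pi^{-1}(L))^\cl_P$, so
\[ \pi^{-1}(\rcl L M) \;=\; (\pi^{-1}(L))^\cl_P + \ker\pi \;=\; (\pi^{-1}(L))^\cl_P, \]
which gives the desired containment (and in fact an equality). The only step requiring any thought is invoking Proposition~\ref{pr:defineresidualversion} to know that the given $\pi$ (not just some fixed chosen surjection) can be used to compute $\rcl L M$; since that is precisely the well-definedness asserted there, no real obstacle arises. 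The argument is short and essentially formal once the set-theoretic identity $\pi^{-1}(\pi(A))=A+\ker\pi$ and the extensivity of $\cl$ are in hand.
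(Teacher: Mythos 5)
Your proof is correct, and it takes a more direct route than the paper's. You invoke the well-definedness from Proposition~\ref{pr:definecohereditaryversion} to compute $\rcl L M$ directly with the given $\pi$, then apply the elementary identity $\pi^{-1}(\pi(A)) = A + \ker\pi$ and absorb $\ker\pi$ into $(\pi^{-1}(L))^\cl_P$ using extensivity of $\cl$; this even yields equality, not just inclusion. The paper instead passes to the quotient $q: M \onto M/L$, applies functoriality of $\po{\cl}{\re}$ to get $q(\rcl L M) \subseteq \rcl 0 {M/L}$, expands $\rcl 0 {M/L} = (q\circ\pi)\bigl(\pi^{-1}(L)^\cl_P\bigr)$ via the surjection $q \circ \pi: P \onto M/L$, and then argues pointwise. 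Both approaches rest on the well-definedness of the defining formula (the paper implicitly uses it for the pair $(0, M/L)$ via $q\circ\pi$, you use it for $(L,M)$ via $\pi$), and both need extensivity of $\cl$ at the end; your version avoids the extra appeal to functoriality and the passage to $M/L$, so it is shorter and more transparent.
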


\begin{proof}
Let $y\in P$ with $\pi(y) \in \rcl LM$.  Let $q: M \onto M/L=:\bar M$ be the natural surjection. 
Then by functoriality of $\po {\cl} {\re}$ and by definition, we have \[
(q \circ \pi)(y) \in q(\rcl L M) \subseteq \rcl 0 {M/L} = (q \circ \pi)\left((\ker (q \circ \pi))^\cl_P\right) = (q \circ \pi)\left(\pi^{-1}(L)^\cl_P\right).
\]
Hence, there is some $a \in \pi^{-1}(L)^\cl_P$ with $q(\pi(y)) = q(\pi(a))$.  That is, $\pi(y-a) \in \ker q = L$.  Thus, $y-a \in \pi^{-1}(L)$, whence \[
y=(y-a)+a \in \pi^{-1}(L)+ \pi^{-1}(L)^\cl_P = \pi^{-1}(L)^\cl_P. \qedhere\]
\end{proof}

The next result allows us to extend the notion of a $\cl$-core from a functorial closure operation to its residual version.

\begin{prop}
\label{prop:residualversionnakayama}
Let $(R,\m)$ be a Noetherian local ring. Let $\cM$ be the category of \fg\ $R$-modules with $R$-module maps. Let $\cl$ be a cofunctorial (eq. functorial) Nakayama closure operation on pairs of \fg\ $R$-modules with residual version $\po {\cl} {\re}$.  Then $\po {\cl} {\re}$ is Nakayama as well.
\end{prop}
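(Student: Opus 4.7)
The plan is to apply Lemma~\ref{lem:residualnakayama} and reduce the problem to the ``Nakayama condition on $0_M^{\po{\cl}{\re}}$'': it suffices to show that whenever $N \subseteq M$ are \fg\ $R$-modules with $N \subseteq (\m N)_M^{\po{\cl}{\re}}$, one has $N \subseteq 0_M^{\po{\cl}{\re}}$. This reformulation sidesteps the three-module chain and lets us focus on a single containment.

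To exploit the Nakayama property of the original closure $\cl$, I would lift everything to a projective cover. Fix a surjection $\pi: P \onto M$ with $P$ a \fg\ free module, set $K := \ker \pi$ and $N' := \pi^{-1}(N)$. A short chase shows that $\pi^{-1}(\m N) = \m N' + K$: the inclusion $\supseteq$ is immediate, and for $\subseteq$ one writes $\pi(y) = \sum r_i n_i$ with $r_i \in \m$ and $n_i \in N$, lifts each $n_i$ to $n_i' \in N'$, and observes that $y - \sum r_i n_i' \in K$.

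Next, I would apply Lemma~\ref{lem:inverseimagercl} to the pair $(\m N, M)$: the hypothesis $N \subseteq (\m N)_M^{\po{\cl}{\re}}$ gives
\[
N' \;=\; \pi^{-1}(N) \;\subseteq\; \pi^{-1}\!\bigl((\m N)_M^{\po{\cl}{\re}}\bigr) \;\subseteq\; \bigl(\pi^{-1}(\m N)\bigr)_P^{\cl} \;=\; (K + \m N')_P^{\cl}.
\]
Since $K \subseteq N'$, this yields the chain $K \subseteq N' \subseteq (K + \m N')_P^{\cl}$ inside the \fg\ module $P$. Now invoking the Nakayama property of $\cl$ in the sense of Definition~\ref{def:nakayama}, we conclude that $K_P^{\cl} = (N')_P^{\cl}$.

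Finally, pushing forward along $\pi$ closes the argument:
\[
N \;=\; \pi(N') \;\subseteq\; \pi\bigl((N')_P^{\cl}\bigr) \;=\; \pi\bigl(K_P^{\cl}\bigr) \;=\; \pi\bigl(\pi^{-1}(0)^{\cl}_P\bigr) \;=\; 0_M^{\po{\cl}{\re}},
\]
where the last equality is the definition of the residual version. The heart of the argument, and what I expect to be the main obstacle, is the lifting step via Lemma~\ref{lem:inverseimagercl}: without it there is no mechanism for converting a Nakayama-type condition phrased in $M$ into one on the projective cover $P$, where the hypothesis on $\cl$ can actually be used. The rest is bookkeeping through the kernel $K$ and the definition of $\po{\cl}{\re}$.
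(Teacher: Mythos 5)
Your proof is correct and follows essentially the same route as the paper: reduce via Lemma~\ref{lem:residualnakayama}, lift to a free cover, compute $\pi^{-1}(\m N) = K + \m N'$ by an element chase, apply Lemma~\ref{lem:inverseimagercl} to obtain $K \subseteq N' \subseteq (K + \m N')_P^{\cl}$, invoke the Nakayama property of $\cl$, and push forward along $\pi$. The only cosmetic difference is notational ($N$ vs.\ $L$ for the submodule and $N'$ vs.\ $\pi^{-1}(L)$ for its preimage).
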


\begin{proof}
Since $\po{\cl}{\re}$ is residual, by Lemma \ref{lem:residualnakayama} it is enough to show that if $L \subseteq M$ are \fg\ modules with $L \subseteq (\m L)_M^{\po{\cl}{\re}}$, we have $L \subseteq 0_M^{\po{\cl}{\re}}$. Let $P$ be a \fg\ free module and $\pi:P \twoheadrightarrow M$ a surjection. Let $K=\ker \pi$. We claim that $\pi^{-1}(\m L)=K+\m \pi^{-1}(L)$. To see this, first note that $\pi(K+\m \pi^{-1}(L)) \subseteq \pi(K)+\m \pi(\pi^{-1}(L)) \subseteq \m L$, whence $K+\m \pi^{-1}(L) \subseteq \pi^{-1}(\m L)$. For the reverse inclusion, let $x \in \pi^{-1}(\m L)$. That is, $\pi(x) \in \m L$. Then there exist $y_j \in \m$ and $z_j \in L$ such that $\pi(x)=\sum_{j=1}^n y_jz_j$. By the surjectivity of $\pi$, there exist $z_j' \in \pi^{-1}(L)$ with $\pi(z_j')=z_j$. It follows that $x-\sum_{j=1}^n y_jz_j' \in \ker \pi=K$. Thus $x \in K+\m \pi^{-1}(L)$.

Now let $z \in \pi^{-1}(L)$. Then since $\pi(z) \in L \subseteq (\m L)_M^{\po{\cl}{\re}}$, by Lemma \ref{lem:inverseimagercl} we have $z \in (\pi^{-1}(\m L))_P^{\cl}=(K+\m \pi^{-1}(L))_P^\cl$. Thus we have $K \subseteq \pi^{-1}(L) \subseteq (K+\m \pi^{-1}(L))_P^\cl$, so by the Nakayama property for cl $\pi^{-1}(L) \subseteq K_P^\cl=(\ker \pi)_P^\cl$. Then by the definition of $\cl_r$, $L=\pi(\pi^{-1}(L)) \subseteq \pi((\ker \pi)_P^\cl)=0_M^{\po{\cl}{\re}}$.
\end{proof}

Liftable integral closure and integral closure are Nakayama closures.  Hence, both the liftable integral-core and the (integral)-core of a finitely generated module exist.  (See for example \cite{CPU2002}, \cite{foulivassilevclcore} and \cite{ERGV-chdual} for definitions of core and $\cl$-core.)  We will illustrate below that these can differ by continuing our analysis of
Example \ref{ex:lirverstrict}.
\begin{example}
We continue with the notation of Example \ref{ex:lirverstrict}. Note, that not only can we see that $\lic L{\m^2}=L$ for $L=(x^2,y^2)$, but the same argument we used in Example \ref{ex:lirverstrict},  will illustrate that $\lic L{\m^2}=L$ for any minimal (integral reduction) $L$ of $\m^2$.  Hence, $\li\core_{\m^2}{\m^2}=\m^2$, because the only $\li$-reduction of $\m^2$ in $\m^2$ is $\m^2$.  However, by \cite[Example 3.2]{CPU2002}, ${\rm core}_R(\m^2)=\m^3={\rm core}_{\m^2}(\m^2)$, since all the minimal reductions of $\m^2$ in $R$ are minimal reductions of $\m^2$ in $\m^2$,  illustrating that ${\rm core}_{\m^2}(\m^2)$ is properly contained in $\li\core_{\m^2}(\m^2)$.
\end{example}


\begin{prop}
\label{pr:clrfinitistic}
Let $\cl$ be a cofunctorial (eq. functorial) closure operation on a class $\cP$ of pairs of $R$-modules that is finitistic, and let $\po{\cl}{\re}$ be its residual version, defined on a class $\cP'$ of pairs of $R$-modules. If $(0,N/L) \in \cP'$ if and only if $(L,N) \in \cP'$, then $\po{\cl}{\re}$ is also finitistic.
\end{prop}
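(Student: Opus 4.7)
The plan is to prove both containments of $\po{\cl}{\re} = (\po{\cl}{\re})_f$, after first invoking Lemma~\ref{lem:twoversionsoffinitistic} to rewrite the finitistic version. Since $\po{\cl}{\re}$ is residual by Proposition~\ref{pr:defineresidualversion} and the hypothesis $(L,N) \in \cP'$ iff $(0,N/L) \in \cP'$ is exactly the backward condition needed there, the lemma yields
\[
(\po{\cl}{\re})_f(L,M) \,\supseteq\, \bigcup_{\substack{L \subseteq N \subseteq M \\ N/L \text{ f.g.}}} \po{\cl}{\re}(L,N),
\]
which is all I need for what follows.

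The containment $(\po{\cl}{\re})_f \leq \po{\cl}{\re}$ follows directly from restrictability: by Proposition~\ref{pr:definecohereditaryversion}, $\po{\cl}{\re}$ is cofunctorial, hence restrictable by Proposition~\ref{pr:cofunctorial}\eqref{it:injsurjcofunc}, so $\po{\cl}{\re}(L \cap U, U) \subseteq \po{\cl}{\re}(L, M)$ for every f.g.\ $U \subseteq M$ with $(L \cap U, U) \in \cP'$, and the union is contained in $\po{\cl}{\re}(L, M)$.

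For the reverse containment, take $z \in \po{\cl}{\re}(L, M)$. Fix a projective surjection $\pi: P \twoheadrightarrow M$ with $(\pi^{-1}(L), P) \in \cP$ and write $z = \pi(w)$ for some $w \in (\pi^{-1}(L))^\cl_P$. Since $\cl$ is finitistic on $\cP$, select a f.g.\ $V \subseteq P$ with $(\pi^{-1}(L) \cap V, V) \in \cP$ and $w \in (\pi^{-1}(L) \cap V)^\cl_V$. Set $N := L + \pi(V) \subseteq M$; then $N/L$ is a quotient of the f.g.\ module $\pi(V)$, so $N/L$ is f.g.\ and $(L, N) \in \cP'$ by the hypothesized equivalence. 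To witness $z \in \po{\cl}{\re}(L, N)$, build a projective cover of $N$ as follows: using that $\cM$ has enough projectives, pick a projective $Q \in \cM$ with a surjection $q: Q \twoheadrightarrow L$; let $P' := V \oplus Q$ and define $\pi': P' \twoheadrightarrow N$ by $(v,u) \mapsto \pi(v) + q(u)$. A direct computation gives $\pi'^{-1}(L) = (\pi^{-1}(L) \cap V) \oplus Q$, and clearly $z = \pi'(w, 0)$.

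To finish, apply functoriality of $\cl$ (equivalent to cofunctoriality here since $\cl$ is order-preserving on submodules, by Proposition~\ref{pr:cofunctorial}\eqref{it:funceq}) to the inclusion $V \hookrightarrow P'$ to conclude $(w,0) \in ((\pi^{-1}(L) \cap V) \oplus 0)^\cl_{P'}$; then order-preservation on submodules of $\cl$ upgrades this to $(w,0) \in (\pi'^{-1}(L))^\cl_{P'}$, so $z = \pi'(w,0) \in \po{\cl}{\re}(L, N) \subseteq (\po{\cl}{\re})_f(L, M)$. The main obstacle is the construction of the auxiliary projective cover $\pi'$ that translates the closure witness $w$ over the f.g.\ module $V$ into a closure witness over a projective surjecting onto $N$; verifying that the intermediate pairs such as $(\pi'^{-1}(L), P')$ lie in $\cP$ is a technical condition that holds in each of the natural examples listed in Remark~\ref{rem:cohereditaryhypotheses}, though it may require explicit auxiliary hypotheses on $\cP$ in full generality.
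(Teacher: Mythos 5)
Your proof is correct in its essentials, but it takes a genuinely different route from the paper's. The paper exploits the residuality of $\po{\cl}{\re}$ to reduce immediately to the case $L=0$: it first shows that for $(0,M) \in \cP'$, $0^{\po{\cl}{\re}}_M$ is a union of $0^{\po{\cl}{\re}}_N$ over f.g.\ $N \subseteq M$ (finding the witness $\ker\pi \subseteq U \subseteq P$ via finitistic-ness of $\cl$ and pushing forward through $\pi|_U : U \to \pi(U)$, with no auxiliary projective needed), and then deduces the general case by applying $\pi^{-1}$ across $M \onto M/L$. Your proof works directly with general $L$ by building a new projective surjection $\pi' : V \oplus Q \onto L + \pi(V)$, transporting the closure witness $w$ over $V$ into a closure witness over $P'$ via functoriality plus order-preservation, and checking $\pi'^{-1}(L) = (\pi^{-1}(L) \cap V) \oplus Q$. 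Both arguments hinge on the same ingredients (Lemma~\ref{lem:twoversionsoffinitistic}, finitistic-ness of $\cl$, functoriality/restrictability of $\po{\cl}{\re}$), and both leave the same pair-class bookkeeping implicit, which you flag. The paper's reduction-to-zero is shorter and avoids the auxiliary cover, while your approach is more concrete and makes the easy containment $(\po{\cl}{\re})_f \leq \po{\cl}{\re}$ explicit, which the paper's proof leaves unstated. One detail worth noting in your construction: for $P' = V \oplus Q$ to be projective you need $V$ projective; this is automatic since $(\pi^{-1}(L)\cap V, V) \in \cP$ forces $V$ to lie in the class of projective ambients of Notation~\ref{not:coheredpairs}, but the argument quietly depends on it.
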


\begin{proof}
First we prove that if $(0,M) \in \cP'$, then \[0_M^{\po{\cl}{\re}}=\bigcup_{(0,N) \in \cP', N \subseteq M \text{ f.g.}} 0_N^{\po{\cl}{\re}}.\]
Let $(0,M) \in \cP'$, and let $\pi:P \to M$ be a surjection from a projective module such that $(\ker(\pi),P) \in \cP$. Let $x \in 0_M^{\po{\cl}{\re}}=\pi((\ker(\pi))_P^{\cl})$. Since $\cl$ is finitistic, there is some $\ker \pi \subseteq U \subseteq P$ such that $U/\ker(\pi)$ is \fg, $(\ker(\pi),U) \in \cP$, and $x \in \pi((\ker(\pi))_U^{\cl})$. Since $U$ is a submodule of a projective module, $x \in \pi((\ker(\pi))_U^{\po{\cl}{\re}})$. Since $\po{\cl}{\re}$ is functorial by Proposition \ref{pr:definecohereditaryversion}, and by our hypothesis $(0,\pi(U)) \in \cP'$, we have $\pi((\ker(\pi))_U^{\po{\cl}{\re}}) \subseteq 0_{\pi(U)}^{\po{\cl}{\re}}$. Since $\pi(U) \cong U/\ker(\pi)$, it is \fg\ and $(0,\pi(U)) \in \cP'$, and so $\pi(U)$ is a \fg\ submodule $N$ of $M$ such that $x \in 0_N^{\po{\cl}{\re}}$. This proves our first claim.

Next we prove that this implies that $\po{\cl}{\re}$ is finitistic on $\cP'$. Let $(L,M) \in \cP'$ such that $(0,M/L) \in \cP'$. Since $\po{\cl}{\re}$ is residual, $L_M^{\po{\cl}{\re}}=\pi^{-1}(0_{M/L}^{\po{\cl}{\re}})$, where $\pi:M \to M/L$ is the quotient map. Let $x \in L_M^{\po{\cl}{\re}}$. By our first claim, there is some \fg\ $N/L \subseteq M/L$ such that $(0,N/L) \in \cP'$ and $x \in \pi^{-1}(0_{N/L}^{\po{\cl}{\re}})$. Let $q:N \to N/L$ be the quotient map. We claim that $x \in q^{-1}(0_{N/L}^{\po{\cl}{\re}})$. We know that $q$ agrees with the composition $N \hookrightarrow M \twoheadrightarrow M/L$. Since $x \in \pi^{-1}(0_{N/L}^{\po{\cl}{\re}})$, $x \in \pi^{-1}(N/L) = N$. 
So $x \in L_N^{\po{\cl}{\re}}=q^{-1}(0_{N/L}^{\po{\cl}{\re}})$, as desired.
\end{proof}

\section{Hereditary versions of pair operations}\label{sec:hereditaryversions}

As the duality for pair operations discussed in \cite{ERGV-chdual,ERGV-nonres} only holds for complete local rings, we cannot make use of our duality to define hereditary and absolute pair operations in full generality.  Hence, the proofs in this section are necessary but precisely dual to those in Section~\ref{sec:residualversion}. We will explore this duality further in Section \ref{sec:duality}. We end the section by showing that the finitistic version of a hereditary closure operation is also hereditary, which we will need in Section \ref{sec:integralclosure}.

\begin{notation}
\label{not:heredpairs}
Throughout this section, we assume the following:
\begin{enumerate}
    \item $\cM$ will denote an abelian category of $R$-modules that has enough injectives.
    \item $\cP$ will denote a set of pairs of $R$-modules $(C,E)$ with $C \subseteq E$, $C,E \in \cM$ and $E$ injective.
    \item $\cP'$ will denote a set of pairs such that $\cP \subseteq \cP'$ and for every $(A,B) \in \cP'$ with $A \subseteq B$, there is an injective module $E$ and an injection $i:B \to E$ in $\cM$ with $(i(A),E) \in \cP$.
    \item In addition, if $(A,B),(C,B) \in \cP'$ with $A \subseteq C$, there is some injective module $E$ and an injection $i:B \to E$ in $\cM$ such that $(i(A),E),(i(C),E) \in \cP$.
    \item We further assume:
    \begin{itemize}
    \item[(**)] Given $A,B,E,i$ as above, if we have a commutative diagram in $\cM$
    \[\xymatrix{
E  \ar[r]^{\tilde{\psi}} & F  \\
B \ar@{^{(}->}[u]^i \ar@{^{(}->}[r]^\psi & C \ar@{^{(}->}[u]^j \\
}\]
    satisfying
    \begin{enumerate}
        \item $\psi:B \to C$ is injective,
        \item $j:C \into F$ where $F$ is an injective module,
        \item $(j(\psi(A)),F) \in \cP$,
        \item $\tilde{\psi}:E \to F$ is any map  such that $\tilde{\psi} \circ i=j \circ \psi$ (such a $\tilde{\psi}$ always exists by the injectiveness of $F$),
    \end{enumerate}
    then $(\tilde{\psi}^{-1}(j(\psi(A))),E) \in \cP$.
\end{itemize}
\end{enumerate}
\end{notation}

\begin{rem}
\label{rem:hereditaryhypotheses}
In making this definition, the examples we have in mind are:
\begin{enumerate}
    \item $\cM=$ $R$-modules, $\cP=$ all pairs where the second module is injective, and $\cP'=$ all pairs in $\cM$.
    \item $R$ is local, $\cM=$ Artinian $R$-modules, $\cP=$ all pairs in $\cM$ where the second module is injective, and $\cP'=$ all pairs in $\cM$.
    \item $\cM=$ graded $R$-modules with graded $R$-module homomorphisms, $\cP=$ all graded injective modules, and $\cP'=$ all pairs in $\cM$.
\end{enumerate}
As with Remark \ref{rem:cohereditaryhypotheses}, we can also apply our results in much more restrictive cases.
\end{rem}

\begin{prop}
\label{pr:definehereditaryversion}
Let $\cM$, $\cP,$ and $\cP'$ be as in Notation \ref{not:heredpairs} and 
let $p$ be a functorial pair operation defined on $\cP$. 
Then we can define a hereditary, functorial pair operation $\po p {\h}$ as follows: For a pair $(A,B) \in \cP'$, let $i:B \to E$ be a monomorphism in $\cM$ with $E$ injective and $(i(A),E) \in \cP$; we define
\[\po p {\h} (A,B) :=i^{-1}(p(i(A),E)).\]
Viewing $i$ as the inclusion map, this is equal to $p(A,E) \cap B.$

In particular, if $p$ is a hereditary, functorial  pair operation defined on $\cP'$, then $\po p {\h}=p$.
\end{prop}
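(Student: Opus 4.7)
The plan is to mirror the proof of Proposition~\ref{pr:definecohereditaryversion}, swapping the roles of surjections from projectives with injections into injectives, and correspondingly swapping ``cofunctorial'' with ``functorial.''  The five things to check are: (i) $\po p{\h}(A,B)$ does not depend on the choice of embedding $i:B\hookrightarrow E$; (ii) isomorphism-invariance; (iii) hereditariness; (iv) functoriality; and (v) the identity $\po p{\h}=p$ when $p$ is already hereditary and functorial on $\cP'$.

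For well-definedness, I would compare two embeddings $i:B\hookrightarrow E$ and $i':B\hookrightarrow E'$ with $(i(A),E),(i'(A),E')\in\cP$.  By injectivity of $E'$, the map $i'$ extends along $i$ to some $\tilde i:E\to E'$ with $\tilde i\circ i=i'$.  Applying functoriality of $p$ to $\tilde i$ with the submodule $i(A)\subseteq E$ (both $(i(A),E)$ and $(\tilde i(i(A)),E')=(i'(A),E')$ lie in $\cP$) gives $\tilde i(p(i(A),E))\subseteq p(i'(A),E')$.  Hence if $b\in i^{-1}(p(i(A),E))$, then $i'(b)=\tilde i(i(b))\in p(i'(A),E')$, so $b\in (i')^{-1}(p(i'(A),E'))$.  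The reverse inclusion follows symmetrically by extending $i$ along $i'$ using the injectivity of $E$.  Isomorphism-invariance follows by the same kind of direct chase, using that if $\phi:B\to B'$ is an isomorphism and $i:B\hookrightarrow E$ is any valid witness, then $i\circ\phi^{-1}:B'\hookrightarrow E$ is a valid witness for $(\phi(A),B')$.

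For hereditariness, suppose $A\subseteq C\subseteq B$ with $(A,B),(A,C)\in\cP'$.  Pick $i:B\hookrightarrow E$ with $E$ injective and $(i(A),E)\in\cP$.  The restriction $i|_C:C\hookrightarrow E$ is then an equally valid embedding to compute $\po p{\h}(A,C)$ (by well-definedness, and Notation~\ref{not:heredpairs}), so
\[\po p{\h}(A,C)=(i|_C)^{-1}(p(i(A),E))=i^{-1}(p(i(A),E))\cap C=\po p{\h}(A,B)\cap C.\]
For functoriality, given $\phi:B\to B'$ with $(A,B),(\phi(A),B')\in\cP'$, choose embeddings $i:B\hookrightarrow E$ and $j:B'\hookrightarrow F$ with $(i(A),E),(j(\phi(A)),F)\in\cP$.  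By injectivity of $F$, the map $j\circ\phi$ extends along $i$ to $\tilde\phi:E\to F$ with $\tilde\phi\circ i=j\circ\phi$.  Since $\tilde\phi(i(A))=j(\phi(A))$, both $(i(A),E)$ and $(\tilde\phi(i(A)),F)$ lie in $\cP$, so functoriality of $p$ yields $\tilde\phi(p(i(A),E))\subseteq p(j(\phi(A)),F)$.  For $x\in\po p{\h}(A,B)$ we then get $j(\phi(x))=\tilde\phi(i(x))\in p(j(\phi(A)),F)$, i.e.\ $\phi(x)\in j^{-1}(p(j(\phi(A)),F))=\po p{\h}(\phi(A),B')$.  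This is the point at which hypothesis (**) may be invoked, should one want to work with intermediate pairs along the factorization $B\to C\hookrightarrow F$, to guarantee that the submodules produced mid-chase remain in $\cP$.

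For the final assertion, suppose $p$ is itself hereditary and functorial on $\cP'$.  Given $(A,B)\in\cP'$ and an embedding $i:B\hookrightarrow E$ with $(i(A),E)\in\cP$, treating $i$ as an inclusion $B\subseteq E$ via isomorphism-invariance of $p$, hereditariness of $p$ gives $p(A,E)\cap B=p(A,B)$, so
\[\po p{\h}(A,B)=i^{-1}(p(i(A),E))=p(A,E)\cap B=p(A,B).\]
I expect the main obstacle to be the well-definedness step, since the diagram chase requires tracking exactly which pairs are in $\cP$ versus $\cP'$; once this is in place the remaining steps are short manipulations using the universal property of injective modules together with the ``functorial'' hypothesis applied to the extended maps $\tilde i$ and $\tilde\phi$.
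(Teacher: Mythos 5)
Your argument is correct and takes essentially the same route as the paper: both prove well-definedness, isomorphism-invariance, hereditariness, and functoriality by using injectivity of the pre-enveloping modules to extend maps and then invoking functoriality of $p$ on those extensions, with the only superficial difference being the direction in which you extend (extending $i'$ along $i$ rather than $i$ along $i'$, resolved by symmetry). Your aside about hypothesis (**) is a safe disclaimer but, as in the paper's own proof of this proposition, (**) is not actually needed since the relevant target pairs are already assumed to be in $\cP$.
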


\begin{proof}
We first show that this is well-defined, independent of $i$ and $E$.  Accordingly, let $(A,B) \in \cP'$ and $i: B \into E$ and $i': B \into E'$ be inclusions in $\cM$, where $E, E'$ are injective and $(i(A),E),(i'(A),E') \in \cP$.

Since $i'$ is injective and $E$ is injective, there is some $R$-linear map $\phi:E' \to E$ in $\cM$ with $i=\phi \circ i'$. Let $z \in \po p {\h}(A,B)$ with respect to $i'$, so there is some $y \in p(i'(A),E')$ with $i'(z)=y$. Then $i(z)=(\phi \circ i')(z)=\phi(y)$. We have $\phi(y) \in \phi(p(i'(A),E'))$. By functoriality, this is contained in $p(\phi(i'(A)),E)=p(i(A),E)$. But then $i(z) \in p(i(A),E)$, so $z \in \po p {\h}(A,B)$ with respect to $E$, as desired. This gives us one inclusion, and the other follows by symmetry.

Next we show that $\po p {\h}$ satisfies the isomorphism criterion. Suppose $\phi:M \rightarrow M'$ is an isomorphism, $(L,M), (\phi(L),M') \in \cP'$, and $i:M\into E$ with  $E$ injective and $(i(L),E)=((i\circ \phi^{-1})(\phi(L)),P) \in \cP$. Then  
\begin{align*}
    \phi(\po{p}{\h}(L,M))&=\phi(i^{-1}(p(i(L),E)))=\phi(i^{-1}(p(i(\phi^{-1}(\phi(L))),E)))\\
    &=(i\circ \phi^{-1})^{-1}(p((i\circ\phi^{-1})(\phi(L)),E))=\po{p}{\h}(\phi(L),M'),\\
\end{align*} which shows that $\po{p}{\h}$ is invariant under isomorphisms. 

Now we show that $\po p {\h}$ is hereditary.  Let $A \subseteq C \subseteq B$ with $(A,C),(A,B) \in \cP'$, and let $i: B \into E$ in $\cM$ with $E$ injective such that $(i(A),E) \in \cP$ as in Notation \ref{not:heredpairs}.  Let $j: C \into B$ be the canonical inclusion.  Then 
\begin{align*}
\po p {\h}(A,C) &= (i \circ j)^{-1} (p((i \circ j)(A),E))\\
 &= j^{-1}(i^{-1}( p(i(j(A)),E))) \\
 &= j^{-1}( \po p {\h}(j(A),B))\\
& = \po p {\h} (j(A),B) \cap C\\
&=\po p {\h}(A,B) \cap C\\
\end{align*}
Note that $j(A)=A$ and  $(i \circ j)(A)=i(A)$, making $(j(A),B) \in \cP'$ and $((i \circ j)(A),E) \in \cP$.
Hence, $\po p {\h}$ is hereditary.

We now show that $\po p {\h}$ is functorial. Accordingly, let $(A,B) \in \cP'$ and let $\phi: B \ra D$ in $\cM$ such that $(\phi(A),D) \in \cP'$. Choose maps $i : B \into E$ and $i': D \into E'$ in $\cM$ with $E,E'$ injective such that $(i(A),E),(i'(\phi(A)),E') \in \cP$.  Then since $E'$ is injective and $i$ is injective, there is some $R$-linear map $\tilde \phi: E \ra E'$ in $\cM$ with $i' \circ \phi = \tilde\phi \circ i$.  Now let $z\in \po p {\h} (A,B)$. Then $y=i(z) \in p(i(A),E)$. Hence $\tilde \phi(y) \in \tilde \phi (p(i(A),E))$. By functoriality of $p$ when the ambient modules are injective, this is contained in $p(\tilde \phi (i(A)),E')=p(i'(\phi(A)),E')$. Since $\tilde{\phi}(y)=\tilde{\phi}(i(z))=i'(\phi(z))$ and $i'$ is injective, $(i')^{-1}(\tilde{\phi}(y))=\{\phi(z)\}.$ This element must be in $(i')^{-1}(p(i'(\phi(A)),E'))=\po p {\h}(\phi(A),D)$.
Hence $\phi(z) \in \po p {\h}(\phi(A),D)$, so $\po p {\h}$ is functorial.

For the final statement, assume that $p$ is hereditary. For $(A,B) \in \cP'$ and $i:B \to E$ in $\cM$ such that $(i(A),E) \in \cP$, we have $\po p {\h} (A,B)=p(A,E) \cap B=p(A,B),$ as desired.
\end{proof}

\begin{prop}
\label{pr:hereditaryorderpreserving}
Let $\cM$, $\cP$, and $\cP'$ be as in 
Notation \ref{not:heredpairs}.  Let $p$ be a functorial pair operation defined on $\cP$ that is order preserving on submodules.  Then $\po p {\h}$ is a hereditary functorial pair operation that is order preserving on submodules.
\end{prop}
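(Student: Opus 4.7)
The plan is to appeal to Proposition~\ref{pr:definehereditaryversion} for the bulk of the statement and then verify order-preservation on submodules by a short direct argument dual to the one used in Proposition~\ref{pr:chorderpreserving}. Since Proposition~\ref{pr:definehereditaryversion} already guarantees that $\po p {\h}$ is a hereditary functorial pair operation whenever $p$ is a functorial pair operation on $\cP$, the only new content to check is the preservation of order on submodules.

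The key step will be to take $A \subseteq C \subseteq B$ with $(A,B),(C,B) \in \cP'$ and invoke part (4) of Notation~\ref{not:heredpairs} to produce a single injective module $E$ and a common monomorphism $i: B \into E$ in $\cM$ such that simultaneously $(i(A),E),(i(C),E) \in \cP$. The fact that we may take a common $E$ for both submodules is what makes the argument work; without part (4) of Notation~\ref{not:heredpairs}, one would have to juggle two different injective envelopes, and the order-preservation would not be visible directly. This is the analogue of the role played in Proposition~\ref{pr:chorderpreserving} by part (4) of Notation~\ref{not:coheredpairs} for projective modules.

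Once the common $E$ is in hand, the proof is essentially formal: since $i(A) \subseteq i(C) \subseteq E$ and $p$ is order-preserving on submodules on the class $\cP$, we have $p(i(A),E) \subseteq p(i(C),E)$. Applying $i^{-1}$ preserves the inclusion, so
\[
\po p {\h}(A,B) = i^{-1}(p(i(A),E)) \subseteq i^{-1}(p(i(C),E)) = \po p {\h}(C,B),
\]
using the definition of $\po p {\h}$ from Proposition~\ref{pr:definehereditaryversion} and the well-definedness (independence of the choice of injection) already established there. I do not anticipate any serious obstacles here; the content is really just the dual observation to Proposition~\ref{pr:chorderpreserving}, and the only subtlety is ensuring that the common envelope exists, which is precisely the hypothesis we have placed on $\cP'$.
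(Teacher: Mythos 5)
Your proposal is correct and follows essentially the same argument as the paper: reduce to order-preservation via Proposition~\ref{pr:definehereditaryversion}, invoke part (4) of Notation~\ref{not:heredpairs} to obtain a common injective preenvelope $E$ with $(i(A),E),(i(C),E) \in \cP$, and then transport the inclusion $p(i(A),E) \subseteq p(i(C),E)$ back to $B$. The paper phrases the last step as intersecting with $B$ rather than applying $i^{-1}$, but these are the same operation when $i$ is the inclusion.
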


\begin{proof}
Since $\po p {\h}$ is a functorial hereditary pair operation by Proposition~\ref{pr:definehereditaryversion}, it suffices to show that for $A \subseteq C \subseteq B$ with $(A,B),(C,B) \in \cP'$,
$\po p {\h}(A,B) \subseteq \po p {\h}(C,B)$. By Notation \ref{not:heredpairs}, there is some injective module $E$ containing $B$, with the inclusion morphism $i$ in $\cM$, such that $(i(A),E),(i(C),E) \in \cP$, and by definition $\po p{\h}(A,B)=p(i(A),E) \cap B$.
Since $p$ is order preserving on submodules we have  $p(i(A),E) \subseteq p(i(C), E)$, and intersecting each of these modules with $B$ we obtain  $\po p{\h}(A,B)=p(i(A),E) \cap B \subseteq p(i(C), E)\cap B=\po p{\h}(C,B)$ which is what we needed to show.  
\end{proof}

\begin{cor}
\label{cor:absolutepair}
Let $\cM$, $\cP$, and $\cP'$ be as in 
Notation \ref{not:heredpairs}. Further assume that if $(A,B) \in \cP'$, then $(A,A) \in \cP'$. Let $p$ be a functorial pair operation defined on $\cP$ that is intensive.  Then $\po p {\h}$ is an absolute, functorial pair operation on $\cP'$. In particular if $p$ is an absolute pair operation defined on $\cP'$, then $\po p {\h}=p$.
\end{cor}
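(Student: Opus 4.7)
The plan is to mimic the proof of the dual Corollary~\ref{cor:cohereditaryextensive}, reducing to Lemma~\ref{lem:absiffinther}, which asserts that an intensive hereditary pair operation is automatically absolute. By Proposition~\ref{pr:definehereditaryversion}, we already know that $\po p {\h}$ is a hereditary functorial pair operation on $\cP'$, so the only missing ingredient for the first assertion is intensivity.

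To verify intensivity directly: given $(A,B) \in \cP'$, choose an injection $i : B \into E$ in $\cM$ with $E$ injective and $(i(A),E) \in \cP$, as provided by Notation~\ref{not:heredpairs}. By definition,
\[
\po p {\h}(A,B) = i^{-1}(p(i(A),E)).
\]
Since $p$ is intensive on $\cP$, we have $p(i(A),E) \subseteq i(A)$, so taking preimages and using injectivity of $i$ yields
\[
\po p {\h}(A,B) \subseteq i^{-1}(i(A)) = A.
\]
Thus $\po p {\h}$ is intensive. Combined with hereditariness from Proposition~\ref{pr:definehereditaryversion}, Lemma~\ref{lem:absiffinther} (which requires no extra hypotheses for the forward direction) immediately gives that $\po p {\h}$ is absolute.

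For the final ``in particular'' assertion, suppose $p$ is itself absolute on $\cP'$. The standing assumption guarantees $(A,A) \in \cP'$ whenever $(A,B) \in \cP'$, so the converse direction of Lemma~\ref{lem:absiffinther} applies and tells us that $p$ is both intensive and hereditary on $\cP'$. Since $p$ is also functorial by hypothesis, the last sentence of Proposition~\ref{pr:definehereditaryversion} (which states $\po p {\h} = p$ whenever $p$ is hereditary and functorial on $\cP'$) gives $\po p {\h} = p$.

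There is no real obstacle here: the whole statement is a formal consequence of combining Proposition~\ref{pr:definehereditaryversion} with Lemma~\ref{lem:absiffinther}, and the intensivity check is a one-line computation that pivots on the injectivity of the embedding $i$.
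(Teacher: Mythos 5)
Your proposal is correct and follows essentially the same route as the paper: verify intensivity of $\po p{\h}$ by a one-line computation at the injective preenvelope, invoke Lemma~\ref{lem:absiffinther} for the forward implication, and use its converse together with the last sentence of Proposition~\ref{pr:definehereditaryversion} for the ``in particular'' clause. The only cosmetic difference is that the paper writes $\po p{\h}(A,B) = p(i(A),E)\cap B$ where you write $i^{-1}(p(i(A),E))$; these are identical since $i$ is injective.
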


\begin{proof}
Suppose $(A,B) \in \cP'$ and $i:B \to E$ is an injective homomorphism in $\cM$ with $(i(A),E) \in \cP$. Since $p$ is intensive,
\[\po p {\h}(A,B)=p(i(A),E) \cap B \subseteq i(A) \cap B=A,\]
implying that $\po p {\h}$ is intensive.
By Lemma~\ref{lem:absiffinther}, an intensive hereditary pair operation is absolute.  Thus by Proposition~\ref{pr:definehereditaryversion}, $\po p {\h}$ is absolute and functorial.  If $p$ is a functorial absolute pair operation, then $p$ is a functorial intensive hereditary pair operation by Lemma~\ref{lem:absiffinther} and again Proposition~\ref{pr:definehereditaryversion} gives us that $\po p {\h}=p$ when $p$ is residual.  
\end{proof}


\begin{notation}
As intensive hereditary pair operations are absolute by Lemma~\ref{lem:absiffinther}, we will denote $\po p{\h}=\po p{\ab}$ when $p$ is additionally intensive.
\end{notation}

\begin{prop}
\label{pr:absoluteinterior}
Let $\cM$, $\cP$, and $\cP'$ be as in Notation \ref{not:heredpairs} and let $\int$ be a functorial interior operation on $\cP$. Then the absolute version $\po{\int}{\ab}$ of $\int$ is an absolute interior operation on $\cP'$.
If $\int$ is defined on $\cP'$ and is absolute, then $\int=\po{\int}{\ab}$.
\end{prop}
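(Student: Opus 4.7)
The plan is to assemble $\po{\int}{\ab}$ as an interior operation by combining the general results earlier in the section, with idempotence as the only point requiring real work. First I would invoke Proposition \ref{pr:definehereditaryversion} to deduce that $\po{\int}{\h}$ is a hereditary, functorial pair operation on $\cP'$. Since $\int$ is intensive and functorial, Corollary \ref{cor:absolutepair} shows that $\po{\int}{\h}$ is absolute and intensive; by the notational convention introduced just before the proposition, it equals $\po{\int}{\ab}$. Proposition \ref{pr:hereditaryorderpreserving} further gives that $\po{\int}{\ab}$ is order-preserving on submodules. So only idempotence remains.

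For idempotence, given $(A, B) \in \cP'$, I would choose (as in Notation \ref{not:heredpairs}) an injection $i: B \into E$ with $E$ injective and $(i(A), E) \in \cP$, and, identifying $i$ with the inclusion, set $C := \po{\int}{\ab}(A, B)$. Intensivity of $\int$ yields
\[ C = \int(A, E) \cap B = \int(A, E), \]
since $\int(A, E) \subseteq A \subseteq B$. To compute $\po{\int}{\ab}(C, B)$ I would reuse the same inclusion $B \subseteq E$; under the assumption that $(C, E) = (\int(A, E), E) \in \cP$, one gets
\[ \po{\int}{\ab}(C, B) = \int(C, E) \cap B = \int(\int(A, E), E) = \int(A, E) = C \]
by intensivity and idempotence of $\int$, which shows $\po{\int}{\ab}$ is idempotent.

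The main obstacle is precisely this hypothesis $(\int(A, E), E) \in \cP$: it is needed to make $\int(C, E)$ defined so that idempotence of $\int$ can be applied. This is the exact dual of the assumption imposed in Proposition \ref{pr:definecohereditaryinterior} (that $(L_{\int}^P, P) \in \cP$), and I would either add it to the statement of Proposition \ref{pr:absoluteinterior} or verify that it holds automatically for the standard choices of $\cP$ and $\cP'$ listed in Remark \ref{rem:hereditaryhypotheses}. For the final ``in particular'' statement, if $\int$ is defined on $\cP'$ and is absolute, then $\int$ is a functorial, intensive, absolute pair operation on $\cP'$, so Corollary \ref{cor:absolutepair} gives $\po{\int}{\h} = \int$; since $\po{\int}{\h} = \po{\int}{\ab}$ in this intensive setting, we conclude $\int = \po{\int}{\ab}$.
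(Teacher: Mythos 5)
Your proof follows essentially the same route as the paper's: invoke Proposition~\ref{pr:definehereditaryversion} and Corollary~\ref{cor:absolutepair} to get a functorial, absolute pair operation, Proposition~\ref{pr:hereditaryorderpreserving} for order-preservation on submodules, then reduce idempotence to the equalities $C = \int(A,E) \cap B = \int(A,E)$ and $\po{\int}{\ab}(C,B) = \int(\int(A,E),E) \cap B = \int(A,E)$, exactly as the paper does. The ``in particular'' clause is also handled the same way.

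Your worry about whether $(\int(A,E),E) \in \cP$ holds is a good instinct, since the analogous Proposition~\ref{pr:definecohereditaryinterior} states such a hypothesis explicitly, but in this hereditary setting it is not a genuine gap: no additional hypothesis is needed. Recall that idempotence of a pair operation is only asserted when $(A,B) \in \cP'$ \emph{and} $(\po{\int}{\ab}(A,B),B) \in \cP'$. Since $\po{\int}{\ab}(A,B) \subseteq A$ by intensiveness, item~(4) of Notation~\ref{not:heredpairs} (applied to the nested pairs $(\po{\int}{\ab}(A,B),B), (A,B) \in \cP'$) provides a single injection $i: B \into E$ with $E$ injective such that both $(i(\po{\int}{\ab}(A,B)),E)$ and $(i(A),E)$ lie in $\cP$. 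Using well-definedness from Proposition~\ref{pr:definehereditaryversion} to compute both $\po{\int}{\ab}(A,B)$ and $\po{\int}{\ab}(\po{\int}{\ab}(A,B),B)$ via this particular $E$, and identifying $i$ with the inclusion, one has $\po{\int}{\ab}(A,B) = \int(A,E)$, so $(i(\po{\int}{\ab}(A,B)),E) = (\int(A,E),E) \in \cP$ is precisely the hypothesis you were missing. Thus your idempotence calculation goes through unchanged; the only modification is to choose the preenvelope $E$ via item~(4) rather than item~(3) of the notation, rather than adding a new hypothesis to the proposition.
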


\begin{proof}
Since every absolute interior operation is an absolute pair operation, then Corollary \ref{cor:absolutepair} gives us that $\po {\int} {\ab}$ is a functorial absolute pair operation on $\cP'$ and if $\int$ is an absolute interior on $\cP'$, then $\int=\po {\int} {\ab}$.  By Proposition 
\ref{pr:hereditaryorderpreserving} we see that $\po{\int}{\ab}$ is order-preserving on submodules.  When we have shown $\po{\int}{\ab}$ is idempotent, our proof will be complete.

Let $(A,B) \in \cP'$ and $E$ be an injective $R$-module containing $B$ such that the inclusion $i:B \to E$ is in $\cM$ and $(A,E) \in \cP$.  Note that \[A^{B}_{\po{\int}{\ab}}=A^E_{\int} \cap B=A^E_{\int}\] where the last equality holds because $\int$ is intensive. 
Now applying $\po{\int}{\ab}$ to $A^{B}_{\po{\int}{\ab}}$ and using the above equality and the idempotence of $\int$ we obtain \[(A^{B}_{\po{\int}{\ab}})^B_{\po{\int}{\ab}}
=(A^E_{\int})^E_{\int}=A^E_{\int}=A^{B}_{\po{\int}{\ab}},\] 
concluding our proof.
\end{proof}

\begin{prop}
\label{pr:definehereditaryclosure}
Let $\cM$, $\cP$, and $\cP'$ be as in Notation \ref{not:heredpairs} and let $\cl$ be a functorial closure operation on $\cP$. Then the hereditary version $\po{\cl}{\h}$ of $\cl$ is a hereditary closure operation on $\cP'$.
If $\cl$ is defined on $\cP'$ and is hereditary, then $\cl=\po{\cl}{\h}$ on $\cP'$.
\end{prop}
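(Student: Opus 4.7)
The plan is to harvest as much as possible from the general pair-operation results already established in Proposition~\ref{pr:definehereditaryversion} and Proposition~\ref{pr:hereditaryorderpreserving}, reducing the work to checking the two closure-specific axioms not yet verified: extensivity and idempotence. The structure of the argument will be the precise dual of Proposition~\ref{pr:defineresidualversion}, so at each step I will simply transfer the corresponding residual-side calculation to the hereditary side via injections into injective envelopes.

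Since a closure operation is in particular a functorial pair operation that is order-preserving on submodules, Proposition~\ref{pr:definehereditaryversion} already gives that $\po{\cl}{\h}$ is hereditary and functorial on $\cP'$, and Proposition~\ref{pr:hereditaryorderpreserving} gives that it is order-preserving on submodules. Extensivity is immediate: for $(A,B) \in \cP'$, choose an injection $i:B \into E$ in $\cM$ with $E$ injective and $(i(A),E) \in \cP$; then extensivity of $\cl$ gives $i(A) \subseteq \cl(i(A),E)$, whence
\[
A \;=\; i^{-1}(i(A)) \;\subseteq\; i^{-1}(\cl(i(A),E)) \;=\; \po{\cl}{\h}(A,B).
\]

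The main substantive step is idempotence. Write $L := \po{\cl}{\h}(A,B)$. Using Notation~\ref{not:heredpairs}(4) applied to the nested submodules $A \subseteq L$ of $B$, pick a single injection $i : B \into E$ in $\cM$ with $E$ injective such that both $(i(A),E)$ and $(i(L),E)$ lie in $\cP$; by the well-definedness proved in Proposition~\ref{pr:definehereditaryversion}, both $\po{\cl}{\h}(A,B)$ and $\po{\cl}{\h}(L,B)$ may be computed using this same $E$. By construction $i(L) \subseteq \cl(i(A),E)$, so order-preservation on submodules together with idempotence of $\cl$ yield
\[
\cl(i(L),E) \;\subseteq\; \cl(\cl(i(A),E),E) \;=\; \cl(i(A),E).
\]
Intersecting with $i(B)$ and pulling back along $i$ gives $\po{\cl}{\h}(L,B) \subseteq L$, and the reverse containment is extensivity of $\po{\cl}{\h}$, which was proved in the previous paragraph. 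Hence $\po{\cl}{\h}$ is idempotent.

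The last sentence of the proposition is immediate: if $\cl$ is already hereditary on the larger class $\cP'$, the final clause of Proposition~\ref{pr:definehereditaryversion} applies directly to give $\po{\cl}{\h} = \cl$ on $\cP'$. The anticipated obstacle is not conceptual but bookkeeping: one must keep track of exactly which pairs land in $\cP$ versus $\cP'$ at each invocation, in particular that the pair $(\cl(i(A),E),E)$ is available in $\cP$ so that idempotence of $\cl$ may legitimately be applied — this is where the standing hypotheses in Notation~\ref{not:heredpairs}, and especially condition (4), are essential.
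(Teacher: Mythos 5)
Your proof is correct and follows essentially the same route as the paper's: extensivity comes from the same direct computation, order-preservation is deferred to Proposition~\ref{pr:hereditaryorderpreserving}, idempotence is shown by the identical chain $(A^\cl_E \cap B)^\cl_E \subseteq (A^\cl_E)^\cl_E = A^\cl_E$ followed by intersecting with $B$, and the final clause is delegated to Proposition~\ref{pr:definehereditaryversion}. The only cosmetic difference is that you are slightly more explicit about which pairs need to lie in $\cP$ at each step (and you correctly flag that $(\cl(i(A),E),E)\in\cP$ is the one place where the conditional form of idempotence must be kept in mind), a bookkeeping point the paper passes over silently.
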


\begin{proof}
Note that if $A \subseteq B \subseteq E$, with $E$ injective, the inclusion morphism in $\cM$, and $(A,E) \in \cP$, since $\cl$ is extensive we have
\[
A_B^{\po{\cl}{\h}}=A_E^{\cl} \cap B \supseteq A \cap B=A
\]
implying that $\po{\cl}{\h}$ is extensive.  Proposition~\ref{pr:hereditaryorderpreserving} implies that $\po{\cl}{\h}$ is order-preserving on submodules.  
To show that $\po{\cl}{\h}$ is a closure operation, we need only show that $\po{\cl}{\h}$ is idempotent. After this we will be done by Proposition~\ref{pr:definehereditaryversion} which tells us that $\po{\cl}{\h}$ is hereditary and that if $\cl$ is hereditary, then $\cl=\po{\cl}{\h}$.

Let $A \subseteq B \subseteq E$ where $E$ is injective with inclusion morphism  in $\cM$, and $(A,E) \in \cP$.  By Proposition~\ref{pr:hereditaryorderpreserving} and the fact that $\po{\cl}{\h}$ is extensive, we have
\[
A_B^{\po{\cl}{\h}} \subseteq (A_B^{\po{\cl}{\h}})_B^{\po{\cl}{\h}}.
\]

Since $A_B^{\po{\cl}{\h}}=A_E^{\cl} \cap B \subseteq A_E^{\cl}$ and $\cl$ is order preserving on submodules,
\[
(A_B^{\po{\cl}{\h}})_E^{\cl}=(A_E^{\cl} \cap B)_E^{\cl} \subseteq (A_E^{\cl})_E^{\cl}=A_E^{\cl}
\]
where the last equality holds by idempotence of $\cl$.
Intersecting with $B$ we obtain
\[
(A_B^{\po{\cl}{\h}})_B^{\po{\cl}{\h}}=(A_B^{\po{\cl}{\h}})_E^{\cl}\cap B \subseteq A_E^{\cl} \cap B=A_B^{\po{\cl}{\h}}
\]
and we conclude that $\po{\cl}{\h}$ is idempotent.
\end{proof}

\begin{prop}\label{pr:sfhereditarypairineq}
Let $p$ be a functorial pair operation defined on $\cP'$.  Then $\po p {\h} \geq p$.
\end{prop}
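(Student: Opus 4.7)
The plan is to mirror the argument in Proposition \ref{pr:sfcohereditarypairineq}, which handles the cohereditary/cofunctorial case, but now using functoriality instead of cofunctoriality and using the inclusion into an injective envelope instead of a projection from a projective cover. Since $\po p {\h}$ is defined via an injective embedding, the natural move is to push $p(A,B)$ forward along this embedding and compare with $p$ applied on the injective module.

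Concretely, fix $(A,B) \in \cP'$ and choose (as in Notation \ref{not:heredpairs}) an injection $i: B \to E$ in $\cM$ with $E$ injective and $(i(A), E) \in \cP$. By definition,
\[
\po p {\h}(A,B) = i^{-1}\bigl(p(i(A), E)\bigr).
\]
Since $i$ is a morphism in $\cM$ with $(A,B) \in \cP'$ and $(i(A), E) \in \cP \subseteq \cP'$, the functoriality of $p$ applied to $i$ yields
\[
i\bigl(p(A,B)\bigr) \subseteq p(i(A), E).
\]
Taking preimages under $i$ and using that $X \subseteq i^{-1}(i(X))$ for any submodule $X$ of $B$, we obtain
\[
p(A,B) \subseteq i^{-1}\bigl(i(p(A,B))\bigr) \subseteq i^{-1}\bigl(p(i(A), E)\bigr) = \po p {\h}(A,B),
\]
which gives $p \leq \po p {\h}$, as desired.

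The argument is essentially routine once one sees that functoriality is exactly the dual input needed to the one used in Proposition \ref{pr:sfcohereditarypairineq}; the only real obstacle is bookkeeping to make sure the pairs used belong to the correct classes. In particular, one must know that an injection $i: B \to E$ as in Notation \ref{not:heredpairs} exists and that $\cP \subseteq \cP'$ so that functoriality of $p$ on $\cP'$ may legitimately be invoked with source $(A,B)$ and target $(i(A), E)$. After that, the inclusion is immediate and no further computation is required.
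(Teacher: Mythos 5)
Your proof is correct and takes essentially the same approach as the paper: apply functoriality of $p$ to the chosen injective embedding $i: B \to E$ to get $i(p(A,B)) \subseteq p(i(A),E)$, and pull back along $i$ (the paper phrases the last step as intersecting with $B$ after identifying $p(A,B)$ with its image in $E$, which is equivalent since $i$ is injective).
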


\begin{proof}
Suppose $A \subseteq B$ with $(A, B) \in \cP$.  Let $E$ be a injective module and $i: B \to E$ an injective $R$-module homomorphism in $\cM$ such that $(i(A),E) \in \cP$. Since $p$ is functorial,
\[i(p(A,B)) \subseteq p(i(A),E).\]
Identifying $p(A,B)$ with its image in $E$ since $i$ is injective,
\[p(A,B) \subseteq p(i(A),E) \cap B=\po p {\h}(A,B).\]
Hence, $\po p{\h} \geq p$.
\end{proof}

\begin{cor}\label{pr:aintineq}
Let $\int$ be a functorial interior operation defined on $\cP'$. Then $\po {\int} {\ab} \geq \int$.
\end{cor}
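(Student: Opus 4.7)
The plan is to derive this immediately from Proposition~\ref{pr:sfhereditarypairineq}, since an interior operation is by definition a pair operation (intensive, order-preserving on submodules, and idempotent), so the hypothesis that $\int$ is functorial puts it squarely within the scope of that proposition. First I would invoke Proposition~\ref{pr:sfhereditarypairineq} applied to $p = \int$, which gives $\po{\int}{\h} \geq \int$ on $\cP'$. Then I would observe that because $\int$ is intensive, the notational convention introduced right before Proposition~\ref{pr:absoluteinterior} (and justified by Corollary~\ref{cor:absolutepair} together with Lemma~\ref{lem:absiffinther}) lets us write $\po{\int}{\h} = \po{\int}{\ab}$. Combining these two facts yields $\po{\int}{\ab} \geq \int$.

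There is essentially no obstacle here, as this is a pure specialization of the pair-operation inequality to the interior case. The only thing to be mildly careful about is ensuring the hypotheses for invoking Proposition~\ref{pr:sfhereditarypairineq} are met: $\int$ is assumed functorial on $\cP'$, and one needs $\cM, \cP, \cP'$ to be as in Notation~\ref{not:heredpairs}, which is the standing assumption throughout the section. No new construction is needed.
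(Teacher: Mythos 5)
Your proposal is correct and matches the paper's argument, which simply cites Proposition~\ref{pr:sfhereditarypairineq} as a direct consequence; you have merely spelled out the intermediate step that $\po{\int}{\h} = \po{\int}{\ab}$ via the notational convention for intensive operations.
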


\begin{proof}
This is a direct consequence of Proposition~\ref{pr:sfhereditarypairineq}.
\end{proof}

The above inequality may be strict; we illustrate this in Example \ref{ex:absineqex}.

\begin{lemma}\label{lem:comph}
    Let $\cM$, $\cP$, and $\cP'$ be as in Notation \ref{not:heredpairs}.
   If $p \leq q$ are pair operations defined on $\cP$, then $\po{p}{\h} \leq \po{q}{\h}$ on $\cP'$.  In particular, if $i \leq j$ are interior operations on $\cP$, then $\po{i}{\ab} \leq \po{j}{\ab}$ on $\cP'$.
\end{lemma}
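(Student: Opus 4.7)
The plan is to mirror the proof of Lemma~\ref{lem:compch} in the hereditary setting, using the explicit formula for $\po{p}{\h}$ given in Proposition~\ref{pr:definehereditaryversion}. The key observation is that the hereditary version is defined by an intersection (or, equivalently, a preimage under an inclusion), and both intersection and preimage are order-preserving operations on submodules, so they preserve the pointwise inequality $p \leq q$.

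Concretely, I would first fix an arbitrary pair $(A,B) \in \cP'$. By Notation~\ref{not:heredpairs}, there exists an injective module $E \in \cM$ and an injection $i: B \hookrightarrow E$ in $\cM$ such that $(i(A),E) \in \cP$. Then by the formula in Proposition~\ref{pr:definehereditaryversion}, we have
\[
\po{p}{\h}(A,B) = i^{-1}(p(i(A),E)) \quad \text{and} \quad \po{q}{\h}(A,B) = i^{-1}(q(i(A),E)).
\]
Since $(i(A),E) \in \cP$ and $p \leq q$ on $\cP$, we have the containment $p(i(A),E) \subseteq q(i(A),E)$ inside $E$. Taking preimages under $i$ preserves containment, so
\[
\po{p}{\h}(A,B) = i^{-1}(p(i(A),E)) \subseteq i^{-1}(q(i(A),E)) = \po{q}{\h}(A,B),
\]
which gives $\po{p}{\h} \leq \po{q}{\h}$ on $\cP'$.

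For the second claim, if $i \leq j$ are interior operations on $\cP$, they are in particular intensive functorial pair operations with $i \leq j$. By the first part, $\po{i}{\h} \leq \po{j}{\h}$ on $\cP'$, and by our notational convention (following Corollary~\ref{cor:absolutepair} and Proposition~\ref{pr:absoluteinterior}), $\po{i}{\h} = \po{i}{\ab}$ and $\po{j}{\h} = \po{j}{\ab}$, giving the desired conclusion. I do not foresee any real obstacle here, since the hereditary version is defined directly as a preimage/intersection and the argument is essentially a one-line consequence of that definition; the only thing to be careful about is making sure the same $i: B \hookrightarrow E$ can be chosen so that both $(i(A),E) \in \cP$ works for computing both $\po{p}{\h}(A,B)$ and $\po{q}{\h}(A,B)$, which is immediate because well-definedness of the hereditary version (already established in Proposition~\ref{pr:definehereditaryversion}) lets us pick any such $(E,i)$.
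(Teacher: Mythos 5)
Your proposal is correct and follows essentially the same approach as the paper: fix $(A,B) \in \cP'$, choose a pre-envelope $i: B \into E$ with $(i(A),E) \in \cP$, apply the formula from Proposition~\ref{pr:definehereditaryversion}, and observe that intersection with $B$ (equivalently, taking $i^{-1}$) preserves the containment $p(i(A),E) \subseteq q(i(A),E)$. The only cosmetic difference is that you write the formula as a preimage while the paper identifies $B$ with its image and writes it as $p(A,E) \cap B$; these are the same thing, as Proposition~\ref{pr:definehereditaryversion} itself notes.
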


\begin{proof}
If $p \leq q$, then for all $(C,E) \in \cP$, $p(C,E) \subseteq q(C,E)$.  If $(A,B) \in \cP'$ then there exists an injective module $E$ containing $B$ such that the inclusion morphism is in $\cM$ and $(A,E) \in \cP$. Then $\po{p}{\h}(A,B)=p(A,E) \cap B$ and $\po{q}{\h}(A,B)=q(A,E) \cap B$.  Since $p(A,E) \subseteq q(A,E)$, then \[\po{p}{\h}(A,B)=p(A,E) \cap B \subseteq q(A,E) \cap B=\po{q}{\h}(A,B),\] as desired.
\end{proof}

\begin{prop}
\label{prop:absoluteversionnakayama}
Let $(R,\m)$ be a Noetherian local ring and let $\cM$ be the category of Artinian $R$-modules with $R$-module maps. Let $\int$ be a functorial Nakayama interior operation on pairs of Artinian $R$-modules with absolute version $\po {\int} {\ab}$.  Then $\po {\int} {\ab}$ is Nakayama as well.
\end{prop}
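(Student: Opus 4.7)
The plan is to reduce the Nakayama condition for $\po{\int}{\ab}$ to the Nakayama condition for $\int$ by embedding everything into a single Artinian injective module. Fix $A \subseteq C \subseteq B$ Artinian $R$-modules with $(A:_C\m)_{\po{\int}{\ab}}^B \subseteq A$. By the equivalent formulation of Nakayama in Definition \ref{def:nakayama}, it suffices to show $C_{\po{\int}{\ab}}^B \subseteq A$.

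Since $B$ is Artinian over the Noetherian local ring $(R,\m)$, its socle is a finite-dimensional $R/\m$-vector space, so $B$ embeds in a finite direct sum $E$ of copies of $E_R(R/\m)$; in particular $E$ is both Artinian and injective. By Proposition \ref{pr:definehereditaryversion} applied with this choice of $E$, combined with the intensivity of $\int$ (which guarantees that the interior of any submodule of $B$ lies in $B$ so that intersecting with $B$ is vacuous), we have
\[
A_{\po{\int}{\ab}}^B = A_{\int}^E, \qquad C_{\po{\int}{\ab}}^B = C_{\int}^E, \qquad (A:_C\m)_{\po{\int}{\ab}}^B = (A:_C\m)_{\int}^E.
\]
In particular the hypothesis translates to $(A:_C\m)_{\int}^E \subseteq A$.

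Now apply the Nakayama property of $\int$ to the triple of Artinian modules $A \subseteq C \subseteq E$: the hypothesis just derived gives $C_{\int}^E \subseteq A$, and intersecting with $B$ yields $C_{\po{\int}{\ab}}^B \subseteq A$, completing the proof. The only substantive point to verify is that an Artinian injective ambient module containing $B$ exists so that the Nakayama property of $\int$ is legitimately applicable; this is the one spot where Artinianness of all modules in sight (rather than arbitrary injective envelopes) is essential, and it is handled by the standard fact recalled above about socles of Artinian modules over local rings.
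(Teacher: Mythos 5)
Your proof is correct and follows essentially the same route as the paper's: both embed $B$ into a finite direct sum of copies of $E_R(R/\m)$ (Artinian and injective), use intensivity of $\int$ to identify the $\po{\int}{\ab}$-interiors in $B$ with the $\int$-interiors in the enveloping injective, and then invoke the Nakayama property of $\int$. The only cosmetic difference is that the paper concludes $A^{E^t}_\int = C^{E^t}_\int$ while you use the equivalent form $C^E_\int \subseteq A$ noted parenthetically in Definition~\ref{def:nakayama}.
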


\begin{proof}
Let $A \subseteq C \subseteq B$ be Artinian $R$-modules and suppose that $(A:_C \m)_{\po{\int}{\ab}}^B \subseteq A$. We will show that $A_{\po{\int}{\ab}}^B=C_{\po{\int}{\ab}}^B$. Let $E=E_R(R/\m)$ and $j:B \into E^t$ for some $t$ (this exists because $B$ is Artinian). 
Since $\int$ is intensive, $(A:_C \m)_{\po{\int}{\ab}}^B=(A:_C \m)_{\int}^{E^t}$, and  so $(A:_C \m)_{\int}^{E^t} \subseteq A$. Since $\int$ is Nakayama, this implies that $A_{\int}^{E^t}=C_{\int}^{E^t}$. By definition of ${\po{\int}{\ab}}$, we have
\[A_{\po{\int}{\ab}}^B=A_{\int}^{E^t}=C_{\int}^{E^t} \text{ and }
C_{\po{\int}{\ab}}^B=
C_{\int}^{E^t}.\]
Consequently, $C_{\po{\int}{\ab}}^B = A_{\po{\int}{\ab}}^B$. Hence ${\po{\int}{\ab}}$ is Nakayama.
\end{proof}

We end the section with a result on finitistic hereditary closure operations, which will be used in Section \ref{sec:integralclosure}.

\begin{prop}
\label{pr:finitisticextensionhereditary}
Let $R$ be a Noetherian ring.  Let $\cl$ be a hereditary closure operation defined on finitely generated $R$-modules.  Then $\cl_f$ is a  hereditary closure operation on all $R$-modules.  Hence $\cl_f = \po{\cl_f}{\h}$.
\end{prop}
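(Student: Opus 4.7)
The plan is to observe first that by Lemma \ref{lem:finitisticpairopprops}, since hereditary implies restrictable, $\cl_f$ is already a closure operation on all $R$-modules; so the only thing to verify is the hereditary property, after which the identity $\cl_f = \po{\cl_f}{\h}$ will be immediate.

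To verify hereditary, fix $L \subseteq N \subseteq M$. The containment $\cl_f(L,N) \subseteq \cl_f(L,M) \cap N$ is essentially tautological: the defining union for $\cl_f(L,N)$ ranges over a subcollection of the finitely generated submodules used for $\cl_f(L,M)$, and everything in $\cl_f(L,N)$ lies in $N$. The real content is the reverse containment. Given $x \in \cl_f(L,M) \cap N$, there is a finitely generated $U \subseteq M$ with $x \in \cl(L \cap U, U)$. The natural candidate for a finitely generated submodule of $N$ that witnesses $x \in \cl_f(L,N)$ is $V := U \cap N$, which is finitely generated because $R$ is Noetherian and $U$ is.

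The key small calculation is that $L \cap V = L \cap U$: this uses $L \subseteq N$, which forces $L \cap U \subseteq V$. With this identification in hand, the chain $L \cap V \subseteq V \subseteq U$ together with hereditariness of $\cl$ gives
\[
\cl(L \cap V, V) \;=\; \cl(L \cap U, U) \cap V,
\]
and since $x \in \cl(L \cap U, U)$ and $x \in U \cap N = V$, we obtain $x \in \cl(L \cap V, V) \subseteq \cl_f(L,N)$. I expect this step---finding the right finitely generated witness $V$ inside $N$ and verifying that $L \cap V$ coincides with $L \cap U$---to be the only genuinely delicate point; everything else is bookkeeping.

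For the final assertion, once $\cl_f$ is known to be a hereditary closure on all $R$-modules, the definition of $\po{\cl_f}{\h}$ in Proposition \ref{pr:definehereditaryversion} gives $\po{\cl_f}{\h}(A,B) = \cl_f(A,E) \cap B$ for any embedding of $B$ into an injective $E$, and hereditariness of $\cl_f$ collapses this to $\cl_f(A,B)$. Hence $\cl_f = \po{\cl_f}{\h}$, as required.
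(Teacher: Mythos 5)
Your argument is correct and is essentially identical to the paper's proof, differing only in the names of the finitely generated witnesses (you use $U \subseteq M$ and $V := U \cap N$, while the paper uses $V \subseteq M$ and $U := V \cap N$). The key steps — invoking Lemma \ref{lem:finitisticpairopprops} via restrictability, identifying $L \cap V = L \cap U$ using $L \subseteq N$, and applying hereditariness to the chain inside the free witness — match the paper's reasoning exactly.
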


\begin{proof}\
Since $\cl$ is hereditary, it is restrictable. Hence by Lemma \ref{lem:finitisticpairopprops} $\cl_f$ is a closure operation. It remains to prove that $\cl_f$ is hereditary.

Let $L \subseteq N \subseteq M$ be $R$-modules. 
Since for $U \subseteq N$ \fg, $U \subseteq M$ as well, $L_N^{\cl_f} \subseteq L_M^{\cl_f} \cap N$. For the reverse direction, let $x \in L_M^{\cl_f} \cap N$. Then there is some $V \subseteq M$ \fg\ such that $x \in (L \cap V)_V^{\cl}$. Set $U:=V \cap N$, which is a \fg\ submodule of $N$. Since $L \cap V \subseteq L \subseteq N$, $L \cap V=L \cap U$. Since $\cl$ is hereditary, 
\[(L \cap U)^{\cl}_U=(L \cap V)^{\cl}_U=(L \cap V)^{\cl}_V \cap U=(L \cap V)^{\cl}_V \cap N.\]
Since $x \in N$ by assumption, $x \in (L \cap U)^{\cl}_U$, so $x \in L_N^{\cl_f}$. Hence $L_M^{\cl_f} \cap N \subseteq L_N^{\cl_f}$.
\end{proof}

\section{Duality of hereditary and cohereditary versions of pair operations}
\label{sec:duality}

Next we compare the dual operations obtained by taking the cohereditary version of a pair operation (resp. the residual version of a closure) and taking its dual by the method of \cite{nmeRG-cidual}, versus using the non-residual dual from \cite{ERGV-nonres}. Note that if the closure is already residual, the two computations agree by the last sentence of Proposition \ref{pr:defineresidualversion}.

\begin{prop}\label{prop:dualsetsofpairs}
Let $(R,\m)$ be a complete local ring.  
\begin{enumerate}
    \item If $\cM$, $\cP$, and $\cP'$ are as defined in Notation \ref{not:coheredpairs}, with the additional assumptions that all modules are Noetherian, then $\cP^\vee$ and $(\cP')^\vee$ are sets of pairs as defined in Notation \ref{not:heredpairs}.
    \item If $\cM$, $\cP$, and $\cP'$ are as defined in Notation \ref{not:heredpairs}, with the additional assumptions that all modules are Artinian, then $\cP^\vee$ and $(\cP')^\vee$ are sets of pairs as defined in Notation \ref{not:coheredpairs}.
\end{enumerate}

\begin{rem}
The hypotheses involving Noetherian and Artinian conditions are
both so that if $E$ is injective, then $E^\vee$ is projective, and in order to dualize the diagrams used to prove that Condition (*) from Notation \ref{not:coheredpairs} implies Condition (**) from Notation \ref{not:heredpairs} and vice versa.
Note that the hypotheses are consistent with where we will apply this result--to sets of pairs $\cP'$ consisting of Noetherian and/or Artinian modules.
\end{rem}
\end{prop}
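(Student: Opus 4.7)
My plan is to verify, condition by condition, that $\cP^\vee$ and $(\cP')^\vee$ satisfy Notation~\ref{not:heredpairs} for part~(1); part~(2) then follows by the mirror argument, since the Noetherian/Artinian hypotheses are chosen precisely so that Matlis duality exchanges projective and injective in the respective categories. The main tool is Lemma~\ref{lem:matlisimagepreimage}, which converts images of submodules under a homomorphism into preimages under its dual, and vice versa.

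For the first four structural conditions of Notation~\ref{not:heredpairs}, the work is bookkeeping. Matlis duality is an exact contravariant functor, so the dual category inherits abelianness and has enough injectives because $\cM$ has enough projectives. That $B$ is injective for $(A,B) \in \cP^\vee$ is immediate from $B^\vee$ being projective Noetherian. For condition~(3), given $(A,B) \in (\cP')^\vee$, apply Notation~\ref{not:coheredpairs}(3) to $((B/A)^\vee, B^\vee) \in \cP'$ to get a projective $P$, a surjection $\pi \colon P \twoheadrightarrow B^\vee$, and $(\pi^{-1}((B/A)^\vee), P) \in \cP$; set $E := P^\vee$ and $i := \pi^\vee \colon B \hookrightarrow E$. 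Lemma~\ref{lem:matlisimagepreimage}(1) with $g = i$ gives $\pi^{-1}((B/A)^\vee) = (E/i(A))^\vee$, so $(i(A), E) \in \cP^\vee$. Condition~(4) is analogous, applied to the pair of pairs.

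The substantive step is deducing condition~(**) from condition~(*). Given the (**)-data, dualize every map to obtain $\pi := j^\vee$, $q := i^\vee$, $\phi := \psi^\vee$, $\tilde{\phi} := \tilde{\psi}^\vee$, with $\phi\pi = q\tilde{\phi}$ dualizing $\tilde{\psi}\circ i = j\circ \psi$. I apply~(*) with $L := (C/\psi(A))^\vee \subseteq M := C^\vee$, $N := B^\vee$, $P := F^\vee$, $Q := E^\vee$. Lemma~\ref{lem:matlisimagepreimage}(1) gives $\pi^{-1}(L) = (F/j(\psi(A)))^\vee$, so the input $(\pi^{-1}(L), P) \in \cP$ matches the given $(j(\psi(A)), F) \in \cP^\vee$; and Lemma~\ref{lem:matlisimagepreimage}(2) followed by (1) gives $\phi(L) = (B/A)^\vee$ and $q^{-1}(\phi(L)) = (E/i(A))^\vee$, matching the given $(i(A), E) \in \cP^\vee$. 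The conclusion $(\tilde{\phi}(\pi^{-1}(L)), Q) \in \cP$ then translates via Lemma~\ref{lem:matlisimagepreimage}(2) to $(\tilde{\psi}^{-1}(j(\psi(A))), E) \in \cP^\vee$, which is the required output of~(**).

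The delicate point is verifying the remaining input $(L, M) = ((C/\psi(A))^\vee, C^\vee) \in \cP'$, equivalently $(\psi(A), C) \in (\cP')^\vee$. This is not among the listed hypotheses of~(**), but it is automatic in all the examples of Remark~\ref{rem:cohereditaryhypotheses}, where $\cP'$ consists of all nested pairs in $\cM$, and more generally whenever $\cP'$ is closed under transferring a pair $(A,B)$ to $(\psi(A),C)$ along an injection $\psi \colon B \hookrightarrow C$ in $\cM$. This compatibility is the one place I expect the proof to require either careful verification from the ambient hypotheses on $\cP'$ or an implicit strengthening of those hypotheses, and it is the only nontrivial obstacle to the otherwise straightforward Matlis-duality translation between the two Notations.
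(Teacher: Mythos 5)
Your proof takes essentially the same route as the paper's: everything is pushed through Matlis duality, with Lemma~\ref{lem:matlisimagepreimage} converting images into preimages, and conditions (3), (4), and (*)/(**) of the two Notations checked to dualize correctly. Your calculations of $\pi^{-1}(L)$, $\phi(L)$, $q^{-1}(\phi(L))$, and $\tilde\phi(\pi^{-1}(L))$ and their translations into the dual diagram are all correct, and your choice $L := (C/\psi(A))^\vee$ automatically forces $\ker\phi \subseteq L$, which is what makes the Matlis-dual dictionary consistent.

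The ``delicate point'' you raise is real and worth knowing. The hypothesis $(\psi(A),C) \in (\cP')^\vee$, equivalently $(L,M) = ((C/\psi(A))^\vee, C^\vee) \in \cP'$, is needed to invoke Condition~(*), since ``$L,M,P,\pi$ as above'' presupposes $(L,M) \in \cP'$; yet it is not among the listed hypotheses of~(**). The paper's own argument is no tighter on this point: it writes ``Set $L \subseteq M$ and assume $(L,M), (\phi(L),N) \in \cP'$,'' thereby positing both the (*)-hypothesis $(L,M) \in \cP'$ (whose dual is your $(\psi(A),C) \in (\cP')^\vee$) and the additional $(\phi(L),N) \in \cP'$ (whose dual is the (**)-hypothesis $(A,B) \in (\cP')^\vee$). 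In effect the paper establishes the equivalence of the conclusions on the common ground where both sets of hypotheses hold, not under either alone. As you observe, this extra compatibility is automatic in all the examples of Remark~\ref{rem:cohereditaryhypotheses}, and more generally whenever $\cP'$ is stable under pushing a pair along a monomorphism, which covers every place the proposition is applied. So the discrepancy you flag is a matter of bookkeeping in the formulation of Conditions~(*) and~(**) rather than a defect unique to your reconstruction.
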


\begin{proof}[Proof of Proposition~\ref{prop:dualsetsofpairs}]
First we note that given a pair $(N,P) \in \cP$ as in Notation \ref{not:coheredpairs}, $P$ is projective. This implies $P^\vee$ is injective by \cite[Theorem 3.2.9]{EJ-book}. Hence pairs $((P/N)^\vee,P^\vee) \in \cP^\vee$ are valid pairs in the sense of Notation \ref{not:heredpairs}. Conversely, if $(C,E) \in \cP$ as in Notation \ref{not:heredpairs}, since $E$ is Artinian by hypothesis, $E^\vee$ is \fg\ and flat, and hence projective, by \cite[Theorem 3.2.16]{EJ-book}. Thus pairs $((E/C)^\vee,E^\vee) \in \cP^\vee$ are valid pairs in the sense of Notation \ref{not:coheredpairs}.

Further, since we have assumed all modules are Matlis dualizable, $\cP^{\vee \vee} = \cP$ and $(\cP')^{\vee \vee} = \cP'$ for $\cP$ and $\cP'$ in the sense of either Notation \ref{not:coheredpairs} or Notation \ref{not:heredpairs}. It will suffice to show that the conditions of Notation \ref{not:coheredpairs} hold for $\cP$ and $\cP'$ if and only if the conditions of Notation \ref{not:heredpairs} hold for $\cP^\vee$ and $(\cP')^\vee$.

Let $(L,M) \in \cP'$ and $\pi:P \twoheadrightarrow M$ in $\cM$ be such that $P$ is projective and $(\pi^{-1}(L),P) \in \cP$. Set $B=M^\vee$, $E=P^\vee$, $i=\pi^\vee$, and $A=(M/L)^\vee$. We claim that $(i(A),E) \in \cP^\vee$ if and only if $(\pi^{-1}(L),P) \in \cP$. By Lemma \ref{lem:matlisimagepreimage}, we have
\[i(A)=\pi^\vee((M/L)^\vee)=(P/\pi^{-1}(L))^\vee.\]
Hence
\[(i(A),E)=((P/\pi^{-1}(L))^\vee,P^\vee) \in \cP^\vee.\]
This proves our claim.

Next assume that $(L,M),(N,M) \in \cP'$, with $L \subseteq M$. Suppose there is some $\pi:P \to M$ with $(\pi^{-1}(L),P),(\pi^{-1}(N),P) \in \cP$. Set $B=M^\vee$, $E=P^\vee$, $i=\pi^\vee$, $A=(M/L)^\vee$, and $C=(M/N)^\vee$. Then 
\[i(A)=(P/\pi^{-1}(L))^\vee\]
and
\[i(C)=(P/\pi^{-1}(N))^\vee\]
as above. So
\[(i(A),E)=((P/\pi^{-1}(L))^\vee,P^\vee) \in \cP^\vee,\]
and the same for $(i(C),E)$. The reverse implication follows similarly.

To finish the proof, we show that Condition (*) of Notation \ref{not:coheredpairs} holds on $\cP$ and $\cP'$ if and only if Condition (**) of Notation \ref{not:heredpairs} holds on $\cP^\vee$ and $(\cP')^\vee$. Assume the following diagrams are Matlis duals:

\begin{center}
\begin{tabular}{c c}
$\xymatrix{
P  \ar@{->>}[d]^{\pi} \ar[r]^{\tilde{\phi}} & Q \ar@{->>}[d]^q  \\
M \ar@{->>}[r]^{\phi} & N  \\
}$ &
$\xymatrix{
F    & E \ar[l]^{\tilde{\psi}}   \\
C  \ar@{^{(}->}[u]^{j}  & B \ar@{^{(}->}[l]^{\psi} \ar@{^{(}->}[u]^i \\
}$ \\
\end{tabular}
\end{center}
where $F=P^\vee$, $E=Q^\vee$, $C=M^\vee$, and $N=B^\vee$. Further assume that $P$ and $Q$ are projective (so $F$ and $E$ are injective).
Set $L \subseteq M$ and assume $(L,M),(\phi(L),N) \in \cP'$ and $(\pi^{-1}(L),P),(q^{-1}(\phi(L)),Q) \in \cP$. Dually, if we set $A=(N/\phi(L))^\vee$, we get $(A,B),(\psi(A),C) \in (\cP')^\vee$ and $(i(A),E),(j(\psi(A)),F) \in \cP^\vee$ by repeated use of Lemma \ref{lem:matlisimagepreimage}.

We claim that $(\tilde{\phi}(\pi^{-1}(L)),Q) \in \cP$ if and only if $(\tilde{\psi}^{-1}(j(\psi(A))),E) \in \cP^\vee$. This will show that Condition (*) holds on $\cP$ and $\cP'$ if and only if Condition (**) holds on $\cP^\vee$ and $(\cP')^\vee$. By Lemma \ref{lem:matlisimagepreimage}, we have
\[j(\psi(A))=\pi^\vee((M/L)^\vee)=(P/\pi^{-1}(L))^\vee.\]
Further, \[\tilde{\psi}^{-1}((P/\pi^{-1}(L))^\vee)=(\tilde{\phi}^\vee)^{-1}((P/\pi^{-1}(L))^\vee).\]
We can see that the set of maps $f:Q \to E_R(k)$ such that $\pi^{-1}(L) \subseteq \ker(f \circ \tilde{\phi})$ are exactly the maps with $\tilde{\phi}(\pi^{-1}(L)) \subseteq \ker(f)$. So
\[(\tilde{\phi}^\vee)^{-1}((P/\pi^{-1}(L))^\vee)=(Q/\tilde{\phi}(\pi^{-1}(L)))^\vee.\] Hence
\[(\tilde{\psi}^{-1}(j(\psi(A))),E)=((Q/\tilde{\phi}(\pi^{-1}(L)))^\vee,Q^\vee),\]
which completes the proof.
\end{proof}

\begin{prop}\label{prop:completedualhandcversions}
Let $(R,\m)$ be a complete local ring.  
\begin{enumerate}
    \item \label{item:chdual} Let $\cM$, $\cP$, and $\cP'$ be defined as in Notation \ref{not:coheredpairs} with the additional assumption that all modules are \fg.
     If $p$ is a {cofunctorial} pair operation defined on $\cP$ and $\po{p}{\ch}$ is the cohereditary version of $p$ defined on $\cP'$, then $(\po{p}{\ch})^\dual=\po{(p^\dual)}{\h}$ is the hereditary version of $p^\dual$ defined on $(\cP')^\vee$. 
     In particular, if $\cl$ is a {cofunctorial (hence, functorial)} closure operation defined on $\cP$ and $\po{\cl}{\re}$ is the residual version of $\cl$ defined on $\cP'$, then $(\po{\cl}{\re})^\dual=\po{(\cl^\dual)}{\ab}$ is the absolute version of the interior $\cl^\dual$.
     
      \item \label{item:hdual} Let $\cM$, $\cP$, and $\cP'$ be defined as in Notation \ref{not:heredpairs} with the additional assumption that all modules are Artinian.
      If $p$ is a {functorial} pair operation defined on $\cP$ and $\po{p}{\h}$ is the hereditary version of $p$ defined on $\cP'$, then $(\po{p}{\h})^\dual=\po{(p^\dual)}{\ch}$ is the cohereditary version of $p^\dual$ defined on $(\cP')^\vee$. 
      
      In particular, if $\int$ is a {functorial} interior operation defined on $\cP$ and $\po{\int}{\ab}$ is the absolute version of $\int$ defined on $\cP'$, then $(\po{\int}{\ab})^\dual=\po{(\int^\dual)}{\re}$ is the residual version of the closure $\int^\dual$. 
\end{enumerate}
\end{prop}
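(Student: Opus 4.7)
The plan is to compute both sides of each identity explicitly using the constructions of Propositions~\ref{pr:definecohereditaryversion} and \ref{pr:definehereditaryversion}, and to check that they agree via Lemma~\ref{lem:matlisimagepreimage}. Proposition~\ref{prop:dualsetsofpairs} ensures the ambient classes of pairs match up correctly, and Proposition~\ref{pr:functcofunct} guarantees that $p^\dual$ is functorial when $p$ is cofunctorial (and vice versa), so the hypotheses for applying the dual constructions are met. The key geometric observation is that if $\pi\colon P\onto M$ is a surjection from a \fg\ projective used to define $\po{p}{\ch}(L,M)$, then its Matlis dual $\pi^\vee\colon M^\vee\into P^\vee$ is exactly an injection into an Artinian injective, which is the sort of witness needed to define $\po{(p^\dual)}{\h}$ on the dual pair.

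For part~(1), I would let $(A,B)\in(\cP')^\vee$, set $M:=B^\vee$ and $L:=(B/A)^\vee$, and fix a surjection $\pi\colon P\onto M$ in $\cM$ with $P$ \fg\ projective and $(\pi^{-1}(L),P)\in\cP$. Writing $K:=p(\pi^{-1}(L),P)$, Definition~\ref{def:smiledual} together with the formula $\po{p}{\ch}(L,M)=\pi(K)$ gives
\[
(\po{p}{\ch})^\dual(A,B)=\bigl(M/\pi(K)\bigr)^\vee,
\]
and Lemma~\ref{lem:matlisimagepreimage}\eqref{it:phipreimage} applied to $g=\pi$ rewrites this as $(\pi^\vee)^{-1}\bigl((P/K)^\vee\bigr)$. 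On the other side, $i:=\pi^\vee\colon B\into P^\vee$ is a valid witness for the hereditary construction. By Lemma~\ref{lem:matlisimagepreimage}\eqref{it:phiimage} one has $i(A)=(P/\pi^{-1}(L))^\vee$, and unwinding Definition~\ref{def:smiledual} on the pair $(i(A),P^\vee)$ (using the dualized short exact sequence $0\to\pi^{-1}(L)\to P\to P/\pi^{-1}(L)\to 0$, and Matlis reflexivity of the \fg\ module $\pi^{-1}(L)$) yields $p^\dual(i(A),P^\vee)=(P/K)^\vee$. Thus
\[
\po{(p^\dual)}{\h}(A,B)=i^{-1}\bigl(p^\dual(i(A),P^\vee)\bigr)=(\pi^\vee)^{-1}\bigl((P/K)^\vee\bigr),
\]
matching the computation of $(\po{p}{\ch})^\dual(A,B)$.

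Part~(2) then follows from (1) by the involution property $p^{\dual\dual}=p$: if $p$ is functorial then $p^\dual$ is cofunctorial, so (1) applied to $p^\dual$ gives $(\po{(p^\dual)}{\ch})^\dual=\po{p^{\dual\dual}}{\h}=\po{p}{\h}$; dualizing both sides yields $\po{(p^\dual)}{\ch}=(\po{p}{\h})^\dual$.

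The main obstacle is careful bookkeeping. At each step one must consistently apply Remark~\ref{rem:Matlis} to identify $B$ with $B^{\vee\vee}$, choose the correct map $g$ when invoking Lemma~\ref{lem:matlisimagepreimage} (and its correct part), and verify that the dualized objects live in allowable classes of pairs, so that $P^\vee$ really is an Artinian injective in the dual ambient category and so that $(i(A),P^\vee)\in\cP^\vee$. The linchpin identity $p^\dual(i(A),P^\vee)=(P/K)^\vee$ requires the double-dual identification $((\pi^{-1}(L))^\vee)^\vee\cong\pi^{-1}(L)$; this is where the \fg\ hypothesis on modules in~(1) (resp.~the Artinian hypothesis in~(2)) is used.
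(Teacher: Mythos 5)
Your argument for part (1) is correct and essentially the same as the paper's: the paper fixes an injection $i\colon B \into E$ first and obtains the surjection $\pi := i^\vee$ by dualizing, whereas you start from a surjection $\pi\colon P \onto M$ and obtain $i := \pi^\vee$; either way, Lemma~\ref{lem:matlisimagepreimage}, Matlis reflexivity of \fg\ modules, and the well-definedness of the two constructions produce the identical chain of equalities. Where you genuinely diverge is part (2): you deduce it formally from part (1) via the involution $p^{\dual\dual}=p$ together with Propositions~\ref{pr:functcofunct} and~\ref{prop:dualsetsofpairs}, whereas the paper re-runs the whole computation from scratch with the roles of the two Notations interchanged. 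Your route is shorter and avoids duplicated bookkeeping; its only cost is that it leans on Proposition~\ref{prop:dualsetsofpairs} to certify that $(\cP')^\vee$ arising from the Artinian setup of Notation~\ref{not:heredpairs} really satisfies Notation~\ref{not:coheredpairs} with finitely generated modules, so that part (1) is applicable — and since that proposition establishes exactly this, the shortcut is sound.
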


\begin{proof}
\eqref{item:chdual}:  By Proposition \ref{prop:dualsetsofpairs}, if $\cP$ and $\cP'$ are  sets of finitely generated pairs as defined in Notation \ref{not:coheredpairs}, then $\cP^\vee$ and $(\cP')^\vee$ are sets of Artinian pairs as defined in Notation \ref{not:heredpairs}.  Let $(A,B) \in (\cP')^\vee$ and $i:B \into E$ be an injective homomorphism in $\cM$ so that $(i(A), E) \in \cP^\vee$.  Choose $L \subseteq M$ such that $B=M^\vee$ and $A=(M/L)^\vee$  so that $(L,M) \in \cP'$.  Set $P=E^\vee$ and $\pi=i^\vee$.  {Since $p$ is cofunctorial, $p^\smile$ is functorial by Proposition \ref{pr:functcofunct}.  Thus,} we know that $\po{p}{\ch}(L,M)=\pi(p(\pi^{-1}(L),P))$ and $\po{(p^\dual)}{\h}(A,B)=i^{-1}(p^\dual(i(A),E))$. By definition, 
\begin{align*}(\po{p}{\ch})^\dual(A,B)&=\left(\displaystyle\frac{B^\vee}{\po{p}{\ch}((B/A)^\vee,B^\vee)}\right)^\vee\\
&=\left(\displaystyle\frac{M}{\po{p}{\ch}(L,M)}\right)^\vee=\left(\displaystyle\frac{M}{\pi(p(\pi^{-1}(L),P))}\right)^\vee\\
&=i^{-1}\left(\left(\displaystyle\frac{P}{p(\pi^{-1}(L),P)}\right)^\vee\right)=i^{-1}(p^\dual((P/\pi^{-1}(L))^\vee,P^\vee))\\
&=i^{-1}(p^\dual(i(A),E))=\po{(p^\dual)}{\h}(A,B),\\
\end{align*}
where the fourth to last equality follows from Lemma~\ref{lem:matlisimagepreimage}.

\eqref{item:hdual}:  By Proposition \ref{prop:dualsetsofpairs}, if $\cP$ and $\cP'$ are sets of Artinian pairs as defined in Notation \ref{not:heredpairs}, then $\cP^\vee$ and $(\cP')^\vee$ are sets of \fg\ pairs as defined in Notation \ref{not:coheredpairs}.  Let $(L,M) \in (\cP')^\vee$ and $\pi:P \onto M$ be a surjective homomorphism in $\cM$ so that $(\pi^-1(L), P) \in \cP^\vee$.  Choose $A \subseteq B$ such that $M=B^\vee$ and $A=(M/L)^\vee$  so that $(A,B) \in \cP'$.  Set $E=P^\vee$ and $i=\pi^\vee$.  Since $p$ is functorial, $p^\smile$ is cofunctorial by Proposition \ref{pr:functcofunct}.  Thus we know that $\po{p}{\h}(A,B)=i^{-1}(p(i(A),E))$ and $\po{(p^\dual)}{\ch}(L,M)=\pi(p^\dual(\pi^{-1}(L),P))$. By definition, 
\begin{align*}(\po{p}{\h})^\dual(L,M)&=\left(\displaystyle\frac{M^\vee}{\po{p}{\h}((M/L)^\vee,M^\vee)}\right)^\vee\\
&=\left(\displaystyle\frac{B}{\po{p}{\h}(A,B)}\right)^\vee=\left(\displaystyle\frac{B}{i^{-1}(p(i(A),E))}\right)^\vee\\
&=\pi\left(\left(\displaystyle\frac{E}{p(i(A),E)}\right)^\vee\right)=\pi(p^\dual((P/\pi^{-1}(L))^\vee,E^\vee))\\
&=\pi(p^\dual(\pi^{-1}(L),P))=\po{(p^\dual)}{\h}(L,M),
\end{align*}
where the fourth to last equality follows from Lemma \ref{lem:matlisimagepreimage}.
\end{proof}

\begin{rem}
In the special case that $p$ is defined on $\cP'$, we can give a different proof of Proposition \ref{prop:completedualhandcversions}, which uses the following lemma:
\end{rem}

\begin{lemma}\label{lem:dualinequality}
Let $(R,\m)$ be a complete local ring. 
\begin{enumerate}
    \item \label{item:chineq} Let $\cM$, $\cP$, and $\cP'$ be defined as in Notation \ref{not:coheredpairs} with the additional assumption that all modules are finitely generated. If $p$ is a {cofunctorial} pair operation defined on $\cP'$ and $\po{p}{\ch}$ is the cohereditary version of $p$ defined on $\cP'$, then $p^\dual \leq (\po{p}{\ch})^\dual$.  
    
    In particular, if $\cl$ is a {cofunctorial (hence functorial)} closure operation defined on $\cP'$ and $\po{\cl}{\re}$ is the residual version of $\cl$ defined on $\cP'$, then $\cl^\dual \leq (\po{\cl}{\re})^\dual$.
    
    \item \label{item:hineq} Let $\cM$, $\cP$, and $\cP'$ be defined as in Notation \ref{not:heredpairs} with the additional assumption that all modules are Artinian. If $p$ is a {functorial} pair operation defined on $\cP'$ and $\po{p}{\h}$ is the hereditary version of $p$ defined on $\cP'$, then $p^\dual \geq (\po{p}{\h})^\dual$.   
    
    In particular, if $\int$ is a {functorial} interior operation defined on $\cP'$ and $\po{\int}{\ab}$ is the absolute version of $\int$ defined on $\cP'$, then $\int^\dual \geq (\po{\int}{\ab})^\dual$.
\end{enumerate}
\end{lemma}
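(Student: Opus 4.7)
My plan is that this lemma should follow essentially immediately from three earlier results already proved in the paper, so the argument is short rather than computational. The key observation is that we already have direct comparisons between a pair operation and its hereditary/cohereditary version, and we have a contravariance result for $\smile$ on inequalities of pair operations.

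For part (\ref{item:chineq}), the plan is to first invoke Proposition~\ref{pr:sfcohereditarypairineq}, which (given that $p$ is cofunctorial on $\cP'$ and that all modules are finitely generated, so the setup of Notation~\ref{not:coheredpairs} is satisfied) yields the pointwise inequality $\po{p}{\ch} \leq p$ on $\cP'$. Since $R$ is a complete local ring and the pairs in $\cP'$ are Matlis dualizable (being finitely generated), Proposition~\ref{pr:comparepairops} applies and reverses the inequality under $\smile$, producing $p^\smile \leq (\po{p}{\ch})^\smile$. The residual/closure special case then follows because a cofunctorial closure operation on $\cP'$ is in particular a cofunctorial (and extensive) pair operation, and its cohereditary version is exactly its residual version by the notation established after Corollary~\ref{cor:cohereditaryextensive}.

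For part (\ref{item:hineq}), the argument is symmetric. Now the hypotheses match Notation~\ref{not:heredpairs}, so Proposition~\ref{pr:sfhereditarypairineq} gives $p \leq \po{p}{\h}$ on $\cP'$. The pairs in $\cP'$ are Artinian, hence Matlis dualizable, so once again Proposition~\ref{pr:comparepairops} applies to give $(\po{p}{\h})^\smile \leq p^\smile$. The interior case follows because a functorial interior operation is in particular an intensive functorial pair operation, whose hereditary version coincides with its absolute version by the notation introduced after Corollary~\ref{cor:absolutepair}.

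There is no real obstacle here: the content of the lemma is entirely packaged in Propositions~\ref{pr:sfcohereditarypairineq}, \ref{pr:sfhereditarypairineq}, and \ref{pr:comparepairops}, together with the identification of residual (resp.\ absolute) with cohereditary-extensive (resp.\ hereditary-intensive) in the relevant settings. The only thing worth being careful about is confirming that the Matlis-dualizability hypothesis of Proposition~\ref{pr:comparepairops} is in force; this is guaranteed by the finitely generated assumption in (\ref{item:chineq}) and the Artinian assumption in (\ref{item:hineq}), which is precisely why these restrictions were added to the statement.
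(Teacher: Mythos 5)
Your proposal is correct and follows essentially the same route as the paper: the authors also cite Propositions~\ref{pr:comparepairops}, \ref{pr:sfcohereditarypairineq}, and \ref{pr:sfhereditarypairineq} and declare both parts direct consequences, with the finiteness/Artinian hypotheses included precisely to make Proposition~\ref{pr:comparepairops} applicable. Your write-up simply spells out the intermediate steps that the paper leaves implicit.
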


\begin{proof}
Both \eqref{item:chineq} and \eqref{item:hineq} are direct consequences of Proposition \ref{pr:comparepairops}, Proposition \ref{pr:sfcohereditarypairineq} and Proposition \ref{pr:sfhereditarypairineq}.
\end{proof}

\begin{prop}\label{pr:dualequality}
Let $(R,\m)$ be a complete local ring. 
\begin{enumerate}
    \item \label{item:cheq} Let $\cM$, $\cP$, and $\cP'$ be defined as in Notation \ref{not:coheredpairs} with the additional assumption that all modules are \fg.
    If $p$ is a {cofunctorial} pair operation defined on $\cP'$ and $\po{p}{\ch}$ is the cohereditary version of $p$ defined on $\cP'$, then $\po{(p^\dual)}{\h} = (\po{p}{\ch})^\dual$ for all pairs in $(\cP')^\vee$.  
    
    In particular, if $\cl$ is a {cofunctorial (hence, functorial)} closure operation defined on $\cP'$ and $\po{\cl}{\re}$ is the residual version of $\cl$ defined on $\cP'$, then $\po{(\cl^\dual)}{\ab} = (\po{\cl}{\re})^\dual$ for all pairs in $(\cP')^\vee$.
    
    \item \label{item:heq} Let $\cM$, $\cP$, and $\cP'$ be defined as in Notation \ref{not:heredpairs} with the additional assumption that all modules are Artinian.
    If $p$ is a {functorial} pair operation defined on $\cP'$ and $\po{p}{\h}$ is the hereditary version of $p$ defined on $\cP'$, then $\po{(p^\dual)}{\ch} = (\po{p}{\h})^\dual$ for all pairs in $(\cP')^\vee$. 
    
    In particular, if $\int$ is a {functorial} interior operation defined on $\cP'$ and $\po{\int}{\ab}$ is the absolute version of $\int$ defined on $\cP'$, then $\po{(\int^\dual)}{\re} = (\po{\int}{\ab})^\dual$ for all pairs in $(\cP')^\vee$.
\end{enumerate}
\end{prop}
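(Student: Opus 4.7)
The strategy is to reduce Proposition~\ref{pr:dualequality} to Proposition~\ref{prop:completedualhandcversions} via restriction. For part~(1), set $q := p|_{\cP}$. This is a cofunctorial pair operation on $\cP$: cofunctoriality is a pointwise condition whose hypotheses demand that both ambient modules of the relevant pairs lie in $\cP$, and since $\cP \subseteq \cP'$, the conclusion is inherited verbatim from the cofunctoriality of $p$ on $\cP'$.

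The plan is to establish three identifications and then chain them with Proposition~\ref{prop:completedualhandcversions}(1) applied to $q$:
\begin{enumerate}
\item[(a)] $\po{p}{\ch} = \po{q}{\ch}$ on $\cP'$. By Proposition~\ref{pr:definecohereditaryversion}, the value $\po{p}{\ch}(L,M)$ equals $\pi(p(\pi^{-1}(L),P))$ for a chosen projective surjection $\pi:P \onto M$ with $(\pi^{-1}(L),P) \in \cP$, so only $p|_\cP$ is invoked.
\item[(b)] $q^\dual = p^\dual|_{\cP^\vee}$. This follows directly from Definition~\ref{def:smiledual}, since the formula $p^\dual(A,B) = (B^\vee / p((B/A)^\vee, B^\vee))^\vee$ for $(A,B) \in \cP^\vee$ uses only the value of $p$ at the pair $((B/A)^\vee, B^\vee) \in \cP$.
\item[(c)] $\po{(p^\dual)}{\h} = \po{(q^\dual)}{\h}$ on $(\cP')^\vee$. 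By Proposition~\ref{pr:definehereditaryversion}, the value at $(A,B) \in (\cP')^\vee$ equals $i^{-1}(p^\dual(i(A),E))$ for an injective embedding $i: B \into E$ with $(i(A),E) \in \cP^\vee$, where $p^\dual$ and $q^\dual$ coincide by (b).
\end{enumerate}
Combining these with Proposition~\ref{prop:completedualhandcversions}(1) for $q$ then yields, on $(\cP')^\vee$,
\[(\po{p}{\ch})^\dual \stackrel{(a)}{=} (\po{q}{\ch})^\dual = \po{(q^\dual)}{\h} \stackrel{(c)}{=} \po{(p^\dual)}{\h}.\]
The special case for a cofunctorial closure operation $\cl$ follows by substitution: $\cl$ is extensive, so $\po{\cl}{\ch} = \po{\cl}{\re}$, while $\cl^\dual$ is intensive by Proposition~\ref{pr:dualityprops}(1), so $\po{(\cl^\dual)}{\h} = \po{(\cl^\dual)}{\ab}$.

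Part~(2) is formally dual: set $q := p|_\cP$ where now $\cP$ consists of pairs of Artinian modules whose second entry is injective, verify the analogous restriction identities (a)--(c) swapping the roles of projective/injective and $\ch/\h$, and invoke Proposition~\ref{prop:completedualhandcversions}(2) on $q$. The only item requiring real checking across both parts is that cofunctoriality (resp.\ functoriality) passes to restrictions, which is immediate as noted above. Thus the main obstacle is essentially bookkeeping: namely, observing that the constructions $\po{(\cdot)}{\ch}$, $\po{(\cdot)}{\h}$, and $(\cdot)^\dual$ each see only the behavior of $p$ on the anchoring subclass $\cP$, so that the stronger hypothesis ``$p$ defined on $\cP'$'' contributes no new content beyond what Proposition~\ref{prop:completedualhandcversions} already handles on $\cP$.
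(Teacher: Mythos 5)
Your proof is correct, and it takes a genuinely different route from the one in the paper. The paper proves Proposition~\ref{pr:dualequality} by a two-sided comparison: using Proposition~\ref{pr:sfcohereditarypairineq} together with Lemma~\ref{lem:dualinequality}(\ref{item:chineq}) and Lemma~\ref{lem:comph} to get $(\po{p}{\ch})^\dual = \po{((\po{p}{\ch})^\dual)}{\h} \geq \po{(p^\dual)}{\h}$, then dually using Proposition~\ref{pr:sfhereditarypairineq}, Lemma~\ref{lem:dualinequality}(\ref{item:heq}), and Lemma~\ref{lem:compch} to get the reverse inequality, and combining. Note that Proposition~\ref{pr:sfcohereditarypairineq} and Proposition~\ref{pr:sfhereditarypairineq} require $p$ to be defined on $\cP'$, so the paper's argument makes essential use of the stronger hypothesis, whereas yours does not. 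Your approach instead exploits the structural observation that all three constructions $\po{(\cdot)}{\ch}$, $(\cdot)^\dual |_{\cP^\vee}$, and $\po{(\cdot)}{\h}$ only see $p|_\cP$; this reduces the statement to the restriction $q = p|_\cP$, where Proposition~\ref{prop:completedualhandcversions}(\ref{item:chdual}) already gives the conclusion by a direct diagram-chase through Lemma~\ref{lem:matlisimagepreimage}. Your route is shorter and has the merit of making explicit why Proposition~\ref{pr:dualequality} is not genuinely stronger than Proposition~\ref{prop:completedualhandcversions}; what it does not reveal (which the paper's remark before Lemma~\ref{lem:dualinequality} explains is the point of this alternate proof) is that when $p$ is defined on all of $\cP'$, an entirely independent proof via comparison inequalities is available, without going through the explicit Matlis-duality computation of Proposition~\ref{prop:completedualhandcversions}.
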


\begin{proof}
\eqref{item:cheq}:  By Proposition \ref{pr:definecohereditaryversion}, $\po{p}{\ch}$ is a  cofunctorial, cohereditary pair operation on $\cP'$.  Now by Propositions \ref{pr:functcofunct} and \ref{pr:dualityprops}, $(\po{p}{\ch})^\dual$ is a functorial and hereditary pair operation on $(\cP')^\vee$.  Hence, $\po{((\po{p}{\ch})^\dual)}{\h}=(\po{p}{\ch})^\dual$ according to Proposition \ref{pr:definehereditaryversion}. By Proposition \ref{pr:sfcohereditarypairineq}, we have $\po{p}{\ch} \leq p$.  Applying Lemma \ref{lem:dualinequality}\eqref{item:chineq}, $(\po{p}{\ch})^\dual \geq p^\dual$ and by Lemma \ref{lem:comph}, 
\begin{equation}\label{eq:heredineq1} (\po{p}{\ch})^\dual=\po{((\po{p}{\ch})^\dual)}{\h} \geq \po{(p^\dual)}{\h}.\tag{$\ast$}\end{equation}

By Proposition \ref{pr:sfhereditarypairineq}, $p^\dual \leq \po{(p^\dual)}{\h}$ is a pair operation on $(\cP')^\vee$ and $(\cP')^\vee$ satisfies Notation \ref{not:heredpairs} by Proposition \ref{prop:dualsetsofpairs}.  Since for all pairs $(L,M) \in \cP'$, $M$ is finitely generated, their Matlis duals are Artinian implying that $(A,B) \in (\cP')^\vee$.  Applying Lemma \ref{lem:dualinequality}\eqref{item:heq}, we obtain $p=(p^\dual)^\dual \geq (\po{(p^\dual)}{\h})^\dual$ and $(\po{(p^\dual)}{\h})^\dual$ is {cofunctorial and} cohereditary on $\cP'$ by Propositions  \ref{pr:functcofunct} and \ref{pr:dualityprops}. It follows from Lemma \ref{lem:compch} that  $\po{p}{\ch} \geq \po{((\po{(p^\dual)}{\h})^\dual)}{\ch}=(\po{(p^\dual)}{\h})^\dual$.  Now by 
Proposition \ref{pr:comparepairops}, 
\begin{equation}\label{eq:heredineq2}
    (\po{p}{\ch})^\dual \leq ((\po{(p^\dual)}{\h})^\dual)^\dual=\po{(p^\dual)}{\h}. \tag{$\ast\ast$}
\end{equation}
Combining Equations \eqref{eq:heredineq1} and \eqref{eq:heredineq2}, we obtain the result.

The proof of \eqref{item:heq} follows the same reasoning.
\end{proof}

\section{Hereditary and cohereditary versions of {$J$}-basically full closure and {$J$}-basically empty interior}\label{sec:JbfJbe}

 In this section we will consider the closure operation $\Jcolsym J$ and the interior operation $\Jintrelsym J$ through the lenses of hereditary and cohereditary pair operations. 

In \cite{ERGV-nonres}, we dualized $\Jcolsym J$ by developing a duality for closures that were not necessarily residual. Here we present an alternate method of dualizing $\Jcolsym J$, by taking its residual version $\po{\Jcolsym J}{\re}$ and dualizing that instead. Note that since $\po{\Jcolsym J}{\re}$ is residual, its dual is the same whether computed by the method of \cite{nmeRG-cidual} or the method of \cite{ERGV-nonres} (see \cite[Proposition 3.2]{ERGV-nonres}).

\begin{defn} \cite[Definition 4.2]{ERGV-nonres} Let $R$ be a commutative ring.
Let $J$ be an ideal of $R$.  Then for any submodule inclusion $L \subseteq M$, we define the $J$-basically full closure of $L$ in $M$ as follows:
\[
\Jcol JLM := (JL :_M J).
\]
\end{defn}

\begin{defn} \cite[Definition 4.9]{ERGV-nonres} Let $R$ be a commutative ring.
Let $J$ be an ideal of $R$.  Then for any submodule inclusion $L \subseteq M$, we define the $J$-basically-empty interior of $L$ in $M$ as follows:
\[
\Jintrel JML := J(L :_M J).
\]
\end{defn}

We proved in  \cite[Theorem 4.12]{ERGV-nonres} that $\Jcolsym J$ and $\Jintrelsym J$ are dual pair operations on pairs of finitely generated modules or pairs of Artinian modules over a complete local ring.

We first consider  $\Jcolsym J$.  We showed that  $\Jcolsym J$ is functorial  in \cite[Proposition 4.7]{ERGV-nonres}.  Although,  $\Jcolsym J$ is not residual, we can show that  $\Jcolsym J$ is hereditary:

\begin{prop}
\label{jcolhereditary}
Let $R$ be a commutative ring. For any ideal $J$ of $R$, the operation $\Jcolsym J$ is a hereditary closure operation.
\end{prop}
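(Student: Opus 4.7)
The plan is to verify the hereditary condition directly from the colon-ideal formula $\Jcol JLM = (JL :_M J)$, and then to check the three closure axioms in a similar quick manner. The key observation is that the submodule $JL$ is intrinsic to $L$ and does not depend on the ambient module, so the defining inclusion $Jx \subseteq JL$ is an ``internal'' condition on $x$.

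For the hereditary property, let $L \subseteq N \subseteq M$. Since $JL \subseteq L \subseteq N$, for any $x \in N$ the condition $Jx \subseteq JL$ is unambiguous regardless of whether $x$ is viewed in $N$ or in $M$. Therefore
\[
\Jcol JLN = \{x \in N : Jx \subseteq JL\} = \{x \in M : Jx \subseteq JL\} \cap N = \Jcol JLM \cap N,
\]
which is the definition of hereditary.

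For the closure axioms: extensivity follows because $JL \subseteq JL$ trivially gives $L \subseteq (JL :_M J)$; order-preservation on submodules of a fixed $M$ is immediate since $L \subseteq L'$ implies $JL \subseteq JL'$, so that $(JL :_M J) \subseteq (JL' :_M J)$; and idempotence follows from the standard property $J \cdot (JL :_M J) \subseteq JL$ of the colon, which yields $(J \cdot \Jcol JLM :_M J) \subseteq (JL :_M J) = \Jcol JLM$, with the reverse containment coming from extensivity together with order-preservation.

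There is essentially no obstacle here: the whole statement is a few lines of unwinding the definition. The only point requiring a moment's care is to emphasize that $JL$ does not change when we enlarge the ambient module, so that $(JL:_N J)$ and $(JL:_M J)\cap N$ describe the same set of elements of $N$.
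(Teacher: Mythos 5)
Your argument is correct and, for the hereditary property (the substantive new content of this proposition), it is essentially the paper's proof: both unwind $\Jcol JLM = (JL :_M J) = \{x \in M : Jx \subseteq JL\}$ and observe that the membership test $Jx \subseteq JL$ is intrinsic to $x$ and $L$, hence unchanged when one restricts the ambient module from $M$ to an intermediate $N$. The only cosmetic differences are that the paper invokes functoriality for the containment $\Jcol JLN \subseteq \Jcol JLM \cap N$ and checks the reverse inclusion directly rather than treating both at once as you do, and that the paper delegates the closure axioms to the cited earlier work rather than re-verifying them (your verification, including $J\cdot(JL:_M J)\subseteq JL$ for idempotence, is correct but not needed here).
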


\begin{proof}
Let $L \subseteq N \subseteq M$ be $R$-modules.  Clearly $\Jcol JLN \subseteq \Jcol JLM \cap N$ by functoriality.  Now suppose that $z \in \Jcol JLM \cap N$. Since $\Jcol JLM=(JL:_M J)$, then $Jz \subseteq JL$ and $z \in N$.  Hence $z \in (J L:_NJ)=\Jcol JLN$.
\end{proof}
By Proposition \ref{pr:definehereditaryclosure}, $\po{\Jcolsym J}{\h}=\Jcolsym J$.
Since $\Jcolsym J$ is functorial, we define the \emph{residual} version of $\Jcolsym J$, which is a residual closure by Proposition \ref{pr:defineresidualversion}:

\begin{defn}\label{def:rJcol}  Let $R$ be a commutative ring.
 For any module $M$, let $\pi: P \onto M$ be a surjection with $P$ projective.  Then for a submodule inclusion $L \subseteq M$, we set $L^{\po{\Jcolsym J}{\re}}_M = \pi\left({\pi^{-1}(L)}^{\Jcolsym J}_P\right)$.
\end{defn}

By Corollary \ref{pr:rclincl} we have the following comparison between $\po {\Jcolsym J}{\re}$ and $\Jcolsym J$:
\begin{cor}
Let $R$ be a commutative ring and $J$ an ideal of $R$. For any $R$-submodule inclusion $L \subseteq M$, $L^{\po{\Jcolsym J}{\re}}_M \subseteq \Jcol J L M$.
\end{cor}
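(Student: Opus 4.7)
The plan is to invoke Corollary~\ref{pr:rclincl} directly. The hypotheses needed are that $\Jcolsym J$ is a closure operation and that it is functorial (equivalently, cofunctorial, by Proposition~\ref{pr:cofunctorial}\eqref{it:funceq} since we are working in the abelian category of $R$-modules with the usual hypotheses on $\cP$ from Notation~\ref{not:coheredpairs}). Both of these facts are established earlier in the excerpt: $\Jcolsym J$ is a functorial closure operation by \cite[Proposition 4.7]{ERGV-nonres}, and Proposition~\ref{jcolhereditary} confirms that it is indeed a closure operation (in fact hereditary).

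Accordingly, the proof consists of a single invocation: by Corollary~\ref{pr:rclincl} applied to the functorial closure operation $\cl = \Jcolsym J$, we have $\po{\Jcolsym J}{\re} \leq \Jcolsym J$, which is precisely the statement $L^{\po{\Jcolsym J}{\re}}_M \subseteq \Jcol J L M$ for every submodule inclusion $L \subseteq M$. Since no nontrivial computation is required beyond citing the general result, there is no serious obstacle here; the only thing to check is that the hypotheses on $\cM$, $\cP$, and $\cP'$ from Notation~\ref{not:coheredpairs} are met for the category of all $R$-modules, and this is exactly example (1) of Remark~\ref{rem:cohereditaryhypotheses}.
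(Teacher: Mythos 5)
Your proposal is correct and matches the paper's approach exactly: the paper derives this corollary by the one-line observation ``By Corollary~\ref{pr:rclincl},'' which is precisely the specialization you carry out. Your extra remarks verifying that $\Jcolsym J$ is a functorial closure operation and that the hypotheses of Notation~\ref{not:coheredpairs} are satisfied are sound but not explicitly spelled out in the paper.
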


Similar to Example \ref{ex:lirverstrict}, the inequality 
 $\po{\Jcolsym J}{\re} \le \Jcolsym J$ can be strict, as seen in the example below:

\begin{example}
Let $R=k[\![x,y]\!]$ where $k$ is any field, let the module $M$ be the ideal $(x^3, x^2y^2, y^3)$, and let $L$ be the sub-ideal $(x^3, y^3)$.  Then we have a surjection $\pi: R \ra M/L$ given by $1 \mapsto x^2y^2 + L$.  The kernel is 
\[L:_R (x^2y^2)=(x^3, y^3):_R (x^2y^2) = (x,y) = \m,\]
so $L^{\po{\Jcolsym{\m}}{\re}}_M / L = \pi(\Jcol{\m}{\m}R) = \pi(\m) = 0$, whence $L^{\po{\Jcolsym{\m}}{\re}}_M = L$.  However $\Jcol{\m}LM=M$.
\end{example}

The next result tells us that we can compute $\po{\Jcolsym{J}}{\re}$-cores.

\begin{cor}
Let $R$ be a commutative ring and $J$ an ideal of $R$. The closure operation $\po{\Jcolsym{J}}{\re}$ is a Nakayama closure.
\end{cor}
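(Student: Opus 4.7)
The plan is to reduce to the Nakayama property of $\Jcolsym{J}$ itself and then invoke Proposition~\ref{prop:residualversionnakayama}. Recall that $\Jcolsym{J}$ is functorial by \cite[Proposition 4.7]{ERGV-nonres}, so Proposition~\ref{prop:residualversionnakayama} applies: if one can show that $\Jcolsym{J}$ is a Nakayama closure on pairs of finitely generated $R$-modules, then its residual version $\po{\Jcolsym{J}}{\re}$ inherits the Nakayama property.

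To verify the Nakayama property for $\Jcolsym{J}$, I would use the characterization in Lemma~\ref{lem:residualnakayama} only after residualizing; but $\Jcolsym{J}$ itself is not residual, so I would instead work directly from Definition~\ref{def:nakayama}. Let $L \subseteq N \subseteq M$ be finitely generated with $N \subseteq (L + \m N)^{\Jcolsym{J}}_M = (J(L+\m N) :_M J)$. Unwinding, this says $JN \subseteq J(L+\m N) = JL + \m (JN)$. Since $JN$ is finitely generated (as $N$ is), Nakayama's lemma applied to $JN/JL$ yields $JN = JL$. Then
\[
N^{\Jcolsym{J}}_M = (JN :_M J) = (JL :_M J) = L^{\Jcolsym{J}}_M,
\]
so $\Jcolsym{J}$ is Nakayama.

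Combining this with Proposition~\ref{prop:residualversionnakayama} immediately gives that $\po{\Jcolsym{J}}{\re}$ is Nakayama. There is no serious obstacle here: the content of the argument is a one-line application of Nakayama's lemma to the containment $JN \subseteq JL + \m(JN)$ extracted from the hypothesis, and all remaining work is handled by Proposition~\ref{prop:residualversionnakayama}.
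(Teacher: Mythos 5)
Your proof is correct and follows the same route as the paper: both reduce to Proposition~\ref{prop:residualversionnakayama} together with the fact that $\Jcolsym{J}$ is a functorial Nakayama closure. The only difference is that you verify directly (via the one-line Nakayama argument from $JN \subseteq JL + \m JN$) that $\Jcolsym{J}$ is Nakayama, whereas the paper simply cites \cite[Proposition 4.7]{ERGV-nonres} for this fact.
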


\begin{proof}
This follows from 
\cite[Proposition 4.7]{ERGV-nonres} and Proposition \ref{prop:residualversionnakayama}.
\end{proof}

The next example exhibits an instance where the residual basically full core of a submodule is not the same as the basically full core of the submodule  (See \cite[Sections 4 and 8]{ERGV-nonres} for relevant definitions.):
\begin{example}
Let $R=k[[t^2,t^3]]$, $\m=(t^2,t^3)$ and $L_a=(t^2+at^3)$ for any $a \in k$. First note that $(\m L_a:_\m \m)=(\m^2:_\m \m)=\m$ for any $a \in k$ making $\Jcol{\m}{(L_a)}{\m}=\m$.  Hence, $L_a$ is an $\Jcolsym \m$-reduction of $\m$ for all $a \in k$.  Note that $(\m (t^3,t^4):_\m \m) =((t^5,t^6):_\m \m)=(t^3,t^4)$ so no other ideal in the lattice of ideals in $\m$ is an $\Jcolsym \m$-reduction of $\m$.  Thus 
\[\Jcolsym \m\core_\m \m= \bigcap_{a\in k} L_a=\m^2.
\]

To see that $\po{\Jcolsym \m}{\re}\core_\m \m=\m \neq \m^2$, we will show that none of the $L_a$ are $\po{\Jcolsym \m}{\re}$-reductions of $\m$ in $\m$.  Consider $\pi_a:R \onto \m/L_a$, given by $\pi_a(1)=t^3$. Since ${\rm ker}(\pi_a)=\m$ and $\m_R^{\Jcolsym \m}=\m$, $0_{\m/L_a}^{\po{\Jcolsym \m}{\re}}=0$. Since $\po{\Jcolsym \m}{\re} \le \po{\Jcolsym \m}{}$, $(t^3,t^4)_{\m}^{\po{\Jcolsym \m}{\re}} \ne \m$.

Since $\po{\Jcolsym \m}{\re}$ is residual, $(L_a)_{\m}^{\po{\Jcolsym \m}{\re}}=L_a \ne \m$.
\end{example}

Next, we show that $\po{\Jcolsym J}{\re}$ is finitistic, provided that $J$ is \fg.

\begin{prop}\label{pr:rJfg}
Let $R$ be a commutative ring and $J$ a  finitely generated ideal of $R$. Let $\alpha$ be the preradical on $R$-modules given by $\alpha(M) = 0^{\po{\Jcolsym J}{\re}}_M$.  Then $\alpha = \alpha_f$, i.e., $\alpha$ is finitistic.
It follows that ${\po{\Jcolsym J}{\re}}$ is a finitistic closure operation.
\end{prop}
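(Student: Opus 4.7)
The plan is to reduce the statement to Proposition~\ref{pr:clrfinitistic}, which says that the residual version of a cofunctorial finitistic closure operation is finitistic (noting that both $\Jcolsym J$ and $\po{\Jcolsym J}{\re}$ are defined on the class of \emph{all} pairs of $R$-modules, so the hypothesis ``$(0,N/L) \in \cP'$ iff $(L,N) \in \cP'$'' is automatic). We already know from \cite[Proposition~4.7]{ERGV-nonres} that $\Jcolsym J$ is functorial (hence cofunctorial by Proposition~\ref{pr:cofunctorial}(\ref{it:funceq}), since the extra hypotheses on $\cP$ are trivially satisfied for all-pairs). So the only real task is to verify that $\Jcolsym J$ itself is finitistic when $J$ is \fg.

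For the finitistic verification, let $L \subseteq M$ be $R$-modules and let $x \in L^{\Jcolsym J}_M = (JL :_M J)$. Write $J = (j_1, \ldots, j_n)$, which we can do since $J$ is \fg. For each $i$ we have $j_i x \in JL$, so we can write $j_i x = \sum_{k=1}^{m_i} a_{ik} \ell_{ik}$ with $a_{ik} \in J$ and $\ell_{ik} \in L$. Set
\[ U := Rx + \sum_{i,k} R \ell_{ik}, \]
which is a \fg\ submodule of $M$ containing $x$, and each $\ell_{ik}$ lies in $L \cap U$. The defining relations then witness that $Jx \subseteq J(L \cap U)$, so $x \in (J(L \cap U) :_U J) = (L \cap U)^{\Jcolsym J}_U$. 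The reverse containment $(L \cap U)^{\Jcolsym J}_U \subseteq L^{\Jcolsym J}_M$ for every \fg\ $U \subseteq M$ is immediate from functoriality (i.e.\ order-preservation on ambient modules applied to the inclusion $U \hookrightarrow M$ after noting $L \cap U \subseteq L$). Hence $\Jcolsym J = (\Jcolsym J)_f$, which is the finitistic property.

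Applying Proposition~\ref{pr:clrfinitistic} now yields that $\po{\Jcolsym J}{\re}$ is a finitistic closure operation. Specializing to $L = 0$, for any $R$-module $M$ we obtain
\[ \alpha(M) = 0^{\po{\Jcolsym J}{\re}}_M = \bigcup_{U \subseteq M \text{ f.g.}} 0^{\po{\Jcolsym J}{\re}}_U = \bigcup_{U \subseteq M \text{ f.g.}} \alpha(U), \]
where the union equals the sum because the $U$'s form a directed family and $\po{\Jcolsym J}{\re}$ is functorial (Proposition~\ref{pr:definecohereditaryversion}), which gives $\alpha(U) \subseteq \alpha(U')$ whenever $U \subseteq U'$. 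Thus $\alpha = \alpha_f$. I expect the only real obstacle is the key computation showing $\Jcolsym J$ is finitistic, and the crucial input there is precisely the finite generation of $J$, which lets us absorb both the generators of $J$ and the finitely many witness elements $\ell_{ik}$ into a single \fg\ submodule $U$.
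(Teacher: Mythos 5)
Your route is genuinely different from the paper's.  The paper proves the claim directly: given $y\in\alpha(M)$, it lifts $y$ to $z$ in a free presentation $\pi\colon F\onto M$, uses finite generation of $J$ to locate a finitely generated $L'\subseteq L=\ker\pi$ with $Jz\subseteq JL'$, and then packages $z$ and $L'$ into a \emph{finite free direct summand} $G$ of $F$; restricting $\pi$ to $G$ gives the needed finitely generated $N=\pi(G)\subseteq M$ with $y\in\alpha(N)$.  You instead prove the stronger and cleaner statement that $\Jcolsym J$ itself is finitistic as a pair operation on all nested pairs (your computation of the witness $U$ is correct and uses precisely the finite generation of $J$), and then invoke Proposition~\ref{pr:clrfinitistic} as a black box.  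Both arguments pivot on the same observation about $J$ being finitely generated.  One subtlety deserves care in your appeal to Proposition~\ref{pr:clrfinitistic}, however.  That proposition is formulated in the setting of Notation~\ref{not:coheredpairs}, where the class $\cP$ on which $\cl$ is finitistic consists of pairs with \emph{projective} ambient module, and its proof uses that $\cl$ agrees with $\po{\cl}{\re}$ on the intermediate module $U$ supplied by the finitistic hypothesis --- which holds when $U$ is projective, since $\po{\cl}{\re}$ coincides with $\cl$ on projective ambients, but fails in general because $\po{\cl}{\re}\le\cl$.  The $U=Rx+\sum_{i,k}R\ell_{ik}$ produced by your lemma need not be projective, so the finitistic property you establish (over all finitely generated submodules) does not directly furnish the projective witness used in that proof.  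Over a free $P$ the fix is routine --- enlarge $U$ to the finite free direct summand of $P$ it sits inside, exactly the paper's choice of $G$ --- but as written this step is elided in your reduction, which is why the paper's own proof of Proposition~\ref{pr:rJfg} is carried out by hand rather than through Proposition~\ref{pr:clrfinitistic}.
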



\begin{proof}
 Let $y \in \alpha(M)$, so that $y \in 0_M^{{\Jcolsym J}_r}$. Then there exist a free module $F$ and a submodule $L \subseteq F$ such that $\pi: F \twoheadrightarrow M$ is a surjection with kernel $L$ and there is some $z \in L_F^{\Jcolsym J}$ with $\pi(z)=y$.
 Write $F = \bigoplus_{\alpha \in \Lambda} R e_\alpha$, where the $e_\alpha$s form a free basis for $F$ over $R$.  
 Then as $z\in F$, $z$ is a linear combination of only finitely many $e_\alpha$s.  
Let $j_1, \ldots, j_m$ be a system of generators for the ideal $J$.  Thus, $Jz$ involves only these $e_\alpha$s.  
Similarly, as each $j_i z \in JL$, there is some finitely generated submodule $L'$ of $L$ such that each $j_i z \in J L'$, so that $Jz \subseteq JL'$.  As $L'$ is finitely generated, its generators use only finitely many $e_\alpha$'s.  Thus, there is some finite collection $\{\alpha_1, \ldots, \alpha_n\} \subseteq \Lambda$ such that, writing $G := \bigoplus_{i=1}^n R e_{\alpha_i}$, we have $L' \subseteq G$ and $z\in G$.  Thus, $Jz \subseteq JL'$, and since also $z\in G$ we have $z \in JL':_GJ = \Jcol J {(L')} G$.  Thus since $L' \subseteq L = \ker \pi$, we have $y = \pi(z) \in 0^{\po{\Jcolsym J}{\re}}_{\pi(G)} = \alpha(\pi(G))$, so that since $\pi(G)$ is a \fg\ submodule of $M$, $\alpha = \alpha_f$.

For the final statement, let $L \subseteq M$ and let $x \in L^{\po{\Jcolsym J}{\re}}_M$.  Then $\bar x \in 0^{\po{\Jcolsym J}{\re}}_{M/L} = \alpha(M/L)$ (by residuality), so that since $\alpha$ is finitistic, there is some \fg\ $T \subseteq M/L$ with $\bar x \in \alpha(T)$.  But by the correspondence theorem, $T = N/L$, for some $N$ with $L \subseteq N \subseteq M$.  That is, $\bar x \in 0^{\po{\Jcolsym J}{\re}}_{N/L}$, so that since ${\po{\Jcolsym J}{\re}}$ is residual, $x \in L^{\po{\Jcolsym J}{\re}}_N$.  Then by \cite[Lemma 3.2]{ERGV-chdual} (see also Lemma~\ref{lem:twoversionsoffinitistic}), we are done.


\end{proof}

The assumption that $J$ is finitely generated is necessary in Proposition~\ref{pr:rJfg}, as shown in the following example.

\begin{example}
\label{ex:Jbfnotfin}
Let $(R,\m)$ be a valuation ring whose value group is $\Q$.  Then $\m^2 = \m$ by \cite[Exercise 6.29]{integralclosure}, since $\m$ is not principal as whenever $x$ is a nonunit with value $\alpha$, there is an element of value $\alpha/2$ in $\m \setminus xR$. Thus, $\Jcol \m \m R = (\m^2 :_R \m) = (\m :_R \m) = R$.  On the other hand, let $I$ be a nonzero finitely generated $R$-submodule of $\m$ -- i.e. a finitely generated proper ideal of $R$.  Then there is some $x\in \m$ with $I=xR$.  Then $\Jcol \m I R = \m I : \m = \m x : \m  \subseteq \m$, since $x \notin \m x$ (since every element of $\m x$ has value strictly greater than the value of $x$), but $x\in \m$.  Thus, $\m^{\m {\rm bf}_f}_R =\m \neq R = \Jcol \m \m R$.  Thus, in the notation of the Proposition, $\alpha(R/\m) = R/\m$ while $\alpha_f(R/\m) = 0$.
\end{example}

The following example illustrates that even when liftable integral closure (see Definition \ref{def:li}) and residual $\m$-basically full closure agree, one can still have a proper containment with respect to their non-residual versions. 

\begin{example}
\label{ex:residualversionsdisagree}
Let $R=k[\![x,y]\!]$ where $k$ is any field, let $M$ be the ideal $(x^3,x^2y, xy^2, y^3)$, and let $L$ be the subideal $(x^3, y^3)$.  Then we have a surjection $\pi: R^2 \ra M/L$ given by $(1,0) \mapsto x^2y + L$ and $(0,1) \mapsto xy^2 + L$.  The kernel is $K=\langle (x,0), (y,-x),(0,y)\rangle$.  
As \[\m K=\langle(x^2,0), (xy,-x^2), (0,xy),(xy,0), (y^2,-xy),(0,y^2) \rangle=\m^2\oplus \m^2,\] we see that $\Jcol{\m}{K}{R^2}=\m K:_{R^2} \m=\m\oplus \m$ making \[\pi(\Jcol{\m}{K}{R^2})= L^{\po{\Jcolsym{\m}}{\re}}_M  / L= \displaystyle\frac{(x^2y^2)+L}{L}.\]   Observe also that  $\Jcol{\m}LM=(x^3,x^2y^2,y^3)=L^{\po{\Jcolsym{\m}}{\re}}_M$.   Whereas, we can use the symmetric algebra $\Sym(R^2)$ of $R^2$ to determine that $K$ is a reduction of $\m \oplus \m$ which is integrally closed in $R^2$. Note that \[\Sym(K) \cong R[xt_1, yt_1-xt_2,yt_2] \subseteq R[t_1,t_2] \cong \Sym(R^2).\]  We need only show that $yt_1, xt_2 \in \sqrt{(xt_1,yt_1-xt_2,yt_2)R[\m t_1, \m t_2]} $ by \cite[Theorem 16.2.3]{integralclosure}.  By the following computation, we see that 
\[(yt_1)^2=yt_1(yt_1-xt_2)-xt_1(yt_2) \quad \text{and} \quad (xt_2)^2=-xt_2(yt_1-xt_2)-xt_1(yt_2)\] implying that $K^{-}_{R^2}=\m \oplus \m$.  Since $R^2$ is free, both the Rees integral closure and Eisenbud-Huneke-Ulrich integral closures are equal.  This is why we are labeling the integral closure of $K$ in $R^2$ without specification of Rees or EHU.  Thus
 \[\lic LM / L = \pi(K^-_{R^2}) = \pi(\m \oplus \m)= \displaystyle\frac{(x^2y^2)+L}{L}.\] Since $M \subseteq R$, $L^-_M=L^-_R=M$.  
 Comparing our computations above, we see that $\lic LM=L_M^{\po{\Jcolsym{\m}}{\re}}$, but $\Jcol{\m} LM \subsetneq L^-_M$.
\end{example}
 
The following result holds by duality when the ring is complete and local, but we prove it in more generality here.

\begin{prop}
\label{pr:jbecohereditary}
For any ideal $J$, the operation $\Jintrelsym J$ is a cohereditary interior operation.
\end{prop}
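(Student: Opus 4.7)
The plan is to verify the cohereditary identity directly from the definition of $\Jintrelsym J$, since its being an interior operation is already established in \cite[Theorem 4.12]{ERGV-nonres} and the surrounding discussion.

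Fix $L \subseteq N \subseteq M$. The key lemma to establish first is the following ``colon commutes with quotient'' identity: when $L \subseteq N$, one has
\[
(N/L :_{M/L} J) = (N :_M J)/L.
\]
The containment $\supseteq$ is immediate: if $m \in (N :_M J)$, then $J\bar m \subseteq (N+L)/L = N/L$. For $\subseteq$, suppose $\bar m \in M/L$ satisfies $J\bar m \subseteq N/L$, i.e., $Jm \subseteq N+L$. Since $L \subseteq N$, this means $Jm \subseteq N$, so $m \in (N :_M J)$, and hence $\bar m \in (N:_M J)/L$.

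Next I would use the standard fact that for any submodule $K$ with $L \subseteq K \subseteq M$ and any ideal $J$, the image $J \cdot (K/L) = (JK + L)/L$ as submodules of $M/L$. Applying this with $K = (N :_M J)$ (and noting $L \subseteq N \subseteq (N :_M J)$), together with the lemma above, gives
\[
\Jintrel J {M/L} {N/L} \;=\; J \cdot (N/L :_{M/L} J) \;=\; J \cdot (N :_M J)/L \;=\; \frac{J(N :_M J) + L}{L} \;=\; \frac{\Jintrel J M N + L}{L},
\]
which is exactly the cohereditary condition.

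There isn't really a hard step here — the only content is the colon identity, whose proof needs the hypothesis $L \subseteq N$ used precisely to collapse $N + L$ back to $N$. I would note briefly in passing that, over a complete local ring, this result also follows from Proposition \ref{jcolhereditary} (asserting that $\Jcolsym J$ is hereditary) together with Proposition \ref{pr:dualityprops}(\ref{it:hercoher}) and the fact that $\Jintrelsym J = (\Jcolsym J)^\smile$ from \cite[Theorem 4.12]{ERGV-nonres}; but the direct verification above has the advantage of holding for an arbitrary commutative ring.
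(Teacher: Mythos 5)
Your proof is correct, and the core computation is essentially the same as the paper's: the crux in both is showing that $J(z+L) \subseteq N/L$ forces $z \in (N:_M J)$, which is exactly the nontrivial direction of your colon lemma. The main organizational difference is that the paper handles the easy containment $\frac{\Jintrel JMN+L}{L} \subseteq \Jintrel J{M/L}{(N/L)}$ by invoking functoriality of $\Jintrelsym J$ from \cite[Proposition 4.10]{ERGV-nonres}, whereas you obtain both containments at once by proving the identity $(N/L :_{M/L} J) = (N:_M J)/L$ in full and then applying the standard formula $J\cdot(K/L)=(JK+L)/L$; this makes your argument slightly more self-contained. Your closing remark about the complete-local-ring route via Propositions \ref{jcolhereditary} and \ref{pr:dualityprops}(\ref{it:hercoher}) is accurate and is precisely why the paper takes the trouble to give a direct proof: as it notes just before the statement, the direct argument yields the result over an arbitrary commutative ring, not just a complete local one.
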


\begin{proof}
Let $L \subseteq N \subseteq M$ be $R$-modules.  Suppose $\pi:M \onto M/L$ is the natural epimorphism.  Since $\Jintrelsym J$ is functorial by \cite[Proposition 4.10]{ERGV-nonres},
\[\pi(\Jintrel JMN)=\displaystyle\frac{\Jintrel JMN+L}{L} \subseteq  \Jintrel J{M/L}{(N/L)}.\]    As  $\Jintrel J{M/L}{(N/L)}=J(N/L:_{M/L} J)$, the elements of  $\Jintrel J{M/L}{(N/L)}$ are of the form $\sum\limits_{i=1}^n j_i(z_i+L)$ where $j_i \in J$ and $z_i+L \in (N/L:_{M/L} J)$; thus, $J(z_i+L) \subseteq N/L$ which implies $Jz_i \subseteq N$.  Hence $z_i \in (N:_M J)$ and  $\sum\limits_{i=1}^n j_i(z_i+L) \subseteq \displaystyle\frac{\Jintrel JMN+L}{L}$ giving us the equality $\displaystyle\frac{\Jintrel JMN+L}{L} = \Jintrel J{M/L}{(N/L)}$ which implies that $\Jintrelsym J$ is cohereditary.
\end{proof}

Now by Proposition \ref{pr:definecohereditaryinterior}, $\po{\Jintrelsym J}{\ch}=\Jintrelsym J$.  Though $\Jintrelsym J$ is not absolute, since $\Jintrelsym{J}$ is functorial, we can define the absolute version via Proposition \ref{pr:absoluteinterior}.

\begin{defn}
Let $L \subseteq M$ and $E$ be any injective module containing $M$, the absolute $\Jintrelsym J$ interior is defined by
\[L^M_{\po{\Jintrelsym J}{\ab}}:=\Jintrel J EL \cap M
\]
\end{defn}

By Corollary \ref{pr:aintineq}, we obtain the following:

\begin{cor}
For any $R$-module inclusion $L \subseteq M$, $L^M_{\Jintrelsym J} \subseteq L^M_{\po{\Jintrelsym J}{\ab}}$.
\end{cor}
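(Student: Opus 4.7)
The plan is to observe that this corollary is essentially a direct specialization of Corollary~\ref{pr:aintineq}. Since $\Jintrelsym J$ is a functorial interior operation (by \cite[Proposition 4.10]{ERGV-nonres}) defined on all pairs of $R$-modules, the general inequality $\po{\int}{\ab} \geq \int$ for functorial interior operations applies immediately to give the claim. In effect, the work was already done in Proposition~\ref{pr:sfhereditarypairineq}: the hereditary version of a functorial pair operation always dominates the original, and specializing to an intensive interior gives Corollary~\ref{pr:aintineq}, which we then cite.

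For completeness, I would also sketch a direct one-line computation, since it is especially transparent for this particular operation. Fix an injective module $E$ containing $M$. By the explicit formulas, we have
\[
L^M_{\Jintrelsym J} = J(L :_M J) \quad \text{and} \quad L^M_{\po{\Jintrelsym J}{\ab}} = J(L :_E J) \cap M.
\]
The containment $(L :_M J) \subseteq (L :_E J)$ is immediate from $M \subseteq E$, so $J(L:_M J) \subseteq J(L:_E J)$. Moreover, $J(L :_M J) \subseteq L \subseteq M$ since any generator $jx$ with $x \in (L:_M J)$ lies in $L$ by definition. Combining these two containments places $J(L:_M J)$ inside $J(L:_E J) \cap M$, which is exactly the required inequality.

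There is no real obstacle here; the only thing to double-check is that the hypotheses of Corollary~\ref{pr:aintineq} apply, namely that $\Jintrelsym J$ is defined and functorial on the class $\cP'$ of all nested pairs of $R$-modules, which is precisely the content of \cite[Proposition 4.10]{ERGV-nonres}. Thus the proof reduces to a citation plus, optionally, the direct verification above.
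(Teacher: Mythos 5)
Your proposal is correct, and the main route matches the paper exactly: the paper offers no separate proof environment for this corollary, only the lead-in ``By Corollary~\ref{pr:aintineq}, we obtain the following,'' which is precisely the citation you give (together with the observation, from \cite[Proposition 4.10]{ERGV-nonres}, that $\Jintrelsym J$ is a functorial interior operation on all nested pairs). Your additional direct computation is a correct and transparent double-check that the paper does not include; it makes the containment concrete via $(L:_M J)\subseteq(L:_E J)$ and $J(L:_M J)\subseteq L\subseteq M$, which is a nice complement to the abstract citation but not needed for the argument to go through.
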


To see that the inequality $\Jintrelsym J \leq \po{\Jintrelsym J}{\ab}$ is strict consider the following example:

\begin{example}\label{ex:absineqex}
Let $R=k[[t^2,t^5]]$.  The quotient field $Q=k((t^2,t^5))=k((t))$ is an injective module containing $R$ 
 by \cite[Proposition 3.34(i)]{Rot-hom}. Consider $J=(t^5,t^6)$.  Note that $t^3 \in Q \setminus R$.  Hence, $(J:_Q\m)=(t^3,t^4)$ but $(J:_R\m)=(t^4,t^5)$.  Thus $\Jintrel \m Q J=\m (J:_Q\m)=(t^5,t^6)$ making $J^R_{\po{\Jintrelsym \m}{\ab}}=(t^5,t^6) \cap R=(t^5,t^6)$ which properly contains $(t^6,t^7)=\m(J:_R\m)=\Jintrel \m R J$. 
\end{example}

The following result tells us that we can compute $\po{\Jintrelsym J}{\ab}$-hulls.

\begin{cor}
The interior operation $\po{\Jintrelsym J}{\ab}$ is a Nakayama interior.
\end{cor}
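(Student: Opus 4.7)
The plan is to apply Proposition~\ref{prop:absoluteversionnakayama} with $\int := \Jintrelsym J$. For that proposition to apply, we must verify two things: that $\Jintrelsym J$ is functorial on pairs of Artinian $R$-modules, and that it is a Nakayama interior on such pairs. Functoriality is already established in \cite[Proposition 4.10]{ERGV-nonres}, so the substantive content is the Nakayama property.

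To verify that $\Jintrelsym J$ is Nakayama, suppose $A \subseteq C \subseteq B$ are Artinian $R$-modules with $(A:_C \m)^B_{\Jintrelsym J} \subseteq A$; I must show that $C^B_{\Jintrelsym J} \subseteq A$. Unpacking the definitions, set $X := (C :_B J)$ and $Y := (A :_B J)$. The hypothesis becomes $J\bigl((A :_C \m) :_B J\bigr) \subseteq A$, i.e. $\bigl((A :_C \m) :_B J\bigr) \subseteq Y$, while the desired conclusion $J(C :_B J) \subseteq A$ is equivalent to $X \subseteq Y$. Now let $b \in X$ satisfy $\m b \subseteq Y$. Then $\m J b \subseteq A$, so $Jb \subseteq (A :_B \m)$; combined with $Jb \subseteq C$ (from $b \in X$), this gives $Jb \subseteq (A :_B \m) \cap C = (A :_C \m)$, whence $b \in ((A :_C \m) :_B J) \subseteq Y$. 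Thus every element of $X/Y$ annihilated by $\m$ is zero, i.e. the socle of $X/Y$ vanishes. Since $X/Y$ is a submodule of the Artinian module $B/Y$, the Matlis-dual Nakayama lemma (every nonzero Artinian module over a local ring has nonzero socle) forces $X/Y = 0$, so $X \subseteq Y$ as required.

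With functoriality and the Nakayama property of $\Jintrelsym J$ in hand, Proposition~\ref{prop:absoluteversionnakayama} immediately yields that $\po{\Jintrelsym J}{\ab}$ is Nakayama. The main obstacle is the middle step above, and in particular the observation that the hypothesis of the Nakayama condition can be rephrased as the vanishing of a socle; once one recognizes this and invokes dual Nakayama for Artinian modules, the rest is bookkeeping.
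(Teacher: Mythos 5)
Your proof is correct and follows the same route the paper takes: reduce to Proposition~\ref{prop:absoluteversionnakayama}, with the required hypotheses on $\Jintrelsym J$ (functoriality and the Nakayama property) supplied by \cite[Proposition 4.10]{ERGV-nonres}. You additionally supply a clean direct verification that $\Jintrelsym J$ is Nakayama — reformulating the hypothesis as $((A:_C\m):_BJ)\subseteq (A:_BJ)$, showing the socle of $(C:_BJ)/(A:_BJ)$ vanishes, and invoking dual Nakayama for Artinian modules — whereas the paper simply cites this fact from \cite{ERGV-nonres}; this fills in the step correctly but does not change the underlying approach.
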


\begin{proof}
This follows from Proposition \cite[Proposition 4.10]{ERGV-nonres} and Proposition \ref{prop:absoluteversionnakayama}.
\end{proof}

We will also demonstrate that the $\m$-basically empty hull can differ from the absolute version of $\m$-basically empty hull.  First we include a result on numerical semigroup rings.  Before stating this result, we recall some nice numerical invariants of numerical semigroup rings.  

Recall that a ring $R=k[[t^S]]$ where $S=\langle s_1,s_2, \ldots, s_n\rangle \subseteq \mathbb{N}$ satisfying $s_1 <s_2< \ldots <s_n$ and gcd$(s_1, s_2 \ldots, s_n)=1$ is called a numerical semigroup ring.  
The multiplicity of the semigroup $S$ is $e=s_1$ (this is also equal to the multiplicity of $R$).  The conductor $c$ of $S$ is the smallest positive integer $c$ such that $n \in S$ for all $n \geq c$.  The conductor ideal of $R$ will in fact be $\langle t^n \mid n \geq c\rangle$.

In the following result, we describe a situation where the $\m$-basically empty interior agrees with its absolute version.

\begin{prop}\label{pr:abs=relnsgr}
Let $R=k[[t^S]]$ where $S=\langle s_1,s_2,\ldots, s_n\rangle \subseteq \mathbb{N}$ satisfying 
\[s_1 <s_2< \ldots <s_n \text{ and {\rm gcd}}(s_1, s_2 \ldots, s_n)=1.\]  Set $Q=k((t))$, the field of fractions of $R$.  Let $I=(f_1, \ldots, f_r)$ be an ideal of $R$ and suppose that the least power of $t$ appearing in the expression of $f_i$ as a power series is $m_i$.  Let $m=\text{\rm min}\{m_1, \ldots, m_r\}$. If $m \geq e+c$, then $\m (I:_R \m)=\m (I:_Q \m)$ if $m \geq e+c$.  In other words,
$\Jintrel{\m}RI=I_{\po{\Jintrelsym{\m}}{\ab}}^R$ if $m \geq e+c$.
\end{prop}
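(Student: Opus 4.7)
The plan is to prove the stronger statement $(I :_Q \m) = (I :_R \m)$, from which the proposition follows immediately by multiplying by $\m$. One containment, $(I:_R\m) \subseteq (I:_Q\m)$, is trivial since $R \subseteq Q$, so the entire content lies in showing that any $q \in (I:_Q\m)$ automatically lies in $R$.

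To do this, I would analyze the $t$-adic valuation $v(q)$ of $q$ viewed as an element of $k((t))$. Two observations drive the argument. First, since $t^e = t^{s_1} \in \m$, we have $t^e q \in I$. Second, every nonzero element of $I$ has $t$-valuation at least $m$: any $R$-linear combination $\sum r_i f_i$ satisfies $v(r_i f_i) \geq v(f_i) \geq m_i \geq m$, since elements of $R$ have nonnegative $t$-valuation. Combining these gives $v(q) + e \geq m$, and then the hypothesis $m \geq e+c$ yields $v(q) \geq c$.

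The proof concludes using the defining property of the conductor: every integer $\geq c$ lies in $S$. Writing $q = \sum_{j \geq v(q)} a_j t^j$ in $k((t))$, every exponent that actually appears is at least $c$ and therefore belongs to $S$, so $q \in k[[t^S]] = R$. Hence $q \in (I:_R \m)$, as desired. I do not anticipate a real obstacle: the argument is a direct valuation computation in the completed semigroup ring, and the hypothesis $m \geq e+c$ is used in exactly one place, namely to push $v(q)$ past the conductor threshold so that $q$ cannot involve any monomials $t^j$ with $j \notin S$.
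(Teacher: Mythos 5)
Your proof is correct and follows essentially the same route as the paper's: use $t^e q \in I$ to deduce $v(q) + e \geq m$, hence $v(q) \geq m - e \geq c$, and then conclude $q \in R$ via the conductor. You fill in slightly more of the justification (why every element of $I$ has valuation $\geq m$, and why $v(q)\geq c$ forces $q \in k[[t^S]]$), but the underlying argument is identical.
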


\begin{proof}
Clearly, $\m(I:_R \m) \subseteq  \m(I:_Q \m)$. Let $f \in (I:_Q \m)$.  Then $t^ef \in I$.  So if $f=\sum\limits_{i=n}^{\infty} a_i t^i$, then $t^e f=\sum\limits_{i=n}^{\infty} a_i t^{i+e}$. Since this is in $I$, $n+e \geq m$.  So $n \geq m- e \geq e+c-e=c$.  Hence $f \in \mathfrak{c} \subseteq R$ and $\m(I:_Q \m) \subseteq  \m(I:_R \m)$ finishing the proof.
\end{proof}

Since $\po{\Jcolsym{J}}{\re}$ is residual, we can dualize it by the method of \cite{nmeRG-cidual}, or equivalently by the method of \cite[Proposition 3.2]{ERGV-nonres}. Note that by Example \ref{ex:absineqex}, its dual will not always agree with $\Jintrelsym J$.

\begin{cor}
Let $R$ be a complete local ring. The pair operation $\po{\Jcolsym{J}}{\re}^\dual$ is an absolute interior operation and $\left(\po{\Jcolsym{J}}{\re}\right)^\dual=\po{\Jintrelsym J}{\ab}$ when $\po{\Jcolsym{J}}{\re}$ is considered as an operation on \fg\ $R$-modules.
\end{cor}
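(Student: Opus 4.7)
The plan is to invoke Proposition~\ref{pr:dualequality}\eqref{item:cheq} with $p = \Jcolsym{J}$, taking $\cP'$ to be the class of nested pairs of finitely generated $R$-modules (which satisfies Notation~\ref{not:coheredpairs} by Remark~\ref{rem:cohereditaryhypotheses}) so that $(\cP')^\vee$ is the class of nested pairs of Artinian $R$-modules. The key observations are that $\Jcolsym J$ is an extensive, functorial (hence cofunctorial, by Proposition~\ref{pr:cofunctorial}\eqref{it:funceq}, since $\Jcolsym J$ is a closure operation and the ambient class is abelian) closure operation, and that by \cite[Theorem~4.12]{ERGV-nonres} its Matlis dual is $(\Jcolsym J)^\dual = \Jintrelsym J$, which is a functorial, intensive interior operation.

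First I would apply Proposition~\ref{pr:dualequality}\eqref{item:cheq} directly, yielding
\[
\bigl(\po{\Jcolsym J}{\ch}\bigr)^\dual \;=\; \po{\bigl((\Jcolsym J)^\dual\bigr)}{\h} \;=\; \po{\Jintrelsym J}{\h}
\]
as pair operations on $(\cP')^\vee$, the class of pairs of Artinian $R$-modules. Next I would translate between the two notational conventions: since $\Jcolsym J$ is extensive, the convention introduced after Corollary~\ref{cor:cohereditaryextensive} gives $\po{\Jcolsym J}{\ch} = \po{\Jcolsym J}{\re}$; and since $\Jintrelsym J$ is intensive, the convention introduced after Corollary~\ref{cor:absolutepair} gives $\po{\Jintrelsym J}{\h} = \po{\Jintrelsym J}{\ab}$. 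Combining these identifications yields $(\po{\Jcolsym J}{\re})^\dual = \po{\Jintrelsym J}{\ab}$, which is the desired equality.

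For the first assertion, that $(\po{\Jcolsym J}{\re})^\dual$ is an absolute interior operation, I would either appeal to the equality just established together with Proposition~\ref{pr:absoluteinterior} applied to the functorial interior operation $\Jintrelsym J$ (which directly gives that $\po{\Jintrelsym J}{\ab}$ is an absolute interior operation), or argue directly from duality: by Proposition~\ref{pr:defineresidualversion} $\po{\Jcolsym J}{\re}$ is a residual, functorial closure operation, and then Proposition~\ref{pr:dualityprops}\eqref{it:resabsdual} (combined with Propositions~\ref{pr:dualityprops}\eqref{it:extintdual} and \ref{pr:functcofunct}) gives that its dual is an absolute, functorial pair operation which is intensive and idempotent by Propositions~\ref{pr:dualityprops}\eqref{it:extintdual} and \cite[Proposition 3.6]{ERGV-nonres}.

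The main obstacle is a bookkeeping one rather than mathematical: making sure that the class $\cP'$ of nested pairs of finitely generated $R$-modules really satisfies the hypotheses of both Notation~\ref{not:coheredpairs} (needed to form $\po{\Jcolsym J}{\re}$) and the additional finite-generation hypothesis of Proposition~\ref{pr:dualequality}\eqref{item:cheq}, and that its Matlis-dual class $(\cP')^\vee$ is exactly the class on which $\po{\Jintrelsym J}{\ab}$ is computed. This is handled by Proposition~\ref{prop:dualsetsofpairs}, so after citing it the identification of the two operations is immediate.
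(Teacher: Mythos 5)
Your proof is correct and takes essentially the same approach as the paper, which simply cites Proposition~\ref{pr:dualequality}; you have spelled out the additional bookkeeping (the identification of $(\Jcolsym J)^\dual$ with $\Jintrelsym J$, the notational translations $\po{\cdot}{\ch}\leadsto\po{\cdot}{\re}$ and $\po{\cdot}{\h}\leadsto\po{\cdot}{\ab}$, and the verification that $\cP'$ satisfies the hypotheses) that the paper's one-line proof leaves implicit.
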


\begin{proof}
This follows from Proposition \ref{pr:dualequality}.
\end{proof}

We use the following identities in the next example:

\begin{lemma}
    Let $R$ be an Artinian Gorenstein local ring. Then for any ideals $I$ and $J$ of $R$,
    \[J(I:_R J)=\ann(J(\ann I):_R J) \text{ and } (JI:_R J)=\ann(J((\ann I):_R J)).\]
\end{lemma}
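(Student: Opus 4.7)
The plan is to reduce both identities to trivialities using standard Matlis duality for ideals in an Artinian Gorenstein local ring. Since $R$ is Artinian Gorenstein local, the injective hull $E_R(R/\m)$ is isomorphic to $R$ itself, so Matlis duality for cyclic modules $R/I$ corresponds to the involution $I \mapsto \ann_R I$ on ideals, with $\ann_R(\ann_R I) = I$. The two identities I will invoke repeatedly are the product/colon swaps
\[
\ann_R(IK) = (\ann_R I :_R K) \qquad \text{and} \qquad \ann_R(I :_R K) = K \cdot \ann_R I,
\]
both of which are standard consequences of Matlis self-duality of $R$.

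For the first equality $J(I :_R J) = \ann_R(J(\ann_R I) :_R J)$, my plan is to apply $\ann_R$ to each side and check that the resulting ideals agree; by involutivity this will give the equation itself. On the left, two successive uses of the swaps give
\[
\ann_R\bigl(J(I :_R J)\bigr) \;=\; \bigl(\ann_R(I :_R J) :_R J\bigr) \;=\; \bigl(J\cdot \ann_R I :_R J\bigr).
\]
On the right, $\ann_R$ kills the outer $\ann_R$ by involutivity, leaving $\bigl(J(\ann_R I) :_R J\bigr)$, which is the same ideal.

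The second identity $(JI :_R J) = \ann_R\bigl(J((\ann_R I) :_R J)\bigr)$ is handled in exactly the same spirit: applying $\ann_R$ to the left side gives
\[
\ann_R(JI :_R J) \;=\; J\cdot \ann_R(JI) \;=\; J\bigl(\ann_R I :_R J\bigr),
\]
while applying $\ann_R$ to the right side yields $J\bigl((\ann_R I) :_R J\bigr)$ by involutivity, the same thing.

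There isn't really a hard step here; the only obstacle is keeping the two colon/product swaps straight and remembering that involutivity of $\ann_R$ on ideals is precisely the Gorenstein hypothesis. I would state the two swaps and the involution as a short preamble (or cite them from a standard reference such as Bruns--Herzog), then present each identity as a two-line computation after applying $\ann_R$ to both sides.
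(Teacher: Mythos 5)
Your proof is correct and rests on the same three facts the paper uses (involutivity of $\ann$, $\ann(IK)=(\ann I:_RK)$, and $\ann(I:_RK)=K\ann I$), so it is essentially the same argument. The only cosmetic differences are that the paper derives the two colon/product swaps from scratch rather than citing them, and it transforms the right-hand side directly into the left-hand side rather than applying $\ann$ to both sides and then invoking involutivity.
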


\begin{proof}
By \cite[Exercise 3.2.15]{BH}, for any ideal $I$, $\ann(\ann I))=I$. 

Next, we show that for any two ideals $I$ and $J$, we have $(I:_RJ) = \ann(J \ann I)$.  To see this, first let $x\in (I:J)$, $j\in J$, and $b\in \ann I$.  Then $xj \in I$, so $xjb = 0$.  Thus, $x\in \ann(J \ann I)$.  Conversely, let $z \in \ann(J \ann I)$ and $j\in J$.  Then $zj \in \ann(\ann I) = I$, so $z \in (I :_R J)$.

It then follows that $\ann(I :_R J) = \ann(\ann(J \ann I)) = J \ann I$.

Next we show that for ideals $I$ and $J$, $\ann(IJ)=(\ann I):_R J$. Let $x \in (\ann I):_R J$ and $y \in IJ$, say $y=\sum r_is_i$ with the $r_i \in I$ and $s_i \in J$. Then $xy=\sum r_ixs_i$. Since each $xs_i \in \ann I$ and each $r_i \in I$, $xy=0$. For the other inclusion, let $x \in \ann IJ$ and $y \in J$. for any $z \in I$, $xyz \in x(IJ)=0$.

To prove the first identity in the statement of the lemma, we have
\begin{align*}
    \ann(J(\ann I):_R J) &= J \ann(J(\ann I)) \\
    &=J((\ann \ann I):_R J) = J(I:_R J).
\end{align*}
For the second identity, we have
\begin{align*}
    \ann(J((\ann I):_R J)) &=(\ann((\ann I):_R J)):_R J \\
    &=(J (\ann \ann I)):_R J = (JI):_R J.
\end{align*}
\end{proof}

\begin{example}
As a special case, suppose our local ring $R$ is Artinian Gorenstein.  Then $E=E_R(k) = R$.  Hence, the duality given above between $\po{\Jcolsym{J}}{\re}$ and $\po{\Jintrelsym J}{\ab}$  restricts to a duality between a \emph{closure operation on ideals of $R$} and an \emph{interior operation on ideals of $R$}.

In particular, by passing through our usual smile duality, one obtains for any ideal $I$ that \begin{align*}
I_{\po{\Jintrelsym{J}}{\ab}}^R&=\Jintrel J R I = J(I:_RJ) = \ann(J(\ann I) :_RJ) \\
&= \ann(\Jcol J {(\ann I)}R)=\ann((\ann I)^{\po{\Jcolsym{J}}{\re}}_R),
\end{align*}
and 
\begin{align*}
I_R^{\po{\Jcolsym{J}}{\re}} &=\Jcol J I R = (JI :_R J) = \ann(J ((\ann I) :_RJ))\\
&=\ann\left( \Jintrel{J}{R}{(\ann I)}\right)= \ann\left((\ann I)^{R}_{ \po{\Jintrelsym{J}}{\ab}}\right).
\end{align*}
\end{example}

\section{Pair operations derived from a pre-enveloping class} \label{sec:preenveloping}

In this section we develop the theory of pair operations derived from a pre-enveloping class. This is a generalization of the way that EHU-integral closure extends integral closure to modules. While it is in some ways similar to the hereditary version of a closure developed in Section \ref{sec:hereditaryversions}, here we do not assume the embeddings are into injective modules. As a result, we need some additional hypotheses to prove comparable results.

\begin{defn}
\cite[Definition 6.1.1]{EJ-book}
Let $\cM$ be a category of $R$-modules.  Let $\cC \subseteq \cD$ be two classes of modules in $\cM$.  We say that $\cC$ is a \emph{pre-enveloping class for $\cM$ in $\cD$} if for any $M \in \cM$, there is some $C \in \cC$ and some morphism $\alpha: M \ra C$ in $\cM$, such that for any morphism $g: M \ra D$ in $\cM$ with $D \in \cD$, there is some morphism $\tilde g: C \ra D$ in $\cM$ with $g = \tilde g \circ \alpha$.  In this case the map $\alpha$ (or by abuse of notation, $C$ itself) is called a \emph{$\cC$-preenvelope of $M$ in $\cD$}.

If $\cC = \cD$, we 
drop the ``in $\cD$'' portion of the terminology.
\end{defn}

Here are some examples: 

\begin{example} If $\cM$ is the class of $R$-modules, and $\cC$ is the class of injective $R$-modules, then any injective map $M \ra C$ with $C \in \cC$ is a $\cC$-preenvelope.  This follows from the definition of injective module.
\end{example}

\begin{example}
By \cite[Proposition 6.5.1]{EJ-book}, a ring $R$ is coherent if and only if the class of flat modules is pre-enveloping.
\end{example}

Next, we recall the biduality natural transformation:
\begin{defn}
Let $R$ be a commutative ring. Let $1$ be the identity functor on $R$-modules, and $\beta$ the biduality functor, given on objects by $\beta(M) = M^{**}= \Hom_R(\Hom_R(M,R),R)$, and on morphisms by the $R$-duality functor composed with itself.  That is, if $\alpha: M \ra N$ is a morphism, $\alpha^*: N^* \ra M^*$ is defined by $\alpha^*(g) := g \circ \alpha$, and $\beta(\alpha) := \alpha^{**} := (\alpha^*)^*$.

Then define 
$h: 1 \ra \beta$ by the rule that for any $R$-module $M$, $h_M: M \ra M^{**}$ is given by $h_M(x)(g) := g(x)$.
\end{defn}

The fact that the above is a natural transformation is well-known, but we provide a proof:

\begin{lemma}\label{lem:hnat}
The rule $h$ as defined above is a natural transformation.
\end{lemma}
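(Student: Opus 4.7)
The plan is to verify two things: (i) that each $h_M : M \to M^{**}$ is an $R$-module homomorphism (so each component of $h$ lives in the right category), and (ii) that the naturality square commutes, i.e., $\beta(\alpha) \circ h_M = h_N \circ \alpha$ for every morphism $\alpha : M \to N$ of $R$-modules.

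First I would check that $h_M$ is well-defined and $R$-linear. For each $x \in M$, the map $h_M(x) : M^* \to R$ sends $g \mapsto g(x)$. This is $R$-linear in $g$ because evaluation at a fixed element is $R$-linear: $(rg_1 + g_2)(x) = rg_1(x) + g_2(x)$. So $h_M(x) \in M^{**}$. Next, $h_M$ itself is $R$-linear in $x$: for $r \in R$ and $x_1, x_2 \in M$, and for every $g \in M^*$, we have
\[
h_M(rx_1 + x_2)(g) = g(rx_1 + x_2) = rg(x_1) + g(x_2) = \bigl(r h_M(x_1) + h_M(x_2)\bigr)(g),
\]
so $h_M(rx_1 + x_2) = rh_M(x_1) + h_M(x_2)$ as elements of $M^{**}$.

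The core of the argument is the naturality check. Given a morphism $\alpha : M \to N$, I would fix $x \in M$ and an arbitrary $g \in N^*$, and compute both sides of the required equality applied to $x$ and then evaluated at $g$. Unpacking by the definitions $\alpha^* (g) = g \circ \alpha$ and $\alpha^{**} = (\alpha^*)^*$, on one side we get
\[
\bigl((\alpha^{**} \circ h_M)(x)\bigr)(g) = \bigl(\alpha^{**}(h_M(x))\bigr)(g) = h_M(x)\bigl(\alpha^*(g)\bigr) = h_M(x)(g \circ \alpha) = (g \circ \alpha)(x) = g(\alpha(x)),
\]
while on the other side
\[
\bigl((h_N \circ \alpha)(x)\bigr)(g) = h_N(\alpha(x))(g) = g(\alpha(x)).
\]
Since these agree for every $x$ and every $g$, we obtain $\alpha^{**} \circ h_M = h_N \circ \alpha$, which is the naturality condition.

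There is no real obstacle here; the only thing to guard against is notational slippage between the various levels of duals. In particular, one must remember that $\alpha^{**}(\varphi)$, for $\varphi \in M^{**}$, is by definition the element of $N^{**}$ whose value at $g \in N^*$ is $\varphi(\alpha^*(g)) = \varphi(g \circ \alpha)$, and unraveling this carefully is what makes both sides of the naturality square land on $g(\alpha(x))$.
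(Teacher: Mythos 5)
Your proof is correct and takes essentially the same approach as the paper's: you fix $x\in M$ and $g\in N^*$, unwind the definitions to show both $\bigl(\alpha^{**}\circ h_M\bigr)(x)$ and $\bigl(h_N\circ\alpha\bigr)(x)$ evaluate to $g(\alpha(x))$, and conclude. The only difference is that you also verify $R$-linearity of each $h_M$, which the paper takes as given; the naturality computation itself is identical.
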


\begin{proof}
Let $f: M \ra N$ be a map of $R$-modules.  Let $x\in M$ and $g \in N^*$.  Then $f^{**}(h_M(x))(g) = h_M(x)(g \circ f) = (g \circ f)(x) = g(f(x)) = h_N(f(x))(g)$.  Since both $g$ and $x$ were arbitrarily chosen, it follows that $h_N \circ f = f^{**} \circ h_M$.  Since $f$ was arbitrary, it follows that $h$ is a natural transformation.
\end{proof}

The following lemma is also well-known, but for lack of a reference we provide a statement and a proof.

\begin{lemma}\label{lem:hPiso}
Let $P$ be a finitely generated, projective $R$-module.  Then $h_P$ is an isomorphism.
\end{lemma}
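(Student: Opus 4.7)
The plan is to reduce the claim to the base case $P = R$ via the usual three-step argument: handle $R$, extend to finite free modules by additivity, and then descend to an arbitrary finitely generated projective summand.

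First I would verify directly that $h_R$ is an isomorphism. The evaluation-at-$1$ map $\varepsilon: R^* = \Hom_R(R,R) \to R$, $\varepsilon(g) := g(1)$, is an $R$-module isomorphism with inverse $r \mapsto (s \mapsto rs)$. Dualizing once more gives an isomorphism $R^{**} \to R^*$, and composing with $\varepsilon$ realizes an isomorphism $R^{**} \to R$; a short calculation shows this composite is inverse to $h_R$, so $h_R$ is an isomorphism.

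Next I would extend this to finite free modules $R^n$ using the fact that both the identity functor and the biduality functor $\beta$ commute with finite direct sums (since $\Hom_R(-,R)$ turns finite direct sums into finite direct products, which agree with direct sums). Because $h$ is a natural transformation by Lemma~\ref{lem:hnat}, and the inclusion maps $R \hookrightarrow R^n$ exhibit $R^n$ as a coproduct in the category of $R$-modules, naturality forces $h_{R^n}$ to coincide with the direct sum $h_R \oplus \cdots \oplus h_R$, hence to be an isomorphism.

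Finally, given an arbitrary finitely generated projective $P$, I would choose $Q$ with $P \oplus Q \cong R^n$ for some $n$ (which exists because any surjection $R^n \twoheadrightarrow P$ splits by projectivity of $P$). Naturality of $h$ applied to the inclusions and projections of this direct sum decomposition yields $h_{P \oplus Q} = h_P \oplus h_Q$ under the canonical identifications $(P \oplus Q)^{**} \cong P^{**} \oplus Q^{**}$. Since $h_{P \oplus Q} = h_{R^n}$ is an isomorphism and a direct sum of module maps is an isomorphism if and only if each summand is, we conclude that $h_P$ (and $h_Q$) is an isomorphism. No step here is a serious obstacle; the only point to be careful about is keeping track of the canonical identifications so that the direct-sum decomposition of $h_{R^n}$ is honest, which naturality of $h$ makes routine.
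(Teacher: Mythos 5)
Your proof is correct and takes essentially the same route as the paper's: establish the free case and then descend to a finitely generated projective $P$ via a direct-sum-of-free decomposition, using naturality of $h$ (Lemma~\ref{lem:hnat}) to control the maps. The only cosmetic differences are that the paper handles $R^n$ directly via a free basis rather than reducing to $R$ and invoking additivity, and it phrases the descent to $P$ as an explicit injectivity/surjectivity diagram chase with the retraction $\pi \circ j = 1_P$ rather than as the statement that a direct sum of maps is an isomorphism iff each summand is; both are routine reformulations of the same idea.
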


\begin{proof}
First we consider the case where $P$ is free. Say $P = R^n$.  Choose a free basis $e_1, \ldots, e_n$ for $R^n$.  Then $P^{**}$ is a free module on the basis $h_P(e_1), \ldots, h_P(e_n)$.  One can therefore make an inverse map $i: P^{**} \ra P$ by sending each $h_P(e_j)$ to $e_j$.

Now we consider the general case.  Then there is a free $R$-module $F = R^n$ and there are maps $\pi: F \ra P$ and $j: P \ra F$ with $\pi \circ j = 1_P$.  Accordingly, $j$ is injective.  Moreover, by functoriality of the biduality functor $\beta$, we have $1_{P^{**}} = \beta(1_P) = \beta (\pi \circ j) = \beta(\pi) \circ \beta(j) = \pi^{**} \circ j^{**}$.  The following diagram is then commutative by Lemma~\ref{lem:hnat}. \[
\xymatrix{
P \ar[r]_j \ar[d]_{h_P} \ar@/^1.0pc/[rr]^{1_P} & F \ar[r]_\pi \ar[d]_{\cong}^{h_F} &P \ar[d]^{h_P}\\
P^{**} \ar[r]^{j^{**}} \ar@/_1.0pc/[rr]_{1_{P^{**}}} &F^{**}  \ar[r]^{\pi^{**}} &P^{**}
}
\]

Let $x\in P$ with $h_P(x)=0$.  Then $0 = (j^{**} \circ h_P)(x) = (h_F\circ j)(x) = h_F(j(x))$. But since both $h_F$ and $j$ are injective, it follows that $x=0$.  Hence, $h_P$ is injective.

Finally, let $c\in P^{**}$.  Since $h_F$ is surjective, there is some $d\in F$ with $h_F(d) = j^{**}(c)$.  Then $h_P(\pi(d)) = \pi^{**}(h_F(d)) = \pi^{**}(j^{**}(c)) = c$.  Thus, $h_P$ is surjective.
\end{proof}

\begin{prop}
\label{pr:fgpreenvproj}
If $\cM$ is the class of \emph{finitely generated} $R$-modules, then the class of finitely generated projective $R$-modules is pre-enveloping. 
In fact, the class of finitely generated free modules is pre-enveloping in the class of finitely generated projectives.  
\end{prop}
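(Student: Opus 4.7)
The plan is to produce an explicit preenvelope using the $R$-dual module $M^{*} = \Hom_R(M,R)$. For any \fg\ $R$-module $M$, a finite presentation $R^n \twoheadrightarrow M$ dualizes to an injection $M^{*} \hookrightarrow (R^n)^{*} \cong R^n$, and since $R$ is Noetherian, submodules of \fg\ modules are \fg, so $M^{*}$ is itself \fg. Fix a generating set $\alpha_1,\dots,\alpha_n$ of $M^{*}$.

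Define $\alpha\colon M\to R^n$ by $\alpha(x) := (\alpha_1(x),\dots,\alpha_n(x))$. I claim $\alpha$ is a preenvelope of $M$ in the class of \fg\ projectives, with $C = R^n$ \fg\ free. First, consider \fg\ free targets: given $g\colon M \to R^m$, write $g=(g_1,\dots,g_m)$ coordinate-wise with each $g_j\in M^{*}$; since the $\alpha_i$ generate $M^{*}$, there exist $r_{ij}\in R$ with $g_j=\sum_i r_{ij}\alpha_i$, and the matrix $(r_{ij})$ defines a map $\tilde g\colon R^n\to R^m$ satisfying $\tilde g\circ\alpha = g$ by direct inspection.

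To pass from \fg\ free targets to \fg\ projective targets, use that any \fg\ projective $P$ is a direct summand of some $R^m$: let $j\colon P\hookrightarrow R^m$ be the inclusion and $q\colon R^m\twoheadrightarrow P$ the retraction with $q\circ j = 1_P$. Any $g\colon M\to P$ produces $j\circ g\colon M\to R^m$, which by the previous step factors as $j\circ g = \tilde h\circ\alpha$ for some $\tilde h\colon R^n\to R^m$; then $(q\circ\tilde h)\circ\alpha = q\circ j\circ g = g$ is the desired factorization. Since $R^n$ is simultaneously \fg\ free and \fg\ projective, this proves both statements of the proposition at once.

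The content here is largely a packaging of standard dual-module manipulations, so there is no serious obstacle; the only genuine subtlety is that one cannot simply invoke the biduality map $h_M\colon M \to M^{**}$ in the style of Lemmas~\ref{lem:hnat} and \ref{lem:hPiso}, because $M^{**}$ need not itself lie in the \fg\ projective class. Choosing a finite generating set of $M^{*}$ and collecting the generators into a single map to $R^n$ is precisely the replacement that keeps the target simultaneously finitely generated and free.
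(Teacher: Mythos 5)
Your proof is correct, and the preenvelope you construct is literally the same one the paper uses: the paper defines $\alpha := \pi^* \circ h_M \colon M \to F^*$ for a surjection $\pi\colon F \onto M^*$ from a finite free module, which after choosing bases is exactly your $x \mapsto (\alpha_1(x),\dots,\alpha_n(x))$. What differs is the verification that every $\beta\colon M \to P$ with $P$ a \fg\ projective module factors through $\alpha$. The paper does this in one pass by dualizing $\beta$, lifting $\beta^*$ through $\pi$ via projectivity of $P^*$, dualizing again, and then invoking naturality of $h$ (Lemma~\ref{lem:hnat}) together with the isomorphism $h_P\colon P \cong P^{**}$ (Lemma~\ref{lem:hPiso}). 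You instead dispose of free targets by an explicit matrix factorization and then split a general \fg\ projective $P$ off a free module; this bypasses Lemma~\ref{lem:hPiso} and the second dualization entirely, making the argument more elementary at the cost of a two-stage structure. You are also more careful than the paper in noting explicitly that $M^*$ is \fg\ (by embedding $M^*$ into a finite free module and using Noetherianness), a point the paper's proof takes for granted. One small misdiagnosis in your closing remark: the paper does invoke $h_M$, but composed with $\pi^*$ so that the ultimate target is the free module $F^*$; the biduality isomorphism the paper actually needs is $h_P$ for the \fg\ projective $P$, not $h_M$ itself.
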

\begin{proof}
Let $M$ be a finitely generated $R$-module, and choose an $R$-module surjection $\pi: F \onto M^*$, where $F$ is finitely generated and free.  Then we claim that $\alpha := \pi^* \circ h_M: M \ra F^*$ is both a \fg\ free preenvelope and a \fg\ projective preenvelope.  To see this, let $P$ be a finitely generated projective module and $\beta: M \ra P$ an $R$-linear map.  Then since $P^*$ is projective and $\pi$ is surjective,, there is some map $\gamma: P^* \ra F$ such that $\pi \circ \gamma = \beta^*$.  Then by Lemma~\ref{lem:hnat}, we have 
\begin{align*}
h_P \circ \beta &= \beta^{**} \circ h_M = (\pi \circ \gamma)^* \circ h_M 
= \gamma^* \circ \pi^* \circ h_M = \gamma^* \circ \alpha.
\end{align*}
By Lemma~\ref{lem:hPiso}, $h_P$ is an isomorphism.  Compose both sides of the above display on the left with $h_P^{-1}$.  Then $\beta = (h_P^{-1} \circ \gamma^*) \circ \alpha$.
\end{proof}

\begin{prop}    
Let $\cM$ be the class of \emph{finitely generated} $R$-modules.
The class of finitely generated free modules is pre-enveloping for $\cM$ in the class of \emph{flat} modules.
\end{prop}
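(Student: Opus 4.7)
The plan is to reuse the same pre-envelope $\alpha := \pi^* \circ h_M \colon M \to F^*$ constructed in Proposition~\ref{pr:fgpreenvproj}, where $\pi \colon F \twoheadrightarrow M^*$ is a surjection from a finitely generated free module $F$. Since $F^*$ is again finitely generated and free, it lies in our candidate pre-enveloping class. What must be shown is that every $R$-linear map $g \colon M \to N$ with $N$ flat factors through $\alpha$.

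The key input is Lazard's theorem: any flat $R$-module $N$ can be written as a filtered colimit $N = \varinjlim_i P_i$ of finitely generated free $R$-modules, with structure maps $\varphi_i \colon P_i \to N$. Under the standing Noetherian hypothesis on $R$ implicit elsewhere in the paper, $M$ is finitely presented, so $\Hom_R(M, -)$ commutes with filtered colimits. Hence the canonical map $\varinjlim_i \Hom_R(M, P_i) \to \Hom_R(M, N)$ is an isomorphism, and our given $g$ is of the form $g = \varphi_i \circ g_i$ for some index $i$ and some $g_i \colon M \to P_i$.

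Now Proposition~\ref{pr:fgpreenvproj} applies to $g_i$, since its target $P_i$ is a finitely generated projective module. This yields a factorization $g_i = \tilde g_i \circ \alpha$ for some $\tilde g_i \colon F^* \to P_i$. Setting $\tilde g := \varphi_i \circ \tilde g_i \colon F^* \to N$ then gives $g = \tilde g \circ \alpha$, establishing the pre-envelope property.

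The main point that must be handled with care is the combined appeal to Lazard's theorem and finite presentation of $M$, which together reduce a map into an arbitrary flat module to a map into a finitely generated free module. In the absence of Noetherianness, the proposition should either be restricted to finitely presented $M$ or the ring should be assumed coherent, but under the paper's Noetherian context no obstacle arises.
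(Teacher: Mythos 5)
Your proof is correct and follows essentially the same route as the paper's: both reduce the problem via Lazard's theorem to a map into a finitely generated free stage of a filtered colimit, then invoke Proposition~\ref{pr:fgpreenvproj}. The one difference is that the paper proves the factorization of $\beta\colon M \to \varinjlim G_i$ through some $G_h$ by hand (choosing generators of $M$ and of $\ker q$, lifting relations along the directed system), which is exactly an inline re-derivation of the standard fact you cite --- that $\Hom_R(M,-)$ commutes with filtered colimits when $M$ is finitely presented. Your version is crisper and makes the hidden hypothesis explicit; you are right that the paper's hand-built factorization also tacitly uses that $\ker q$ is finitely generated, i.e. that $M$ is finitely presented, so the Noetherian (or coherent) assumption is present in both arguments even though neither statement says so aloud.
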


\begin{proof}
Indeed, let $M$ be a finitely generated module, let $\alpha: M \ra F^*$ be as above, let $G$ be a flat $R$-module and let $\beta: M \ra G$ be an $R$-linear map.  By Lazard's theorem \cite{Laz-plat}, there is some directed system $\{G_i\}_{i \in \cI}$ of finitely generated free $R$-modules such that $G = \lim_{\rightarrow} G_i$.  Accordingly, let $g_{ij}: G_i \ra G_j$ be the maps in the directed system and let $g_i: G_i \ra G$ be the induced maps to the direct limit.  Let $\{x_1, \ldots, x_n\}$ be a generating set for the $R$-module $M$. For each $k$, there is some $i_k \in \cI$ and $y_k \in G_{i_k}$ such that $\beta(x_k) = g_{i_k}(y_k)$.  Choose $j \in \cI$ such that $j \geq i_k$ for $k=1, \ldots, n$.
Set $z_k := g_{i_k,j}(y_k) \in G_j$.  Now, let $q: R^n \onto M$, where $R^n$ has basis $\{e_1, \ldots, e_n\}$ be the surjective map that sends $e_k \mapsto x_k$.  Define $\beta': R^n \ra G_j$ to be the unique map such that $\beta'(e_k) = z_k$ for all $k=1, \ldots, n$.  Then $g_j(\beta'(e_k)) = g_j(z_k) = g_{i_k}(y_k) = \beta(x_k) = \beta(q(e_k))$.  Since this holds for all $k$, we have $g_j \circ \beta' = \beta \circ q$.  Let $K = \ker q$; choose a generating set $\{u_1, \ldots, u_t\}$ for $K$.  Then for each $1\leq \ell \leq t$, we have $g_j(\beta'(u_\ell)) = \beta(q(u_\ell)) = \beta(0) = 0$.  Again using direct limit properties, there is then some $h\geq j$ such that $\beta'(u_\ell) \in \ker g_{jh}$ for all $1 \leq \ell \leq t$.  That is, $g_{jh} \circ \beta'$ vanishes on $\ker q$, so there is an induced map $\gamma: M \ra G_h$ such that $\gamma \circ q = g_{jh} \circ \beta'$. But then $g_h \circ \gamma \circ q = g_h \circ g_{jh} \circ \beta' = g_j \circ \beta' = \beta \circ q$.  Then since $q$ is surjective, and hence right-cancellable, we have $\beta = g_h \circ \gamma$.
    
But since $G_h$ is a finitely generated free module, we already know by Proposition \ref{pr:fgpreenvproj} that there is some $\tilde \gamma: F^* \ra G_h$ such that $\gamma = \tilde\gamma \circ \alpha$.  Hence, $\beta = g_h \circ \gamma = g_h \circ \tilde \gamma \circ \alpha$, so that $\beta$ factors through $\alpha$ as required.
\end{proof}

\begin{prop}\label{pr:versalop}
Let $\cM$ be a category of modules, and let $\cC$ be a pre-enveloping subclass. Let $\cP'$ be a class of pairs $(L,M)$ such that $L,M \in \cM$ 
and the inclusion map $L \hookrightarrow M$ is in $\cM$ as well.  Assume that whenever $\phi: M \ra N$ is a morphism in $\cM$ and $(L,M) \in \cP'$, then $(\phi(L), N) \in \cP'$. Let $p$ be a functorial pair operation defined on $\cP=\cP' |_\cC := \{(L,C) \in \cP' \mid C \in \cC\}$.  Define the pair operation $p_{h(\cC)}$ on $\cP'$ so that when $\alpha: M \ra C$ is a $\cC$-preenvelope, $p_{h(\cC)}(L,M) = \alpha^{-1}(p(\alpha(L),C))$. 
Then $p_{h(\cC)}$ is well-defined and functorial.  Moreover,

\begin{enumerate}
    \item\label{p'opsub} If $p$ is order-preserving on submodules, then so is $p_{h(\cC)}$.
    \item\label{p'ext} If $p$ is extensive, then so is $p_{h(\cC)}$.
    \item\label{p'idem} If $p$ is a closure operation,
    then $p_{h(\cC)}$ is also idempotent. 
    \item\label{altp'} Let $(L,M) \in \cP'$ and $x\in M$.  Then $x\in p_{h(\cC)}(L,M)$ if and only if for every map $g: M \ra C$ in $\cM$ with $C \in \cC$, we have $g(x) \in p(g(L),C)$. 
\end{enumerate}
Hence, if $p$ is a closure operation, then so is $p_{h(\cC)}$.
\end{prop}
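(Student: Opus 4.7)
The plan is to first establish the intrinsic characterization (\ref{altp'}); this will immediately give well-definedness of $p_{h(\cC)}$, and also illuminate the subsequent arguments. Then functoriality and the remaining properties (\ref{p'opsub})--(\ref{p'idem}) can be handled by direct computation using the formula $p_{h(\cC)}(L,M) = \alpha^{-1}(p(\alpha(L), C))$ for any chosen $\cC$-preenvelope $\alpha \colon M \to C$, using functoriality of $p$ to shuttle elements through maps between preenvelopes.

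For (\ref{altp'}), fix a $\cC$-preenvelope $\alpha \colon M \to C$. If $x \in \alpha^{-1}(p(\alpha(L), C))$, then for any map $g \colon M \to D$ with $D \in \cC$, the preenveloping property yields $\tilde g \colon C \to D$ with $g = \tilde g \circ \alpha$, and functoriality of $p$ gives
\[
g(x) = \tilde g(\alpha(x)) \in \tilde g(p(\alpha(L), C)) \subseteq p(\tilde g(\alpha(L)), D) = p(g(L), D).
\]
Conversely, specializing the hypothesized condition to $g = \alpha$ recovers $\alpha(x) \in p(\alpha(L), C)$, i.e., $x \in p_{h(\cC)}(L, M)$. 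Since the right-hand side of (\ref{altp'}) makes no reference to $\alpha$, this shows $p_{h(\cC)}$ is independent of the chosen preenvelope, hence well-defined.

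Functoriality follows by the same factoring trick: given $\phi \colon M \to N$ in $\cM$ and preenvelopes $\alpha \colon M \to C$, $\beta \colon N \to D$, factor $\beta \circ \phi = \tilde \phi \circ \alpha$ and chase $x \in p_{h(\cC)}(L, M)$ through the diagram using functoriality of $p$. For (\ref{p'opsub}) and (\ref{p'ext}), the set-theoretic containments $\alpha(L) \subseteq \alpha(N)$ and $L \subseteq \alpha^{-1}(\alpha(L))$ combine with order-preservation and extensivity of $p$ respectively, and then one applies $\alpha^{-1}$. For (\ref{p'idem}), set $L' := p_{h(\cC)}(L, M)$; then $\alpha(L') \subseteq p(\alpha(L), C)$, so order-preservation followed by idempotence of $p$ give
\[
p(\alpha(L'), C) \subseteq p(p(\alpha(L), C), C) = p(\alpha(L), C),
\]
and applying $\alpha^{-1}$ yields $p_{h(\cC)}(L', M) \subseteq L'$. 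The reverse containment is extensivity, already shown in (\ref{p'ext}). The closing sentence of the proposition is just the combination of (\ref{p'opsub})--(\ref{p'idem}).

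The main technical obstacle is tracking hypothesis membership: every invocation of $p$ and of the preenveloping property must take place on pairs and morphisms that actually lie in $\cP$ and $\cM$. In particular, for idempotence one must know that $(p(\alpha(L), C), C) \in \cP$ in order to apply $p$ again; this is subsumed by the assumption that $p$ is a closure operation on $\cP$, but the argument requires care to ensure that $\alpha(L)$, its image in $C$, and iterated applications of $p$ all yield valid pairs in the stated domain.
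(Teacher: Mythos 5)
Your proof is correct, and the only substantive difference from the paper's proof is organizational: you prove the intrinsic characterization (\ref{altp'}) first and derive well-definedness as a corollary (since the right-hand side of (\ref{altp'}) makes no reference to the choice of $\alpha$), whereas the paper proves well-definedness directly by comparing two preenvelopes $\alpha,\beta$ and factoring each through the other, saving (\ref{altp'}) for last. Both routes hinge on the same two ingredients --- the factoring property of $\cC$-preenvelopes and functoriality of $p$ --- so the content is essentially identical; your ordering is marginally more economical, since the paper's direct well-definedness argument is a special case of the factoring trick that your proof of (\ref{altp'}) already carries out. Your idempotence chain ($\alpha(L') \subseteq p(\alpha(L),C)$, then order-preservation, then idempotence of $p$) matches the paper's; note the paper attributes the first containment to extensivity, but in fact it is the set-theoretic fact $\alpha(\alpha^{-1}(S)) \subseteq S$, as you use. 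You correctly flag in your closing paragraph the need to verify that intermediate pairs such as $(p(\alpha(L),C),C)$ lie in $\cP$ before invoking order-preservation and idempotence of $p$; the paper glosses over exactly the same point (it only explicitly assumes $(p_{h(\cC)}(L,M),M)\in\cP'$), so this is a shared, not a new, gap.
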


\begin{rem}
This result resembles the results in Section \ref{sec:hereditaryversions}. The key difference is that when we use injective modules as our pre-enveloping class, we don't need to assume $p$ is extensive in order to prove that if $p$ is idempotent, then so is $p_{h(\cC)}$. So this result and the results of Section \ref{sec:hereditaryversions} overlap, but each includes cases not covered by the other results. Note that $p_h$ as defined in previous sections is $p_{h(\cC)}$, where $\cC$ is the class of injective modules.
\end{rem}

\begin{proof}
For well-definedness, let $\alpha: M \ra C$ and $\beta: M \ra C'$ be $\cC$-preenveloping maps.  Then by definition, there exist maps $\tilde \alpha: C' \ra C$ and $\tilde \beta: C \ra C'$ such that $\alpha = \tilde \alpha \circ \beta$ and $\beta = \tilde \beta \circ \alpha$.  Let $x\in \alpha^{-1}(p(\alpha(L),C))$. That is, $\alpha(x) \in p(\alpha(L), C)$.  Then by functoriality of $p$, we have $\beta(x) = \tilde \beta( \alpha(x)) \in p(\tilde \beta(\alpha(L)), C') = p(\beta(L), C')$.  That is, $x \in \beta^{-1}(p(\beta(L), C'))$.  By symmetry, we get the reverse inclusion, and so $p_{h(\cC)}$ is well-defined.

For functoriality, let $(L,M) \in \cP'$, and let $\phi: M \ra N$ in $\cM$. Let $x\in p_{h(\cC)}(L,M)$.  Choose $\cC$-preenveloping maps $\alpha: M \ra C$ and $\beta: N \ra D$.  Then since $\alpha$ is a $\cC$-preenvelope, there is some $\tilde \phi: C \ra D$ such that $\beta \circ \phi = \tilde \phi \circ \alpha$.  We have $\alpha(x) \in p(\alpha(L),C)$, so functoriality of $p$ gives $\beta(\phi(x))= \tilde \phi(\alpha(x)) \in p(\tilde \phi(\alpha(L)), D) = p(\beta(\phi(L)), D).$  Hence by definition, $\phi(x) \in p_{h(\cC)}(\phi(L), N)$.

Now suppose $p$ is order-preserving on submodules. Let $L \subseteq N \subseteq M$ be submodule inclusions in $\cM$ with $(L,M), (N,M) \in \cP'$.  Let $\alpha:M \ra C$ be a $\cC$-preenvelope.  Let $x \in p_{h(\cC)}(L,M)$.  Then $\alpha(x) \in p(\alpha(L), C) \subseteq p(\alpha(N), C)$ (since $\alpha(L) \subseteq \alpha(N)$ and $p$ is order-preserving on submodules).  Hence, $x \in \alpha^{-1}(p(\alpha(N),C)) = p_{h(\cC)}(N,M)$, whence $p_{h(\cC)}$ is order-preserving on submodules.

Next, suppose $p$ is extensive.  Let $(L,M) \in \cP'$ and let $\alpha: M \ra C$ be a $\cC$-preenvelope.  Let $x \in L$.  Then $\alpha(x) \in \alpha(L) \subseteq p(\alpha(L),C)$ (since $p$ is extensive).  Thus, $x\in \alpha^{-1}(p(\alpha(L), C)) = p_{h(\cC)}(L,M)$, whence $p_{h(\cC)}$ is extensive.

Next, suppose $p$ is extensive \opsub, and idempotent.  Let $(L,M) \in \cP'$ such that $(p_{h(\cC)}(L,M),M) \in \cP'$ as well.  Let $\alpha: M \ra C$ be a $\cC$-preenvelope and let $x\in p_{h(\cC)}(p_{h(\cC)}(L,M),M)$.  Then 
\begin{align*}
\alpha(x) &\in  p(\alpha(p_{h(\cC)}(L,M)),C)=p(\alpha(\alpha^{-1}(p(\alpha(L),C))),C) \\
&\subseteq p(p(\alpha(L),C),C)=p(\alpha(L),C),
\end{align*}
where the containment follows since $p$ is extensive and the last equality follows by the idempotence of $p$.
Hence $x \in p_{h(\cC)}(p_{h(\cC)}(L,M),M)$.

Finally, we prove (\ref{altp'}).  Let $x\in M$, and suppose $x \in p_{h(\cC)}(L,M)$.  Let $g: M \ra C$ be as in (\ref{altp'}).  Let $h: M \ra B$ be a $\cC$-preenvelope.  Then by definition, $h(x) \in p(h(L),B)$.  But by the pre-enveloping property, there is some $\tilde g: B \ra C$ in $\cM$ with $\tilde g \circ h = g$.  Hence, by functoriality of $p$, we have $g(x) = \tilde g(h(x))  \in p(\tilde g(h(L)), C) = p(g(L),C)$.  The converse is trivial.
\end{proof}

\section{Applications to integral closure, tight closure, and other closures defined using preenvelopes}
\label{sec:integralclosure}

In this section, we apply our results on residual and hereditary versions to integral closure and tight closure. We discover that EHU-integral closure is not the hereditary version of liftable integral closure and show that we can define a version of tight closure using a preenveloping class.

This version of integral closure arose in the context of \emph{Rees algebras of modules}:   \begin{defn}[{\cite[pp. 702-703]{EHU-Ralg}}]
Let $M$ be an $R$-module.  Then the \emph{Rees algebra} of $M$ is defined to be $\cR(M) := \Sym(M) / \bigcap_{g: M \ra F} \{\ker \Sym(g)\}$, where the intersection is taken over all maps $g: M \ra F$ where $F$ is free.

If $g: M \ra F$ is a map with $F$ free, then we set $\cR(g) := \im \Sym(g)$.

A map $g: M \ra F$, with $F$ free and $M,F$ finitely generated, is \emph{versal} if for any map $h: M \ra G$ to a free module $G$, there is some $R$-linear map $\phi: F \ra G$ such that $\phi \circ h=g$.
\end{defn}

\begin{lemma}[{\cite[Lemma 1.5]{SS-intrinsicRees}}]\label{lem:versal}
If $g: M \ra F$ is versal, then $\cR(M) \cong \cR(g)$.
\end{lemma}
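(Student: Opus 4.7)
The plan is to unwind the definition of $\cR(M)$ and show that the intersection defining it collapses to the single ideal $\ker \Sym(g)$ when $g$ is versal, after which the first isomorphism theorem for graded $R$-algebras finishes the job.

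First I would observe that, by the definition of $\cR(M)$, if I set $K := \bigcap_{h: M \to G} \ker \Sym(h)$, where the intersection runs over all $R$-linear maps from $M$ to free modules $G$, then $\cR(M) = \Sym(M)/K$. Since $g: M \to F$ is itself one of the maps appearing in this intersection, the containment $K \subseteq \ker \Sym(g)$ is immediate.

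The key step is the reverse containment $\ker \Sym(g) \subseteq \ker \Sym(h)$ for every $h: M \to G$ with $G$ free. Given such an $h$, versality of $g$ supplies an $R$-linear map $\phi: F \to G$ with $h = \phi \circ g$ (this is how I read the versal condition; the displayed formula $\phi \circ h = g$ in the paper appears to be a typographical slip, since the hom-sets only compose in the order stated here). Applying the functor $\Sym$ gives $\Sym(h) = \Sym(\phi) \circ \Sym(g)$, so anything killed by $\Sym(g)$ is killed by $\Sym(h)$. Intersecting over all such $h$ yields $\ker \Sym(g) \subseteq K$, and combined with the previous paragraph, $K = \ker \Sym(g)$.

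To conclude, I would note that $\Sym(g): \Sym(M) \to \Sym(F)$ is a homomorphism of graded $R$-algebras with image $\cR(g)$ by definition, so the first isomorphism theorem yields
\[
\cR(M) = \Sym(M)/K = \Sym(M)/\ker \Sym(g) \cong \im \Sym(g) = \cR(g),
\]
as required. I do not foresee any real obstacle here; the only point requiring care is the direction of the factorization in the versality hypothesis, since the content of the lemma is essentially that a versal map represents the functor $h \mapsto \ker \Sym(h)$ minimally.
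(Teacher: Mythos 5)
Your proof is correct. The paper itself does not prove this lemma but simply cites it from Sædén Ståhl \cite{SS-intrinsicRees}; your argument is the natural one and recovers precisely the intended content: versality of $g$ forces $\ker\Sym(g)$ to be the smallest of the kernels appearing in the defining intersection, so $K = \ker\Sym(g)$, and the first isomorphism theorem for graded $R$-algebras gives $\cR(M) = \Sym(M)/K \cong \im\Sym(g) = \cR(g)$. You are also right that the displayed equation in the paper's definition of \emph{versal} should read $h = \phi \circ g$ rather than $\phi \circ h = g$; the latter is not even composable when $G \neq F$, and the corrected form is the one that appears in \cite{EHU-Ralg} and \cite{SS-intrinsicRees} and is the one used throughout the paper (e.g.\ in the proof of Proposition~\ref{pr:EHUishereditary}).
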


\begin{thm}[{\cite[from Theorem 2.2]{EHU-Ralg}}]\label{thm:EHUic}
Let $R$ be a Noetherian ring, let $U \subseteq L \subseteq M$ be finite $R$-modules and let $f: M \ra F$ be a versal map with $F$ free.  The following are equivalent: \begin{enumerate}
    \item $L$ is integral over $U$ in $M$ (i.e. $L \subseteq \EHUic UM$).
    \item For every minimal prime $\p$ of $R$, $L'$ is integral over $U'$ in $M'$ over the ring $R/\p$, where $'$ denotes images in $F/\p F$.
    \item For every homomorphism $M \ra G$ to a free module and every ring map $R \ra S$ with $S$ a domain, the $S$-module $L'$ is integral over $U'$ in $M'$, where $'$ denotes tensoring with $S$ and taking images in $S \otimes_R G$.
\end{enumerate}

\end{thm}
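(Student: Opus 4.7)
My plan is to prove the cycle $(1) \Rightarrow (3) \Rightarrow (2) \Rightarrow (1)$, with most of the work concentrated in $(2) \Rightarrow (1)$.

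For $(1) \Rightarrow (3)$, given $h : M \to G$ with $G$ free, a ring map $R \to S$ with $S$ a domain, and $x \in L$, I would invoke Definition~\ref{def:EHU} to produce an integral equation
\[
h(x)^n + a_1 h(x)^{n-1} + \cdots + a_n = 0
\]
in $\Sym_R(G)$ with each $a_j$ in the degree-$j$ part of the subring generated by $h(U)$, hence $a_j \in h(U)^j$. Base-changing along the canonical ring map $\Sym_R(G) \to \Sym_S(S \otimes_R G) \cong S \otimes_R \Sym_R(G)$ transports this into an integral equation for the image of $h(x)$ over the subring generated by $U'$, giving $L' \subseteq (U')^-_{S \otimes_R G}$ as $S$-modules. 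For $(3) \Rightarrow (2)$, I would specialize $(3)$ by taking $h = f$ and $S = R/\p$ for each minimal prime $\p$ of $R$.

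For the main implication $(2) \Rightarrow (1)$, fix $x \in L$ and an arbitrary $g : M \to G$ with $G$ free. By versality of $f$ there exists $\phi : F \to G$ with $g = \phi \circ f$, and $\Sym(\phi) : \Sym_R(F) \to \Sym_R(G)$ transports any integral equation for $f(x)$ over the subring generated by $f(U)$ into one for $g(x) = \phi(f(x))$ over the subring generated by $g(U) = \phi(f(U))$. Thus it suffices to show $f(x) \in f(U)^-_F$. Enumerate the (finitely many) minimal primes $\p_1, \ldots, \p_r$ of $R$. For each $i$, hypothesis $(2)$ gives an integral equation for $f(x)$ modulo $\p_i$, which I would lift to a polynomial
\[
P_i \;=\; f(x)^{n_i} + a_{i,1} f(x)^{n_i - 1} + \cdots + a_{i,n_i} \;\in\; \p_i \Sym_R(F),
\]
with each $a_{i,j}$ chosen in $f(U)^j$ (possible because $f(U)^j$ surjects onto the degree-$j$ part of the corresponding subring of $\Sym_{R/\p_i}(F/\p_i F)$). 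Multiplying, $\prod_{i=1}^{r} P_i \in \bigl(\bigcap_i \p_i\bigr)\Sym_R(F) = N \Sym_R(F)$, where $N$ is the nilradical of $R$. Since $R$ is Noetherian, $N^k = 0$ for some $k$, so $\bigl(\prod_i P_i\bigr)^k = 0$ in $\Sym_R(F)$. Expanding this identity yields a monic polynomial equation for $f(x)$ of degree $k \sum_i n_i$ whose coefficient of $f(x)^m$ is a sum of products of the $a_{i,j}$'s of total degree $k \sum_i n_i - m$, hence lies in $f(U)^{k \sum_i n_i - m}$. This is the desired integral equation, so $f(x) \in f(U)^-_F$, completing the proof.

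The principal obstacle is the bookkeeping in $(2) \Rightarrow (1)$: one must verify that the $P_i$ can be lifted with coefficients in the correct graded pieces $f(U)^j$, and that forming products and powers of the $P_i$'s respects the $\mathbb{N}$-grading on $\Sym_R(F)$ in the way required by Definition~\ref{def:icfree}. Both points reduce to the observation that the subring of $\Sym_R(F)$ generated by $f(U)$ is $\mathbb{N}$-graded with degree-$j$ part equal to $f(U)^j$, so multiplying homogeneous elements of appropriate degrees automatically lands in the graded piece prescribed by the integral equation.
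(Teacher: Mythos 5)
The paper does not supply a proof of this statement; it is quoted from Eisenbud--Huneke--Ulrich's Theorem~2.2 and used as a black box, so there is no ``paper proof'' to compare against. That said, your argument is correct and is essentially the standard EHU strategy: reduce everything to the single versal map $f$ (by pushing integral equations forward through $\Sym(\phi)$), reduce the domain case to integral closure in $\Sym$-algebras, and handle the non-reduced case by multiplying the lifted integral equations over all minimal primes and raising to a power killing the nilradical.

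Two small points worth making explicit when you write this up. First, in the lifting step of $(2)\Rightarrow(1)$ you implicitly homogenize: since $\Sym_{R/\p_i}(F/\p_i F)$ is $\mathbb{N}$-graded, the subring $T_i$ generated by $\overline{f(U)}$ is a graded subring with $(T_i)_j = \overline{f(U)}^j$, and $\overline{f(x)}$ has degree~$1$, so taking the degree-$n_i$ component of an arbitrary integral equation yields one with $\bar a_{i,j}$ homogeneous of degree $j$ --- this is what lets you lift each $\bar a_{i,j}$ to $a_{i,j}\in f(U)^j$. Second, your containment $\prod_i P_i \in (\bigcap_i \p_i)\Sym_R(F)$ is cleanest phrased as $\prod_i P_i \in \bigl(\prod_i \p_i\bigr)\Sym_R(F) \subseteq N\Sym_R(F)$, since each $P_i\in\p_i\Sym_R(F)$ gives containment in the product directly. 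With these spelled out, the proof is complete and matches the approach one would expect the cited source to take.
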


\begin{rem}[{\cite[top of p. 702]{EHU-Ralg}}]
Let $M$ be a finitely generated $R$-module, $F$ a finitely generated free module, and $\alpha: M \ra F$ an $R$-linear map.  Consider $F$ to be the degree 1 component of the graded $R$-algebra $A=\Sym(F)$ in the usual way.  Then $\cR(\alpha)$ is the (graded) $R$-subalgebra of $A$ generated by $\alpha(M) \subseteq F = A_1$.  One can also think of this as the image of the induced map $\Sym(\alpha): \Sym(M) \ra \Sym(F)$.
\end{rem}

\begin{lemma}\label{lem:Rees-surj}
Let $M$ be a finite $R$-module, let $F,G$ be finite free $R$-modules, and let $\alpha: M \ra F$ and $\mu: F \ra G$ be $R$-linear.  Then $\mu_*(\cR(\alpha)) = \cR(\mu \circ \alpha)$, where $\mu_* := \Sym(\mu)$.
\end{lemma}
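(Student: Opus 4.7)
The plan is to reduce the claim to the functoriality of the symmetric algebra construction. Recall from the remark preceding the lemma that $\cR(\alpha) = \im\Sym(\alpha) \subseteq \Sym(F)$ is the graded $R$-subalgebra of $\Sym(F)$ generated by the image $\alpha(M) \subseteq F = \Sym(F)_1$, and similarly $\cR(\mu\circ\alpha) = \im\Sym(\mu\circ\alpha) \subseteq \Sym(G)$ is generated by $(\mu\circ\alpha)(M) = \mu(\alpha(M))$.

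Here is the plan in the order I would execute it. First, I would invoke functoriality of $\Sym$ to identify
\[
\Sym(\mu\circ\alpha) \;=\; \Sym(\mu)\circ\Sym(\alpha) \;=\; \mu_*\circ\Sym(\alpha),
\]
as maps $\Sym(M) \to \Sym(G)$. Second, I would take images on both sides: applying $\mu_*$ to the image of $\Sym(\alpha)$ yields exactly the image of the composition, i.e.
\[
\mu_*(\cR(\alpha)) \;=\; \mu_*\bigl(\im\Sym(\alpha)\bigr) \;=\; \im\bigl(\mu_*\circ\Sym(\alpha)\bigr) \;=\; \im\Sym(\mu\circ\alpha) \;=\; \cR(\mu\circ\alpha).
\]
This is the desired equality.

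The only thing worth checking carefully is that the set-theoretic image of a composition behaves as expected, namely that for any $R$-linear (or $R$-algebra) maps $A \xrightarrow{f} B \xrightarrow{g} C$ one has $\im(g\circ f) = g(\im f)$, which is immediate. Thus there is really no substantive obstacle: once functoriality of $\Sym$ is in hand, the proof is a one-line diagram chase. I would write the argument as two short displayed equations, the first asserting $\Sym(\mu\circ\alpha) = \mu_*\circ\Sym(\alpha)$ by functoriality, and the second computing images as above, and conclude.
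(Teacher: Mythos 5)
Your proof is correct, and it is a slicker packaging of what the paper does. The paper argues directly with monomial generators: every positive-degree element of $\cR(\alpha)$ is an $R$-linear combination of products $\prod_j \alpha(m_j)$, one applies $\mu_*$ to such a product to get $\prod_j(\mu\circ\alpha)(m_j)$, and the reverse inclusion is checked symmetrically. You instead invoke the definitional identity $\cR(\alpha)=\im\Sym(\alpha)$ (stated just before the lemma), functoriality of $\Sym$ to get $\Sym(\mu\circ\alpha)=\mu_*\circ\Sym(\alpha)$, and the trivial fact that $g(\im f)=\im(g\circ f)$. These are the same observations --- the paper's monomial manipulation is precisely what functoriality of $\Sym$ encodes on generators --- but your version compresses the two-inclusion check into a single chain of equalities, which is cleaner and avoids having to describe the graded pieces of $\cR(\alpha)$ explicitly. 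Either form is fine; yours reads better if functoriality of $\Sym$ is treated as standard background.
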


\begin{proof}
Every positive-degree element of $\cR(\alpha)$ is a finite $R$-linear combination of terms of the form $\gamma=\prod_{j=1}^t \alpha(m_j)$, $t\in \N$, $m_j \in M$. For any such $\gamma$, we have $\mu_*(\gamma) = \prod_{j=1}^t \mu(\alpha(m_j)) = \prod_{j=1}^t (\mu \circ \alpha)(m_j) \in \cR(\mu \circ \alpha)$.

Conversely, every positive-degree element of $\cR(\mu \circ \alpha)$ is an $R$-linear combination of terms of the form $\beta = \prod_{j=1}^t (\mu \circ \alpha)(m_j)$.  But then $\beta = \mu_*(\prod_{j=1}^t \alpha(m_j)) \in \mu_*(\cR(\alpha))$.
\end{proof}

\begin{prop}\label{pr:EHUishereditary}
The Eisenbud-Huneke-Ulrich (EHU) version of integral closure is both functorial and hereditary as a map on finitely generated modules.  That is, let $R$ be a Noetherian ring, and $L \subseteq N \subseteq M$ finitely generated $R$-modules.  Then $N$ is integral over $L$ (as given in Definition \ref{def:EHU}) if and only if it is integral over $L$ in $M$.
\end{prop}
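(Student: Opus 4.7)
The plan is to verify functoriality first and then deduce the hereditary property in two steps: the easy inclusion follows from functoriality, while the reverse reduces to the domain case via Theorem~\ref{thm:EHUic}(2), where I can invoke the coincidence of EHU-integral closure with Rees integral closure from \cite[Corollary 1.5]{nmeUlr-lint}.

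First I would prove functoriality: given any $R$-linear $\phi: M \to M'$ and $x \in L^{\EHUi}_M$, for an arbitrary $g: M' \to F$ with $F$ free, the composition $g \circ \phi: M \to F$ is a map to a free module, so the definition of EHU-integral closure gives $g(\phi(x)) = (g\circ\phi)(x) \in (g\circ\phi)(L)^-_F = g(\phi(L))^-_F$; since $g$ was arbitrary, $\phi(x) \in \phi(L)^{\EHUi}_{M'}$. Applying this to the inclusion $N \hookrightarrow M$ immediately gives $L^{\EHUi}_N \subseteq L^{\EHUi}_M \cap N$.

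For the reverse inclusion, I would fix $x \in L^{\EHUi}_M \cap N$ and reduce to the domain case via Theorem~\ref{thm:EHUic}(2), using the preliminary observation that versal maps are preserved under base change: if $f: P \to F$ is versal over $R$ and $I \subseteq R$ is any ideal, then $f \otimes R/I : P/IP \to F/IF$ is versal over $R/I$, since any $R/I$-linear map $h: P/IP \to G$ to a free $R/I$-module is killed by $I$ when viewed as an $R$-linear map to $G$, so the versality-induced factorization $h = \psi \circ f$ automatically descends through $F/IF$. With this lemma, Theorem~\ref{thm:EHUic}(2) applied to $M$ via a versal $f_M: M \to F_M$ and to $N$ via a versal $f_N: N \to F_N$ translates the problem into an $R/\p$-level statement: it suffices to prove the hereditary property for EHU-integral closure over each domain $R/\p$, where $\p$ ranges over the minimal primes of $R$.

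Over a domain, \cite[Corollary 1.5]{nmeUlr-lint} identifies EHU-integral closure with Rees integral closure, so it remains to show that Rees integral closure is hereditary over a domain. This I would verify directly from the valuative definition: if $L \subseteq N \subseteq M$ and $x \in L^{\Ri}_M \cap N$, then for every valuation ring $V$ between $R$ and its fraction field $Q$, the image of $x$ in $M \otimes_R Q$ lies in the image of $L \otimes_R V$; because $L \subseteq N$, this expression already factors through $N \otimes_R V$, and the flatness of $Q$ over $R$ makes $N \otimes_R Q \hookrightarrow M \otimes_R Q$, so the image of $x$ in $N \otimes_R Q$ lies in the image of $L \otimes_R V$, giving $x \in L^{\Ri}_N$. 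The main obstacle will be the careful bookkeeping in the reduction step: matching up the images of $L, N, M$ in $F_M/\p F_M$ and $F_N/\p F_N$ under the respective versal maps, and ensuring that the base-change lemma yields precisely the intrinsic $R/\p$-module statements needed to assemble back to the conclusion over $R$.
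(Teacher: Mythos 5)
Your functoriality argument and your derivation of the easy inclusion $L^{\EHUi}_N \subseteq L^{\EHUi}_M \cap N$ are correct and match the paper's proof. Your valuative argument that Rees integral closure is hereditary over a domain is also correct, and it is a simpler route for that part than the paper's, which instead works with Rees algebras. The difficulty lies entirely in the reduction of the general case to the domain case, and there your proposal has a genuine gap.

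Your base-change lemma for versal maps is false. The argument breaks where you apply versality of $f: P \to F$ to the $R$-linear composite $P \to P/IP \to G$: here $G$ is free over $R/I$ but not over $R$, and versality of $f$ over $R$ only supplies factorizations of maps into finitely generated \emph{free $R$-modules}. A concrete counterexample: let $R = k[x,y]/(xy)$, $P = R/(x)$, and $\p = (x)$, a minimal prime. Then $P^* = (0 :_R x) = (y)$, and the map $f: P \to R$ given by $\bar a \mapsto ay$ is versal, since $f^*: R \to (y) = P^*$ is the surjection $r \mapsto ry$. But $f \otimes R/\p$ is multiplication by $y$ on $R/(x) \cong k[y]$, whose $k[y]$-dual is again multiplication by $y$, which is not surjective; so $f \otimes R/\p$ is not versal over $R/\p$. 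Moreover, even granting some correct base-change statement, the step you flag as the "main obstacle" is the actual crux and does not dissolve formally: the images of $L$ and $N$ under the versal maps $f_N$ and $f_M|_N$ land in two different free modules $F_N/\p F_N$ and $F_M/\p F_M$, related by a map $\psi$ with $\psi \circ f_N = f_M|_N$, and the integrality statement you know lives in the \emph{target}. You need to pull it back along a surjection, and this does not follow formally. The paper's proof handles exactly this: over a domain $\ker(\alpha \circ j) = T(M) \cap N = T(N) = \ker \beta$, so by \cite[Proposition 1.8]{EHU-Ralg} the induced map $\cR(\beta) \to \cR(\alpha\circ j)$ on Rees algebras has nilpotent kernel, which lets an integrality equation downstairs be lifted along $\mu_* = \Sym(\mu)$ and then corrected by raising it to a suitable power. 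Your approach needs a substitute for that nilpotent-kernel argument before the valuative step can take over.
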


\begin{proof}
To see functoriality, let $L \subseteq M$ and $M'$ be finitely generated modules and $g: M \ra M'$ an $R$-linear map.  Let $F$ be a finitely generated free module and $\phi: M' \ra F$ an $R$-linear map. Suppose $x \in \EHUic L{M}$.  Then since $\phi\circ g$ is a map from $M$ to a free module, we have $\phi(g(x)) \in \phi(g(L))^-_F$.  Since $\phi$ was arbitrary, it follows that $g(x) \in \EHUic {g(L)} {M'}$.

In particular, if $L \subseteq N \subseteq M$ and $N$ is integral over $L$ in the EHU sense (that is $N = \EHUic LN$), then $N \subseteq \EHUic LM$ as well by functoriality applied to the inclusion map $N \hookrightarrow M$. 

For the reverse direction, note first that by applying Theorem~\ref{thm:EHUic}(2), we may assume $R$ is an integral domain.  
Next, let $\alpha: M \ra G$ and $\beta: N \ra F$ be versal maps to finitely generated free modules.  Such maps exist by \cite[Proposition 1.3]{EHU-Ralg}. Then there is an induced map $\mu: F \ra G$ with $\mu \circ \beta = \alpha |_N$.  Let $i: L \into N$ and $j: N \into M$ be the inclusion maps.  We are given that $N$ is integral over $L$ in $M$ in the EHU sense.  By definition, we then have that the ring $R[N'_G]=\cR(\alpha \circ j)$ is integral over the subring $R[L'_G] = \cR(\alpha \circ j \circ i)$, where for a submodule $K$ of $M$, $K'_G$ is the image of $K$ in $\Sym(G)$. 

Next, we claim that $\mu \circ \beta=\alpha \circ j: N \ra G$ induces an injection on the torsionless quotient $N/\ker(h_N)$ of $N$.  To see this, let $x\in \ker (\alpha \circ j)$.  Then $j(x) \in \ker \alpha = T(M)$, the torsion submodule of $M$.  Since $R$ is a domain, ``torsionless'' and ``torsion-free'' coincide, and we have $T(M) \cap N = T(N)$.  Hence, $x\in T(N)$. 

 Thus, by \cite[Proposition 1.8]{EHU-Ralg}, the kernel of the natural map $\phi: \cR(N) = \cR(\beta) \rightarrow \cR(\mu \circ \beta) = \cR(\alpha \circ j) = R[N'_G]$ is nilpotent.

Let $a\in N$.  Since $a\in \EHUic LM$, we have $\alpha(j(a)) \in (\alpha(j(L)))^-_G$.   That is, $\mu(\beta(a)) \in (\mu(\beta(L)))^-_G$.  Thus, as an element of $\Sym(G)$, $\mu(\beta(a))$ is integral over the subalgebra $R[L'_G] = \cR(\alpha \circ j \circ i) = \cR(\mu\circ \beta \circ i)$. That is, there exist $n \in \N$ and $h_t \in \cR(\mu \circ \beta \circ i)$ with \[
\mu(\beta(a))^n + \sum_{t=1}^n h_t \mu(\beta(a))^{n-t} = 0.
\]
But $\mu_*=\Sym(\mu)$ restricts to a surjective map $\cR(\beta \circ i) \onto \cR(\mu \circ \beta \circ i)$ by Lemma~\ref{lem:Rees-surj}.  Hence, there exist $g_t \in \cR(\beta \circ i)$ with $h_t= \mu_*(g_t)$.  Thus, we have \[
\mu_*\left(\beta(a)^n + \sum_{t=1}^n g_t \beta(a)^{n-t} \right) = \mu(\beta(a))^n + \sum_{t=1}^n \mu_*(g_t) \mu(\beta(a))^{n-t} = 0.
\]
Since $\beta(a) \in \beta(N) \subseteq \cR(\beta)$ and each $g_t \in \cR(\beta \circ i) \subseteq \cR(\beta)$, we can replace ``$\mu_*$'' in the above display with ``$\phi$''.
But since the kernel of $\phi$ is nilpotent, there is some positive integer $m$ with $(\beta(a)^n + \sum_{t=1}^n g_t \beta(a)^{n-t})^m = 0$.  The latter is then an equation of integrality of $\beta(a)$ over $\cR(\beta \circ i) = R[L'_F]$.  Thus, $\beta(a) \in \beta(L)^-_F$.  Then by the implication ``$2 \Rightarrow 1$'' of Theorem~\ref{thm:EHUic}, since $\beta$ is versal, we have $a \in \EHUic LN$.  Thus, $N=\EHUic LN$.
\end{proof}

The next result shows that EHU-integral closure must agree with its hereditary version, even though they appear to be defined differently.

\begin{cor}
\label{cor:ehuhereditaryversion}
Let $R$ be Noetherian. Extend EHU-integral closure to submodules of all $R$-modules by taking its finitistic version as in Definition \ref{def:finitistic}. The hereditary version of this closure is equal to EHU-integral closure on \fg\ $R$-modules.

In fact, this holds for any hereditary closure operation defined on \fg\ $R$-modules.
\end{cor}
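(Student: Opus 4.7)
The plan is to combine three earlier results: Proposition~\ref{pr:EHUishereditary} (EHU-integral closure is hereditary on finitely generated modules), Proposition~\ref{pr:finitisticextensionhereditary} (the finitistic extension of a hereditary closure on finitely generated modules is hereditary on all modules, and is equal to its own hereditary version), and Lemma~\ref{lem:finitisticequalsorigonfg} (for a restrictable closure on finitely generated modules, the finitistic version agrees with the original on finitely generated pairs). The general statement in the second sentence of the corollary already tells us the strategy: prove the result for arbitrary hereditary $\cl$ on finitely generated $R$-modules and then specialize to EHU-integral closure.

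First I would note that any hereditary closure operation on finitely generated $R$-modules is in particular restrictable. Applying Lemma~\ref{lem:finitisticequalsorigonfg}, we then get $\cl_f(L,M)=\cl(L,M)$ whenever $L\subseteq M$ are finitely generated. Next, Proposition~\ref{pr:finitisticextensionhereditary} shows that $\cl_f$ is itself a hereditary closure operation on the category of all $R$-modules, and moreover that $\cl_f=\po{(\cl_f)}{\h}$. Composing these two facts, for finitely generated $L\subseteq M$ we obtain
\[
\po{(\cl_f)}{\h}(L,M) \;=\; \cl_f(L,M) \;=\; \cl(L,M),
\]
which is the desired equality on finitely generated modules.

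Finally, I would specialize to $\cl=\EHUi$. By Proposition~\ref{pr:EHUishereditary}, EHU-integral closure is a hereditary closure operation on finitely generated $R$-modules, so the general argument above applies and yields $\po{((\EHUi)_f)}{\h}(L,M)=\EHUi(L,M)$ for all finitely generated $L\subseteq M$, which is exactly the statement of the corollary.

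I do not expect a real obstacle here, since all of the work has been done in the earlier results; the only thing to be careful about is verifying that the hypotheses of Lemma~\ref{lem:finitisticequalsorigonfg} and Proposition~\ref{pr:finitisticextensionhereditary} are indeed satisfied by a hereditary closure (which is immediate, since hereditary implies restrictable by the remark following the definition of hereditary). The proof itself should be just a few lines citing these three results in sequence.
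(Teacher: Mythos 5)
Your proposal is correct and takes essentially the same approach as the paper: it invokes Proposition~\ref{pr:EHUishereditary}, Proposition~\ref{pr:finitisticextensionhereditary}, and Lemma~\ref{lem:finitisticequalsorigonfg} in the same combination, with the only difference being that you prove the general hereditary case first and then specialize to EHU, while the paper does the reverse.
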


\begin{proof}
First note that EHU-integral closure is hereditary on pairs of \fg\ $R$-modules by Proposition \ref{pr:EHUishereditary}. By Proposition \ref{pr:finitisticextensionhereditary}, the finitistic extension of EHU-integral closure is a hereditary closure operation on pairs of $R$-modules, hence equal to its hereditary version on all $R$-modules. Since Lemma \ref{lem:finitisticequalsorigonfg} implies that the finitistic extension of EHU-integral closure agrees with EHU-integral closure on \fg\ $R$-modules, the result is proved.

The proof for arbitrary hereditary closure operations follows in the same way.
\end{proof}

In contrast, when $R$ is not a domain, Rees integral closure is not always hereditary, even on \fg\ modules. When $R$ is a domain, Rees integral closure agrees with EHU-integral closure, so we do not study Rees integral closure separately in this paper, beyond the example below.

\begin{example}
We give an example to show that if $R$ is not a domain, Rees integral closure may not be hereditary. Let $R=k[x]/(x^2)$ and set $M=Rx$. By 
\cite[Example 1.6]{nmeUlr-lint}, $0_M^{\rm -Rs}=0$. Since $R$ is Artinian Gorenstein, it is injective as a module over itself. If Rees integral closure were hereditary, we should get $0_R^{\rm -Rs} \cap M=0$, but $0_R^{\rm -Rs}=(x)=M$. So Rees integral closure is not hereditary in this case.
\end{example}

The next example demonstrates that the hereditary version of liftable integral closure can differ from EHU-integral closure. However, we will see below that these two closures agree on sufficiently nice modules.

\begin{example}
\label{ex:hereditarydisagreeintegral}
Let $R=k[\![x]\!]$, where $k$ is any field.  Let $M=k$ be the residue field of $R$, and set $L=0 \subseteq M$.  Then since $\depth R>0$, we have $\Hom_R(k,R) = 0$, so that for any finitely generated projective (hence free) $R$-module $P\cong R^t$, we have $\Hom_R(M,P) \cong \Hom_R(k,R)^t = 0$.  That is, the only map $M \ra P$ is the zero map, so it factors through the map $M \ra 0$.  Hence the map $M \ra 0$ is versal, so $\EHUic 0 M = M$.


However, the hereditary version of the liftable integral closure of $0$ in $M$ is equal to 0. To see this, let $E$ be an injective module containing $M$ and let $N$ be a finitely generated module with $M \subseteq N \subseteq E$.  Let $G$ be a finite free module and $\pi: G \onto N$ be a surjection.  Then $\ker \pi$ is an integrally closed $R$-submodule of $G$, since any submodule of a finitely generated free module over a discrete valuation ring must be integrally closed \cite[Corollary 16.3.3]{integralclosure}.  Hence $\lic 0 N = \pi( (\ker \pi)^-_G) = \pi(\ker \pi) = 0$.  Since $\li$ is finitistic \cite[Lemma 2.3]{nmeUlr-lint}, it follows that $\lic 0 E = 0$, so that $0^{\li_{\rm {h}}}_M = \lic 0 E \cap M = 0$.

\end{example}

\begin{prop}
\label{pr:torsionlessagree}
Let $M$ be a torsionless \fg\ $R$-module.  Then the hereditary version of liftable integral closure agrees with EHU-integral closure on submodules of $M$.
\end{prop}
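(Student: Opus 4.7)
The plan is to exploit the fact that $M$ embeds in a finitely generated free module $F$ (as $M$ is torsionless), and then in an injective $E$ containing $F$, giving a chain $L \subseteq M \subseteq F \subseteq E$ in which I can compute both closures in terms of their values at $F$ and at $E$, and where they are forced to agree because both restrict to the classical integral closure $L_F^{-}$ on the free module $F$.

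First I would compute $L_M^{\EHUi}$. Since $\EHUi$ is hereditary by Proposition~\ref{pr:EHUishereditary}, $L_M^{\EHUi} = L_F^{\EHUi} \cap M$. When $F$ is free the identity map $F \to F$ is versal, so by Theorem~\ref{thm:EHUic} we have $L_F^{\EHUi} = L_F^{-}$, and hence $L_M^{\EHUi} = L_F^{-} \cap M$. On the other side, by Proposition~\ref{pr:definehereditaryversion} applied to the embeddings $M \hookrightarrow E$ and $F \hookrightarrow E$, I have $L_M^{\po{\li}{\h}} = \lic LE \cap M$ and $L_F^{\po{\li}{\h}} = \lic LE \cap F$; since $M \subseteq F$, these combine to give $L_M^{\po{\li}{\h}} = L_F^{\po{\li}{\h}} \cap M$.

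The two inclusions then follow quickly. For $L_M^{\EHUi} \subseteq L_M^{\po{\li}{\h}}$, I would apply Definition~\ref{def:li} with $\pi = \mathrm{id}_F$ to see that $\lic LF = L_F^{-}$; combined with Proposition~\ref{pr:sfhereditarypairineq} this yields $L_F^{-} = \lic LF \subseteq L_F^{\po{\li}{\h}}$, and intersecting with $M$ gives $L_M^{\EHUi} = L_F^{-} \cap M \subseteq L_M^{\po{\li}{\h}}$. For the reverse inclusion, Example~\ref{ex:lirverstrict} identifies $\li$ as the residual version of $\EHUi$, so Corollary~\ref{pr:rclincl} gives $\li \leq \EHUi$; Lemma~\ref{lem:comph} then yields $\po{\li}{\h} \leq \po{\EHUi}{\h}$, and because $\EHUi$ is already hereditary, Proposition~\ref{pr:definehereditaryversion} gives $\po{\EHUi}{\h} = \EHUi$, so $L_M^{\po{\li}{\h}} \subseteq L_M^{\EHUi}$.

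The main point to verify is the bridging identifications $\EHUic LF = L_F^{-}$ and $\lic LF = L_F^{-}$ for the finitely generated free module $F$: each requires appealing to the appropriate definition together with the versality of the identity map on a free module, and once these are in hand everything else is routine manipulation with hereditariness and monotonicity.
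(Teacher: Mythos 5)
Your proof is correct and follows essentially the same route as the paper's: both hinge on embedding the torsionless module $M$ in a finitely generated free module $F$, identifying both closures with the classical integral closure $L_F^-$ on $F$, and pushing into an injective $E \supseteq F$ to reach the hereditary version of $\li$. The only cosmetic difference is that you package the paper's element-chase through $\EHUic LF = \lic LF \subseteq \lic LE$ as the abstract inequalities $L_M^{\EHUi} = L_F^- \cap M$ and $\lic LF \subseteq L_F^{\po\li\h}$ via Proposition~\ref{pr:sfhereditarypairineq} (which is itself just functoriality applied to $F \hookrightarrow E$), and you spell out the direction $\po\li\h \leq \EHUi$ that the paper leaves implicit after quoting $\li \leq \EHUi$.
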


\begin{proof}
 Since liftable integral closure is generally smaller than EHU-integral closure \cite[Proposition 2.4 (6) and Corollary 1.5]{nmeUlr-lint}, we only need to prove one containment. Let $u \in \EHUic L M$. Since $M$ is contained in a \fg\ free module $F$, $u \in \EHUic LF=L_F^{\li}$. Let $E$ be an injective module containing $F$. Since liftable integral closure is functorial, $u \in L_E^{\li}$. Since $E$ is injective, this is equal to $L_E^{\po{\li}{\h}}$. So $u \in L_E^{\po{\li}{\h}} \cap M=L_M^{\po{\li}{\h}}$, as desired.
\end{proof}

In the next result, we give a condition on a closure operation $\cl$ that is sufficient to guarantee that the closure operation $\cl_{h(\cQ)}$ as in Proposition \ref{pr:versalop}, is hereditary when the pre-enveloping class $\cQ$ consists of \fg\ projective modules.

\begin{prop}
\label{pr:fgaspreenveloping}
Let $R$ be a Noetherian integral domain.  Let $\cl$ be a functorial closure operation on the set of pairs $\cP := \{(M,P) \mid P $ is a finitely generated projective $R$-module and $M$ is a submodule of $P\}$. Suppose that whenever $\alpha: M \hookrightarrow P$ and $\beta: M \hookrightarrow Q$ are injective maps where $P,Q$ are finitely generated projectives, that for any submodule $L$ of $M$, we have $\alpha^{-1}(\alpha(L)^\cl_P) = \beta^{-1}(\beta(L)^\cl_Q)$.  Then the closure operation $\cl_{h(\cQ)}$ on $\cP' := \{(L,M) \mid M \text{ is finitely generated}\}$, as in Proposition~\ref{pr:versalop}, is hereditary. Consequently, it is equal to its hereditary version as in Proposition \ref{pr:definehereditaryversion}.
\end{prop}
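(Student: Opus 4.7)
My plan is to show directly that $\cl_{h(\cQ)}$ is hereditary; the fact that it then coincides with its hereditary version from Proposition~\ref{pr:definehereditaryversion} is an immediate consequence of the uniqueness clause of that proposition. Let $L \subseteq N \subseteq M$ be finitely generated $R$-modules. The containment $\cl_{h(\cQ)}(L, N) \subseteq \cl_{h(\cQ)}(L, M) \cap N$ is routine, following from the functoriality of $\cl_{h(\cQ)}$ established in Proposition~\ref{pr:versalop} applied to the inclusion $N \hookrightarrow M$; I will focus on the reverse inclusion.

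I intend to handle the torsion-free case first and then reduce the general case to it. For the torsion-free case, recall that the preenvelope $\alpha_M \colon M \to P_M$ constructed in Proposition~\ref{pr:fgpreenvproj} has the form $\pi^* \circ h_M$, and a short calculation shows that over the domain $R$ its kernel is exactly $T(M)$. Thus when $M$ is torsion-free, $\alpha_M$ is injective, and hence so is its restriction $\alpha_M|_N \colon N \hookrightarrow P_M$. Picking any preenvelope $\alpha_N \colon N \to P_N$ (which is also injective, since $N$ is torsion-free as a submodule of $M$), the hypothesis of the proposition applied to the two injective maps $\alpha_N$ and $\alpha_M|_N$ gives
\[\cl_{h(\cQ)}(L, N) = \alpha_N^{-1}\bigl(\cl(\alpha_N(L), P_N)\bigr) = (\alpha_M|_N)^{-1}\bigl(\cl(\alpha_M(L), P_M)\bigr) = \cl_{h(\cQ)}(L, M) \cap N,\]
where the last equality uses $(\alpha_M|_N)^{-1}(X) = \alpha_M^{-1}(X) \cap N$ for every $X \subseteq P_M$.

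For the reduction, I will use that any homomorphism from $M$ into a finitely generated projective kills $T(M)$ (since the target is torsion-free over the domain $R$) and so factors through $\pi_M \colon M \twoheadrightarrow M/T(M)$. Combining this with the alternate characterization of $\cl_{h(\cQ)}$ in Proposition~\ref{pr:versalop}(\ref{altp'}) yields the identity $\cl_{h(\cQ)}(L, M) = \pi_M^{-1}\bigl(\cl_{h(\cQ)}(\pi_M(L), M/T(M))\bigr)$ and the analogous identity for $N$. Since $T(N) = N \cap T(M)$ over a domain, the inclusion $N \hookrightarrow M$ descends to an inclusion $\bar{i} \colon N/T(N) \hookrightarrow M/T(M)$ satisfying $\pi_M|_N = \bar{i} \circ \pi_N$ and $\bar{i}(\pi_N(L)) = \pi_M(L)$. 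Applying the torsion-free case to the triple $\pi_N(L) \subseteq N/T(N) \subseteq M/T(M)$ and then pulling back through these quotient maps produces the desired equality $\cl_{h(\cQ)}(L,N) = \cl_{h(\cQ)}(L,M) \cap N$.

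The portion I expect to require the most care is the final pullback step, specifically the bookkeeping identity $\pi_N^{-1}\bigl(Y \cap (N/T(N))\bigr) = \pi_M^{-1}(Y) \cap N$ for $Y \subseteq M/T(M)$, which rests on the commutative square relating $\pi_M$, $\pi_N$, and $\bar{i}$, together with the containment $N \subseteq \pi_M^{-1}(N/T(N))$. Once this is verified, the rest of the argument is a clean combination of two applications of Proposition~\ref{pr:versalop}(\ref{altp'}) and the torsion-free case, with no further input from the structure of $\cl$ beyond the stated hypothesis on injective embeddings.
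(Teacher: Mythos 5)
Your proof is correct, and the overall decomposition (torsion-free case first, then reduction via the torsionless quotient) is the same as the paper's. The torsion-free case is in fact identical in substance: your $\alpha_M|_N$ is the paper's $\beta \circ i$ and your $\alpha_N$ is the paper's $\alpha$, just named differently. One small gap worth noting: you justify injectivity of the pre-envelope $\alpha_N$ of the torsion-free module $N$ merely by saying ``$N$ is torsion-free''; that needs a word more---either use the explicit $\pi^* \circ h_N$ construction of Proposition~\ref{pr:fgpreenvproj} (whose kernel is $\tau(N)=0$), or observe that $N$ embeds in a free module and this embedding must factor through any pre-envelope, forcing the pre-envelope to be injective. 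The paper has the same implicit step. Where your argument genuinely diverges is the reduction: the paper factors the pre-envelopes $\mu,\gamma$ through the biduality maps and chases an explicit commutative diagram to transport the torsionless-case conclusion, whereas you extract the cleaner identity $\cl_{h(\cQ)}(L,M) = \pi_M^{-1}\bigl(\cl_{h(\cQ)}(\pi_M(L),M/\tau(M))\bigr)$ from the alternative characterization in Proposition~\ref{pr:versalop}(\ref{altp'}), using that every map from $M$ to a finitely generated projective kills $\tau(M)$ over the domain $R$. This is a more modular route: it isolates the descent-through-torsion-quotient phenomenon as a standalone formula and then combines it with the torsion-free case by pure set-theoretic bookkeeping, which you correctly identify and verify as the identity $\pi_N^{-1}\bigl(Y \cap (N/\tau(N))\bigr) = \pi_M^{-1}(Y) \cap N$. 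The paper's explicit diagram keeps all the maps in view at once, which some readers may prefer, but your version is shorter and arguably clearer about exactly which property of the pre-enveloping class $\cQ$ is being used in the reduction, namely that its members are torsion-free.
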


\begin{proof}
Let $L \subseteq N \subseteq M$ be finitely generated $R$-modules. First, by functoriality (as in Proposition~\ref{pr:versalop}) applied to the inclusion map $i: N \hookrightarrow M$, we have $L^{\cl_{h(\cQ)}}_N \subseteq L^{\cl_{h(\cQ)}}_M \cap N$, so we need only prove the reverse containment.

For this, assume first that $M$ is torsionless.  Then $M$ embeds in a finite free module, whence $N$ does as well, so that $N$ is also torsionless \cite[Exercise 1.4.20(b)]{BH}.  Let $\alpha: N \hookrightarrow F$ and $\beta: M \hookrightarrow G$ be pre-envelopes by the class of finitely generated projectives.  Then we have: \begin{align*}
L^{\cl_{h(\cQ)}}_M \cap N &= i^{-1}(i(L)^{\cl_{h(\cQ)}}_M) = i^{-1}(\beta^{-1}(\beta(i(L))^\cl_G)) = (\beta \circ i)^{-1}((\beta \circ i)(L)^\cl_G)\\
&= \alpha^{-1}(\alpha(L)^\cl_F) = L^{\cl_{h(\cQ)}}_N,
\end{align*}
where the first equality on the second line is by assumption (since both $\beta \circ i$ and $\alpha$ are injective maps to finitely generated projective modules) and the final equality is because $\alpha$ is a pre-envelope.

Now we drop the assumption that $M$ is torsionless.  For this, recall   \cite[second paragraph of Introduction]{AusBr} that since $R$ is an integral domain, for any finite $R$-module $C$ the kernel of the natural biduality map $h_C: C \ra \Hom_R(\Hom_R(C,R),R)$ is precisely the torsion submodule $\tau(C)$ of $C$. Note also that $\tau(N) = i^{-1}(\tau(M))$. Hence, there is an induced injection $i_0: N/\tau(N) \ra M/\tau(M)$, so that $i_0 \circ \pi = q \circ i$, where $\pi: N \onto N/\tau(N)$ and $q: M \onto M/\tau(M)$ are the canonical projections.  Now, let $\mu: N \ra F$ be a \fg\ free pre-envelope.  Then since $\mu$ factors through $h_N$, it also factors through $\pi$, which is to say there is some map $\tilde \mu: N/\tau(N) \ra F$ such that $\tilde \mu \circ \pi = \mu$.  Moreover, the versality of $\mu$ implies that $\tilde \mu$ is injective.  To see this, let $q: P \onto \Hom_R(N,R)$ be a surjection from a finite free module.  Then $q^* := \Hom_R(q,R)$ is injective by left-exactness of $\Hom$, and $q^* \circ h_N$ factors through $\mu$, since $\mu$ is versal.  Thus, for any $x\in \ker \mu$, we have $x \in \ker (q^* \circ h_N) = \ker h_N = \tau(N)$.

Similarly, let $\gamma: M \ra G$ be a \fg\ free pre-envelope.  Then there is an injective map $\tilde{\gamma}: M/\tau(M) \ra G$ such that $\tilde{\gamma} \circ q = \gamma$.  
We now have the following commutative diagram: \[
\xymatrix{
N \ar@{^{(}->}[r]^i \ar@{->>}[d]^\pi \ar@/_3pc/[dd]_\mu &M\ar@{->>}[d]^q \ar@/^3pc/[dd]^\gamma\\
N/\tau(N) \ar@{^{(}->}[r]^{i_0} \ar@{^{(}->}[d]^{\tilde \mu} &M/\tau(M) \ar@{_{(}->}[d]^{\tilde{\gamma}} \\
F & G
}
\]
Now let $x \in L^{\cl_{h(\cQ)}}_M \cap N$.  That is, $x\in N$ and $i(x) \in i(L)^{\cl_{h(\cQ)}}_M$.  Then by construction of $\gamma$ and by definition of $\cl_{h(\cQ)}$, we have: \[
(\tilde{\gamma}\circ i_0)(\pi(x)) = \tilde{\gamma}(q(i(x))) = \gamma(i(x)) \in (\gamma(i(L)))^\cl_G = ((\tilde{\gamma} \circ i_0 \circ \pi)(L))^\cl_G.
\]
Since $\tilde{\gamma} \circ i_0$ and $\tilde \mu$ are both injective, we have by the first part of the proof (where we were assuming $M$ to be torsionless) that \[
\pi(x) \in \pi(L)^\cl_G \cap (N/\tau(N)) = \pi(L)^\cl_F \cap (N/\tau(N)) = {\tilde \mu}^{-1}(\tilde \mu(\pi(L))^\cl_F),
\]
so that \[
x \in \pi^{-1}(\tilde \mu^{-1}(\tilde \mu(\pi(L))^\cl_F)) = \mu^{-1}(\mu(L)^\cl_F) = L^{\cl_{h(\cQ)}}_N. \qedhere
\]
\end{proof}

Now we show that this condition holds for tight closure.

\begin{lemma}
Let $R$ be a prime characteristic reduced Noetherian ring.  Let $L \subseteq M$ be finitely generated $R$-modules, and let $\alpha: M \hookrightarrow F$, $\beta: M \hookrightarrow G$ be two different injective maps into finitely generated projective modules.  Then $L^*_F \cap M = L^*_G \cap M$.
\end{lemma}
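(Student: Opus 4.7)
The plan is to reduce to free modules, and then to a common ambient via a diagonal embedding. Since $F$ is finitely generated projective, it is a direct summand of some $R^n$; let $i_F : F \hookrightarrow R^n$ be the split inclusion with retraction $p_F : R^n \twoheadrightarrow F$ satisfying $p_F \circ i_F = \mathrm{id}_F$. Applying functoriality of tight closure to both $i_F$ and $p_F$, for any $x \in F$ the containment $x \in L^*_F$ is equivalent to $i_F(x) \in i_F(L)^*_{R^n}$: the forward direction is functoriality of $i_F$, while the reverse uses functoriality of $p_F$ together with $p_F(i_F(x)) = x$ and $p_F(i_F(L)) = L$. The same reduction applies to $G$, so I may assume $F = R^n$ and $G = R^m$. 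The task then becomes to compare $L^*_{R^n} \cap M$ and $L^*_{R^m} \cap M$ under two injections $\alpha : M \hookrightarrow R^n$ and $\beta : M \hookrightarrow R^m$.

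Next I would compare both closures to a common ambient via the diagonal embedding $\gamma := (\alpha, \beta) : M \hookrightarrow R^{n+m}$. Since $\alpha = \pi_n \circ \gamma$ and $\beta = \pi_m \circ \gamma$, where $\pi_n,\pi_m$ are the coordinate projections, functoriality of tight closure gives
\[
\gamma^{-1}(\gamma(L)^*_{R^{n+m}}) \subseteq \alpha^{-1}(\alpha(L)^*_{R^n}) \cap \beta^{-1}(\beta(L)^*_{R^m}).
\]
Thus it is enough to prove the reverse containments, namely that $\alpha(x) \in \alpha(L)^*_{R^n}$ forces $\gamma(x) \in \gamma(L)^*_{R^{n+m}}$, and the symmetric statement for $\beta$; combined with the diagonal containment above this yields both inclusions of the lemma.

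The reverse containment is the main obstacle. A naive attempt would construct an $R$-linear automorphism of $R^{n+m}$ converting the ``coordinate'' embedding $j_n \circ \alpha$ into $\gamma$, but doing so requires extending $\beta \circ \alpha^{-1}: \alpha(M) \to R^m$ to an $R$-linear map $R^n \to R^m$, which is generally obstructed by $\Ext^1_R(R^n/\alpha(M), R^m)$. Instead, I would work directly at the level of Frobenius powers. Given $c \in R^\circ$ and $r_\ell \in R$ witnessing $c \cdot 1 \otimes \alpha(x) = \sum_\ell r_\ell(1 \otimes \alpha(\ell))$ in $F^e(R^n)$, the element $z := c \otimes x - \sum_\ell r_\ell \otimes \ell$ of $F^e(M)$ lies in $\ker F^e(\alpha) \cong \Tor_1^R({}^eR, R^n/\alpha(M))$. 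Because $R$ is reduced, $R^\circ$ consists of non-zerodivisors, and the technical crux is to find $c' \in R^\circ$ annihilating this $\Tor$ module so that $c' z = 0$; then $(cc') \otimes x \in L^{[q]}_M$, yielding an intrinsic relation in $F^e(M)$ that then pushes forward under $F^e(\gamma)$ to produce the required relation in $F^e(R^{n+m})$. Equivalently, one may argue that each of $\alpha^{-1}(\alpha(L)^*_{R^n})$ and $\beta^{-1}(\beta(L)^*_{R^m})$ coincides with the intrinsic tight closure $L^*_M$ of the pair, in which case the lemma is immediate. Controlling the Frobenius kernel by a single element of $R^\circ$, and hence disentangling the ambient-dependent part of the tight closure from its intrinsic part, is the delicate place where the reducedness hypothesis is essential.
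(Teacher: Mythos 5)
Your reduction to free modules via split inclusions is correct, and the diagonal embedding $\gamma=(\alpha,\beta):M\hookrightarrow R^{n+m}$ together with functoriality of the coordinate projections does give $\gamma^{-1}(\gamma(L)^*_{R^{n+m}}) \subseteq \alpha^{-1}(\alpha(L)^*_{R^n}) \cap \beta^{-1}(\beta(L)^*_{R^m})$. But the reverse inclusions are the entire content of the lemma, and what you offer for them is a sketch with a genuine hole. The step of producing $c' \in R^\circ$ annihilating $\ker F^e(\alpha) \cong \Tor_1^R({}^eR,\, R^n/\alpha(M))$ must hold simultaneously for all (or cofinally many) $e$; reducedness tells you $R^\circ$ consists of nonzerodivisors, but it does not produce such a uniform Tor-annihilator, and this kind of ``one $c$ works for every Frobenius power'' statement is precisely the nontrivial uniformity issue that makes tight-closure arguments delicate (and typically requires excellence or F-finiteness, not just reducedness). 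You flag it as ``the delicate place'' but do not carry it out. The fallback phrasing --- that both sides ``coincide with the intrinsic tight closure $L^*_M$'' --- is either circular (if $L^*_M$ means the embedding-independent value whose existence the lemma is asserting) or begs the question in a different form (if $L^*_M$ means the Frobenius-functor/residual tight closure of the pair $(L,M)$, then its equality with the pre-enveloped closure is again what needs proving).

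The paper's proof is substantially different and much shorter. It forms the pushout $H := F \oplus_M G$ of the two embeddings; since $\alpha,\beta$ are injective, so are the induced maps $\alpha':F\to H$ and $\beta':G\to H$, giving a single finitely generated overmodule containing both $F$ and $G$ compatibly with $M$. It then cites \cite[Proposition 8.18(b)]{HHmain} to obtain $L^*_F = L^*_H \cap F$ and $L^*_G = L^*_H \cap G$, whence $L^*_F \cap M = L^*_H \cap M = L^*_G \cap M$. The structural insight is to pass \emph{up} to a common overmodule that identifies the two copies of $M$, which sidesteps both the $\Ext^1_R(R^n/\alpha(M),R^m)$ obstruction you correctly noticed and the uniform Tor-annihilation problem; your approach instead tries to pull tight closure \emph{down} to an intrinsic level of $M$ via Frobenius powers, which is exactly where the uniformity must be paid for.
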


\begin{proof}
Consider the pushout $H := F \oplus_M G$ of the maps $\alpha$ and $\beta$.  Label the induced maps $\alpha': F \ra H$ and $\beta': G \ra H$.  Note that both $\alpha'$ and $\beta'$ are injective.  Hence we have inclusions $L \subseteq F \subseteq H$ and $L \subseteq G \subseteq H$. 
Then by \cite[Proposition 8.18(b)]{HHmain}, we have $L^*_F = L^*_H \cap F$ and $L^*_G = L^*_H \cap G$. Thus: \[
L^*_F \cap M = L^*_H \cap F \cap M = L^*_H \cap M = L^*_H \cap G \cap M = L^*_G \cap M \qedhere
\]
\end{proof}

\begin{cor}
Let $R$ be a prime characteristic Noetherian domain.  Let $*$ denote the tight closure operation, restricted to submodules of finitely generated projective modules.  Then the operation $*_{h(\cQ)}$, as in Proposition~\ref{pr:fgaspreenveloping}, is a hereditary closure operation on finitely generated modules that agrees with $*$ when the ambient module is projective (e.g. free). In particular, this means that it agrees with its hereditary version as in Proposition \ref{pr:definehereditaryversion}.
\end{cor}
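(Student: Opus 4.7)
The plan is to apply Proposition~\ref{pr:fgaspreenveloping} directly to tight closure, using the preceding lemma as the crucial technical input. First I would verify that tight closure $*$ is a functorial closure operation on the class $\cP = \{(L,P) \mid P \text{ is a finitely generated projective } R\text{-module}, L \subseteq P\}$. Functoriality of tight closure is standard (it follows from functoriality of Frobenius and the definition via test elements/$c$-multiples), and extensivity, idempotence, and order-preservation on submodules are classical. The hypothesis of Proposition~\ref{pr:fgaspreenveloping} also requires $R$ to be a Noetherian integral domain, which is given.

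Next I would verify the key technical hypothesis of Proposition~\ref{pr:fgaspreenveloping}, namely that for any two injective maps $\alpha: M \hookrightarrow F$ and $\beta: M \hookrightarrow G$ into finitely generated projectives and any submodule $L$ of $M$, we have $\alpha^{-1}(\alpha(L)^*_F) = \beta^{-1}(\beta(L)^*_G)$. Identifying $M$ with its image under either map, this equation is exactly $L^*_F \cap M = L^*_G \cap M$, which is the conclusion of the preceding lemma. With this in hand, Proposition~\ref{pr:fgaspreenveloping} applies, so $*_{h(\cQ)}$ is a hereditary closure operation on $\cP' = \{(L,M) \mid M \text{ finitely generated}\}$.

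To prove the second assertion, that $*_{h(\cQ)}$ agrees with $*$ on submodules of projective modules, I would observe that when $M$ is itself a finitely generated projective module, the identity map $\mathrm{id}_M: M \to M$ is a pre-envelope of $M$ by the class $\cQ$ of finitely generated projectives: any $R$-linear map $g: M \to P$ with $P$ finitely generated projective trivially factors as $g \circ \mathrm{id}_M$. Since the construction in Proposition~\ref{pr:versalop} is independent of the choice of pre-envelope, we compute $L^{*_{h(\cQ)}}_M = \mathrm{id}_M^{-1}(L^*_M) = L^*_M$ for any submodule $L \subseteq M$.

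Finally, the last statement is immediate: having established that $*_{h(\cQ)}$ is a functorial hereditary closure operation defined on $\cP'$, the final sentence of Proposition~\ref{pr:definehereditaryversion} (``if $p$ is a hereditary, functorial pair operation defined on $\cP'$, then $\po p {\h}=p$'') applied to $p = *_{h(\cQ)}$ yields $(*_{h(\cQ)})_h = *_{h(\cQ)}$. There is no real obstacle here since all the heavy lifting is done by the preceding lemma and Proposition~\ref{pr:fgaspreenveloping}; the only subtle point is noticing that the identity map is a pre-envelope when $M$ is already projective, which makes the comparison with ordinary tight closure trivial on $\cP$.
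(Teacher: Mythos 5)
Your overall strategy matches what the paper intends: the lemma immediately preceding the corollary supplies the compatibility hypothesis of Proposition~\ref{pr:fgaspreenveloping}, and the observation that $\mathrm{id}_M$ is a $\cQ$-preenvelope whenever $M$ is itself finitely generated projective cleanly gives agreement with ordinary tight closure on $\cP$ (using the well-definedness part of Proposition~\ref{pr:versalop}). These two points are correct and are the real content of the corollary.

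However, your argument for the final ``In particular'' clause has a genuine gap. You apply the last sentence of Proposition~\ref{pr:definehereditaryversion} directly with $\cP' = \{(L,M) \mid M \text{ finitely generated}\}$. But Proposition~\ref{pr:definehereditaryversion} is stated for $\cP$ and $\cP'$ as in Notation~\ref{not:heredpairs}, which requires $\cP \subseteq \cP'$ where $\cP$ consists of pairs with \emph{injective} ambient module, and requires every $(A,B) \in \cP'$ to embed into such a pair. Finitely generated modules over a Noetherian domain (other than a field) are never injective, so this $\cP'$ does not satisfy Notation~\ref{not:heredpairs}, and the hereditary version $\po{(*_{h(\cQ)})}{\h}$ is not even defined on it. The correct route is the one taken in Corollary~\ref{cor:ehuhereditaryversion}: first extend $*_{h(\cQ)}$ to all $R$-modules via its finitistic version (Definition~\ref{def:finitistic}), invoke Proposition~\ref{pr:finitisticextensionhereditary} to conclude the finitistic extension is a hereditary closure on all $R$-modules (hence equals its own hereditary version), and then use Lemma~\ref{lem:finitisticequalsorigonfg} to see that the finitistic extension restricts back to $*_{h(\cQ)}$ on finitely generated modules. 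Alternatively, you could simply cite the final ``Consequently'' sentence of Proposition~\ref{pr:fgaspreenveloping}, which already packages this conclusion.
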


This gives us one way to get a hereditary version of tight closure. Absolute tight closure (see Definition \ref{def:abstightclosure}) is another one:

\begin{lemma}
Absolute tight closure equals $*_{\rm h}$ on \fg\ modules over any Noetherian ring $R$ of prime characteristic $p>0$.  In particular, absolute tight closure is hereditary in this context.
\end{lemma}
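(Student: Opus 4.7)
The plan is to interpret $*_{\rm h}$ as the hereditary version (via Proposition~\ref{pr:definehereditaryversion}) of the closure operation $*^{fg}$ on all nested pairs of $R$-modules, and then verify by a double inclusion that this coincides with absolute tight closure on pairs of finitely generated modules.

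The preliminary step is to check that $*^{fg}$, extended to arbitrary $R$-modules as in Definition~\ref{def:abstightclosure}, is a functorial closure operation on all pairs. Extensivity and order-preservation on submodules are immediate from the definition, and idempotence is classical for $*^{fg}$. For functoriality: given $g : M \to M'$ and $x \in N^{*fg}_M$, one produces a f.g.\ submodule $M_0 \subseteq M$ with $x \in (M_0 \cap N)^*_{M_0}$, and then standard functoriality of tight closure on f.g.\ modules, combined with $g(M_0 \cap N) \subseteq g(M_0) \cap g(N)$, yields $g(x) \in (g(M_0)\cap g(N))^*_{g(M_0)} \subseteq g(N)^{*fg}_{M'}$. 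Proposition~\ref{pr:definehereditaryversion} then applies and gives $N^{*_{\rm h}}_M = N^{*fg}_E \cap M$ for any injective $E \supseteq M$, independent of the choice of $E$.

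Next I would prove $N^{*abs}_M \subseteq N^{*_{\rm h}}_M$: for $x \in N^{*abs}_M$, pick $Q \supseteq M$ with $x \in N^{*fg}_Q$ and a f.g.\ $M_0 \subseteq Q$ with $x \in (M_0 \cap N)^*_{M_0}$; then embed $Q$ (and hence both $M$ and $M_0$) into an injective module $E$. The f.g.\ submodule $M_0 \subseteq E$ still witnesses $x \in N^{*fg}_E$ directly from the definition, and $x \in M$ gives $x \in N^{*fg}_E \cap M = N^{*_{\rm h}}_M$. For the reverse direction, take $x \in N^{*_{\rm h}}_M$ with $E \supseteq M$ injective, so that some f.g.\ $M_0 \subseteq E$ satisfies $x \in (M_0 \cap N)^*_{M_0}$. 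Set $Q := M + M_0 \subseteq E$: this is finitely generated (since both $M$ and $M_0$ are) and contains $M$. Because $M_0 \subseteq Q$, the same $M_0$ witnesses $x \in N^{*fg}_Q$, so $x \in N^{*abs}_M$. The ``in particular'' clause is then immediate: $*_{\rm h}$ is hereditary by construction in Proposition~\ref{pr:definehereditaryversion}.

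The only genuine obstacle is conceptual rather than computational: one must first commit to an interpretation of $*_{\rm h}$, since ordinary tight closure is not a priori defined when the ambient module is non-finitely generated (much less injective). The finitistic extension $*^{fg}$ is the natural choice, as Proposition~\ref{pr:definehereditaryversion} requires a functorial pair operation defined on pairs whose ambient module is injective. Once this setup is fixed, both inclusions amount to the observation that a f.g.\ witness $M_0$ inside an ambient module ($Q$ or $E$) can be transported between the two settings by embedding and by forming $M+M_0$.
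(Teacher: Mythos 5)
Your proof is correct, but it takes a noticeably more self-contained route than the paper's. The paper's argument is three lines: it cites Hochster--Huneke \cite[8.21]{HHmain} for the identity $N^{*abs}_M = N^{*fg}_E \cap M$ ($E$ injective, $E \supseteq M$), observes that $N^{(*fg)_{\rm h}}_M = N^{*fg}_E \cap M$ by definition of the hereditary version, and concludes via Proposition~\ref{pr:hereditaryorderpreserving} that this operation is hereditary. You instead reprove the Hochster--Huneke identity from scratch: the forward inclusion by extending a witnessing ambient $Q$ into an injective hull, and the reverse by shrinking from $E$ to the finitely generated $Q = M + M_0$. That double-inclusion argument is exactly the content of \cite[8.21]{HHmain}, so you have independently rediscovered it. A further point in your favor: both approaches require $*^{fg}$ to be a functorial pair operation in order for Proposition~\ref{pr:definehereditaryversion}/\ref{pr:hereditaryorderpreserving} to apply (and to make $N^{*fg}_E \cap M$ well-defined independent of $E$), and you spell out this functoriality check whereas the paper leaves it implicit. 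The trade-off is length: the paper's proof is terser because it leans on the external reference, while yours is longer but completely self-contained modulo the properties of tight closure on f.g.\ modules. One minor caveat to record in a final write-up: you should confirm that the version of $*$ on f.g.\ modules you are using is residual/functorial (the Hochster--Huneke version is), since the functoriality step $g((M_0\cap N)^*_{M_0}) \subseteq g(M_0\cap N)^*_{g(M_0)}$ depends on it.
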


\begin{proof}
Let $N \subseteq M$ be finitely generated $R$-modules.
By \cite[8.21]{HHmain}, $N^{*abs}_M = N^{*fg}_E \cap M$ when $E$ is any injective module containing $M$.  By Proposition~\ref{pr:hereditaryorderpreserving}, $(*fg)_h$ is hereditary.  But $N^{(*fg)_h}_M = N^{*fg}_E \cap M$.
%
\end{proof}

As we had for integral closure, we now have 2 methods for creating a hereditary version of tight closure, one involving the pre-enveloping class of injective modules, and one involving the pre-enveloping class $\cQ$ of \fg\ projective modules. As in that case, the two methods differ on some examples, but agree for sufficiently nice modules.

\begin{example}
We repeat Example \ref{ex:hereditarydisagreeintegral}. The argument that $0_M^{*_{h(\cQ)}}=M$ is the same. However, since $R$ is regular, for any module $U \supseteq M$, $0_U^{*fg}=0$. So $0_M^{*abs}=0$.
\end{example}

\begin{prop}
    Let $R$ be a prime characteristic Noetherian domain. If $M$ is a torsionless \fg\ $R$-module and $L \subseteq M$, then $L_M^{*_{h(\cQ)}}=L_M^{*abs}$, where $\cQ$ is the pre-enveloping class of \fg\ projective modules.
\end{prop}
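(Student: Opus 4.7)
The plan is to establish both containments using the explicit formulas $L^{*abs}_M = L^{*fg}_E \cap M$ (from the lemma above identifying absolute tight closure with $(*fg)_h$) and $L^{*_{h(\cQ)}}_M = L^*_F \cap M$, the latter being valid for any injective embedding $M \hookrightarrow F$ into a \fg\ projective by the unlabeled lemma preceding the above corollary (proved via a pushout argument and \cite[Proposition 8.18(b)]{HHmain}) together with Proposition~\ref{pr:fgaspreenveloping}. Since $M$ is a \fg\ torsionless module over the Noetherian domain $R$, one can embed $M \hookrightarrow F$ into a \fg\ free module $F$, and further into an injective module $E \supseteq F$.

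For the containment $L^{*_{h(\cQ)}}_M \subseteq L^{*abs}_M$, the definition of $L^{*fg}_E$ as a union over \fg\ submodules of $E$ immediately yields $L^*_F = (F \cap L)^*_F \subseteq L^{*fg}_E$ by taking $F$ itself as the \fg\ submodule. Intersecting with $M$ gives $L^{*_{h(\cQ)}}_M = L^*_F \cap M \subseteq L^{*fg}_E \cap M = L^{*abs}_M$.

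For the reverse containment, let $u \in L^{*abs}_M$. Then there is a \fg\ submodule $N \subseteq E$ with $u \in (L \cap N)^*_N$. After replacing $N$ by $N + M$ (which preserves this membership by functoriality of tight closure, and which still satisfies $L \cap (N+M) = L$ since $L \subseteq M$), we may assume $N \supseteq M$ and $u \in L^*_N$. Form the pushout $H := N \oplus_M F$ in the category of $R$-modules. Since $M \hookrightarrow F$ is injective, the canonical maps $N \hookrightarrow H$ and $F \hookrightarrow H$ are both injective, and the two induced embeddings of $L$ into $H$ agree. Functoriality of $*$ applied to $N \hookrightarrow H$ gives $u \in L^*_H$, and then, as in the proof of the lemma preceding the above corollary on independence of the \fg\ projective preenvelope, \cite[Proposition 8.18(b)]{HHmain} applied to $F \hookrightarrow H$ yields $L^*_F = L^*_H \cap F$. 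Since $u \in M \subseteq F$, it follows that $u \in L^*_F \cap M = L^{*_{h(\cQ)}}_M$.

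The main obstacle I anticipate is verifying that \cite[Proposition 8.18(b)]{HHmain} indeed applies to the inclusion $F \hookrightarrow H$ in this more general situation: the lemma on independence of preenvelopes used it when both modules involved in the pushout were \fg\ projective, whereas here $N$ is an arbitrary \fg\ $R$-module and the quotient $H/F \cong N/M$ need not have favorable flatness properties. One may need to supplement the pushout argument by exploiting purity of $F$ inside $H$ coming from its projectivity, or by passing through a persistence-type result for tight closure under an appropriate base change, to close this gap cleanly.
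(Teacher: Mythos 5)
Your proof of the containment $L^{*_{h(\cQ)}}_M \subseteq L^{*abs}_M$ is correct and is essentially the one the paper intends: embedding $M \hookrightarrow F \hookrightarrow E$ with $F$ finite free and $E$ injective, using $L^*_F \subseteq L^{*fg}_E$, and intersecting with $M$. In the paper this is the only containment that requires an explicit argument, since it follows the structure of the proof of Proposition~\ref{pr:torsionlessagree}.

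Your treatment of the reverse containment $L^{*abs}_M \subseteq L^{*_{h(\cQ)}}_M$ has a genuine gap, and it is the one you flag yourself. The pushout $H = N \oplus_M F$ has $H/F \cong N/M$, and since $N$ is merely finitely generated there is no reason for $F$ to be a direct summand of $H$ (which is what the application of \cite[Proposition 8.18(b)]{HHmain} in the paper's unlabeled lemma relies on, with both modules there being finitely generated projective). Your suggested fix --- ``purity of $F$ inside $H$ coming from its projectivity'' --- does not work: projectivity of $F$ as an abstract module has nothing to do with purity of the particular submodule $F \subseteq H$; to split $F \hookrightarrow H$ one would need to extend $\alpha: M \hookrightarrow F$ along $M \hookrightarrow N$, which would require $F$ to be injective, not projective. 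So the pushout route does not close.

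What you missed is that this direction does not need any pushout or reference to 8.18(b) at all. By persistence (functoriality) of tight closure, $L^*_M \subseteq g^{-1}(g(L)^*_P)$ for every $g: M \to P$ with $P$ a finitely generated projective, hence $* \le *_{h(\cQ)}$ as closure operations on finitely generated modules. Taking finitistic versions and then hereditary versions preserves this inequality (Lemma~\ref{lem:comph}), and both $*abs = (*fg)_{\h}$ and $*_{h(\cQ)}$ equal their own hereditary versions (the latter by Corollary~\ref{cor:ehuhereditaryversion} applied to a hereditary closure on finitely generated modules). This yields $*abs \le *_{h(\cQ)}$ directly, with no torsionless hypothesis needed. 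That is the ``we only need to prove one containment'' step in the proof of Proposition~\ref{pr:torsionlessagree}, which the present proposition's proof is explicitly declared to mirror.
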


\begin{proof}
    The proof is identical to the proof of Proposition \ref{pr:torsionlessagree}.
\end{proof}

\section*{Acknowledgments}

We thank Abdullah Alshayie for finding some minor errors in this and other papers of ours.  We are also grateful for extensive comments made by the anonymous referee which improved the paper.

\providecommand{\bysame}{\leavevmode\hbox to3em{\hrulefill}\thinspace}
\providecommand{\MR}{\relax\ifhmode\unskip\space\fi MR }
\providecommand{\MRhref}[2]{%
  \href{http://www.ams.org/mathscinet-getitem?mr=#1}{#2}
}
\providecommand{\href}[2]{#2}

\end{document}